\documentclass{article}[12pt]
\usepackage{amssymb}
\usepackage{amsmath,amsthm,amsfonts}
\usepackage[latin1]{inputenc}
\usepackage{graphicx}
\usepackage{hyperref}
\usepackage{enumerate}
\usepackage{tikz}
\usepackage{mathrsfs}
\usepackage{verbatim}
\usepackage{color,url}
\usepackage{cite}

\newtheorem{remark}{Remark}[section]

\newtheorem{lemma}[remark]{Lemma}
\newtheorem{theorem}[remark]{Theorem}
\newtheorem{proposition}[remark]{Proposition}
\newtheorem{corollary}[remark]{Corollary}

\newtheorem{problem}{Problem}

\newcommand{\cp}{\,\square\,}
\newcommand{\stp}{\,\boxtimes\,}

\newcommand{\msl}{\{\hspace*{-0.1cm}\{}
\newcommand{\msr}{\}\hspace*{-0.1cm}\}}
\newcommand{\adimms}{\operatorname{msad}}

\DeclareMathOperator{\ecc}{ecc}
\DeclareMathOperator{\diam}{diam}
\DeclareMathOperator{\edim}{edim}
\DeclareMathOperator{\mdim}{dim_{m}}
\DeclareMathOperator{\medim}{edim_{m}}
\DeclareMathOperator{\odim}{dim_{om}}
\DeclareMathOperator{\ldim}{dim_{lm}}
\DeclareMathOperator{\lodim}{dim_{lom}}
\DeclareMathOperator{\Amal}{Amal}

\begin{document}

\title{Multiset resolvability parameters in graphs: A survey with new results and open problems}

\author{Mohammad Farhan$^{a,}$\thanks{\texttt{mohammad.farhan@uca.es}}
\and Sandi Klav\v zar$^{b,c,d,}$\thanks{\texttt{sandi.klavzar@fmf.uni-lj.si}}
\and Dorota Kuziak$^{a,}$\thanks{\texttt{dorota.kuziak@uca.es}}
\and Ismael G. Yero$^{e,}$\thanks{\texttt{ismael.gonzalez@uca.es}}
}
\maketitle

\begin{center}
$^a$ Departamento de Estad\'istica e Investigaci\'on Operativa, Universidad de C\'adiz, Spain \\

\medskip
$^b$ Faculty of Mathematics and Physics, University of Ljubljana, Slovenia\\
\medskip
	
$^c$ Institute of Mathematics, Physics and Mechanics, Ljubljana, Slovenia\\
\medskip
	
$^d$ Faculty of Natural Sciences and Mathematics, University of Maribor, Slovenia\\
\medskip

$^e$ Departamento de Matem\'{a}ticas, Universidad de C\'adiz, Algeciras Campus, Spain \\
\medskip
\end{center}

\begin{abstract}
The metric dimension, which has lots of variants and numerous applications in other fields, is one of the most important and most extensively studied topics in metric graph theory. Results in which resolvability is achieved by considering multisets of distances from a fixed vertex, instead of vectors as in the original version, are surveyed. The concepts discussed are multiset dimension, outer multiset dimension, local multiset dimension, edge multiset dimension, and $k$-multiset antidimension. Along the way, sharp lower bounds on the outer multiset dimension of diameter two graphs and join graphs with edgeless graphs are proved, which solves two open problems from the literature. New results on graphs with local multiset dimension equal to two are also proved. In particular, such graphs are characterized among block graphs.  Finally, a list of open problems from the literature is compiled, and several new problems are added to the list for future research.
\end{abstract}

\noindent
{\bf Keywords:} graph distance; metric dimension; multiset dimension; outer multiset dimension; local multiset dimension; edge multiset dimension; $k$-multiset antidimension \\

\noindent
{\bf AMS Subj.\ Class.\ (2020)}: 05C12 

\section{Introduction}

The concepts of multiset resolving sets and multiset dimension find their roots in the classical resolving sets and metric dimension, respectively, which are classical parameters in graph theory that serve as some mathematical foundations for localization in networks. In the classical model, a network is equipped with a set of ``landmarks'' placed at specific points. Every other location in the network is then assigned a unique ``code'' based on its exact distance to each of these landmarks. Because the distances are recorded in a fixed, specific order, the set of such landmarks works like a positioning system, ensuring that no two locations share the same identity. This classical approach essentially defines the minimum number of landmarks needed to perfectly map out every single point in a structure without any ambiguity, which somehow mimics a GPS-like system. 

In such a traditional framework, a point is represented by ordered distances to a set of fixed landmarks. The multiset dimension reimagines this by removing the order of those distances. Instead of knowing exactly which distance belongs to which landmark, we only know the collection of distances involved. This shift creates a much more challenging environment for navigation and identification, as the loss of positional information requires a more unique arrangement of landmarks to ensure every point in the network remains distinguishable. Despite that this increases the difficulty of addressing the location abilities in graphs, it is valuable to consider such a version as it, in reward, also increases the capacity to model navigation and identification systems where sensors or landmarks are anonymous and cannot be uniquely distinguished from one another. By moving away from ordered coordinates, the multiset dimension serves as a more realistic ``metric worst-case scenario'' for ensuring system reliability and structural uniqueness in networks.

The metric dimension related parameters in graphs have been very attractive research topics in the last two decades, although the first notion regarding these parameters dates back to 1953 when a related concept was introduced for general metric spaces in~\cite{blumenthal-1953}. For the specific area of graph theory, the first information on these parameters came independently from the 1970's due to Slater \cite{slater-1975}, and Harary and Melter \cite{harary-1976}. However, one might notice that, indeed, the topic remained almost unstudied until the work \cite{Chartrand-2000} brought it back to attention. This topic attracted several investigations in various directions, including combinatorial, computational, and applied approaches. For example, a very remarkable application appeared in \cite{tillquist-2019}, where a sort of methodology was designed for embedding biological sequence data into Hamming graphs by means of a metric dimension related parameter. In addition, the obtained embedding was further used in machine learning algorithms that learn classifiers from such datasets. Since it is not our goal to deeply dig into the metric dimension of graphs, we suggest the interested reader to see the extra information on this concept, and other related ones, that can be found in the two surveys \cite{kuziak-2021,tillquist-2023}, and references cited therein.

To the best of our knowledge, the multiset dimension was introduced by Saenpholphat in 2009 in~\cite{saenpholphat-2009}. It is no surprise, however, that the paper went unnoticed for a long time, as it was published in a conference proceedings volume and in Thai language. Eight years later, the concept was independently rediscovered in~\cite{simanjuntak-2017} by Simanjuntak, Vetr\'{\i}k, and Mulia. The fact that the concept has been independently introduced (at least) twice demonstrates that it is natural and appealing to the wider community of metric graph theory.

The multiset dimension of graphs has received increasing attention in the last few years, and because of this, we aim in this work to collect the contributions that are known so far. Along the way, we also present some new contributions to the knowledge on this parameter, as well as list many open questions that appears to be of interest for consideration in future investigations.

\section{Preliminaries}

In this section, we first introduce three of the dimensions covered by this article and present basic relationships between them. Afterwards, standard graph theory definitions and related notation needed later on are presented. But first of all, along our exposition we shall assume $G = (V(G), E(G))$ is a connected graph, unless specifically stated the contrary. Also, given $u,v\in V(G)$, by $d_G(u,v)$ we denote the shortest-path distance in $G$ between $u$ and $v$. 

\subsection{Multiset dimension and two variants}

If $S =\{u_1,\dots,u_k\}$ is an ordered set of vertices of a connected graph $G$, then the \emph{metric representation} of a vertex $x\in V(G)$ with respect to $S$ is the vector 
$$r(x|S) = (d_G(x,u_1),\dots,d_G(x,u_k))\,.$$
The set $S$ is a {\em resolving set} for $G$ if all the vectors $r(x|S)$, $x\in V(G)$, are pairwise different. The \emph{metric dimension} $\dim(G)$ of $G$ is the cardinality of a smallest resolving set for $G$. If we require that the condition $r(x|S) \ne r(y|S)$ holds only for every pair $x,y$ of adjacent vertices of $G$, then $S$ is called a {\em local resolving set} of $G$. The minimum cardinality of a local resolving set is the {\em local metric dimension} ${\rm ldim}(G)$ of $G$~\cite{okamoto-2010}.

For $S$ and $x$ as above, the authors of~\cite{saenpholphat-2009, simanjuntak-2017} independently suggested to consider the {\em multiset representation of $x$ with respect to} $S$ defined as 
$$m_G(x|S)=\msl d_G(x, u_1), \ldots, d_G(x, u_k) \msr\,,$$
where $\msl \cdot \msr$ limits a multiset. The set $S$ is a \emph{multiset resolving set} for $G$ if all the multisets $m_G(x|S)$, $x\in V(G)$, are pairwise different. The \emph{multiset dimension} $\mdim(G)$ of $G$ is the cardinality of a smallest multiset resolving set. 

It was already noted in~\cite{saenpholphat-2009} that there exist graphs $G$ which admit no multiset resolving sets. In such a case, we set $\mdim(G) = \infty$. To avoid this problem, an ``outer'' version of multiset resolving sets was introduced as follows~\cite{gil-pons-2019}. A set $S\subseteq V(G)$ is an \emph{outer multiset resolving set} for $G$, if the multisets $m_G(x|S)$, $x\in V(G)\setminus S$, are pairwise different. The cardinality of a smallest outer multiset resolving set is the \emph{outer multiset dimension} of $G$, denoted by $\odim(G)$. Note that this structure avoids the problem of infiniteness. 

Yet another variant of the multiset dimension was introduced in~\cite{alfarisi-2019}. A set $S\subseteq V(G)$ is a \emph{local multiset resolving set} for $G$ if, for every $xy\in E(G)$, the multisets $m_G(x|S)$ and $m_G(y|S)$ are different. The cardinality of a smallest local multiset resolving set is the \emph{local multiset dimension} of $G$, denoted by $\ldim(G)$. 

A minimum size resolving set of a given graph $G$ is called a \emph{metric basis} of $G$. Analogously, a \emph{multiset basis} of $G$, an \emph{outer multiset basis} of $G$, and a \emph{local multiset basis} of $G$ are defined. We will also need the concept of a \textit{multiset distance irregular graph} which refers to a graph $G$ in which $m(x|V(G))\ne m(y|V(G))$ holds for any two vertices $x,y\in V(G)$.

To illustrate the concepts introduced, let us take a look at 
the $3$-cube $Q_3$ as shown in Fig.~\ref{fig:Q3}. As we will 
see later, bipartite graphs have a local multiset dimension 
equal to one, so $\ldim(Q_3) = 1$ (any vertex represents a local multiset basis). Further, by computer search, we obtained that $\mdim(Q_3) = \infty$ and that $\odim(Q_3) = 5$. 

\begin{figure}[ht]
\begin{center}
\begin{tikzpicture}[scale=0.7,style=thick]
\tikzstyle{every node}=[draw=none,fill=none]
\def\vr{4pt} 

\begin{scope}[yshift = 0cm, xshift = 0cm]
\path (0,0) coordinate (v1);
\path (3,0) coordinate (v2);
\path (1.6,1.2) coordinate (v3);
\path (4.5,1.2) coordinate (v4);
\path (0,3) coordinate (v5);
\path (3,3) coordinate (v6);
\path (1.6,4.2) coordinate (v7);
\path (4.5,4.2) coordinate (v8);
\draw (v1) -- (v2) -- (v4) -- (v8) -- (v6) -- (v5) -- (v7) -- (v3) -- (v1);
\draw (v3) -- (v4);
\draw (v1) -- (v5);
\draw (v6) -- (v2);
\draw (v7) -- (v8);
\draw (v1)  [fill=black] circle (\vr);
\draw (v2)  [fill=black] circle (\vr);
\draw (v3)  [fill=black] circle (\vr);
\draw (v4)  [fill=white] circle (\vr);
\draw (v5)  [fill=white] circle (\vr);
\draw (v6)  [fill=black] circle (\vr);
\draw (v7)  [fill=black] circle (\vr);
\draw (v8)  [fill=white] circle (\vr);
\draw[left] (v5)++(-0.1,0.0) node {$\msl 1^3, 2^2\msr$};
\draw[right] (v4)++(0.1,0.0) node {$\msl 1^2, 2^3\msr$};
\draw[right] (v8)++(0.1,0.0) node {$\msl 1^2, 2^2, 3\msr$};
\end{scope}

\end{tikzpicture}
\end{center}
\caption{An outer multiset basis $S$ of $Q_3$ is marked by the black vertices. Along with the remaining vertices $v$, the multiset $m_G(v|S)$ is written.}
\label{fig:Q3}
\end{figure}

Now that, once we have introduced the dimensions of interest, we can immediately state that the following holds for any non-trivial graph $G$:
\begin{align*}
{\rm ldim}(G) & \le \ldim(G) \le \mdim(G)\,, \\
\dim (G) & \le \odim(G) \le \mdim(G)\,.
\end{align*}

\subsection{Other terminology and notation}

Let $G = (V(G), E(G))$ be a graph. The order of $G$ is denoted by $n(G)$. For $v\in V(G)$, the \emph{open neighborhood} $N_G(v)$ is the set of neighbors of $v$, while the \emph{closed neighborhood} $N_G[v]$ is the open neighborhood supplemented with the vertex $v$ itself. The maximum degree of $G$ is denoted by $\Delta(G)$. Two vertices, $u$ and $v$, are \emph{true twins} in $G$, if $N_G[u]=N_G[v]$, and they are \emph{false twins} if $N_G(u)=N_G(v)$. By saying that $u$ and $v$ are {\em twins} we mean that they are either false twins or true twins. A vertex $x$ is {\em simplicial} if $N_G[x]$ induces a complete graph. The {\em eccentricity} $\ecc_G(x)$ of a vertex $x\in V(G)$ is the maximum distance between $x$ and all the other vertices of $G$. The diameter $\diam(G)$ of $G$ is the maximum distance between all pairs of vertices of $G$, that is, $\diam(G) = \max_{x\in V(G)}\ecc_G(x)$. The clique number and the chromatic number of $G$ are denoted by $\omega(G)$ and $\chi(G)$, respectively. 

Standard graph classes like complete graphs, complete bipartite graphs, cycles, paths, and stars will be denoted respectively by $K_n$, $K_{r,s}$, $C_n$, $P_n$, and $S_n$. The \textit{complete $k$-ary tree of height $h$} is a rooted tree whose root has degree $k$, all its leaves are at a distance of exactly $h$ from the root, and its descendants are either leaves or vertices of degree $k+1$.

The join $G+H$ of disjoint graphs $G$ and $H$ is obtained from the disjoint union of $G$ and $H$ by adding an edge between every vertex in $G$ and every vertex in $H$. The complement of a graph $G$ is denoted by $\overline{G}$, while the subgraph of $G$ induced by a set $S\subseteq V(G)$ is written $G[S]$. The {\em Cartesian product} $G\cp H$ of graphs $G$ and $H$ as well as their lexicographic product $G\circ H$ and their strong product $G\boxtimes H$, all have the vertex set $V(G)\times V(H)$. Vertices $(g,h)$ and $(g',h')$ are adjacent in $G\cp H$ if either $gg'\in E(G)$ and $h=h'$, or $g=g'$ and $hh'\in E(G)$. These two vertices are adjacent in $G\circ H$ if either $g=g'$ and $hh'\in E(H)$, or $gg'\in E(G)$. And these two vertices are adjacent in $G\boxtimes H$ if either $gg'\in E(G)$ and $h=h'$, or $g=g'$ and $hh'\in E(G)$, or $gg'\in E(G)$ and $hh'\in E(G)$.

Finally, for a positive integer $k$, we will denote the set $\{1,\dots, k\}$ by $[k]$.

\section{Multiset dimension}
\label{sec:multiset}

In this section, we discuss the most fundamental concept of interest to us: the multiset dimension. In the first subsection we relate it to the so-called ID-colorings, continue by discussing the phenomenon of infinite multiset dimension, and complete by its computations complexity. The subsequent section collects bounds on the multiset dimension and specific exact results. After that we focus on the multiset dimension of trees, and of graph products. The section is concluded with the investigation of the multiset dimension on random graphs, digraphs, and graphs defined by algebraic structures. 

\subsection{ID-colorings, infinite multiset dimension, complexity}

As it frequently happens, the multiset dimension of graphs was also presented in a different approach by a group of authors other than the ones from \cite{saenpholphat-2009, simanjuntak-2017}. To see this, we refer to the work \cite{Chartrand-2021}, where the equivalent concept was introduced, and to \cite{hakanen-2024}, where such equivalence between the two concepts was noticed. 

Assume $G$ is a graph with $\diam(G) = d$, and let $S\subseteq V(G)$ be a set of vertices. For every vertex $x\in V(G)$, the \emph{code} of $x$ (with respect to the set $S$) is the $d$-vector $\vec{d}(x|S)=(t_1,\dots,t_d)$, where each $t_i$ represents the number of vertices in $S$ at distance $i$ from the vertex $x$. If the collection of codes of vertices of $G$ is pairwise different, then the set $S$ is called an \emph{identification coloring}, shortly an \emph{ID-coloring} of $G$. It must be remarked that there are graphs which have no ID-colorings. In this sense, a graph $G$ that has an ID-coloring is called an ID-graph. Now, for any ID-graph $G$, the cardinality of a smallest ID-coloring of $G$ is known as the \emph{ID-number} of $G$, which is also denoted by $ID(G)$. 

It was proved in \cite{hakanen-2024} that the ID-number and the multiset dimension of graphs represent the same thing. That is, the following result holds.

\begin{theorem}{\em\cite[Theorem 2.1 and Corollary 2.2]{hakanen-2024}}
\label{th:equivalence}
Let $G$ be a graph with $\diam(G) = d$. Then $S\subseteq V(G)$ is an ID-coloring for $G$ if and only if $S$ is a multiset resolving set for $G$. Moreover, $\mdim(G)=ID(G)$.
\end{theorem}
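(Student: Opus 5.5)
The plan is to show that, for a fixed vertex $x$, the multiset $m_G(x|S)$ and the code $\vec{d}(x|S)$ encode exactly the same information. The equivalence of the two resolvability notions then follows at once, and the equality of parameters follows by taking minima.

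First I would note that every distance $d_G(x,u)$ with $u\in S$ lies in $\{0,1,\dots,d\}$, since $d=\diam(G)$. A finite multiset with support in $\{0,\dots,d\}$ is determined by its multiplicity function $i\mapsto \mu_x(i)$. Here $\mu_x(i)=|\{u\in S: d_G(x,u)=i\}|$, and by definition $t_i=\mu_x(i)$ for $1\le i\le d$. The code omits the multiplicity of $0$. However, $\mu_x(0)\in\{0,1\}$, and it can be recovered from the code as $\mu_x(0)=|S|-\sum_{i=1}^d t_i$, because $|S|$ is the same for all $x$. The maps $m_G(x|S)\mapsto \vec{d}(x|S)$ and $\vec{d}(x|S)\mapsto m_G(x|S)$ are therefore mutually inverse bijections between the sets of representations that actually occur.

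This step is the only subtle point: the $0$-entry is missing from the code, and the size of $S$ must be used to restore it. With that settled, for any $x,y\in V(G)$ we have $m_G(x|S)=m_G(y|S)$ if and only if $\vec{d}(x|S)=\vec{d}(y|S)$. Hence the multisets are pairwise distinct exactly when the codes are pairwise distinct, so $S$ is an ID-coloring if and only if it is a multiset resolving set.

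For the final claim I would observe that both families of admissible sets coincide. Their minimum cardinalities are therefore equal, giving $\mdim(G)=ID(G)$ when $G$ is an ID-graph. When $G$ is not an ID-graph, neither family contains any set, which matches the convention $\mdim(G)=\infty$.
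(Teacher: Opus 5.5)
Your proof is correct: the code $\vec{d}(x|S)$ is the multiplicity vector of $m_G(x|S)$ on $\{1,\dots,d\}$, and recovering the multiplicity of $0$ as $|S|-\sum_{i=1}^d t_i$ shows that, for every fixed $S$, two vertices have equal codes exactly when they have equal multisets, so the two families of sets coincide and $\mdim(G)=ID(G)$ follows, including the $\infty$ convention. The survey gives no proof of its own and only cites \cite[Theorem 2.1 and Corollary 2.2]{hakanen-2024}; your direct translation between codes and multiplicity functions is the natural way to verify this statement.
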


Other contributions to the multiset dimension of graphs, which are using the terminology of ID-colorings, are \cite{Cai-2025,Cueno-2025,Kono-2022,Kono-2022a,Kono-2021,Tolentino-2025}. Many of the known results which use the terminology of multiset resolving sets are also known in the  terminology of ID-colorings. Along this review, we shall use the terminology and notation from \cite{simanjuntak-2017}.

\medskip
As already mentioned, there are several graphs which have no multiset resolving sets and so, its multiset dimension is established as infinite. Such a fact was already noticed in~\cite{saenpholphat-2009}. For example, if a graph $G$ has three vertices that are pairwise true twins (or pairwise false twins), then its multiset dimension is infinite~\cite[Theorem 3]{saenpholphat-2009}. In addition, the next result is an interesting one in this regard.

\begin{theorem}{\em \cite[Theorem 2]{simanjuntak-2017}}
\label{th:diam-2}
If $G$ is a graph of diameter two different from a path, then $\mdim(G)=\infty$.
\end{theorem}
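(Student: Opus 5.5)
The plan is to show that in a graph of diameter two the multiset representation carries almost no information, and then to get a contradiction by a degree-counting argument inside $S$. Let $G$ have diameter two and suppose $S$ is a multiset resolving set with $k=|S|$. For every $x\in V(G)$, each entry of $m_G(x|S)$ lies in $\{0,1,2\}$. The number of $0$'s is $1$ if $x\in S$ and $0$ otherwise. The number of $1$'s is $a(x)=|N_G(x)\cap S|$. The number of $2$'s is then forced to be $k$ minus the other two counts. Hence $m_G(x|S)$ is determined by the pair $(\mathbf{1}[x\in S],\,a(x))$, and $S$ is multiset resolving if and only if these pairs are pairwise distinct.

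\textbf{Step 1 (vertices inside $S$).} All vertices of $S$ share the first coordinate, so the values $a(x)=\deg_{G[S]}(x)$ for $x\in S$ must be pairwise distinct. These are $k$ distinct degrees of the $k$-vertex simple graph $G[S]$, so they are exactly $0,1,\dots,k-1$. For $k\ge 2$ this is impossible: the vertex of degree $k-1$ is adjacent to every other vertex of $S$, while the vertex of degree $0$ is adjacent to none. The case $k=0$ is also excluded, since every representation is then the empty multiset and $G$ has at least three vertices. Hence $k=1$.

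\textbf{Step 2 (the case $k=1$).} Let $S=\{s\}$. Then $m_G(s|S)=\msl 0\msr$, and every other vertex has representation $\msl 1\msr$ or $\msl 2\msr$, because the diameter is two. Distinctness forces at most one neighbor of $s$ and at most one vertex at distance $2$ from $s$, so $n(G)\le 3$. The only connected graph of diameter two on at most three vertices is $P_3$. Thus every diameter-two graph other than a path admits no multiset resolving set, that is, $\mdim(G)=\infty$.

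The only genuinely substantive step is Step 1, the observation that a simple graph cannot have all degrees distinct. The rest is bookkeeping about which distances can occur when the diameter is two.
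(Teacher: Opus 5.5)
Your proof is correct and complete. Encoding $m_G(x|S)$ by the pair (whether $x\in S$, $|N_G(x)\cap S|$) is exact when the diameter is two. Step~1 is the standard fact that no graph on $k\ge 2$ vertices has pairwise distinct degrees, and Step~2 correctly forces $G\cong P_3$ when $|S|=1$.

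The survey gives no proof of this statement; it only quotes it from \cite{simanjuntak-2017}. The route suggested by the surrounding material has two stages. It first invokes the quoted facts that $\mdim(G)=1$ if and only if $G$ is a path, and that $\mdim(G)\ge 3$ for non-paths (both from \cite{saenpholphat-2009}), so that only $|S|\ge 3$ remains. It then applies the same degree argument inside $G[S]$.

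Your version differs only in packaging. Step~1 handles every $k\ge 2$ at once; the case $k=2$ is the observation $m_G(x|\{x,y\})=m_G(y|\{x,y\})$ in another form. Step~2 replaces the characterization of $\mdim(G)=1$ with a direct count. As a result your proof is fully self-contained and needs nothing beyond the pigeonhole principle on the degrees of $G[S]$.
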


After such a contribution, the problem of characterizing all the graphs $G$ satisfying $\mdim(G)=\infty$ became an interesting one. However, due to the wide structure of such graphs, it seems to be a very challenging question to settle such a problem. For example, it can be noticed that the diameter of the graph is not a restriction for it to have infinite multiset dimension. Recall the example of the hypercube $Q_3$, which has diameter three and infinite multiset dimension. This might come from the very high symmetry that $Q_3$ has, and it suggests that perhaps any hypercube satisfies a similar situation. However, as we will further see in Section \ref{sec:conclude}, this is not the case, at least for a few other smaller hypercubes. The problem of characterizing the graphs with infinite multiset dimension has also been approached from some other directions. For example, in~\cite{Cevik-2024,Cevik-2025}, the authors claimed to have considered this problem via the graphs obtained from special minimal (while inefficient or not) monoid presentations, and to have investigated some ideas about matching special graph dimensions, like the (local) multiset dimension, in terms of ideals over non-commutative rings, or more generally, on algebras. The topics of these works are somehow out of the scope of this survey.

\medskip
The problem of computing the multiset dimension was proved to be NP-hard in \cite{hakanen-2024}. There, the authors used a reduction from the 3-SAT problem which was highly influenced by a similar result from \cite{gil-pons-2019} showing the NP-hardness of computing the outer multiset dimension of graphs. In addition, such a reduction is also influenced by the classical reduction, also from 3-SAT, for showing the NP-hardness of the classical metric dimension. The decision problem regarding the multiset dimension is as follows.

\bigskip
\indent \textsc{Multiset Dimension Problem} \\
\indent \textbf{Instance:} A graph $G$, an integer $k\in [n(G)]$. \\
\indent \textbf{Question:} Is there a multiset resolving set in $G$ of cardinality at most $k$?
\bigskip

\noindent
The following result from \cite{hakanen-2024} leads then to the NP-hardness previously mentioned.

\begin{theorem}{\em \cite[Theorem 3.1]{hakanen-2024}}
The {\sc Multiset Dimension Problem} is NP-complete.
\end{theorem}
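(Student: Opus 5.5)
Membership in NP is immediate: a certificate is a set $S$ with $|S|\le k$. We compute all distances by BFS and form the $n$ multisets $m_G(x|S)$, each sorted. We then check that they are pairwise distinct, all in polynomial time. The substance is NP-hardness, which we plan to obtain by a reduction from 3-SAT modelled on the classical Khuller--Raghavachari--Rosenfeld gadget reduction for metric dimension and on the reduction of \cite{gil-pons-2019} for the outer version.

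Given a formula with variables $x_1,\dots,x_n$ and clauses $C_1,\dots,C_m$, we will build for each variable a gadget. It consists of a small rigid subgraph containing two special vertices $T_i$ and $F_i$ (true/false), together with some vertices that can only be separated by choosing a vertex of $\{T_i,F_i\}$, or a fixed small set of alternatives. This forces every multiset resolving set to contain at least one (and in a minimum one, exactly one) vertex per gadget. For each clause $C_j$ we add a clause gadget with two vertices $c_j,c_j'$. We join $c_j$ to the literal vertices appearing in $C_j$ by paths of carefully chosen lengths, so that $c_j$ and $c_j'$ receive different multisets exactly when some chosen vertex corresponds to a true literal of $C_j$. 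We set $k=n$ (or $n$ plus a fixed number of forced vertices per gadget). The key difference from the vector case is that multisets forget which landmark gave which distance. To compensate, we will give the gadgets distinct ``heights'' or attach pendant paths of pairwise different lengths to each gadget. Then every landmark has a distinctive distance profile, and two distinct vertices from different gadgets cannot collide in their multisets. We will also need a forced vertex set, for instance the ends of long pendant paths, which is included in every multiset resolving set. This ensures the graph admits a multiset resolving set at all, since $\mdim$ may otherwise be infinite.

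The correctness proof then runs in two directions. (i) From a satisfying assignment, pick $T_i$ or $F_i$ accordingly, together with the forced vertices. Then verify pairwise distinct multisets: within a gadget by construction, across gadgets by the distinct-length distance signatures, and for clause pairs $c_j,c_j'$ by the true literal. (ii) Conversely, from a multiset resolving set of size $\le k$, counting shows exactly one chosen vertex per variable gadget, possibly after normalization to $T_i$ or $F_i$. Reading off an assignment, each clause pair is resolved only if some literal of it is chosen true.

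The main obstacle is step (i), the global distinctness check. With unordered multisets, a vertex far from everything may accidentally share a multiset with some vertex in another gadget. So the bulk of the work is choosing path lengths (e.g.\ growing geometrically with $i$ and $j$) so that the multiset of each vertex encodes the gadget it lies in, typically via its unique smallest distances to forced landmarks. I would first try making the $i$-th gadget's pendant path of length about $2^i$ times a constant larger than the clause-gadget diameter, and check that the smallest entry of $m_G(x|S)$ identifies the gadget of $x$.
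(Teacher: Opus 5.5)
Your NP-membership argument is fine. The overall route is also right: a 3-SAT reduction modelled on the Khuller--Raghavachari--Rosenfeld construction and on the outer-multiset reduction of \cite{gil-pons-2019} is what the survey attributes to \cite{hakanen-2024}. The hardness part, however, is a plan rather than a reduction. No gadget is specified, and the step you call the main obstacle is left open: showing that, for the set $S$ built from a satisfying assignment, all $n(G)$ multisets are pairwise distinct, including pairs in different gadgets or on the attached paths. That step is the content of the theorem.

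The concrete device you propose for it fails on three counts.
\begin{itemize}
\item The smallest entry of $m_G(x|S)$ is just $d_G(x,S)$. This is $0$ for every landmark and $1$ for every neighbour of a landmark, so it cannot encode the gadget containing $x$. The smallest of the large entries (those coming from far-away landmarks) does no better: for every core vertex it is essentially the distance to the end of the shortest attached path.
\item Pendant paths of length about $2^i$ make the order of $G$ exponential in the number of variables, so the construction is not a polynomial reduction. Lengths $L_i=iD$, with $D$ larger than the diameter of the core, would suffice to place the large entries in disjoint windows $[L_i,L_i+D)$. For core vertices this decodes the vector of distances to the attachment points, but the vertices on those paths would still have to be checked.
\item Nothing forces the end of a pendant path into $S$, and forcing vertices into $S$ does not by itself guarantee that a multiset resolving set exists. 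The natural forcing device here is a pair of twins: exactly one of two twins lies in any multiset resolving set, since with neither or both in $S$ their multisets coincide.
\end{itemize}

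Two further multiset-specific points are glossed over.

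First, for a clause pair $c_j,c_j'$, one separating landmark suffices in the vector setting, but with multisets two true literals can cancel. If $d_G(c_j,s)=d_G(c_j',s')=2$ and $d_G(c_j',s)=d_G(c_j,s')=3$, both vertices receive $\msl 2,3\msr$ from $\{s,s'\}$. You need an orientation condition. For example, require every literal vertex of $C_j$ to be strictly closer to $c_j$ than to $c_j'$, with all other candidate landmarks equidistant from the two. Then the sums of the two multisets differ as soon as one true literal is chosen.

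Second, in direction (ii), normalizing a landmark to $T_i$ or $F_i$ is not harmless, because multiset resolvability is neither monotone nor decided landmark by landmark. Instead, read the truth value directly from whichever gadget vertex lies in $S$. Then prove that a clause pair can be separated only by a landmark read as a true literal of that clause.
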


\subsection{Bounds and exact results}

A natural question that is usually considered for any graph parameter relates to its realization. This issue was presented in \cite{Khemmani-2018}, where it was specifically proved the following.

\begin{theorem}{\em \cite[Theorem 5]{Khemmani-2018}}
For every pair $k, n$ of integers with $k\ge 3$ and $n \ge 3(k - 1)$, there exists a connected graph $G$ of order $n$ with $\mdim(G) = k$.
\end{theorem}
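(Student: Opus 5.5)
We construct, for given $k\ge 3$ and $n\ge 3(k-1)$, an explicit graph and show it has multiset dimension exactly $k$. The obstacle is rigidity. A multiset resolving set has to separate every vertex, including the vertices of $S$ itself. Moreover, by Theorem~\ref{th:diam-2} and the twin obstruction of~\cite{saenpholphat-2009}, the graph must have large diameter and few twins. So I would build a tree-like or path-like graph of diameter roughly $n-O(k)$, where the landmarks sit in a rigid pattern.

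\textbf{Construction.} Start from a long path $P=v_1v_2\cdots v_m$. At $k-1$ chosen positions along it, attach short pendant paths (legs) of pairwise different lengths, or else small gadgets of three vertices each; this accounts for the $3(k-1)$ in the hypothesis. The total order is then adjusted to exactly $n$ by lengthening the spine. The intended basis $S$ consists of the $k-1$ leg endpoints together with one spine end $v_1$.

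\textbf{Upper bound.} For $\mdim(G)\le k$, I would check that the multisets $m_G(x|S)$ are pairwise distinct. On the spine, as $x$ moves along, the distances to the leg ends change in a staggered way, and the distance to $v_1$ grows. Two spine vertices can be compared by the maximum or the sum of their multisets. For the sum, moving by one step changes it by $|S_{\text{behind}}|-|S_{\text{ahead}}|$, so I would place the legs so that the sum is strictly monotone, or use the largest element, which is the distance to $v_1$ or to the far end. Leg vertices are separated from spine vertices and from each other by the distinct leg lengths: a leg vertex has a distance to its own leg end smaller than any spine vertex could realize, together with a matching shift in the remaining entries.

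\textbf{Lower bound.} For $\mdim(G)\ge k$, I would show that any set $S'$ with $|S'|\le k-1$ fails. By a pigeonhole or symmetry argument, some gadget contains no vertex of $S'$. The gadgets are designed so that two of their vertices (for instance, two legs of equal length attached at the same spine vertex, or a pair forming a reflection of the gadget) are distinguished only by a vertex inside that gadget. Hence every gadget needs its own landmark, giving $k-1$ landmarks. One more landmark is needed because, with exactly one per gadget, the landmarks themselves or two symmetric spine positions collide. This lower-bound step, choosing gadgets so that each forces a private landmark while keeping the whole graph resolvable with $k$, is the main difficulty. I would first try gadgets made of a pair of twin-like pendant vertices plus a connector, which explains three vertices per gadget, and verify the extra landmark via the end-of-spine symmetry. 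Small $n$ equal to exactly $3(k-1)$ would be checked separately with the spine of minimal length.
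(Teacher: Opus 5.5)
The survey only cites this result and gives no proof, so I assess your plan on its own. It has a genuine gap, and it sits exactly at the step you flag as the main difficulty.

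First, the construction is never fixed. The upper bound relies on legs of \emph{pairwise different} lengths, while the lower bound relies on \emph{equal} pendants attached at the same spine vertex; these are different graphs, and distinct legs force nothing into $S$. Take gadgets made of a spine vertex $v_i$ with two pendants $a_i,b_i$, which are false twins. Every multiset resolving set contains exactly one of $a_i,b_i$: if neither is in $S$ they collide, and if both are, the entries $0$ and $2$ merely swap. Hence a set of size $k-1$ is exactly one pendant per gadget, and everything hinges on showing that such a set $S_0$ fails.

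Your reason (``the landmarks themselves or two symmetric spine positions collide'') is not an argument, and it is false for natural instances of your construction. Take the caterpillar with spine $v_1v_2v_3v_4$ and pendants $a_i,b_i$ at $v_1,v_2,v_4$: three gadgets, one extra spine vertex, $n=10$, $k=4$. For $S_0=\{a_1,a_2,a_4\}$:
\begin{itemize}
\item $v_1,v_2,v_3,v_4$ get $\msl 1,2,4\msr$, $\msl 1,2,3\msr$, $\msl 2,2,3\msr$, $\msl 1,3,4\msr$;
\item $a_1,a_2,a_4$ get $\msl 0,3,5\msr$, $\msl 0,3,4\msr$, $\msl 0,4,5\msr$;
\item $b_1,b_2,b_4$ get $\msl 2,3,5\msr$, $\msl 2,3,4\msr$, $\msl 2,4,5\msr$.
\end{itemize}
These are pairwise distinct, so this graph has multiset dimension $3$, not $4$. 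If $v_3$ is suppressed, the gadget centers become consecutive ($n=9$), $m_G(v_1|S_0)=m_G(v_4|S_0)=\msl 1,2,3\msr$, and the value really is $4$. So ``adjusting the order by lengthening the spine'' can destroy precisely the obstruction you need. Padding by one pendant at a gadget center is worse, since three pairwise twins give $\mdim=\infty$.

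What is missing is a placement that provably produces a collision for every one-pendant-per-gadget set. One option is consecutive centers $v_1,\dots,v_{k-1}$, where $m_G(v_1|S_0)=m_G(v_{k-1}|S_0)=\msl 1,2,\dots,k-1\msr$ for every such $S_0$. This must be combined with padding that leaves these distances and the twin pairs intact: a path of at least two vertices hung at one spine vertex does this, but a single extra vertex needs separate care.

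The upper bound is also only heuristic. Along the spine, the sum of distances to the landmarks is convex. For it to be strictly monotone, more than half of the $k$ landmarks would have to project onto one end of the spine. With one landmark per gadget at distinct spine vertices, at most two do, so this fails for every $k\ge 4$. The largest entry does not determine the multiset either.

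Your claim that a leg vertex is closer to its own leg end than any spine vertex can be overlooks that a multiset does not record which landmark produced an entry. A spine vertex next to a short leg has equally small entries, and in the twin version $b_i$ is at distance $2$ from $a_i$ while $v_i$ is at distance $1$.

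You need to name a concrete $k$-set and check all pairs, including landmarks against landmarks and pendants against spine vertices. For the unpadded caterpillar with consecutive centers, $\{a_1,\dots,a_{k-1},v_1\}$ works. Then repeat the check for each padded graph. Until both the forced collision and this verification are carried out, neither $\mdim(G)\ge k$ nor $\mdim(G)\le k$ has been established.
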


Any graph $G$ that admits a multiset resolving set satisfies that 
$1\le \mdim(G)\le n(G)\,.$ 
The graphs that attain the lower bound above follow the steps of other related metric parameters.

\begin{theorem}{\em \cite[Theorem 7]{saenpholphat-2009}}
Let $G$ be a graph. Then $\mdim(G)=1$ if and only if $G$ is a path.
\end{theorem}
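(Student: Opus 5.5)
For a single vertex $u$, the multiset $m_G(x|\{u\})=\msl d_G(x,u)\msr$ determines exactly the number $d_G(x,u)$. Hence $\{u\}$ is a multiset resolving set if and only if it is a resolving set in the classical sense, that is, if and only if all vertices of $G$ have pairwise distinct distances to $u$. The plan is to prove both directions of the equivalence directly from this observation.

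For the sufficiency, I will let $G=P_n$ with vertices $v_1,\dots,v_n$ in path order and take $u=v_1$. Then $d_G(v_i,v_1)=i-1$, so the singletons $\msl i-1\msr$ are pairwise distinct for $i\in[n]$. Thus $\{v_1\}$ is a multiset resolving set and $\mdim(P_n)\le 1$. Since any multiset resolving set of a graph with at least one vertex is nonempty, $\mdim(P_n)=1$. (For $n=1$ the empty set works trivially, so the statement is read for paths including the trivial one, with the convention $\mdim\ge 1$.)

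For the necessity, I will suppose $\{u\}$ is a multiset resolving set of the connected graph $G$ of order $n$. Then the distances $d_G(x,u)$, $x\in V(G)$, are pairwise distinct. Every distance from $u$ lies in $\{0,1,\dots,\ecc_G(u)\}$, and by connectivity every value in this range is attained, since a shortest path from $u$ to a farthest vertex passes through vertices at each intermediate distance. Distinctness therefore forces exactly one vertex $x_i$ at distance $i$ for each $i\in\{0,\dots,n-1\}$.

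It remains to show that these vertices induce a path. Any edge joins vertices whose distances from $u$ differ by at most one, so the only possible edges are $x_ix_{i+1}$. Each such edge must be present, because $x_{i+1}$ needs a neighbor at distance $i$ on a shortest path to $u$, and $x_i$ is the only vertex at that distance. Hence $G\cong P_n$, with $u$ as an end vertex. This layering argument is the only step with any content, and even it is a routine BFS-layer observation, so I expect no real obstacle.
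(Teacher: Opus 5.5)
Your proof is correct. It is essentially the standard argument behind the cited result: a singleton is a multiset resolving set exactly when it is a resolving set, so the statement reduces to the classical fact that $\dim(G)=1$ if and only if $G$ is a path, and your distance-layer argument from $u$ proves that fact directly. The survey only cites the result from \cite{saenpholphat-2009} and gives no proof of its own, so there is no further route to compare against.
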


In this sense, it should happen that any graph different from a path will have multiset dimension at least two. However, as noted in~\cite[Theorem 8]{saenpholphat-2009}, there is no graph with multiset dimension two, which is based on the simple fact that for any two vertices $x,y$, it holds that $m_G(x|\{x,y\})=\msl 0,d_G(x,y)\msr=m_G(y|\{x,y\})$.

\begin{theorem} {\rm \cite[Theorem 8]{saenpholphat-2009}}
If $G$ is not a path, then $\mdim(G)\ge 3$.
\end{theorem}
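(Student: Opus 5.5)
Since $\mdim(G)\ge 1$ always, and by \cite[Theorem 7]{saenpholphat-2009} the value $1$ is attained exactly by paths, it suffices to show that no graph has multiset dimension equal to $2$. The case $\mdim(G)=\infty$ needs no argument, so we assume $G$ admits a multiset resolving set and is not a path, and we argue by contradiction.

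Suppose $G$ is not a path and $S$ is a multiset resolving set of $G$ with $|S|=2$, say $S=\{x,y\}$ with $x\neq y$. We then compare the representations of the two elements of $S$ itself. We have
$$m_G(x|S)=\msl d_G(x,x), d_G(x,y)\msr=\msl 0, d_G(x,y)\msr,$$
and
$$m_G(y|S)=\msl d_G(y,x), d_G(y,y)\msr=\msl d_G(x,y), 0\msr=\msl 0, d_G(x,y)\msr,$$
using the symmetry of the distance. Hence $m_G(x|S)=m_G(y|S)$ with $x\ne y$, which contradicts $S$ being multiset resolving. Therefore no multiset resolving set has exactly two vertices.

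Consequently, a graph that is not a path has $\mdim(G)\ne 1$ and $\mdim(G)\ne 2$. This gives $\mdim(G)\ge 3$, where we include the value $\infty$ in this bound.

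We do not expect any real obstacle. The only points needing care are to use the characterization of $\mdim(G)=1$, and to observe that a multiset resolving set must also distinguish its own elements. The latter holds because the definition quantifies over all vertices of $G$, unlike the outer version.
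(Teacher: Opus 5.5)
Your proof is correct and matches the paper's reasoning: the characterization that $\mdim(G)=1$ holds exactly for paths rules out the value $1$, and the observation $m_G(x|\{x,y\})=\msl 0,d_G(x,y)\msr=m_G(y|\{x,y\})$ rules out the value $2$. This is exactly the argument the paper gives for this statement.
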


Concerning this fact, it is then natural to think about characterizing those graphs whose multiset dimension equals three. For instance, as proved in~\cite[Theorem 9]{saenpholphat-2009}, we have $\mdim(C_n) = 3$ for $n\ge 6$. For graphs with multiset dimension $3$, the following contribution was given in \cite{bong-2021}. 

\begin{theorem}{\em \cite[Theorem 2.3]{bong-2021}}
If $G$ is a graph with $\diam(G) = d$ and $\mdim(G) = 3$, then $n(G)\le \frac{d^3+3d^2+2d+12}{6}$.
\end{theorem}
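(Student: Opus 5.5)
We argue by counting the possible multiset representations with respect to a multiset basis. Let $S=\{u_1,u_2,u_3\}$ be a multiset resolving set of $G$ with $|S|=3$. Every vertex $x$ has $m_G(x|S)=\msl a,b,c\msr$ with $a,b,c\in\{0,1,\dots,d\}$. Since these multisets are pairwise distinct, $n(G)$ is at most the number of multisets that can actually occur. We therefore bound that number.

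First we handle the vertices of $S$ themselves. The representation of $u_i$ contains $0$, and it is the only vertex whose representation contains $0$, since $d_G(x,u_j)=0$ forces $x=u_j$. So exactly three vertices have a $0$ in their representation. Every other vertex has a multiset of size $3$ with entries in $[d]$. The number of such multisets is $\binom{d+2}{3}=\frac{d(d+1)(d+2)}{6}=\frac{d^3+3d^2+2d}{6}$.

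Adding the three vertices of $S$ gives $n(G)\le \frac{d^3+3d^2+2d}{6}+3=\frac{d^3+3d^2+2d+18}{6}$. This falls short of the target $\frac{d^3+3d^2+2d+12}{6}$ by exactly $1$. So the main step is to show that at least one multiset in $[d]$ of size $3$ is not realized by any vertex outside $S$.

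The natural candidate is $\msl d,d,d\msr$, or more generally a multiset that would force large distances to all of $S$. Suppose some vertex $x$ had $m_G(x|S)=\msl d,d,d\msr$. Then I would consider the pairwise distances among $u_1,u_2,u_3$. Take a shortest $u_1$--$x$ path and let $y$ be the neighbour of $x$ on it. Then $y$ has distance $d-1$ to $u_1$, and distance $d-1$ or $d$ to $u_2$ and $u_3$. This alone gives no contradiction. I would instead use the multiset $\msl 1,1,1\msr$: a vertex adjacent to all three of $u_1,u_2,u_3$. Combined with the representations of the $u_i$ themselves, namely $\msl 0,d(u_1,u_2),d(u_1,u_3)\msr$ and so on, I expect that these three representations must be pairwise distinct. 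This forces the three pairwise distances among the $u_i$ to be distinct. In particular, not all of them are $\le 2$ with two equal, so some pair is at distance at least $3$, and then no vertex is adjacent to both vertices of that pair. Hence $\msl 1,1,1\msr$ is not realized.

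It remains to check that the three pairwise distances can be distinct and still all be at most $2$. This is impossible, since distinct values in $\{1,2\}$ number only two. So some pair is at distance $\ge 3$ and $\msl 1,1,1\msr$ is excluded. (If $d\le 2$, Theorem~\ref{th:diam-2} forces $G$ to be a path or to have $\mdim(G)=\infty$, so this case is vacuous or trivial.)

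The main obstacle is this extraction of one forbidden multiset. I expect the argument with distinct pairwise distances in $S$ to give it. The arithmetic then yields $n(G)\le \frac{d^3+3d^2+2d+12}{6}$.
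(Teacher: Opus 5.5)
Your proof is correct and follows essentially the route the paper points to: you count the three basis vertices plus at most $\binom{d+2}{3}$ size-three multisets over $[d]$, then use the key fact that the pairwise distances inside a basis are distinct to exclude $\msl 1,1,1\msr$, which gives exactly $\binom{d+2}{3}+2=\frac{d^3+3d^2+2d+12}{6}$. (You should state the distinctness step explicitly rather than say you ``expect'' it: if $d_G(u_i,u_j)=d_G(u_i,u_k)$ then $u_j$ and $u_k$ receive the same multiset.) The detour through $\msl d,d,d\msr$ and the separate treatment of $d\le 2$ can be dropped, since excluding $\msl 1,1,1\msr$ works for every $d\ge 1$; note also that Theorem~\ref{th:diam-2} does not cover $d=1$.
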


The proof of the result above strongly relies on the following property that a graph $G$ with $\mdim(G)=3$ has. If $S=\{x,y,z\}$ is a multiset basis of $G$, then the three distances $d_G(x,y)$, $d_G(x,z)$, and $d_G(y,z)$ are pairwise different. Otherwise, if for instance $d_G(x,y)=d_G(x,z)$, then clearly $m_G(y|S)=\msl0,d_G(y,x),d_G(y,z)\msr=\msl0,d_G(z,x),d_G(z,y)\msr=m_G(z|S)$, which is not possible.

\smallskip
On the other hand, regarding the trivial upper bound $\mdim(G)\le n(G)$, it is not clear whether there is a graph $G$ for which $\mdim(G)=n(G)$. It was conjectured in \cite[Conjecture 1]{hfidh-2019} that if $G$ admits a multiset resolving set, then $\mdim(G)\le n(G)-1$. 

Several other minor results on the multiset dimension were given. Using the terminology of ID-colorings, exact results on some families of graphs were given in~\cite{Cai-2025}. In particular, a study on lollipop graphs was presented, where a {\em lollipop graph} is obtained from a cycle and a path by identifying one leaf of the path with any vertex of the cycle. A family of chemical graphs named starphene graphs was studied in~\cite{liu-2021} and proved that the multiset dimension for every graph in this family is equal to $4$. In addition, the work~\cite{Bengeri-2025} presents some studies on the multiset dimension of those graphs obtained from two cycles and a path, by joining one vertex of one cycle with a vertex of degree one in the path and any vertex from the other cycle with the other vertex of degree one from the path. 

\subsection{Multiset dimension of trees}

To determine the multiset dimension of an arbitrary tree appears to be a very challenging problem. In this subsection we collect known results in this direction. We begin with the following theorem which supports~\cite[Conjecture 1]{hfidh-2019} asserting that $\mdim(G)\le n(G)-1$, assuming that $G$ admits at least one multiset resolving set. 

\begin{theorem}{\em \cite[Theorem 1]{hfidh-2019}}
If $T$ is a tree such that $\mdim(T)<\infty$, then $\mdim(T)\le n(T)-2$. 
\end{theorem}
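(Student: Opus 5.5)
Suppose $T$ is a tree with $\mdim(T)<\infty$, so that $V(T)$ itself is a multiset resolving set. Since supersets of multiset resolving sets need not be resolving, the plan is to work downward from $V(T)$: we exhibit a set $S$ of size at most $n(T)-2$ that still resolves. The case where $T$ is a path is immediate, since then $\mdim(T)=1\le n(T)-2$ for $n(T)\ge 3$. Small trees ($n(T)\le 2$ are paths) are likewise covered. So assume $T$ is not a path. Then $T$ has a vertex of degree at least $3$, and hence at least three leaves.

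The natural candidate is $S=V(T)\setminus\{\ell_1,\ell_2\}$, where $\ell_1,\ell_2$ are two suitably chosen leaves. Removing a vertex $u$ from $V(T)$ changes each multiset $m(x|V(T))$ by deleting one copy of $d(x,u)$. Two vertices $x,y$ can therefore become unresolved only if
$$m(x|V(T))\setminus\msl d(x,\ell_1),d(x,\ell_2)\msr = m(y|V(T))\setminus\msl d(y,\ell_1),d(y,\ell_2)\msr.$$
Since the full multisets differ, this forces the full multisets to differ exactly in the removed entries. Write $D_x=m(x|V(T))$, viewed as the distance distribution from $x$. For a collision, $D_x$ and $D_y$ must differ by replacing at most two values. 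In a tree, moving from $x$ to an adjacent vertex $x'$ shifts every distance by $\pm1$, according to the side of the edge $xx'$. This rigidity should make near-coincidences of distance distributions rare.

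Concretely, first note that $T$ having finite multiset dimension already forbids three pairwise false twins, so no vertex carries three or more pendant leaves. Next, pick a longest path $P=\ell_1\cdots\ell_2$ in $T$ and take its end-leaves. For any vertex $x$, the sum $d(x,\ell_1)+d(x,\ell_2)$ equals $\diam(T)+2d(x,P)$. Also, $\max(d(x,\ell_1),d(x,\ell_2))=\ecc(x)$, because endpoints of a diameter path realize eccentricities in trees. So the removed pair is essentially determined by $\ecc(x)$ and the distance from $x$ to $P$. Suppose $D_x$ minus the pair equals $D_y$ minus its pair. Comparing maxima gives $\ecc(x)=\ecc(y)$ whenever the eccentricity value occurs with multiplicity larger than what is removed. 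The delicate situation is when the maximum distance is attained only by $\ell_1$ or $\ell_2$; that case is handled by comparing the second largest entries. Comparing sums of all entries (Wiener-type totals) then gives a linear relation between $d(x,P)$ and $d(y,P)$, together with the total distance sums $\sigma(x),\sigma(y)$. Since $\sigma$ is strictly convex along paths in a tree, equalities of this kind should force $x=y$, or else $x,y$ are symmetric with respect to the diameter path.

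The main obstacle is the symmetric case, where $x$ and $y$ sit in mirror positions and swapping $\ell_1,\ell_2$ exchanges their removed values. If the original pair of leaves fails there, my fallback is to choose instead one end-leaf $\ell_1$ of a longest path together with a leaf $\ell_3$ hanging off a branch vertex that is not on $P$. Alternatively, one can remove a single end-leaf and a neighbor-free vertex chosen adjacent to $\ell_1$'s support. In either variant the removed distances are no longer symmetric, which breaks the mirror collision. I would do a short case analysis over these two or three choices, showing that at least one of them yields a multiset resolving set of size $n(T)-2$.
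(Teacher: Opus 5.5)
\textbf{Genuine gap.} Your opening step is wrong: $\mdim(T)<\infty$ does not make $V(T)$ a multiset resolving set. This is exactly the non-monotonicity you mention in your next sentence. If $\phi$ is a nontrivial automorphism of $T$ with $\phi(S)=S$, then $m(\phi(x)|S)=m(x|S)$ for every $x$. So $V(T)$ fails to resolve every path of order at least $2$ and every complete binary tree, although all of these have finite multiset dimension. Your reduction (``since the full multisets differ, a collision forces them to differ exactly in the removed entries'') is therefore unavailable.

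More seriously, the goal of your case analysis can be impossible. Take the complete binary tree $T$ of height $2$: root $r$, children $u,w$, leaves $a,a'$ below $u$ and $b,b'$ below $w$. Here $n(T)=7$ and $\mdim(T)=3$; for instance $\{a,u,b\}$ resolves. A resolving $5$-set $S$ cannot be invariant under the swap $a\leftrightarrow a'$ or the swap $b\leftrightarrow b'$, so it must omit exactly one of $a,a'$ and exactly one of $b,b'$. But then the automorphism exchanging the two halves of $T$, and mapping one omitted leaf to the other, fixes $S$. Hence $T$ has no multiset resolving set of size $n(T)-2$ at all, and no choice of two vertices to delete, leaves or otherwise, can succeed.

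\textbf{What the argument needs instead.} Start from a multiset basis $B$, which exists by hypothesis. If $|B|\le n(T)-2$ there is nothing to prove, so the only cases requiring an argument are $B=V(T)$ and $B=V(T)\setminus\{v\}$. In each of these you must produce a strictly smaller resolving set. Your plan never treats $B=V(T)\setminus\{v\}$. Even for $B=V(T)$, where $T$ has no nontrivial automorphism, your decisive step is only heuristic. Comparing maxima and totals gives a single relation among $\sigma(x),\sigma(y),d(x,P),d(y,P)$. Strict convexity of the transmission $\sigma$ along paths does not make $\sigma$ injective, even up to symmetry. For example, attach a leaf to $v_1$ in $P_5=v_0v_1v_2v_3v_4$: then $\sigma(v_1)=\sigma(v_2)=8$, although $v_1$ and $v_2$ have different degrees. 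So ``should force $x=y$'' is not an argument, and the proposed fallback choices of leaves are not shown to work either. Finally, the statement tacitly needs $n(T)\ge 3$, since $\mdim(P_2)=1>n(P_2)-2$; small paths are not ``likewise covered''.
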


Below we present known results for various tree families. The first part of the next results follows by the fact that if a graph $G$ contains a vertex which is adjacent to at least three leaves, then $\mdim(G) = \infty$. 

\begin{theorem}{\em \cite[Theorem 5]{simanjuntak-2017}}
Let $G$ be a complete $k$-ary tree of height $h$, where $k\ge 2$ and $h\ge 1$. Then $\mdim(G)=\infty$ if and only if $k\ge 3$. Moreover, if $k=2$, then $\mdim(G)=2^h-1$.
\end{theorem}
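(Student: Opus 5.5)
The first assertion is immediate from the twin observation. If $k\ge 3$, take any vertex $w$ at depth $h-1$. Its children are $k\ge 3$ leaves that are pairwise false twins. By \cite[Theorem 3]{saenpholphat-2009}, or the remark preceding the statement about a vertex adjacent to at least three leaves, $\mdim(G)=\infty$. So the work is the case $k=2$, where I must show $\mdim(G)=2^h-1$. For $h=1$ we have $G=P_3$, so $\mdim(G)=1$ by \cite[Theorem 7]{saenpholphat-2009}, and I will assume $h\ge 2$ where convenient.

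\textbf{Key reduction.} Let $w$ be an internal vertex with children $a,b$ and subtrees $T_a,T_b$. For $x\in S\cap T_a$ at depth $t$ below $a$ we have $d(a,x)=t$ and $d(b,x)=t+2$, and symmetrically for $T_b$. Vertices of $S$ outside $T_a\cup T_b$ are equidistant from $a$ and $b$. Encode $S\cap T_a$ by its depth-generating polynomial $P_a(z)=\sum_t c_t z^t$. Then $m(a|S)=m(b|S)$ holds if and only if $P_a+z^2P_b=P_b+z^2P_a$, that is, $(1-z^2)(P_a-P_b)=0$, that is, $P_a=P_b$. Hence a necessary condition is:
\begin{center}
(D) for every internal vertex, the two child subtrees carry different depth profiles of $S$.
\end{center}

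\textbf{Lower bound.} I will prove, by induction on the height $j$ of a subtree $T$, that every $S$ satisfying (D) inside $T$ has $|S\cap T|\ge 2^j-1$. I will also try to prove that equality forces a unique depth profile $\pi_j$. The base case $j=1$ works: exactly one of the two sibling leaves lies in $S$, so $\pi_1=z$.

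For the inductive step, both child subtrees have at least $2^{j-1}-1$ vertices of $S$. They cannot both attain this minimum, since then both would have profile $\pi_{j-1}$, violating (D) at the root of $T$. This gives $|S\cap T|\ge 2(2^{j-1}-1)+1=2^j-1$. The delicate point, which I expect to be the main obstacle, is keeping track of the equality case. In equality, one child is minimal with profile $\pi_{j-1}$, the other has exactly one extra vertex, and the root is not in $S$. I need the profile of the "minimum $+1$" subtree to be determined as well, or at least its contribution to $\pi_j$ to be determined. If uniqueness fails, I would weaken the hypothesis to a statement only about total counts, plus the fact that $\pi_j$ has no constant term. That weaker statement seems to be all the induction really uses.

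\textbf{Upper bound.} I would build $S_h$ recursively. Put $S_{h-1}$ in the left subtree and $S_{h-1}\cup\{\text{root of the right subtree}\}$ in the right subtree, giving $|S_h|=2^{h-1}-1+2^{h-1}=2^h-1$. Adding a subtree root changes no deeper sibling comparison. At the top level, the two profiles differ by the constant term $1$, so (D) holds everywhere.

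The remaining task is to show that $S_h$ separates all pairs, not only siblings. For this I would write $m(x|S_h)$ as a sum of shifted profiles along the path from $x$ to the root. I would then argue by comparing the smallest distances first: the minimum distance to $S_h$, then the multiplicities at each distance. This should recover the depth of $x$ and then, level by level, which child it descends into. That verification is the second place where I expect the computation to need care.
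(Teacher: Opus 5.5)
Your treatment of $k\ge 3$ matches the only justification the survey gives, namely a vertex adjacent to at least three leaves. Your reduction (D) via $(1-z^2)(P_a-P_b)=0$ is correct and is the right necessary condition.

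The lower-bound induction, however, is not closed as written. To exclude that both children attain $2^{j-1}-1$, you need that equal minimal counts force equal profiles. Your fallback (tracking only counts plus the absence of a constant term) cannot deliver this, because counts alone do not rule out two subtrees with the same count but different depth distributions. The fix is to prove a stronger statement. If (D) holds at every internal vertex of a subtree $T$ of height $j$, then the profile of $S\cap T$ is $\pi_j$ or $\pi_j+1$, where $\pi_0=0$ and $\pi_j=2z\pi_{j-1}+z$. The step is immediate. The children have profiles in $\{\pi_{j-1},\pi_{j-1}+1\}$, and (D) forces one of each, so the profile is $2z\pi_{j-1}+z+[w\in S]$. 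In words, (D) holds throughout if and only if $S$ contains exactly one vertex of each pair of siblings, whence $|S|\ge \pi_h(1)=2^h-1$.

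The real gap is in the upper bound. Checking (D) for $S_h$ only separates siblings. Separating all pairs is what proves $\mdim(G)<\infty$ for $k=2$, i.e.\ the ``only if'' half of the first assertion, and your sketch for it would not work as stated. The minimum distance cannot recover the depth, since every vertex of $S_h$ has minimum distance $0$. For $h=2$, the right child of the root and its right child have multisets $\msl 0,1,3\msr$ and $\msl 0,1,4\msr$, which differ only at the top. Depth is read off from the maximum instead: for $h\ge 2$, a vertex at depth $\ell\ge 1$ has maximum distance $\ell+h$ to $S_h$, while the root has $h$.

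For vertices of equal depth, use the same polynomial bookkeeping you used for siblings. Take the root path $x_0,\dots,x_\ell=x$, let $y_i$ be the sibling of $x_i$, and set $\epsilon_i=[x_i\in S]$. The subtree $T_{y_i}$ then has profile $\pi_{h-i}+1-\epsilon_i$. Summing over the decomposition $V(G)=V(T_x)\cup\bigcup_{i}(\{x_{i-1}\}\cup V(T_{y_i}))$ gives
\[
M_x(z)=A_\ell(z)+(1-z^2)\sum_{k=1}^{\ell}\epsilon_k z^{\ell-k},
\]
where $A_\ell$ depends only on $\ell$, on $h$, and on whether the root lies in $S$. Exactly one child of each vertex lies in $S$, so $(\epsilon_1,\dots,\epsilon_\ell)$ determines $x$. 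Hence distinct vertices of the same depth get distinct multisets.

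Note that the first difference appears at distance $\ell-k$, where $k$ is the deepest level at which the two root paths differ. So the small distances encode the path from $x$ upward, not from the root downward as you suggested. This argument in fact shows that every set containing exactly one vertex of each sibling pair is a multiset resolving set, which completes $\mdim(G)=2^h-1$.
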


Given a graph $G$, a subgraph $P$ of $G$ is called a $k$-\textit{center path} of $G$, if $P$ is a path and $d_G(u,V(P))=\min\{d_G(u,v)\,:\,v\in V(P)\}\le k$ for every $u\in V(G)$. A minimum $k$-center path is a $k$-center path of the smallest possible length. A \textit{caterpillar} is a tree containing a 1-center path, and a \textit{lobster} is a tree containing a 2-center path. 

\begin{theorem}{\em \cite[Theorem 2]{hfidh-2019}}
Let $G$ be a lobster. If $P$ is the minimum $2$-center path of $G$, then the following claims are equivalent.
\begin{itemize}
    \item[(i)] $G$ has finite multiset dimension.
    \item[(ii)] The only component of $G-E(P)$ with infinite multiset dimension is an $S_4$.
    \item[(iii)] If $H$ is a $G-E(P)$, then $G[H]$ has at most four components which are either a $P_2$, a $P_3$, or an $S_4$, with at most two $P_2$ and two $S_4$.
\end{itemize}
\end{theorem}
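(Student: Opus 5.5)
We would prove the cycle of implications (i)$\Rightarrow$(ii)$\Rightarrow$(iii)$\Rightarrow$(i), always working with the minimum $2$-center path $P=p_1\cdots p_\ell$ and the components of $G-E(P)$. Each such component is a rooted subtree $T_i$ hanging at $p_i$ of depth at most $2$. A first observation is that, by minimality of $P$, the end vertices $p_1$ and $p_\ell$ carry only short hanging trees. Otherwise $P$ could be shortened or shifted.

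\textbf{Necessity (i)$\Rightarrow$(ii).} We use the twin obstruction of \cite[Theorem 3]{saenpholphat-2009}: three pairwise false twins force $\mdim=\infty$, and in particular a vertex with three leaf neighbours forces it. We also use a symmetry variant: if an automorphism of $G$ fixes a multiset resolving set $S$ setwise and swaps two vertices, then these two vertices get equal multisets. Applying this inside a hanging subtree $T_i$ constrains its shape. If $T_i$ has a vertex with $\ge 3$ leaves, we are done by twins. If $T_i$ has two isomorphic branches at $p_i$, or two leaves at a depth-one vertex, we get a transposition fixing all of $G$ outside a small set. Then $S$ must break the symmetry, and we check by case analysis that this fails only for the configuration recorded in the statement, the star $S_4$. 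Thus every component other than $S_4$ must have finite multiset dimension on its own.

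\textbf{Structure (ii)$\Rightarrow$(iii).} This is a counting argument on the possible shapes of depth-$\le 2$ rooted trees whose branches are pairwise non-isomorphic and contain no $S_4$ except as the whole component. What survives is small: $P_2$, $P_3$ and $S_4$ attached at their root. The multiplicity bounds of at most two $P_2$ and two $S_4$ come from pigeonhole arguments. Each $p_i$ has only two sides along $P$, so a third copy of the same pendant type at the same distance profile would give three vertices with identical multisets for any $S$.

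\textbf{Sufficiency (iii)$\Rightarrow$(i).} Here we build an explicit multiset resolving set. We take $S=V(P)$ together with one leaf from each pendant $P_3$ or $S_4$. We then verify that the multisets are distinct, via the key fact that for the path $P$, distances to all of $V(P)$ already distinguish path vertices, as in $\mdim(P_n)=1$. For a vertex $u$ off $P$, $m(u|S)$ is the multiset of $P$ shifted by $d(u,V(P))\in\{1,2\}$ plus a few small extra entries. The shift and the attachment point can be recovered from the minimum entry and the shape of the multiset.

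\textbf{Main obstacle.} The hardest step will be the case analysis in (i)$\Rightarrow$(ii) and the distinctness check in (iii)$\Rightarrow$(i). In both, attachment points symmetric with respect to the centre of $P$ give coincidences that must be broken by adding an asymmetric vertex to $S$. To handle this, we would first try placing $S$ so that $P$ is effectively "anchored" at one end.
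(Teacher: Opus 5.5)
The survey only quotes this theorem from \cite{hfidh-2019} and gives no proof, so I judge your plan on its own merits. It has genuine gaps in both directions.

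The most concrete gap is in (iii)$\Rightarrow$(i). Your set $S$, consisting of $V(P)$ and one leaf of each pendant $P_3$ or $S_4$, fails for three independent reasons.

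(1) $V(P)$ does not multiset-resolve the path: $m(p_1|V(P))=\msl 0,1,\dots,\ell-1\msr=m(p_\ell|V(P))$. The equality $\mdim(P_n)=1$ comes from a single end vertex, not from all of $V(P)$. Already the path $x\,w\,p_1\,p_2\,p_3\,w'\,x'$ breaks your set. Its minimum $2$-center path is $p_1p_2p_3$ and it satisfies (iii), but your $S=\{p_1,p_2,p_3,x,x'\}$ is reflection-invariant and $m(w|S)=\msl 1,1,2,3,5\msr=m(w'|S)$.

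(2) Condition (iii) allows two pendant leaves at some $p_i$. They are false twins, so exactly one of them must lie in $S$, and your $S$ contains neither.

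(3) Two branches of the same type at one $p_i$ collide. For branches $p_iw_1x_1$ and $p_iw_2x_2$, your $S$ contains $x_1,x_2$ but not $w_1,w_2$, so $m(w_1|S)=m(w_2|S)$. The same happens for two $S_4$-branches.

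A correct construction must give the branches at each $p_i$ different traces of $S$ (in the sense below), make the forced twin choices, and break the reflection of $P$ with an anchor at one end. Proving that such an $S$ separates all vertices globally is the real content of this direction, and your last paragraph postpones exactly that.

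For (i)$\Rightarrow$(ii)$\Rightarrow$(iii), the symmetry mechanism you propose is not the one that works. Let $B_j=\{w_j\}\cup L_j$ be the branches at $p_i$, where $w_j\notin V(P)$ is a neighbour of $p_i$ and $L_j$ is its set of leaf children. Every $s\in S$ outside $\bigcup_k B_k$ satisfies $d(w_j,s)=1+d(p_i,s)$. Every $s\in B_k$ with $k\neq j$ is at distance $2$ or $3$ from $w_j$. Hence $m(w_j|S)=C\uplus\msl 0^{a_j},1^{b_j},2^{A-a_j},3^{B-b_j}\msr$, where $a_j=|S\cap\{w_j\}|$, $b_j=|S\cap L_j|$, $A=\sum_k a_k$, $B=\sum_k b_k$, and $C$ is independent of $j$ and contains no $0$. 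So the pairs $(a_j,b_j)$ must be pairwise distinct over \emph{all} branches at $p_i$, isomorphic or not. Since $|L_j|\ge 3$ gives three false twins and $|L_j|=2$ forces $b_j=1$, all pairs lie in $\{0,1\}^2$. Condition (iii) follows at once: at most four branches, at most two $P_2$'s (which have $b_j=0$), and at most two $S_4$'s (which have $b_j=1$).

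Your automorphism argument only detects collisions between isomorphic branches, so it misses cases such as a $p_i$ carrying two pendant leaves, two pendant $P_3$'s and one $S_4$-branch. There every swap of isomorphic branches can be broken, yet five branches compete for four patterns. Two of your claims are also incorrect: that surviving branches are pairwise non-isomorphic (two $P_2$'s, two $P_3$'s or two $S_4$'s are allowed), and that the multiplicity bound two comes from the two sides of $P$. The bound four never appears in your argument.

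Finiteness of $\mdim(G)$ also does not pass to a single component $T_i$ by restricting $S$, because the distances to $S\setminus V(T_i)$ are lost. So (ii) needs a separate check of the finitely many shapes allowed by (iii). Moreover, (ii) must be read with the $S_4$ hanging from $P$ at a leaf: three pendant leaves at $p_i$ also form an $S_4$ component, but they force $\mdim(G)=\infty$.
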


\begin{theorem}{\em \cite[Theorem 3]{hfidh-2019}}
Let $G$ be a caterpillar with $P$ as its minimum $1$-center path. Then $\mdim(G)<\infty$ if and only if every vertex in $V(P)$ has at most two neighbors in $G-P$.
\end{theorem}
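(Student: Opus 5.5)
We prove the two directions of the caterpillar statement separately. Throughout, $P=p_1p_2\cdots p_\ell$ is the minimum $1$-center path of the caterpillar $G$. By minimality, $p_1$ and $p_\ell$ both have at least one leaf of $G$ attached, or else they are leaves of $G$ themselves. Every vertex of $G-P$ is a leaf adjacent to exactly one $p_i$.

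\textbf{Necessity.} Suppose some $p_i$ has at least three neighbours in $G-P$. These neighbours are leaves with the common neighbour $p_i$, so they are three pairwise false twins. By \cite[Theorem 3]{saenpholphat-2009} (equivalently, the remark preceding the theorem on complete $k$-ary trees), $\mdim(G)=\infty$. So finiteness forces every $p_i$ to have at most two neighbours in $G-P$.

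\textbf{Sufficiency.} Assume each $p_i$ carries at most two pendant leaves. We exhibit an explicit multiset resolving set $S$; it does not need to be small. Our first choice is $S=V(G)\setminus\{p_1\}$, or more generally $V(G)$ minus a carefully chosen small set.

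The most natural candidate to test first is $S=V(G)$. The multiset $m_G(x|V(G))$ is the distance degree sequence of $x$. For a caterpillar this is roughly determined by the position of $x$ along $P$ and by whether $x$ is a leaf. A leaf $x$ at $p_i$ has distances equal to one plus those of $p_i$, except that $x$ contributes a $0$ and $p_i$ contributes a $1$. So within one column, a leaf and its spine vertex are separated by the number of $1$'s, which equals the degree. Two leaves at the same $p_i$ are false twins, so any two of them must be split by putting exactly one in $S$; hence $V(G)$ itself fails. We therefore modify it: at each $p_i$ with two leaves $a_i,b_i$, put $a_i\in S$ and $b_i\notin S$.

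The plan is then to show that $S$ separates all pairs, in three steps.
\begin{enumerate}
\item \emph{Same column.} $p_i$, $a_i$ and $b_i$ differ in their number of $0$'s and $1$'s. Here $b_i$ has no $0$; $a_i$ has a $0$ and at most one $1$; $p_i$ has a $0$ and at least two $1$'s when both neighbours on $P$ exist. The ends need a small additional check.
\item \emph{Different columns $i<j$.} We compare the largest entries of the multisets. Eccentricity with respect to $S$ changes monotonically away from the centre of $P$. This settles vertices on the same side of the centre.
\item \emph{Symmetric positions.} For $i$ and $j$ symmetric about the centre, we use the asymmetry of $S$, break it deliberately if necessary, or compare the counts of the largest distance.
\end{enumerate}

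The main obstacle is step 3, namely distinguishing vertices placed symmetrically about the centre of $P$. If the caterpillar is itself symmetric, then for instance $m(p_i|S)$ and $m(p_{\ell+1-i}|S)$ may coincide. The fix I would try first is to remove one extra spine vertex near an end, say $p_1$ or a leaf at $p_1$, from $S$. This skews all distance multisets. Then for symmetric pairs the number of vertices of $S$ at maximum distance differs, and one checks this does not break the separations in steps 1 and 2. If this fails for small or degenerate cases (very short spines, or $P_2$/$P_3$-like situations), I would handle those by direct inspection.
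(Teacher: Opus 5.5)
Your necessity argument is correct: three leaves at one $p_i$ are pairwise false twins, so $\mdim(G)=\infty$ by \cite[Theorem 3]{saenpholphat-2009}. The survey only quotes this theorem from \cite{hfidh-2019} and gives no proof, so your sufficiency argument has to stand on its own. It does not. You never fix a resolving set, and several of the claims your three steps rely on are false.

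\textbf{Where the sufficiency argument fails.}
\begin{itemize}
\item[(i)] Deleting $p_1$ is not harmless. Your ``first choice'' $V(G)\setminus\{p_1\}$ already fails for the spine $p_1p_2p_3p_4$ with one leaf at each of $p_1,p_2,p_4$: there $p_2$ and $p_3$ both receive $\msl 0,1,1,2,2,3\msr$.
\item[(ii)] Your trigger for the fix is the wrong condition. Once one leaf of each twin pair is left out of $S$, the number of leaves at $p_k$ becomes invisible. What matters is $T=\{k : p_k \mbox{ has a leaf}\}$, not whether $G$ is symmetric. Take two leaves $a_1,b_1$ at $p_1$, one leaf $a_4$ at $p_4$, and $p_2,p_3$ bare. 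No automorphism reverses $P$, yet $S_0=V(G)\setminus\{b_1\}$ gives $p_2$ and $p_3$ the same multiset $\msl 0,1,1,2,2,3\msr$.
\item[(iii)] Step 2 is insufficient. The largest entry does not separate a leaf at $p_i$ from the spine vertex one step farther from the centre. With respect to $S_0$ their eccentricities are $\max\{i,\ell+1-i\}+1$ and $\max\{i-1,\ell+2-i\}$, which coincide for $2\le i\le(\ell+1)/2$. You need another invariant, such as the number of $1$'s. That invariant is exactly what can break after deleting $p_1$: a leafless $p_2$ then has a single neighbour in $S$, just like a leaf.
\item[(iv)] The step-3 mechanism is wrong. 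In example (ii), $S_1=S_0\setminus\{p_1\}$ gives $p_2,p_3$ the multisets $\msl 0,1,2,2,3\msr$ and $\msl 0,1,1,2,3\msr$. Each has exactly one entry at the maximum distance; they differ lower down.
\end{itemize}
``One checks this does not break steps 1 and 2'' and ``degenerate cases by inspection'' are precisely the unproved part.

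\textbf{What is missing.} You need a genuine comparison of multisets across columns. Here is one way to get it. For your $S_0$, $m(b_i|S_0)$ is $m(p_i|S_0)$ with every entry increased by $1$. Also, $m(a_i|S_0)$ is obtained from that shifted multiset by turning one entry $2$ into $0$. So everything reduces to comparing spine vertices, plus counting $0$'s and $1$'s.

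Minimality of $P$ gives $1,\ell\in T$. Hence the $S_0$-eccentricity of $p_i$ is $\max\{i,\ell+1-i\}$, which forces $i+j=\ell+1$ for any collision between $p_i$ and $p_j$. In that case $m(p_i|S_0)=m(p_j|S_0)$ holds if and only if $i$ and $j$ have equal distance multisets to $T$ along the path. Comparing $\sum_{t\in T}z^{|i-t|}$ with $\sum_{t\in T}z^{|j-t|}$ shows this happens if and only if $T$ is symmetric about $(\ell+1)/2$. So $S_0$ works unless $T$ is symmetric.

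If $T$ is symmetric, $S_1$ works. The symmetric spine vertices $p_i,p_j$ with $i<j=\ell+1-i$ now differ by one entry: $\ell-i$ for $p_i$ versus $i-1$ for $p_j$. The only remaining leaf-versus-spine candidates are $a_3$ or $a_{\ell-2}$ against a leafless $p_2$, and a direct computation excludes these. Without an argument of this kind, the sufficiency direction is unproved.
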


Similar and indeed equivalent results to the ones above were obtained in \cite{Kono-2022} by using the terminology of ID-coloring. In addition, such work gives the following contributions.

\begin{theorem}{\em \cite[Theorem 2.1]{Kono-2022}}
If $T$ is a caterpillar with $\diam(T)\ge 3$ and $\Delta(T) = 3$, then $\mdim(T)=3$.
\end{theorem}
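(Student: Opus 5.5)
Since $T$ is not a path (it has a vertex of degree $3$), Theorem [Theorem 8 of saenpholphat-2009] gives $\mdim(T)\ge 3$. The whole task is therefore to exhibit a multiset resolving set of size three. Let $P=v_1v_2\cdots v_m$ be a longest path of $T$; since $T$ is a caterpillar, $P$ is a $1$-center path, and $m\ge 4$ because $\diam(T)\ge 3$. Every vertex outside $P$ is a leaf hanging at some internal vertex $v_i$, $2\le i\le m-1$. Because $\Delta(T)=3$, each such $v_i$ carries at most one pendant leaf, which we call $\ell_i$. Let $I\subseteq\{2,\dots,m-1\}$ be the set of indices with a pendant leaf; it is nonempty.

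For the three vertices we take $S=\{v_1,v_m,w\}$, where $w$ breaks the symmetry of $P$. The natural choice is $w=\ell_j$ with $j$ the smallest index in $I$, or $w=v_2$ (or some $v_k$ close to one end) if that works better. With respect to $\{v_1,v_m\}$ alone, the pair of distances is $\msl i-1,m-i\msr$ for $v_i$ and $\msl i,m-i+1\msr$ for $\ell_i$. Their sum is $m-1$ on the spine and $m+1$ on the leaves, so spine vertices and leaves are already separated by the sum of the two end-coordinates. That sum can be recovered from the multiset only if we know which entry comes from $w$.

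The core of the proof is the case analysis. We write $m(x|S)$ explicitly: for $x=v_i$ the distance to $w=\ell_j$ is $|i-j|+1$, and for $x=\ell_i$ with $i\ne j$ it is $|i-j|+2$. Distinguishing two vertices then reduces to comparing three-element multisets of integers. The approach is to use the total sum of the multiset as a first invariant. On spine vertices this total is $m-1+|i-j|+1$, and on leaves it is $m+1+|i-j|+2$. The collisions that remain are pairs symmetric about $j$, and spine/leaf pairs whose sums happen to agree. These are eliminated by comparing the minimum entry (essentially the distance to the nearer end of $P$) and the maximum entry. Taking $j$ minimal, so that $w$ lies near the end $v_1$, should make the reflection $i\mapsto 2j-i$ impossible for $i<j$ without a coincidence in the end-distances.

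I expect the main obstacle to be these residual coincidences: a spine vertex $v_i$ versus a leaf $\ell_{i'}$ whose three distances are a permutation of each other. This can happen when the first leaf is near the middle or when $m$ is small. If the choice $w=\ell_j$ fails in such cases, I would first try $w=v_2$ together with $v_1$ (note $d(v_1,v_2)=1$) and $v_m$; the three pairwise distances $1,m-1,m-2$ are distinct, as required by the observation made after the bound of \cite{bong-2021}. I would redo the sum and minimum comparison for this set, and treat small $m$ (say $m\le 5$) directly by hand.
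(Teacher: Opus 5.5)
Your lower bound is fine, but the upper bound, which is the only real content, is never proved. You do not fix a set $S$ and verify it, and your primary candidate $S=\{v_1,v_m,\ell_j\}$ with $j=\min I$ actually fails.

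\begin{itemize}
\item If $j=2$, then $v_1$ and $\ell_2$ are both leaves hanging at $v_2$, so $d(v_1,v_m)=d(\ell_2,v_m)=m-1$. Hence $m_T(v_1|S)=m_T(\ell_2|S)=\msl 0,2,m-1\msr$. The smallest instance is $P_4$ with a pendant leaf at $v_2$.
\item If $m$ is odd and $j=(m+1)/2$ (e.g.\ $P_5$ with a single leaf at $v_3$), then $d(v_1,\ell_j)=d(v_m,\ell_j)=j$. So $v_1$ and $v_m$ both receive $\msl 0,j,m-1\msr$.
\item The case $j=m-1$ fails symmetrically.
\end{itemize}

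Taking $j$ minimal excludes none of these cases. In each of them two of the pairwise distances inside $S$ coincide, which the distinct-distance criterion you quote would have caught. The rest of your plan (sums, minima, maxima, reflections $i\mapsto 2j-i$) is phrased with ``should'' and ``I expect'', so no case is actually settled.

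Your fallback $S=\{v_1,v_2,v_m\}$ does work for every such caterpillar, but the idea that makes it work is missing from the plan.

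\begin{enumerate}
\item Since $v_1$ is a leaf adjacent to $v_2$, every $x\neq v_1$ satisfies $d(x,v_1)=d(x,v_2)+1$. So $m_T(x|S)=\msl a,a+1,b\msr$ with $a=d(x,v_2)$ and $b=d(x,v_m)$.
\item For $x=v_k$ with $k\ge 2$ you get $a=k-2$ and $a+b=m-2$. For $x=\ell_i$ you get $a=i-1$ and $a+b=m$. Since $\Delta(T)=3$ allows at most one leaf per $v_i$, the pair $(a,b)$ determines $x$.
\item A multiset $\msl a,a+1,b\msr$ determines $(a,b)$ unless it equals $\msl t,t+1,t+2\msr$. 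In that case the two readings are $(t,t+2)$ and $(t+1,t)$, whose sums have different parity. Every genuine $a+b$ is congruent to $m$ modulo $2$, so the reading is unique.
\item Finally, $v_1$ has $\msl 0,1,m-1\msr$. The only other vertices whose multiset contains $0$ are $v_2$, with $\msl 0,1,m-2\msr$, and $v_m$, with $\msl 0,m-2,m-1\msr$. Neither coincides with $\msl 0,1,m-1\msr$ when $m\ge 4$.
\end{enumerate}

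Your proposal still needs this argument, or an equivalent complete case check, for the chosen set.
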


\begin{theorem}{\em \cite[Theorem 2.2]{Kono-2022}}
Let $T$ be a caterpillar with $\diam(T)\ge 4$ and $\Delta(T) = 4$. If $T$ has $k$ vertices of degree at least three adjacent to two leaves, then $\mdim(T)<\infty$. Moreover, $\max\{3,k\}\le \mdim(T)\le k+3$.
\end{theorem}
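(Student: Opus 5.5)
We need to prove: for a caterpillar $T$ with $\diam(T)\ge 4$ and $\Delta(T)=4$ having $k$ vertices of degree at least three adjacent to two leaves, $\mdim(T)<\infty$ and $\max\{3,k\}\le \mdim(T)\le k+3$.

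\emph{Finiteness.} Let $P=v_0v_1\cdots v_m$ be the minimum $1$-center path, whose ends are supports of the longest path. Since $\Delta(T)=4$, an internal vertex of $P$ has at most two neighbours off $P$. Hence the criterion of \cite[Theorem 3]{hfidh-2019} gives $\mdim(T)<\infty$. Alternatively, finiteness follows from the explicit construction below.

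\emph{Lower bound.} Since $\diam(T)\ge 4$, $T$ is not a path, so $\mdim(T)\ge 3$ by \cite[Theorem 8]{saenpholphat-2009}. For $k$, let $v$ be a vertex with two pendant leaves $\ell,\ell'$. These are false twins, so every vertex outside $\{\ell,\ell'\}$ is equidistant from them. If neither lies in a resolving set $S$, then $m(\ell|S)=m(\ell'|S)$. Thus every such $v$ forces at least one of its two leaves into $S$. The leaf pairs for distinct $v$ are disjoint, which gives $|S|\ge k$.

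\emph{Upper bound: construction.} Write $v_0,\dots,v_m$ along the spine, with extra leaves hanging from the internal vertices. From each vertex with two pendant leaves, put one leaf into $S$ (the $k$ forced vertices). Then add three further vertices chosen so that their distances to one another are pairwise distinct, as required for any $3$-element basis. A natural choice is $v_0$, $v_1$ and a suitable spine vertex, or $v_0$ together with two vertices near one end. The goal is that the three auxiliary vertices alone multiset-resolve the spine, in the spirit of $\mdim(C_n)=3$ and \cite[Theorem 2.1]{Kono-2022}. Note that $v_0$ together with $v_1$ already separates spine vertices by distance to the left end, since for $v_i$ the multiset $\msl i,i-1\msr$ determines $i$. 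The third vertex, possibly a leaf at the far end or a pendant leaf of $v_1$, is used to separate leaves from spine vertices.

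\emph{Upper bound: verification.} I would compute $m(x|S)$ for each vertex $x$ of three types: a spine vertex $v_i$, a leaf at $v_i$ in $S$, and a leaf at $v_i$ not in $S$. The key observation is that a pendant leaf $u$ at $v_i$ satisfies $d(u,y)=d(v_i,y)+1$ for every $y$ other than $u$. So its multiset is the multiset of $v_i$ shifted by one, except for a $0$ if $u\in S$.

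The main obstacle is the collision check. Vertices far to the right have many large distances to the forced leaves, and a shifted multiset of a leaf at $v_i$ might coincide with that of $v_{i+1}$. I would handle this by extracting the position $i$ from the minimum element, or from the contributions of the auxiliary vertices near $v_0$. Comparing the sums $\sum_{y\in S}d(x,y)$ gives a parity argument: $T$ is bipartite, so all distances from $x$ to $S$ change parity together. This separates $v_i$ from a leaf at $v_{i\pm1}$ and leaves only a few near-coincidences. Those remaining cases I would resolve using the asymmetry introduced by the three auxiliary vertices, adjusting their choice if $m$ is small.
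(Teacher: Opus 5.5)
Your lower bound $\max\{3,k\}$ is correct. The finiteness step and the upper bound, however, both have genuine gaps. For finiteness you check only the internal vertices of the spine $P$. But \cite[Theorem 3]{hfidh-2019} requires \emph{every} vertex of $V(P)$ to have at most two neighbours off $P$. An end vertex of $P$ has a single neighbour on $P$, so $\Delta(T)=4$ allows it three leaves. This case is not cosmetic. Take the caterpillar with spine $s_1s_2s_3$, three leaves at $s_1$ and one leaf at $s_3$. It has diameter $4$ and maximum degree $4$, yet its three leaves at $s_1$ are pairwise false twins, so $\mdim=\infty$ by \cite[Theorem 3]{saenpholphat-2009}. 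Hence the statement, as quoted here, must be read with the implicit hypothesis that no vertex is adjacent to three leaves, and your proof has to state and use it. The condition $\Delta(T)=4$ alone does not give finiteness. Your fallback, that finiteness follows from the construction, depends on the upper bound, which you have not established.

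The upper bound $k+3$ is the real content of the theorem, and it is not proved. You never fix the three extra vertices, and the collision check is only sketched, using tools that would not work.

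\emph{Landmark-by-landmark reasoning.} Multiset resolvability cannot be verified one landmark at a time. Since $m(x|S)$ is a single unordered multiset, you cannot isolate the entries contributed by the auxiliary vertices once the $k$ forced leaves are added. In fact $\{v_0,v_1\}$ does not even separate the spine: $m(v_0|\{v_0,v_1\})=m(v_1|\{v_0,v_1\})=\msl 0,1\msr$.

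\emph{Parity.} The parity argument fails exactly on the collisions you single out. A leaf $u$ at $v_i$ and the vertex $v_{i+1}$ are at distance $2$, hence in the same colour class. So $d(u,y)\equiv d(v_{i+1},y)\pmod 2$ for all $y$, and parity says nothing; the same holds for $v_i$ versus a leaf at $v_{i\pm1}$. Bipartiteness only separates vertices in opposite classes, and only when $S$ is not split evenly between the two classes.

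\emph{Minimum entry.} The minimum entry of $m(x|S)$ is just $d(x,S)$ and does not encode $i$ either. Concretely, $d(u,y)=d(v_{i+1},y)$ for every $y\ne u$ in the component of $T-v_iv_{i+1}$ containing $v_i$, while $d(u,y)=d(v_{i+1},y)+2$ on the other side. Such multisets can coincide after rearrangement.

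What is missing is an explicit $S$ with $|S|\le k+3$, together with an invariant of the \emph{whole} multiset that provably separates all pairs. These include vertices of $S$ among themselves, and vertices near both ends of the spine. They also include each unforced twin $b_i$ against spine vertices and other leaves; note that the multiset of $b_i$ is that of its forced partner with $0$ replaced by $2$.
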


In connection with the result above, a realization result for caterpillars with prescribed values in the multiset dimension was also provided. That is, they proved that every integer between $\max\{3, k\}$ and $k+3$ is realizable as the multiset dimension of some caterpillar. In addition, regarding the multiset dimension of caterpillars, a very technical characterization of the caterpillar graphs with a multiset dimension equal to $3$ was presented in \cite{Khemmani-2020}. Moreover, several other contributions on caterpillar graphs (with special emphasis on the symmetric ones) were given in \cite{Isariyapalakul-2020}.

\subsection{Multiset dimension of graph products}

Here we primarily present some contributions regarding the multiset dimension of Cartesian products, and we also add a couple of results on the strong product. For the grids, that is, Cartesian products of paths, we have the following. 

\begin{theorem}
{\em \cite[Theorem 7]{simanjuntak-2017}} For any integers $r,s\ge 2$ such that $(r,s)\ne (2,2)$, it holds $\mdim(P_r\cp P_s) = 3$.
\end{theorem}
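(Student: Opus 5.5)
The claim has two halves. For the lower bound $\mdim(P_r\cp P_s)\ge 3$ I will use Theorem~8 of~\cite{saenpholphat-2009}, already quoted above: $P_r\cp P_s$ with $r,s\ge 2$ contains a $4$-cycle and so is not a path, which gives the bound directly. The real content is the upper bound. For it I will write down an explicit set $S$ of three vertices and check that all multisets $m(x|S)$ are distinct.

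Label the vertices $(i,j)$ with $i\in[r]$, $j\in[s]$, so that $d((i,j),(i',j'))=|i-i'|+|j-j'|$. By symmetry I may assume $r\le s$. The observation after Theorem~2.3 of~\cite{bong-2021} says the three pairwise distances inside $S$ must all differ, so $S$ cannot be symmetric. My first candidate is $S=\{(1,1),(1,2),(r,1)\}$: two adjacent corner-ish vertices together with the opposite corner on the same column. For $x=(i,j)$ its distances to these three vertices are
\[
a=i+j-2,\qquad b=i-1+|j-2|,\qquad c=r-i+j-1 .
\]
For $j\ge 2$ we have $b=a-1$, while for $j=1$ we have $b=a+1$. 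So the multiset always contains two values differing by exactly $1$, and the third value is $c$. The pair $(a,c)$ determines $(i,j)$, because $a-c=2i-r-1$ and $a+c=2j+r-3$.

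The decoding step is to recover $(a,b,c)$, up to the flagged ambiguity, from the unordered triple. If a multiset $\msl p,q,t\msr$ can be split as "consecutive pair plus a third value" in two different ways, collisions may occur. I would handle this by a short parity argument:
\begin{itemize}
\item $a+c=2j+r-3\equiv r+1 \pmod 2$, so $a$ and $c$ have a parity fixed by $r$;
\item $b=a\pm1$ has the opposite parity to $a$.
\end{itemize}
Therefore, in the multiset, $b$ is the unique element whose parity differs from that of the other two, or the two elements of one parity are $\{a,c\}$ and the remaining element is $b$. Concretely: exactly one element has parity different from $r+1$, and that element is $b$.

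The expected obstacle is the next step. Once $b$ is identified, the other two values $\{a,c\}$ are still unordered. I must decide which one is $a$, namely the one adjacent to $b$. If both remaining values equal $b\pm1$, then either $c=a$ or $c=a\pm2$ with $b$ lying between them, and swapping $a$ and $c$ yields another grid point. Collisions come precisely from such pairs.

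At that point I would adjust the third landmark, for example to $(r,s)$ or $(2,s)$. This breaks the $a\leftrightarrow c$ symmetry: with $c=r-i+s-j$ we get $a+c$ constant, so the swap $a\leftrightarrow c$ becomes a genuine reflection of the grid, which is still bad. So I would instead try $(1,1),(1,2),(2,1)$-type configurations with distances $1,1,2$. These fail too, since two landmarks are equidistant from the third. That suggests a configuration like $\{(1,1),(1,2),(1,4)\}$ or $\{(1,1),(2,1),(1,3)\}$ with distances $1,2,3$, where the values of $m(x|S)$ are affine in $i$ with slopes tied to $j$.

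My plan is to fix such a set, verify injectivity by the parity/consecutiveness decoding above with a case split on $j$ relative to the landmark columns, and treat the small grids ($s\le 3$, including $P_2\cp P_3$) by direct listing. This last verification, choosing a set that survives the $a\leftrightarrow c$ ambiguity uniformly in $r,s$, is the part I expect to need the most care.
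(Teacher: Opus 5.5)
Your lower bound is fine. The upper bound is not proved, and it is the whole content of the theorem: you never fix a three-element set and verify it. Worse, the configurations you end up proposing are not multiset resolving sets in general.

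\begin{itemize}
\item For $S=\{(1,1),(1,2),(1,4)\}$ one gets $m((1,5)|S)=\msl 1,3,4\msr=m((2,4)|S)$ as soon as $r\ge 2$ and $s\ge 5$.
\item For $S=\{(1,1),(2,1),(1,3)\}$ one gets $m((2,4)|S)=\msl 2,3,4\msr=m((3,3)|S)$ as soon as $r\ge 3$ and $s\ge 4$.
\end{itemize}

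This is structural, not a matter of tuning. Suppose all landmarks lie in $[1,I]\times[1,J]$ with $I<r$ and $J<s$. Then for every $(i,j)$ with $i\ge I$ and $j\ge J$, each landmark distance equals $i+j$ minus a constant, so $(I,J+1)$ and $(I+1,J)$ always collide. The same happens at the other three corners. Making the pairwise landmark distances $1,2,3$ is necessary but far from sufficient; the landmarks must reach the far side of the grid, as your first candidate did.

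That first candidate $S=\{(1,1),(1,2),(r,1)\}$ is in fact correct for odd $r$, but your parity rule is wrong as stated. The congruence $a+c\equiv r+1\pmod 2$ only says whether $a$ and $c$ have equal parity, not what that parity is.

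\begin{itemize}
\item For $r$ odd, $a\equiv c\not\equiv b$, so $b$ is the unique element of its parity class. Then $a$ is whichever remaining element lies in $\{b-1,b+1\}$. In the only ambiguous case $\{a,c\}=\{b-1,b+1\}$, the sum $a+c=2b$ decides: it equals $r-1$ if $j=1$ (where $a=b+1$) and $2j+r-3\ge r+1$ if $j\ge 2$ (where $a=b-1$). From $(a,c)$ you recover $(i,j)$. Hence $S$ resolves $P_r\cp P_s$ for every odd $r\ge 3$ and every $s\ge 2$.
\item For even $r$ the set fails, but through a $b\leftrightarrow c$ swap rather than the $a\leftrightarrow c$ swap you anticipated: $m((r/2,2)|S)=\msl r/2-1,r/2,r/2+1\msr=m((r/2+1,1)|S)$.
\end{itemize}

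A repair for even $r$ and $s\ge 3$ is $S'=\{(1,1),(1,3),(r,1)\}$, with $a,b,c$ now the distances to these three vertices.
\begin{itemize}
\item One has $b=a-2$, $b=a$, $b=a+2$ according as $j\ge 3$, $j=2$, $j=1$.
\item Since $a+c=2j+r-3$ is odd, $c$ is the unique element of its parity class.
\item The other two elements coincide if and only if $j=2$.
\item Otherwise, with $y$ the larger of them, $a=y$ if $y+c\ge r+3$ (case $j\ge 3$) and $a=y-2$ if $y+c=r+1$ (case $j=1$).
\end{itemize}
The remaining case, $r$ even and $s=2$ (so $r\ge 4$), follows by exchanging the two coordinates.

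An explicit set of this kind, with a complete verification, is exactly what your proposal is missing.
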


In \cite{marcelo-2025}, cylindrical graphs were studied, namely, those graphs obtained as the Cartesian product of a path and a cycle. The main results read as follows, while the multiset dimension of the remaining cases remains an open problem. 

\begin{theorem}{\em \cite[Theorems 3.4 and 3.5]{marcelo-2025}}
\begin{itemize}
    \item[(i)] If $m\ge 3$, then $\mdim(C_3\cp P_m)=3$.
    \item[(ii)] If $m\ge 8n+1$, then $\mdim(C_n\cp P_m)=3$.
\end{itemize}
\end{theorem}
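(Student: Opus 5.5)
The lower bound $\mdim\ge 3$ in both parts comes for free. $C_3\cp P_m$ and $C_n\cp P_m$ are not paths, so the result of \cite[Theorem 8]{saenpholphat-2009} quoted above gives $\mdim\ge 3$. The whole task is therefore to exhibit, in each case, a three-vertex set $S=\{x,y,z\}$ that is a multiset resolving set. By the remark after the result of \cite{bong-2021}, the three pairwise distances in $S$ must be distinct, and this guides the choice of $S$.

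For the vertices I will use coordinates $(i,j)$ with $i\in\mathbb{Z}_n$ (the cycle) and $j\in[m]$ (the path). Distances are $d((i,j),(i',j'))=d_{C_n}(i,i')+|j-j'|$.

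\textbf{Part (i), $n=3$.} Here $d_{C_3}\in\{0,1\}$, so $C_3\cp P_m$ behaves like a "thick path". I would take $x=(0,1)$ at one end of the path, and $y=(1,2)$, $z=(2,k)$ with $k$ chosen to break symmetry, say $k=m$ or $k=3$. Then $d(x,y)=2$, $d(x,z)=k$, $d(y,z)=k-1$, and these are distinct.

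The multiset of a vertex $(i,j)$ has the form $\msl j-1+\epsilon_0,\ |j-2|+\epsilon_1,\ |j-k|+\epsilon_2\msr$, where $\epsilon_t=[i\ne t]\in\{0,1\}$.

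\begin{enumerate}[(a)]
\item The smallest element together with the sum of the multiset essentially determines $j$. The sum is about $3j$ plus a bounded correction, so it determines $j$ up to a small window.
\item Inside that window, the pattern of the $\epsilon$'s identifies $i$, because exactly one $\epsilon$ is $0$.
\end{enumerate}

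I would verify this by a short case analysis on $j\le 2$, $2<j<k$ and $j\ge k$, and use the small cases $m=3,4$ as sanity checks.

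\textbf{Part (ii), $m\ge 8n+1$.} The plan is to place two landmarks near one end of the long path and one far away, so that the path coordinate separates almost everything. Concretely, take $x=(0,1)$, $y=(\lfloor n/2\rfloor,2)$ and $z=(0,t)$ with $t$ of order $4n$ (or $t=m$). For a vertex $v=(i,j)$, the distances to $x$ and to $y$ are $j+O(n)$ and the distance to $z$ is $|t-j|+d_{C_n}(0,i)$. Two vertices with equal multisets must then have equal sums and equal extreme values. Because the cycle contributions are bounded by $\lfloor n/2\rfloor$ while the path offsets grow linearly, I would show that equality forces the same $j$, and then the same $d_{C_n}(0,i)$ and $d_{C_n}(\lfloor n/2\rfloor,i)$. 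Taken together these determine $i$ (the reflection symmetry is broken once $y$ is shifted to $\lfloor n/2\rfloor -1$ or a similar off-antipodal position).

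\textbf{Main obstacle.} The hardest step is the cancellation that unordered multisets allow. A vertex $v$ with $j$ small may swap the roles of its distances to $x$ and $y$ with those of another vertex $v'$. The delicate region is where $|t-j|$ becomes comparable with $j$, since there the distance to $z$ can also be confused with the others.

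To handle this, I would split vertices into zones: $j<t/2$, $j$ near $t/2$, and $j>t/2$. In each zone I would identify which element of the multiset is "the distance to $z$" using its magnitude, and the margin $m\ge 8n+1$ is used exactly to keep these zones separated by more than $2\lfloor n/2\rfloor$. Once that attribution is fixed, the problem reduces to resolving with an ordered set, and that is a routine check.
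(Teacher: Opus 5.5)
Your lower bound $\mdim\ge 3$ is fine. The upper bound is the whole content here, since the survey only quotes the result from \cite{marcelo-2025} without proof, and your proposal does not establish it: the sets you propose are not multiset resolving.

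\textbf{Part (i).} Take $S=\{(0,1),(1,2),(2,k)\}$ with any $3\le k\le m$. In a layer $j$ with $2\le j\le k$, the base distances to $x,y,z$ are $(j-1,\,j-2,\,k-j)$. The three vertices $(0,j),(1,j),(2,j)$ are obtained from this base by adding $1$ to two of the three coordinates, and the multiset cannot tell which two. So two of them collide as soon as the two coordinates being exchanged have equal base values. Since $(j-1)-(k-j)$ and $(j-2)-(k-j)$ are consecutive integers, this always happens in the middle layer:
\begin{itemize}
\item for odd $k$, $(0,\frac{k+1}{2})$ and $(2,\frac{k+1}{2})$ both get $\msl \frac{k-1}{2},\frac{k-1}{2},\frac{k+1}{2}\msr$;
\item for even $k$, $(1,\frac k2+1)$ and $(2,\frac k2+1)$ both get $\msl \frac k2-1,\frac k2,\frac k2+1\msr$.
\end{itemize}
For $m=k=3$, $(0,2)$ and $(2,2)$ both get $\msl 1,1,2\msr$; there even your claimed distinctness fails, since $d(x,y)=d(y,z)=2$. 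So step (b) is false.

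A set that does work is $\{(0,1),(1,1),(1,k)\}$, with $k$ the largest odd integer $\le m$. For $j\le k$, the multisets of $(0,j),(1,j),(2,j)$ are $\msl j-1,j,k-j+1\msr$, $\msl j-1,j,k-j\msr$ and $\msl j,j,k-j+1\msr$, with sums $k+j$, $k+j-1$ and $k+j+1$. Hence only $(0,j)$, $(1,j+1)$ and $(2,j-1)$ can share a sum. These coincide only if $k=2j$ or $k=2j-2$, which is impossible for odd $k$. When $m=k+1$, the last layer has sums $2m,2m+1,2m+2$, all larger than $2k+1$.

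\textbf{Part (ii).} This fails for the same reason, and your zone argument cannot rescue it. For $j$ near $t/2$ the three distances lie within $O(n)$ of each other however large $m$ is, so there is no separation to exploit. Even knowing which entry is $d(\cdot,z)$ would still leave $d(\cdot,x)$ and $d(\cdot,y)$ unordered, so the claimed reduction to an ordered set is unjustified.

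Concretely, for $j\le t$ we have $d(v,x)+d(v,z)=2d_{C_n}(0,i)+t-1$. So the map $(i,j)\mapsto(-i,t+1-j)$ exchanges the distances to $x$ and $z$. It also fixes the distance to $y=(s,2)$ whenever $d_{C_n}(i,s)-d_{C_n}(i,-s)=t+1-2j$ (for $2\le j\le t-1$). For example, take $n=5$, $y=(2,2)$ and any $t\ge 4$, which includes $t\approx 4n$ and $t=m$:
\begin{itemize}
\item if $t$ is even, $(1,\frac t2+1)$ and $(4,\frac t2)$ both get $\msl \frac t2,\frac t2,\frac t2+1\msr$;
\item if $t$ is odd, a swap of the $y$- and $z$-distances makes $(4,\frac{t+3}{2})$ and $(3,\frac{t+1}{2})$ both get $\msl \frac{t-1}{2},\frac{t+3}{2},\frac{t+3}{2}\msr$.
\end{itemize}
What is missing is an explicit triple together with a verification that ignores the order of the distances and rules out all such role swaps in the middle layers. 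One way is to sort vertices by the sum of their multiset and check the few candidates with equal sums, as in the corrected part (i). Without this, the inequality $\mdim(C_n\cp P_m)\le 3$ remains unproved.
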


Cartesian products of an arbitrary graph with a complete graph have also been investigated. Recalling that in a multiset distance irregular graph $G$, by definition, $m(x|V(G))\ne m(y|V(G))$ for any $x,y\in V(G)$, we have the following results.  

\begin{theorem}{\em \cite[Theorem 5.1]{hakanen-2024}}
Let $G$ be a graph.
\begin{enumerate}
    \item[(i)] If $n\ge 3$ is an integer,  then $\mdim(G\cp K_n)=\infty$.
    \item[(ii)] If $G$ is not a multiset distance irregular graph, then $\mdim(G\cp K_2)=\infty$.
\end{enumerate}
\end{theorem}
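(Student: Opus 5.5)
The plan is to rule out every candidate set $S\subseteq V(G\cp K_n)$ directly. For each such $S$ I will exhibit two distinct vertices with equal multiset representations. The tool is the distance formula in the Cartesian product: $d((g,i),(g',j)) = d_G(g,g') + [i\ne j]$, where $[i\ne j]$ is $1$ if $i\ne j$ and $0$ otherwise. This holds because $K_n$ has diameter one. So, for a fixed $g\in V(G)$, the vertices of the fiber $\{g\}\times V(K_n)$ differ only through the indicator term.

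For (i), fix $g\in V(G)$ and write $S=\bigcup_{h} \{h\}\times A_h$ with $A_h\subseteq [n]$. For $(g,i)$, the vertices of $S$ in a fiber over $h$ contribute $d_G(g,h)$ with multiplicity $|A_h\cap\{i\}|$ and $d_G(g,h)+1$ with multiplicity $|A_h\setminus\{i\}|$. Comparing $(g,i)$ and $(g,j)$, only the fibers $h$ with $A_h$ meeting $\{i,j\}$ in exactly one element contribute differently. I will encode each $i\in[n]$ by a formal sum. To each layer $i$ attach the vector $c_i=\sum_{h:\,i\in A_h} (e_{d(g,h)}-e_{d(g,h)+1})$, a count vector indexed by distances. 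Then $m((g,i)|S)$ equals a common base multiset plus $c_i$. Hence $(g,i)$ and $(g,j)$ are resolved iff $c_i\ne c_j$.

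This alone does not force a collision, so I will use all fibers at once. I need $c_i(g)\ne c_j(g)$ for all $g$ and all $i\ne j$, and also no collisions across different fibers. The key observation is the following. Suppose the layers $i$ and $j$ are never separated, meaning that for every $h$ the set $A_h$ contains both $i,j$ or neither. Then the automorphism swapping layers $i$ and $j$ fixes $S$, which gives a collision. So I will pick a vertex $g$ at which the "telescoping" vectors coincide. Choose $g$ maximizing, say, eccentricity-type potential: take $g$ with $d(g,h)$ constant over the relevant $h$, e.g. $G$ a single vertex. That gives the key case $K_n$ itself, where $c_i=|\{h: i\in A_h\}|\,(e_0-e_1)$ with one $h$. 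The general argument needs the following. For $n\ge3$, among three layers $i,j,k$, the vectors $c_i,c_j,c_k$ at a suitable $g$ cannot be pairwise distinct. Otherwise $(g,\cdot)$ collides with a vertex in a neighbouring fiber; that is, for $g'$ adjacent to $g$, some $(g',i)$ equals $(g,j)$ in representation. I expect this cross-fiber collision to be the main obstacle. My first attempt will be to take $g$ a vertex of maximum eccentricity and use a pigeonhole argument on the total sum of entries of the $c_i$ (always $0$) and on their weighted first moments. These moments determine $\sum_{h:\,i\in A_h}1$, and with $n\ge 3$ this forces two layers with equal moments, hence equal $c$.

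For (ii), with $n=2$ the vertices are $(g,1),(g,2)$. I will pick $x\ne y$ in $G$ with $m(x|V(G))=m(y|V(G))$, which exist by hypothesis. The idea is to show that for any $S$, either some fiber pair $(g,1),(g,2)$ collides, or one of the pairs $(x,1),(y,2)$ or $(x,1),(y,1)$ does. The equality of full distance multisets is used after complementing $S$ within each fiber. Writing $S_1,S_2$ for the two layers, I will compare $S$ with the symmetric set $V(G)\times\{1\}$ adjusted by the layer-swap. The aim is to show that the representations of $(x,\cdot)$ and $(y,\cdot)$ agree in the multiset difference. Again, the hard step is converting the fiber-wise resolving conditions, namely that $S$ distinguishes the two copies of $g$ for every $g$, into a global statement. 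That statement is that $S$ meets every fiber in exactly one vertex. Once that is established, $m((x,i)|S)$ is determined by $m(x|V(G))$ and the layer pattern, and the required collision follows.
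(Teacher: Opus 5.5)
\textbf{The statement as printed is false, so the step you flag as the main obstacle cannot be completed in either part.} The survey only cites this result and gives no proof. Your within-fiber criterion is correct: $(g,i)$ and $(g,j)$ are resolved if and only if $c_i\neq c_j$. The failure is in the global step, because with the Cartesian product both parts admit counterexamples.

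Write the vertices of $P_3\cp K_n$ as $(a,i)$, with $a\in\{1,2,3\}$ the position on the path, so that $d((a,i),(b,j))=|a-b|+[i\neq j]$, as in your formula.

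\emph{Counterexample to (i).} In $P_3\cp K_3$ take $S=\{(1,2),(1,3),(3,3)\}$. The vertices $(1,1),(1,2),(1,3),(2,1),(2,2),(2,3),(3,1),(3,2),(3,3)$ receive the representations $\msl 1,1,3\msr$, $\msl 0,1,3\msr$, $\msl 0,1,2\msr$, $\msl 2,2,2\msr$, $\msl 1,2,2\msr$, $\msl 1,1,2\msr$, $\msl 1,3,3\msr$, $\msl 1,2,3\msr$, $\msl 0,2,3\msr$. These are pairwise distinct, so $\mdim(P_3\cp K_3)\le 3$.

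\emph{Counterexample to (ii).} In $P_3\cp K_2$ take $S=\{(1,1),(1,2),(3,1)\}$. The vertices $(1,1),(1,2),(2,1),(2,2),(3,1),(3,2)$ receive the pairwise distinct representations $\msl 0,1,2\msr$, $\msl 0,1,3\msr$, $\msl 1,1,2\msr$, $\msl 1,2,2\msr$, $\msl 0,2,3\msr$, $\msl 1,2,3\msr$. Yet $P_3$ is not multiset distance irregular, since its two leaves both have distance multiset $\msl 0,1,2\msr$.

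Both examples agree with results quoted earlier in the survey: $\mdim(C_3\cp P_m)=3$ for $m\ge 3$, and $\mdim(P_r\cp P_2)=3$ for $r\ge 3$.

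\emph{Where your steps break.} In (i), the first moment of $c_i$ equals $-|S\cap (V(G)\times\{i\})|$. Nothing forces two layers to carry the same number of vertices of $S$; in the example above the counts are $0,1,2$. Even equal counts would not give $c_i=c_j$. So no cross-fiber collision is forced. In (ii), the claim that a multiset resolving set meets every fiber in exactly one vertex is false. The vertices $(g,1)$ and $(g,2)$ are not twins in $G\cp K_2$, and the set above contains both vertices over $1$ and none over $2$.

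\textbf{What is true.} The statement holds for $G\stp K_n\cong G\circ K_n$, which is presumably the product intended in the cited source; the survey discusses strong products from the same reference right next to it. There $N[(g,i)]=N_G[g]\times V(K_n)$, so each fiber $\{g\}\times V(K_n)$ consists of pairwise true twins, and the argument is short.

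Two true twins $u,v$ receive equal representations whenever both or neither lie in $S$. This holds because $d(u,w)=d(v,w)$ for all $w\notin\{u,v\}$, and $\{d(u,u),d(u,v)\}=\{d(v,u),d(v,v)\}$.

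For $n\ge 3$, any three vertices of a fiber include two on the same side of $S$, which gives (i).

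For $n=2$, every multiset resolving set must contain exactly one vertex of each fiber. Since $d((x,i),(h,j))=d_G(x,h)$ for $h\neq x$, the vertex of $S$ over $x$ has representation exactly $m_G(x|V(G))$. Hence any $x\neq y$ with $m_G(x|V(G))=m_G(y|V(G))$ produces a collision, which gives (ii). This is precisely your plan for (ii), but it depends on the fibers being twin classes, and the Cartesian product does not have that property.
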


\begin{theorem}{\em \cite[Corollary 5.1]{hakanen-2024}}
If $G$ is multiset distance irregular, then $\mdim(G\cp K_2) = n(G)$.
\end{theorem}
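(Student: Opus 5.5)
The proof splits into an upper bound $\mdim(G\cp K_2)\le n(G)$, given by an explicit multiset resolving set, and a matching lower bound. Throughout, write the vertices of $G\cp K_2$ as $(g,i)$ with $g\in V(G)$ and $i\in\{1,2\}$. The only fact about the product I need is
$$d((g,i),(h,j))=d_G(g,h)+[i\ne j].$$

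For the upper bound I take $S=V(G)\times\{1\}$, which has $n(G)$ elements. Then:
\begin{itemize}
\item $m((g,1)|S)=m_G(g|V(G))$;
\item $m((g,2)|S)$ is $m_G(g|V(G))$ with every entry increased by $1$.
\end{itemize}
Two vertices of the first layer get different multisets because $G$ is multiset distance irregular. The same holds in the second layer, since adding $1$ to every entry is injective on multisets. A vertex $(g,1)$ has $0$ in its multiset while no vertex $(h,2)$ does, so the two layers are also separated. Hence $S$ is a multiset resolving set.

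For the lower bound I encode multisets as polynomials. For a set $A\subseteq V(G)$ put $P_{g,A}(x)=\sum_{a\in A}x^{d_G(g,a)}$. Given $S\subseteq V(G\cp K_2)$, let $S_i=\{h:(h,i)\in S\}$. Then the multiset of $(g,1)$ corresponds to $P_{g,S_1}+xP_{g,S_2}$, and that of $(g,2)$ to $xP_{g,S_1}+P_{g,S_2}$. Their difference is $(1-x)(P_{g,S_1}-P_{g,S_2})$. So $(g,1)$ and $(g,2)$ are separated exactly when $m_G(g|S_1)\ne m_G(g|S_2)$.

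In matrix form, let $M$ be the $n\times n$ matrix with entries $x^{d_G(g,h)}$ and $c=\mathbf 1_{S_1}-\mathbf 1_{S_2}\in\{-1,0,1\}^n$. The condition on the pairs $(g,1),(g,2)$ says that every coordinate of $Mc$ is a nonzero polynomial. Pairs of the form $(g,1),(h,2)$ and $(g,1),(h,1)$ give further polynomial identities that must fail. The remaining step is to show that $|S|<n(G)$ forces one of these identities to hold, i.e.\ some pair of vertices to collide.

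This remaining step is the main obstacle, and I only have a strategy for it, not a proof. The condition on pairs $(g,1),(g,2)$ by itself does not obviously force $|S_1|+|S_2|\ge n$. I would first look at vertices $g$ lying in neither $S_1$ nor $S_2$; by pigeonhole such a $g$ exists when $|S|<n$. For such $g$, set $Q=P_{g,S_1}-P_{g,S_2}$. I would try to combine $Q\ne0$ with the cross comparisons between $(g,i)$ and $(h,3-i)$, for $h$ adjacent to or near $g$, in order to derive a collision. The tool for this would be a parity or sum-of-coefficients argument: evaluate the polynomials at $x=1$ to compare cardinalities, and at $x=-1$. If that fails, my fallback is to compare against the structure behind Theorem~5.1(ii) of \cite{hakanen-2024}, which shows that some swap between the two layers produces equal multisets whenever the resolving data is insufficient. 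I would try to show that a set of fewer than $n(G)$ vertices leaves such a swap unresolved.
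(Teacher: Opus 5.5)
\textbf{The lower bound is missing, and in fact it is false as stated.} Your upper bound is correct: with $S=V(G)\times\{1\}$, the entry $0$ separates the two layers. Within each layer, multiset distance irregularity, together with injectivity of the shift by $1$, separates the vertices. The gap is the lower bound, which you leave open. It cannot be closed, because for the Cartesian product the inequality $\mdim(G\cp K_2)\ge n(G)$ fails.

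Take the spider $T$ with centre $c$ and legs $ca$, $cb_1b_2$, $cd_1d_2d_3$. For $v=c,a,b_1,b_2,d_1,d_2,d_3$, the multisets $m_T(v|V(T))$ are
$\msl 0,1^3,2^2,3\msr$, $\msl 0,1,2^2,3^2,4\msr$, $\msl 0,1^2,2^2,3,4\msr$, $\msl 0,1,2,3^2,4,5\msr$, $\msl 0,1^2,2^3,3\msr$, $\msl 0,1^2,2,3^2,4\msr$, $\msl 0,1,2,3,4^2,5\msr$.
These are pairwise distinct, so $T$ is multiset distance irregular with $n(T)=7$.

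Now let $S=(V(T)\setminus\{c\})\times\{1\}$, so $|S|=6$.
\begin{itemize}
\item The vertices $(a,1),(b_1,1),(b_2,1),(d_1,1),(d_2,1),(d_3,1)$ receive $\msl 0,2^2,3^2,4\msr$, $\msl 0,1,2^2,3,4\msr$, $\msl 0,1,3^2,4,5\msr$, $\msl 0,1,2^3,3\msr$, $\msl 0,1^2,3^2,4\msr$, $\msl 0,1,2,4^2,5\msr$.
\item The vertex $(c,1)$ receives $\msl 1^3,2^2,3\msr$.
\item These seven multisets are pairwise distinct.
\item Each $(v,2)$ receives the multiset of $(v,1)$ shifted by $+1$, so the second layer is resolved internally.
\item Every second-layer multiset has no $0$ and at most one $1$. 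Every first-layer multiset contains a $0$, or, for $(c,1)$, three $1$'s.
\end{itemize}
Hence $\mdim(T\cp K_2)\le 6<n(T)$. No evaluation at $x=\pm1$, and no other refinement of your polynomial strategy, can succeed. Your own remark was the right warning sign: the fibre condition $m_G(g|S_1)\ne m_G(g|S_2)$ holds as soon as $|S_1|\ne|S_2|$. So nothing forces $S$ to meet every fibre $\{(g,1),(g,2)\}$.

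Your fallback via Theorem~5.1(ii) of \cite{hakanen-2024} is equally unsound. Read with $\cp$, that result already fails for $G=P_3$, which is not multiset distance irregular. Write $V(P_3)=\{1,2,3\}$. The set $\{(1,1),(1,2),(3,1)\}$ gives the six vertices of $P_3\cp K_2$ the distinct multisets $\msl 0,1,2\msr$, $\msl 0,1,3\msr$, $\msl 0,2,3\msr$, $\msl 1,1,2\msr$, $\msl 1,2,2\msr$, $\msl 1,2,3\msr$. This agrees with $\mdim(P_r\cp P_s)=3$ from \cite{simanjuntak-2017}.

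The statement becomes correct, and the lower bound immediate, for the strong product $G\stp K_2$ ($\cong G\circ K_2$). This is presumably what the cited result concerns. There $(g,1)$ and $(g,2)$ are true twins. A multiset resolving set must therefore contain exactly one of them, since taking both or neither gives equal multisets. This forces $|S|=n(G)$. Your set $V(G)\times\{1\}$ still works there, because $(g,2)$ now receives $m_G(g|V(G))$ with its $0$ replaced by $1$.
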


Strong products of paths were considered in~\cite{hakanen-2024}. Because $\diam(P_2\stp P_2) = 1$ and since $\diam(P_3\stp P_3) = 2$, these two products have no multiset resolving sets, cf.~Theorem~\ref{th:diam-2}. Further, $\mdim(P_4\stp P_4)=6$ and $\mdim(P_5\stp P_5)=\mdim(P_6\stp P_6)=4$, while for all the other dimension in the product we have: 

\begin{theorem}{\em \cite[Theorem 4.3]{hakanen-2024}}
If $n\ge 7$, then $\mdim(P_n\stp P_n) \in \{3,4\}$.
\end{theorem}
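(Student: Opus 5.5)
The lower bound is immediate. $P_n\stp P_n$ is not a path for $n\ge 2$, so by \cite[Theorem 8]{saenpholphat-2009} we get $\mdim(P_n\stp P_n)\ge 3$. All the work is in the upper bound, where I would exhibit, for every $n\ge 7$, an explicit multiset resolving set of size $4$.

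Write the vertices as pairs $(i,j)$ with $i,j\in[n]$. The basic tool is the distance formula in the strong product of paths:
$$d((i,j),(i',j'))=\max\{|i-i'|,|j-j'|\}.$$
So for a landmark at a corner, the level sets of the distance function are $L$-shaped "shells". The landmark $(1,1)$ gives the value $\max\{i,j\}-1$. The landmark $(n,n)$ gives $\max\{n-i,n-j\}$. The landmarks $(1,n)$ and $(n,1)$ give the shells around the other two corners. The intersection of a shell about $(1,1)$ with a shell about $(n,n)$ is at most two vertices, and these are mirror images under the swap $(i,j)\mapsto(j,i)$. A shell about $(1,n)$ separates such a mirror pair, unless the pair lies on the diagonal, where it is a single vertex anyway.

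The plan is therefore to choose four landmarks close to corners, for instance
$$S=\{(1,1),\,(2,1),\,(1,n),\,(n,n)\},$$
or a small perturbation of this set. Two requirements guide the choice. First, the ordered vector of the four distances should determine the vertex; for the set above this follows from the shell-intersection argument. Second, the configuration should be asymmetric enough that the multiset of distances already determines which distance came from which landmark.

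The second requirement is where I expect the main obstacle: decoding the multiset. The approach I would try first is a case analysis. Suppose $(i,j)$ and $(i',j')$ give the same multiset. Consider the $24$ possible matchings between their sorted distance lists. Use the following constraints:
\begin{itemize}
\item adjacent landmarks such as $(1,1),(2,1)$ give distances differing by at most $1$;
\item opposite corners give distances summing to at least $n-1$;
\item the identity $d((i,j),(1,1))+d((i,j),(n,n))\ge n-1$ holds, with equality exactly on the diagonal.
\end{itemize}
With these constraints, most matchings should force $(i,j)=(i',j')$. The matchings that survive are the ones that exchange the roles of symmetric landmarks. These must be killed by the asymmetry of the chosen set, and the hypothesis $n\ge 7$ should enter exactly here: it guarantees enough room that near-corner and far-corner distances cannot coincide in the problematic ways. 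This matches the known small values $\mdim(P_4\stp P_4)=6$ and $\mdim(P_5\stp P_5)=\mdim(P_6\stp P_6)=4$.

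If the first candidate set fails for some residual collisions, I would shift one landmark by one step along a border, for example replacing $(n,n)$ by $(n,n-1)$, and redo the bookkeeping. I would also check a few small instances, say $n=7,8$, by direct computation to make sure the chosen pattern works uniformly.
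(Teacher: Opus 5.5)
Your lower bound is fine. The upper bound, however, is the whole content of the statement, and it is not established. The decoding of the multiset is only sketched (``most matchings should force $(i,j)=(i',j')$'', ``these must be killed by the asymmetry''). Moreover, the explicit set you propose is not a multiset resolving set for any $n\ge 3$. The landmarks themselves must receive distinct multisets. Yet $(1,1)$ and $(2,1)$ are at distance $1$ from each other, and both are at distance $n-1$ from $(1,n)$ and from $(n,n)$. Hence $m((1,1)|S)=m((2,1)|S)=\msl 0,1,n-1,n-1\msr$. In general, for $x,y\in S$ the multisets of distances from $x$ and from $y$ to $S\setminus\{x,y\}$ must differ, and this is the first thing to check for any candidate.

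Your fallback $S'=\{(1,1),(2,1),(1,n),(n,n-1)\}$ removes this collision but fails in the middle of the grid for every $n\ge 7$. For $n=2m$, the adjacent vertices $(m,m)$ and $(m+1,m)$ both get $\msl m-1,m-1,m,m\msr$. For $n=2m+1$, the vertices $(m+2,m)$ and $(m+1,m+2)$ both get $\msl m,m,m+1,m+1\msr$.

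This is not a small-$n$ effect, so the hope that ``$n\ge 7$ gives enough room'' is misplaced. Near the center, every landmark placed near a corner is at distance roughly $n/2$, so the multiset consists of two consecutive values. The constraints you list cannot tell which value came from which landmark:
\begin{itemize}
\item adjacent landmarks differ by at most $1$;
\item opposite corners sum to at least $n-1$;
\item $d((i,j),(1,1))+d((i,j),(n,n))=n-1+|i-j|$.
\end{itemize}

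What is missing is a specific four-element set, designed so that the central region is also decoded, together with a complete verification that is uniform in $n\ge 7$. Concretely: split the grid into regions on which each of the four distances $\max\{|i-a|,|j-b|\}$ is a fixed linear function. Then show, for every pair of regions and also for the landmarks themselves, that equal multisets force equal vertices. Running $n=7,8$ by computer can guide the choice of set but cannot replace this argument.
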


\subsection{Other directions}

In this subsection we cover other directions of the investigation of the multiset dimension: random graphs, digraphs, and graphs defined by algebraic structures. 

Eide and Pra{\l}at~\cite{eide-2026+} investigated the mutiset dimension on random graphs. To formulate their main result, some preparation is needed. An event holds with high probability (w.h.p.) if it holds with probability tending to one as $n \rightarrow \infty$. The binomial random graph ${\cal G}(n, p)$ is defined as a distribution over the class of graphs with the vertex set $[n]$ in which every pair $\{i, j\} \in \binom{[n]}{2}$ appears independently as an edge in $G$ with probability $p$. For $x \in (0, 1]$, the function $f_x$ is defined on $[0, 1]$  by
$$f_x(y) = \sum_{i=0}^{\left\lfloor \frac{1}{x}\right\rfloor} \max \{ix + y - 1, 0\}\,.$$
The main result of Eide and Pra{\l}at then reads as follows. 

\begin{theorem} {\rm \cite[Theorem 1.4]{eide-2026+}}
\label{thm:pralat}
If $x \in \left(0, \frac{1}{8}\right]$ and $d = (n - 1)p = n^{x+O(\log^{-1} n)}$, then w.h.p.
$$\mdim({\cal G}(n, p)) \le n^{y_4 + O(\log^{-1} n)}\,,$$
where $y_4 = y_4(x) \in (0, 1)$ is the unique solution to $f_x(y) = 4$. On the other hand, if $x \in \left(0, \frac{1}{2}\right]$ and $d = n^{x+O(\log^{-1} n)}$, then w.h.p.
$$\mdim({\cal G}(n, p)) \ge  n^{y_1 + O(\log^{-1} n)}\,,$$
where $y_1 = y_1(x)$ is the unique solution to $f_x(y) = 1$. Finally, if $x > \frac{1}{2}$, then w.h.p.
$$\mdim({\cal G}(n, p)) = \infty\,.$$
\end{theorem}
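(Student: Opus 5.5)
The theorem has three parts, and I would handle them separately. Throughout, I would use the standard structural picture of ${\cal G}(n,p)$ with $d=n^{x+O(\log^{-1}n)}$. W.h.p., for every vertex $v$ and every $i\le \lfloor 1/x\rfloor$, the sphere $\Gamma_i(v)$ of vertices at distance $i$ from $v$ has size $d^{i}n^{O(\log^{-1}n)}$ as long as $d^i=o(n)$. Almost all remaining vertices lie at distance $\lfloor 1/x\rfloor$ or $\lfloor 1/x\rfloor+1$. This follows from the usual breadth-first-search exploration combined with Chernoff bounds and a union bound over $v$.

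\textbf{The case $x>\frac12$.} Here $d^2\gg n\log n$, so any two non-adjacent vertices have a common neighbour w.h.p., and $\diam({\cal G}(n,p))\le 2$. If the diameter is exactly two, the graph is certainly not a path, so Theorem~\ref{th:diam-2} gives $\mdim=\infty$. If $p$ is so large that the graph is complete, it contains three pairwise true twins, and we again get $\mdim=\infty$ by the result of~\cite{saenpholphat-2009}.

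\textbf{Lower bound.} I would argue by pigeonhole. Let $S$ be any multiset resolving set with $|S|=n^{y}$. By Theorem~\ref{th:equivalence}, $m(v|S)$ is equivalent to the code vector $(t_1,\dots,t_D)$, where $t_i=|S\cap\Gamma_i(v)|$. The aim is to show that, w.h.p. and simultaneously for all $S$ and $v$, each coordinate $t_i$ takes at most $n^{\max\{ix+y-1,0\}+O(\log^{-1}n)}$ values. The reason is that $t_i\le\min\{|S|,|\Gamma_i(v)|\}$.

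This is the step I expect to be delicate. The crude bound $\min\{|S|,d^i\}$ is too weak. What is needed is that, uniformly over all sets $S$ and all vertices $v$, one has $|S\cap \Gamma_i(v)|\le n^{ix+y-1+o(1)}$ whenever $ix+y-1>0$, and $|S\cap \Gamma_i(v)|=n^{o(1)}$ otherwise. This cannot hold for every set $S$, since an adversarial $S$ can concentrate around one vertex. I would first try the following way around it. Since the codes of all $n$ vertices must be distinct, it suffices to bound the \emph{number} of distinct code vectors realized by most vertices. For this, I would average over $v$: $\sum_v |S\cap\Gamma_i(v)|=\sum_{s\in S}|\Gamma_i(s)|\approx |S|d^i$. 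Markov's inequality then shows that all but $o(n)$ vertices have $t_i\le n^{ix+y-1+o(1)}$ (or $t_i=n^{o(1)}$ in the small layers). The number of possible code vectors among these typical vertices is at most $n^{f_x(y)+O(\log^{-1}n)}$. Distinctness of the codes of $(1-o(1))n$ vertices therefore forces $f_x(y)\ge 1-o(1)$. Since $f_x$ is increasing, this gives $|S|\ge n^{y_1+O(\log^{-1}n)}$.

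\textbf{Upper bound.} I would take $S$ to be a uniformly random set of size $n^{y_4}$, chosen after exposing the graph, or more conveniently by sprinkling. Fix two vertices $u\ne v$. Conditioned on the sphere structure, each $t_i(u)$ is hypergeometric with mean $\mu_i=n^{ix+y-1}$. For layers with $\mu_i\gg1$, the sphere $\Gamma_i(u)$ and $\Gamma_i(v)$ differ on a large portion; here I would use $x\le\frac18$ to make sure the spheres of distinct vertices overlap negligibly in the relevant layers. Given that, the point probability of $t_i(u)=t_i(v)$, conditional on the other layers, is $O(\mu_i^{-1/2})$ by a local limit (anticoncentration) estimate for sums of hypergeometrics. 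Multiplying over layers gives
\[
\Pr[m(u|S)=m(v|S)]\le n^{-f_x(y)/2+O(\log^{-1}n)}.
\]
With $f_x(y_4)=4$ this is $n^{-2-\Omega(1)}$ after adjusting the error term, and a union bound over the $\binom n2$ pairs finishes the argument. The main obstacle in this part is making the conditional independence between layers, and between the codes of $u$ and $v$, rigorous. I would handle it by exposing $S$ layer by layer on the symmetric difference $\Gamma_i(u)\triangle\Gamma_i(v)$ and applying anticoncentration only to the fresh part.
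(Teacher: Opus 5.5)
The survey does not prove this statement. It only quotes it from Eide and Pra{\l}at, so there is no in-paper proof to compare with, and I judge your outline on its own merits.

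Your plan is sound, and the exponent bookkeeping is right: the counting condition is $f_x(y)\ge 1$, and the union-bound condition is $f_x(y)/2\ge 2$. The three parts are: diameter two for $x>\frac12$, counting codes for the lower bound, and a random $S$ with layer-wise anticoncentration and a union bound over pairs for the upper bound. The $x>\frac12$ part is fine as written.

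You misidentify the role of $x\le\frac18$. Note that $f_x(0)=0$ and $f_x(1)=x\lfloor 1/x\rfloor(\lfloor 1/x\rfloor+1)/2$. Hence $f_x(1)>1$ iff $x\le\frac12$, and $f_x(1)>4$ iff $x\le\frac18$. These hypotheses are exactly what puts $y_1$, resp.\ $y_4$, inside $(0,1)$; for $x>\frac18$ your bound $n^{-f_x(y)/2}$ never drops below $n^{-2}$. The condition has nothing to do with sphere overlaps.

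\textbf{Lower bound.} Markov with slack $n^{o(1)}$ only yields $|S|\ge n^{y_1-o(1)}$, not $n^{y_1+O(\log^{-1}n)}$. Fix it as follows.
\begin{itemize}
\item Let $K$ be a uniform bound on $|\Gamma_i(w)|/d^i$ over \emph{all} $w$. A uniform bound is needed, not an average, because $S$ is arbitrary; it gives $\sum_{s\in S}|\Gamma_i(s)|\le K|S|d^i$.
\item Use the threshold $C\max\{1,K|S|d^i/n\}$ in layer $i$, with a constant $C\ge 2\lfloor 1/x\rfloor$.
\item Then at least $n/2$ vertices are typical in every layer $i\le\lfloor1/x\rfloor$, and they realize at most $n^{f_x(y)+O(\log^{-1}n)}$ codes.
\item Since $f_x$ has slope at least $1$ wherever it is positive, you get $y\ge y_1-O(\log^{-1}n)$.
\end{itemize}
You must also say why layers beyond $\lfloor 1/x\rfloor$ cost nothing. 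W.h.p.\ $\diam({\cal G}(n,p))\le\lfloor1/x\rfloor+1$, since $d^{\lfloor1/x\rfloor+1}\gg n\log n$. The last coordinate is then determined by $|S|$, the bit $t_0$ and the other coordinates. Otherwise that layer would add a factor $n^{y}$.

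\textbf{Upper bound.} Three statements need correcting.
\begin{itemize}
\item ``Conditional on the other layers'' is wrong. Since $\sum_i t_i(u)=|S|=\sum_i t_i(v)$, conditioning on all other layers determines $t_i(u)-t_i(v)$. Your last sentence is the right repair; made precise, it reads as follows. Sprinkle $S$ independently with rate $n^{y-1}$. At step $i$, expose the vertices $w$ with $\min\{d_G(u,w),d_G(v,w)\}=i<\max\{d_G(u,w),d_G(v,w)\}$. Every other vertex contributing to $t_i(u)-t_i(v)$ was exposed earlier. So, given the past, this difference is a constant plus $X-Y$, where $X,Y$ are independent binomials on $\Gamma_i(u)\setminus\Gamma_{\le i}(v)$ and $\Gamma_i(v)\setminus\Gamma_{\le i}(u)$. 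The bound $\sup_c\Pr[X-Y=c]\le\sup_c\Pr[X=c]=O(\mu_i^{-1/2})$ is then elementary. The chain rule over $1\le i\le\lfloor1/x\rfloor$ gives your estimate.
\item The structural input is not negligible overlap of spheres. Negligible overlap is false at $i=1/x$ when $1/x$ is an integer, e.g.\ $x=\frac18$: there both spheres have linear size and meet in linearly many vertices. What you need is only that, w.h.p.\ for all pairs $u\ne v$ and all such $i$, $|\Gamma_i(u)\setminus\Gamma_{\le i}(v)|=\Omega(d^i)$. This holds for every fixed $x>0$ by standard breadth-first-search estimates.
\item With $y=y_4+C/\log n$ the collision probability is $e^{-\Omega(C)}n^{-2}$, not $n^{-2-\Omega(1)}$. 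This suffices: you only need the expected number of colliding pairs to be below $1$, together with $|S|=O(n^{y})$, to conclude that a good $S$ exists.
\end{itemize}
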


The concept of multiset dimension naturally extends to digraphs by simply considering the distance function in digraphs. This was done for the first time by Cueno, Garciano, and Marcelo in~\cite{Cueno-2025} by considering the multiset dimension of Cayley digraphs of Abelian groups. More precisely, they have focused on Cayley graphs ${\rm Cay}(\mathbb{Z}_n, \{a_1, \dots, a_t\})$, where $n$, $t$, and $a_1, \dots, a_n$ are positive integers such that $1\le a_1 < \cdots < a_t \le \lfloor n/2 \rfloor$. These Cayley graphs are also known as the circulant digraphs $C_n(a_1, \dots, a_t)$. They have determined $\mdim(C_n(1, t))$ for $t\ge 2$ and $n \ge 2t^2$ and proved an upper
bound for $\mdim(C_n(1, \dots, t))$ for $t \ge 3$ and $n \ge 2$. Moreover, they have established the following two results. 

\begin{theorem} {\rm \cite[Theorem 21]{Cueno-2025}}
If $n \ge 6$ and $H$ is the Cayley digraph of $\mathbb{Z}_n \oplus \mathbb{Z}_n$ with the generating set $\{(1,0), (0,1)\}$, then $\mdim(H) = n$.  
\end{theorem}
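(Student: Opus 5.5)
The plan is to exploit the translation structure of $H$. Directed distances are given by
$$d_H(x,u)=\big((u_1-x_1)\bmod n\big)+\big((u_2-x_2)\bmod n\big)=:f(u-x),$$
where $f(a,b)=a+b$ with $a,b\in\{0,\dots,n-1\}$. I would fix one direction of distance, from the vertex to the landmarks, and note that the other convention is symmetric. The basic tool is a decomposition of $V(H)$ into the $n$ anti-diagonals $D_c=\{x: x_1+x_2\equiv c \pmod n\}$. For $x\in D_c$ and a landmark $u$, we have $f(u-x)\equiv r_u:=(u_1+u_2-c)\bmod n$, so $f(u-x)\in\{r_u,\,r_u+n\}$. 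Moreover, $f(u-x)=r_u+n$ exactly when $(u_1-x_1)\bmod n>r_u$. Thus two vertices in different anti-diagonals are usually separated by residues of distances. Within one anti-diagonal, $m_H(x|S)$ is determined by the vector counting, for each residue $r$, how many landmarks with $r_u=r$ are ``big'' (take the value $r+n$).

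\emph{Upper bound.} I would give an explicit set of $n$ landmarks, trying first a ``staircase'' $S=\{(i,2i \bmod n): i\in\{0,\dots,n-1\}\}$, or a variant such as $\{(i,0)\}$ with one point perturbed. The set should be chosen so that the values $u_1+u_2$ of the landmarks are pairwise distinct mod $n$, or nearly so. Then on each $D_c$ every residue class contains at most one landmark, and the multiset records exactly which landmarks are big. Parametrising $x=(t,c-t)$ and moving along $D_c$, landmark $u$ is big for $t$ in a cyclic interval determined by $u_1$ and $r_u$. One then checks that the $n$ resulting indicator vectors are distinct, and that vertices in different anti-diagonals differ in their residue multisets, since the residues are shifted by $c$. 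The hypothesis $n\ge 6$ should enter here, as small cases produce coincidences.

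\emph{Lower bound.} Suppose $|S|=k\le n-1$. I would choose $c$ cleverly, for instance $c\equiv u_1+u_2+1$ for a suitable landmark so that $r_u=n-1$. Such a landmark always contributes $n-1$ and is useless on $D_c$. Alternatively I would average over $c$ to find an anti-diagonal where the landmarks are ``wasted''. Then I would count the switching events: along $D_c$ each landmark changes status at most twice as $t$ runs around the cycle, namely at $t=u_1+1$ and $t=u_1-r_u$. The aim is to show that fewer than $n$ landmarks cannot produce $n$ distinct count-vectors on some anti-diagonal. Landmarks sharing a residue only contribute their number of big members, which loses information, and landmarks with $r_u=n-1$ contribute nothing.

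The main obstacle is this lower bound. The naive count of switching events only gives $n\le 2k$. To get $k\ge n$ one needs a finer argument showing that consecutive switches pair up, for example that the ``become big'' event of one landmark and the ``become small'' event of another coincide in a way that leaves the multiset unchanged. An alternative is to use that the sum $\sum_{u}f(u-x)$ over $D_c$ is constant on average, and so derive a collision. I would first try a parity and pigeonhole argument across all $n$ anti-diagonals simultaneously, using that $\sum_c$ of the number of useless landmarks equals $k$.
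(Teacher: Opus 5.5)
\textbf{Neither bound is actually proved.} The lower bound is missing the one idea that makes it easy, and the upper-bound candidates you name do not work.

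\textbf{Lower bound.} Counting, pairing or averaging switching events cannot give $k\ge n$ on its own. For $n=6$, the three landmarks $(0,1),(1,2),(2,0)$ already give six distinct multisets along $D_0$: $\msl1,2,3\msr,\msl2,3,7\msr,\msl2,7,9\msr,\msl7,8,9\msr,\msl3,7,8\msr,\msl1,3,8\msr$. So the anti-diagonal has to be chosen from where the landmarks sit.

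The key fact is already in your formulas. The second switching point is $t=u_1-r_u\equiv c-u_2$. Hence on $D_c$ the step $(t,c-t)\to(t+1,c-t-1)$ changes the status of $u$ only if $u$ lies in column $t$ or in row $c-t-1$.

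If $|S|\le n-1$, some column $t$ and some row $j$ contain no landmark. Put $x=(t,j+1)$ and $y=(t+1,j)$. For every $u\in S$ we have $(u_1-t)\bmod n\ge 1$ and $(u_2-j)\bmod n\ge 1$, so
$d(x,u)=(u_1-t)\bmod n+(u_2-j)\bmod n-1=d(y,u)$.
Thus $x$ and $y$ have the same distance vector, not just the same multiset, and $\mdim(H)\ge n$. This lower bound holds for all $n$.

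\textbf{Upper bound.} This is the real content of the theorem, and the only place where $n\ge 6$ is used. You have not exhibited a set, let alone checked one, and your candidates fail:
\begin{itemize}
\item The staircase $\{(i,2i)\}$ is a coset of $\langle(1,2)\rangle$, so $S+(1,2)=S$. Translation invariance then gives $m_H(x|S)=m_H(x+(1,2)|S)$ for every $x$ and every $n$.
\item For $\{(i,0)\}$ one gets $m_H(x|S)=\msl b,b+1,\dots,b+n-1\msr$ with $b=(-x_2)\bmod n$, which is constant on each row.
\item Your separation claim across anti-diagonals is backwards. If the $n$ sums $u_1+u_2$ are pairwise distinct mod $n$, the residue multiset on every $D_c$ is all of $\mathbb{Z}_n$. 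That is invariant under shifts, so residues separate no two anti-diagonals.
\end{itemize}

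The lower-bound argument tells you what a valid set of size $n$ must look like. It has exactly one landmark per column (or per row), say $S=\{(i,g(i))\}$. Moreover, every value of $g$ must be taken at least twice: if $g(t)$ is attained only at $t$, then $(t,g(t)+1)$ and $(t+1,g(t))$ again have identical distance vectors. Together these two observations show that moving a single point of $\{(i,0)\}$ never helps. Either the moved point stays in its column and creates a value of $g$ taken once, or it leaves an empty column while at most two rows are used.

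What remains is to choose such a $g$ explicitly and verify that $x\mapsto m_H(x|S)$ is injective on all $n^2$ vertices for $n\ge 6$.
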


\begin{theorem} {\rm \cite[Theorem 22]{Cueno-2025}}
If $n \ge 3$, $m\ge n+1$, and $H$ is the Cayley digraph of $\mathbb{Z}_m \oplus \mathbb{Z}_n$ with the generating set $\{(1,0), (0,1)\}$, then $\mdim(H) = n$.  
\end{theorem}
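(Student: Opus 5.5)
The proof splits into a lower bound $\mdim(H)\ge n$ and a construction showing $\mdim(H)\le n$. I use the explicit distance formula of the Cayley digraph. Write vertices as $(a,b)$ with $a\in\mathbb{Z}_m$ and $b\in\mathbb{Z}_n$, and for residues write $[t]_m\in\{0,\dots,m-1\}$ for the representative of $t$ mod $m$. The only arcs are $(a,b)\to(a+1,b)$ and $(a,b)\to(a,b+1)$, and the two coordinates move independently. Hence
$$d_H((a,b),(c,d))=[c-a]_m+[d-b]_n .$$
For a set $S=\{(c_s,d_s)\}$ we take $m_H(x|S)$ to be the multiset of these distances from $x$ to $S$. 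Because translations are automorphisms, choosing the distances towards $S$ rather than from $S$ only changes the argument by a symmetric relabelling.

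\textbf{Lower bound.} Suppose $|S|\le n-1$.
\begin{enumerate}
\item Since $|S|<n$, some $b\in\mathbb{Z}_n$ is not the second coordinate of any element of $S$.
\item Since $|S|<n<m$, some $a\in\mathbb{Z}_m$ is such that $a-1$ is not the first coordinate of any element of $S$.
\item Compare $x=(a,b)$ with $y=(a-1,b+1)$. For every $s\in S$ we have $d_s\neq b$, so $[d_s-(b+1)]_n=[d_s-b]_n-1$; the second component drops by one with no wraparound.
\item Likewise $c_s\neq a-1$ gives $[c_s-(a-1)]_m=[c_s-a]_m+1$; the first component rises by one with no wraparound.
\item So $d_H(y,s)=d_H(x,s)$ for every $s\in S$, hence $m_H(x|S)=m_H(y|S)$ with $x\neq y$, and $S$ is not multiset resolving.
\end{enumerate}
Therefore $\mdim(H)\ge n$. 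This is the same shift idea that underlies the infinite-dimension phenomena recalled after Theorem~\ref{th:diam-2}, but made coordinatewise.

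\textbf{Upper bound.} The lower-bound argument shows what a resolving $S$ of size $n$ needs. It must meet every row residue $b\in\mathbb{Z}_n$, which forces exactly one element per residue class of the second coordinate. Its first coordinates should also be spread out so that no uniform diagonal shift $(-k,+k)$ preserves the multiset. The natural candidate is the diagonal $S=\{(i,i): 0\le i\le n-1\}$, which is well defined since $n\le m-1$; this is where $m\ge n+1$ enters. For $x=(a,b)$ the multiset is $\msl [i-a]_m+[i-b]_n : 0\le i\le n-1\msr$. I would recover $(a,b)$ from it in three steps.
\begin{enumerate}
\item Split according to whether $i\ge b$ or $i<b$. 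For $i\ge b$ the second term is $i-b$, and for $i<b$ it is $i-b+n$. Similarly, the first term wraps at $i=a$ only when $a\le n-1$.
\item Deduce that the multiset is a union of at most three arithmetic progressions with common difference $2$. The starting values and lengths of these progressions are explicit functions of $a$ and $b$.
\item Read off $a$ and $b$ from invariants such as the minimum, the sum, the number of odd entries, and the lengths of the progressions. Case by case, these must determine $(a,b)$ uniquely.
\end{enumerate}

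\textbf{Main obstacle.} The hard part is this injectivity check. I expect it to split into the cases $a\ge n$, where there is no first-coordinate wrap, and $a\le n-1$, with further subcases on $a<b$, $a=b$, and $a>b$. The small values $n=3$ and $m=n+1$ are the most likely to produce collisions. If the pure diagonal fails somewhere, I would try a perturbed transversal, for instance $\{(0,0)\}\cup\{(i+1,i):1\le i\le n-1\}$, or choose the first coordinates so that all pairwise differences are distinct. In that case I would rerun the same progression bookkeeping, which keeps one element per row residue and so stays compatible with the lower bound.
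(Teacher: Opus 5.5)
\textbf{The lower bound is fine; the upper bound is a genuine gap.} Your lower bound is correct: for $|S|\le n-1$ the vertices $(a,b)$ and $(a-1,b+1)$ even have equal distance vectors. One small correction there: switching between distances to $S$ and from $S$ is justified by the inversion $g\mapsto -g$, which maps $H$ isomorphically onto its converse digraph. Translations do not do this.

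The upper bound, however, is not proved, and the set you commit to is not multiset resolving for any $n\ge 3$. Take $S=\{(i,i):0\le i\le n-1\}$ and $x=(0,b)$. Then $d_H(x,(i,i))=i+[i-b]_n$, so $m_H((0,b)|S)$ is the union of the progressions $b,b+2,\dots,2n-2-b$ and $n-b,n-b+2,\dots,n+b-2$. This multiset is invariant under $b\mapsto n-b$; for instance $m_H((0,1)|S)=\msl n-1,1,3,\dots,2n-3\msr=m_H((0,n-1)|S)$.

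Your fallback also fails. For $n=3$, $m=4$ and $S=\{(0,0),(2,1),(3,2)\}$ one finds $m_H((1,0)|S)=\msl 2,3,4\msr=m_H((1,2)|S)$. So the step ``case by case, these must determine $(a,b)$ uniquely'' is exactly what breaks. Spreading out the first coordinates is the wrong heuristic, because it creates a reflection symmetry in the second coordinate.

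\textbf{A construction that works.} Do the opposite and concentrate the first coordinates. Take $S=\{(0,j):0\le j\le n-2\}\cup\{(1,n-1)\}$, and reparametrise $x=(a,b)$ bijectively by $\alpha=[-a]_m$ and $\beta=[n-1-b]_n$.
\begin{itemize}
\item The distances to the vertices $(0,j)$ are $\alpha+t$ for $t\in\{0,\dots,n-1\}\setminus\{\beta\}$.
\item The distance to $(1,n-1)$ is $\alpha+\beta+1$ if $\alpha\le m-2$, and $\beta$ if $\alpha=m-1$.
\end{itemize}
In the first case the sum $n\alpha+\binom{n}{2}+1$ determines $\alpha$, and $\beta$ is the unique $t\le n-1$ with $\alpha+t$ absent. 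In the second case $\beta\le n-1<m-1$ is the minimum, and all entries are distinct.

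It remains to show the two cases never coincide. A first-case multiset either has a repeated entry (when $\beta\le n-2$) or equals $\msl \alpha,\dots,\alpha+n-2,\alpha+n\msr$, whose only gap of $2$ is at the top. In the second case a top gap of $2$ forces $\beta=n-2$, and then the bottom gap is $m-n+1\ge 2$, so the gap patterns differ.

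This is where $m\ge n+1$ is really used. For $m=n$ this set fails: $(1,0)$ and $(2,n-1)$ collide.
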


Another research direction concerning the multiset dimension of graphs focuses on studying some graphs arising from some algebraic structures. Examples of this type of study are \cite{ali-2024a,ali-2024b,ali-2024c,ali-2026}. In general, the goals of these investigations are centered into taking some algebraic structure (with some particular interest) and generate a graph that somehow ``captures'' some properties of it, and then study the multiset dimension (or indeed any other parameter) of the graph constructed in this way, so that later on, some knowledge can be given about this algebraic structure, based on the graph theoretical property that has been considered. One of the problems that have such contributions is that very frequently the constructed graphs are rather simple and lack of much interest. Thus, the main contributions of these investigations are not precisely the graph theoretical conclusions, but the implications they might have in the algebraic structure. In this sense, in Table \ref{tab:placeholder} we simply comment in our expository work about the specific algebraic structures that have been considered so far.

\begin{table}[ht]
    \centering
    \begin{tabular}{|c|c|}
    \hline
       \cite{ali-2024a}  & compressed zero-divisor graphs over rings  \\ \hline
       \cite{ali-2024b}  & fuzzy zero divisor graph over a commutative ring \\ \hline
       \cite{ali-2024c}  & zero divisor graphs associated with rings\\ \hline
       \cite{ali-2026}   & $V$-graphs over commutative rings:  \\ 
        &  a graph obtained from the non-zero elements of a commutative ring\\ \hline
    \end{tabular}
    \label{tab:placeholder}
\end{table}

In order to see the structure of the results that appear in these types of investigation, we note the following one, where we need an extra definition. Let $S$ be a commutative ring with identity. The $V$-graph of $S$, denoted by $V(S)$, is the undirected graph whose vertex set is $S\setminus\{0\}$, that is, the vertex set of $V(S)$ is formed by all the non-zero elements of $S$. Two distinct vertices $x$ and $y$ of $V(S)$ are adjacent in $V(S)$ if the product $xy$ is a regular element of $S$.

\begin{proposition}{\em \cite[Proposition 3.1.1.]{ali-2026}} 
Let $R$ be a commutative ring with identity. Then the multiset dimension of the $V$-graph $V(R)$ is $1$ if and only if $R\cong \mathbb{Z}_3$.
\end{proposition}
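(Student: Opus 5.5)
The statement reduces to the earlier characterization: by \cite[Theorem 7]{saenpholphat-2009}, $\mdim(G)=1$ holds if and only if $G$ is a path. So it suffices to prove that $V(R)$ is a path if and only if $R\cong\mathbb{Z}_3$. For the easy direction, $V(\mathbb{Z}_3)$ has vertex set $\{1,2\}$, and $1\cdot 2=2$ is a unit, hence regular. Thus $V(\mathbb{Z}_3)=P_2$, which is a path, and $\mdim(V(\mathbb{Z}_3))=1$. The implicit convention is that $\mdim$ is only discussed for connected graphs; in particular, $\mathbb{Z}_2$ gives $K_1$, whose multiset dimension is degenerate or excluded, since $\{x\}$ resolves $K_1$ trivially. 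I would handle this by noting the convention that the graph should be nontrivial, or by checking how \cite{ali-2026} treats it.

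For the converse, assume $V(R)$ is a path $P_m$ with $m\ge 2$, which in particular requires $V(R)$ to be connected. The key structural observation is that the units $U(R)$ are regular and multiply regular elements to regular elements. Hence every unit $u$ is adjacent to every regular $r\neq u$, and a zero-divisor $z$ is adjacent to nothing, because $zy$ is always a zero-divisor. Connectedness with at least two vertices therefore forces $R$ to have no nonzero zero-divisors. In the finite case this makes $R$ a field; for a general $R$, it makes $R$ a domain, in which every nonzero element is regular.

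In a domain the product of two nonzero elements is nonzero and hence regular, so $V(R)$ is the complete graph on $R\setminus\{0\}$. A complete graph is a path only if it has at most two vertices. Therefore $|R|\le 3$, and $R$ must be a domain with exactly three elements, that is $R\cong\mathbb{Z}_3$; the one-vertex case $R\cong\mathbb{Z}_2$ is excluded as above. An infinite domain yields an infinite complete graph, which is certainly not a path.

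The main obstacle is bookkeeping around conventions rather than mathematics. I need to fix the precise notion of "regular", meaning a non-zero-divisor; to decide whether $R$ is assumed finite; and to settle the degenerate case $\mathbb{Z}_2$ and whether a disconnected $V(R)$ counts. I would first pin these down from the definitions in \cite{ali-2026}. If nonconnected graphs are allowed, note that a disconnected graph has no finite multiset basis in the intended sense, since distances are infinite. Under that convention, only the connected analysis above matters.
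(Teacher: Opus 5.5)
The survey gives no proof of this proposition; it only quotes it from \cite{ali-2026}, so there is no argument to compare yours against. On its own merits your argument is correct.

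Take ``regular'' in its standard sense of non-zero-divisor. Nonzero regular elements are closed under products. Every nonzero zero-divisor $z$ is an isolated vertex: if $zw=0$ with $w\neq 0$, then $(zy)w=0$ for every $y$. So $V(R)$ is a clique on the nonzero regular elements together with isolated vertices. Connectivity with at least two vertices then forces $R$ to be a domain, which gives $V(R)=K_{|R|-1}$. The path characterization of \cite{saenpholphat-2009} leaves only $|R|=3$, that is, $R\cong\mathbb{Z}_3$.

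Two of your convention remarks need tightening.

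First, for $\mathbb{Z}_2$, the clean observation is that $\emptyset$ already resolves $K_1$, so $\mdim(V(\mathbb{Z}_2))=0\neq 1$. Saying that $\{x\}$ resolves $K_1$ points the wrong way. If $\mdim(K_1)$ were taken to be $1$, then $\mathbb{Z}_2$ would be a counterexample, and the statement would need an explicit nontriviality hypothesis.

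Second, your claim that a disconnected graph has no finite multiset basis ``since distances are infinite'' is false if $\infty$ is admitted as a distance value. $V(\mathbb{Z}_4)$ is the edge between $1$ and $3$ plus the isolated vertex $2$. The set $\{1\}$ gives the pairwise distinct representations $\msl 0\msr$, $\msl 1\msr$, $\msl\infty\msr$. So the ``only if'' direction genuinely depends on the survey's standing convention that multiset dimension is considered only for connected graphs. That convention, not an impossibility argument, is what justifies restricting to the connected case.
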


We end this section by considering some related contributions regarding multiset resolving sets of graphs presented by using the ID-coloring terminology. In \cite{Chartrand-2021}, those graphs $G$ satisfying that $\mdim(G)<\infty$ were called \textit{ID-graphs}. Moreover, they defined the \textit{identification spectrum} of $G$ as the set of integers $r$ such that $G$ has a multiset resolving set of cardinality $r$. Clearly, this latter definition makes sense because a graph could not have a multiset resolving set of a specific cardinality $r'$, even if $\mdim(G)< r' \le n(G)$. Notice that if $S\subset V(G)$, then it might happen that $S\cup S'$ is not a multiset resolving set for every or for some $S'\subseteq V(G)\setminus S$. In addition, we may observe that the smallest integer belonging to the identification spectrum of a graph $G$ is, in fact, equal to $\mdim(G)$. 

With respect to this identification spectrum, the following results were also given in \cite{Chartrand-2021}.

\begin{theorem}{\em \cite[Theorem 3.1]{Chartrand-2021}}
For any integer $n\ge 4$, the path $P_n$ has a multiset resolving set of cardinality $r$ if and only if $r=1$ or $3\le r\le n-1$.
\end{theorem}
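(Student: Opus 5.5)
We prove the two directions separately. Label the path $P_n=v_1v_2\cdots v_n$, so that $d(v_i,v_j)=|i-j|$.

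\emph{Necessity.} We must exclude $r=2$ and $r=n$; together with the trivial range $1\le r\le n$, this gives the stated condition.
\begin{itemize}
\item The case $r=2$ is excluded by the observation recorded before \cite[Theorem 8]{saenpholphat-2009}: for $S=\{x,y\}$ we have $m(x|S)=\msl 0,d(x,y)\msr=m(y|S)$.
\item For $r=n$, take $S=V(P_n)$. Then $m(v_1|S)=\msl 0,1,\dots,n-1\msr=m(v_n|S)$, so $V(P_n)$ is never multiset resolving.
\end{itemize}

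\emph{Sufficiency for $r=1$.} The singleton $\{v_1\}$ works, since the distances $i-1$ are pairwise distinct.

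\emph{Sufficiency for $3\le r\le n-1$.} The natural first try is $\{v_1,\dots,v_r\}$. It fails: $v_i$ and $v_{r+1-i}$ receive mirror-image, hence equal, multisets. To break this symmetry I will skip one vertex and take
$$S=\{v_1,\dots,v_{r-1}\}\cup\{v_{r+1}\},$$
which is possible because $r+1\le n$. The vertices then split into three groups, which I will separate using a few multiset invariants: the number of $0$'s, the number of $1$'s, the minimum, and the maximum.

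\begin{enumerate}
\item \emph{Vertices $v_j$ with $j\ge r+2$.} The multiset is $\msl j-1,j-2,\dots,j-r+1,\;j-r-1\msr$. Its minimum $j-r-1\ge 1$ determines $j$, so these vertices are pairwise distinguished.
\item \emph{The vertex $v_r$.} Its multiset is $\msl r-1,\dots,2,1,1\msr$. It contains no $0$, so it differs from every vertex of $S$. Its minimum is $1$, which it shares among the non-$S$ vertices only with $v_{r+2}$. However, $v_{r+2}$ has maximum $r+1$ while $v_r$ has maximum $r-1$, so these two differ as well.
\item \emph{Vertices of $S$.} Each contains exactly one $0$, so they differ from all vertices outside $S$. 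Among themselves:
\begin{itemize}
\item $v_{r+1}$ has multiset $\msl r,r-1,\dots,2,0\msr$. It is the only vertex of $S$ with no $1$, because $v_r\notin S$ and $v_{r+2}\notin S$.
\item $v_1$ has maximum $r$, one $1$, and no $2$ missing below its maximum except that the value $r-1$ is absent: its multiset is $\msl 0,1,\dots,r-2,r\msr$.
\item For $2\le j\le r-1$, the multiset of $v_j$ is $\msl j-1,\dots,1,0,1,\dots,r-1-j,\;r+1-j\msr$.
\end{itemize}
\end{enumerate}

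For the last family I will compare the pair (number of $1$'s, maximum). The maximum is $\max\{j-1,\,r+1-j\}$. Two such vertices can only collide when $j-1=r+1-j'$ with $j\neq j'$, and in that case I will show that the counts of $2$'s, or the position of the single gap in the consecutive block, differ. Concretely, the multiset of $v_j$ consists of the two runs $0..j-1$ and $1..r-1-j$ together with the isolated value $r+1-j$. I expect this to pin down $j$, because the largest value missing below the maximum equals $r-j$ whenever $r+1-j$ exceeds $j-1$.

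\emph{Main obstacle.} The delicate part is exactly this comparison inside $S$, in particular the near-symmetric pairs $v_j$ and $v_{r-j}$, together with the boundary cases $j=r-1$ (where $v_r$ is missing as a neighbour) and small $r$ such as $r=3$. If the single-gap construction produces a collision for some $r$, my fallback is to skip a vertex farther from the end, for example
$$S=\{v_1,\dots,v_{r-2},v_r,v_{r+1}\},$$
or to place the gap at $v_2$. I would then redo the same invariant check (zeros, ones, maximum, sum of distances) to confirm that the multisets are pairwise distinct.
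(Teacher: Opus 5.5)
Your necessity argument, the case $r=1$, and the choice $S=\{v_1,\dots,v_{r-1}\}\cup\{v_{r+1}\}$ are all correct. So are your separation of the vertices outside $S$ (absence of $0$, the minimum, and the maxima of $v_r$ versus $v_{r+2}$) and your isolation of $v_{r+1}$ as the only vertex of $S$ with no $1$.

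\textbf{The gap.} It is exactly the step you flag as the main obstacle. You never show that $v_1,\dots,v_{r-1}$ receive pairwise distinct multisets; you only say you expect it. Moreover, the invariants you propose for the pairs with equal maximum (those with $j+j'=r+2$) do not work. Take $r=9$ and $n\ge 10$. Then $m(v_5|S)=\msl 0,1,1,2,2,3,3,4,5\msr$ and $m(v_6|S)=\msl 0,1,1,2,2,3,4,4,5\msr$. These agree in the number of $1$'s, the maximum, and the number of $2$'s, and neither has a value missing below its maximum. Your auxiliary claim is also false: the largest missing value below the maximum is not $r-j$ at $j=(r+1)/2$ for odd $r$, even though $r+1-j>j-1$ there. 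For instance, $r=7$, $j=4$ gives $\msl 0,1,1,2,2,3,4\msr$, where $3$ is present. As written, the sufficiency direction is therefore not proved.

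\textbf{How to close it.} The construction itself is sound, so no fallback is needed; one extra invariant, the sum, finishes the job. For every $1\le j\le r-1$, the multiset $m(v_j|S)$ is the multiset union of $\{0\}$, $\{1,\dots,j-1\}$, $\{1,\dots,r-1-j\}$ and $\{r+1-j\}$. Its maximum is $\max\{j-1,r+1-j\}$. Its sum is $\binom{j}{2}+\binom{r-j}{2}+r+1-j=j^2-(r+1)j+\binom{r}{2}+r+1$. For $j\neq j'$:
\begin{itemize}
\item equal maxima force $j+j'=r+2$;
\item equal sums force $(j-j')(j+j'-r-1)=0$, i.e.\ $j+j'=r+1$.
\end{itemize}
These cannot hold simultaneously, so $v_1,\dots,v_{r-1}$ are pairwise separated. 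This covers $v_1$ versus $v_{r-1}$ as well, and makes your count of $1$'s unnecessary inside this family. Combined with your other observations, this completes the proof.

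The survey only quotes this result from Chartrand et al.\ and gives no proof of its own, so there is no argument in the paper to compare your route against.
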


\begin{theorem}{\em \cite[Theorem 4.4]{Chartrand-2021}}
For any integer $n\ge 6$, the cycle $C_n$ has a multiset resolving set of cardinality $r$ if and only if $3\le r\le n-3$.
\end{theorem}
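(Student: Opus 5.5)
We prove the necessity of $3\le r\le n-3$ with a symmetry argument, and the sufficiency with an explicit family of sets, using a complementation principle to halve the work.

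\emph{Complementation principle.} Since $C_n$ is vertex-transitive, the multiset $m_{C_n}(x|V(C_n))$ is the same for every vertex $x$. For $S\subseteq V(C_n)$ and $T=V(C_n)\setminus S$ we have $m(x|S)=m(x|V(C_n))\setminus m(x|T)$ as multisets. Hence $m(x|S)=m(y|S)$ if and only if $m(x|T)=m(y|T)$. So $S$ is a multiset resolving set if and only if its complement $T$ is one. The cardinalities realized therefore form a set that is symmetric under $r\mapsto n-r$.

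\emph{Necessity.} The cases $r=1$ and $r=2$ are excluded by the earlier results: $\mdim(G)=1$ forces $G$ to be a path, and no graph has a multiset resolving set of size two (the argument via $m_G(x|\{x,y\})=m_G(y|\{x,y\})$). By complementation, $r=n-1$ and $r=n-2$ are then excluded as well. The case $r=n$ is excluded because $V(C_n)$ gives every vertex the same multiset.

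A direct argument also covers all these cases at once. Suppose $\varphi$ is an automorphism with $\varphi(S)=S$ and $\varphi(x)\neq x$ for some $x$. Then $m(x|S)=m(\varphi(x)|S)$, so $S$ fails. If $|V(C_n)\setminus S|\le 2$, the complement is fixed setwise by some reflection of $C_n$: for two vertices, take the reflection through the midpoint of a shortest path between them. That reflection then also preserves $S$. Since $n\ge 6$ and a reflection fixes at most two vertices, it moves some vertex, and $S$ fails.

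\emph{Sufficiency.} By complementation it suffices to exhibit, for every $t$ with $3\le t\le \lfloor n/2\rfloor$, a multiset resolving set of size $t$; the complements then cover $\lceil n/2\rceil\le r\le n-3$. Label the cycle $v_0,\dots,v_{n-1}$. I would take the "broken block"
\[
T_t=\{v_0,v_1,\dots,v_{t-2}\}\cup\{v_t\},
\]
that is, a run of $t-1$ consecutive vertices, one skipped vertex, then one more vertex. Every reflection moves this set, because the gap breaks the symmetry. For $t=3$ this is $\{v_0,v_1,v_3\}$, consistent with $\mdim(C_n)=3$.

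To verify the construction, compute $m(x|T_t)$ as $x$ moves around the cycle. The contribution of the run $\{v_0,\dots,v_{t-2}\}$ is a "window" of consecutive distances determined by the position of $x$ relative to the run. The extra vertex $v_t$ adds one additional distance. The plan is to read off two invariants from each multiset:
\begin{itemize}
\item[(i)] the minimum distance together with the multiplicity pattern, which detects whether $x$ lies in the run, in the gap, at $v_t$, or outside;
\item[(ii)] the sum of the distances, or the maximum distance, which pins down the position of $x$ along its region.
\end{itemize}
Together these should show that distinct $x$ give distinct multisets.

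The main obstacle is this case analysis. It must handle the wrap-around near the antipode of the run, where the distances stop increasing, and it must separate the parities of $n$. It must also rule out coincidences between a vertex on one side of the run and a vertex on the other side. The extra vertex $v_t$ is exactly what breaks these mirror pairs, so the key check is that its added distance differs for mirror-image positions. If small cases, such as $t$ close to $n/2$ with $n$ even, cause collisions, I would move the skipped vertex or the extra vertex by one position and adjust the construction accordingly.
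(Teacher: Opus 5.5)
Your complementation principle and the necessity direction are correct. Every vertex of $C_n$ has the same distance multiset to $V(C_n)$, so $S$ is multiset resolving if and only if $V(C_n)\setminus S$ is. Hence the sizes $1,2,n-2,n-1,n$ are excluded (your reflection argument also does this directly), and for $n\ge 6$ it suffices to produce resolving sets of every size $t$ with $3\le t\le\lfloor n/2\rfloor$.

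\textbf{The gap.} This sufficiency part, which is the real content of the theorem, is never proved; the survey only cites the result, so there is no in-paper argument to fall back on. You state a plan, say the invariants ``should'' separate the vertices, and leave open that $T_t$ might need modifying. Moreover, invariant (ii) does not do what you claim:
\begin{itemize}
\item For $n=14$, $t=7$, the outside vertices $v_8$ and $v_{12}$ have $m(v_8|T_7)=\msl 1,3,4,5,6,6,7\msr$ and $m(v_{12}|T_7)=\msl 2,3,4,5,5,6,7\msr$. These have the same sum $32$ and the same maximum $7$.
\item Inside the run, $v_i$ and $v_{t-1-i}$ always have the same sum.
\end{itemize}
So the position within a region must be recovered by a different mechanism.

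\textbf{How to close it.} The construction is in fact correct and can be verified in a few lines. Since $t\le n/2$, distances inside the arc $A=\{v_0,\dots,v_t\}$ are index differences. View $T_t$ as $A\setminus\{v_{t-1}\}$.
\begin{itemize}
\item First, $0\in m(x|T_t)$ if and only if $x\in T_t$.
\item Inside $T_t$, $v_t$ is the only vertex with no neighbour in $T_t$. For $0\le i\le t-2$ one has $m(v_i|T_t)=\msl 0\msr\uplus\msl 1,\dots,i\msr\uplus\msl 1,\dots,t-2-i\msr\uplus\msl t-i\msr$, whose sum is $i^2-(t-1)i+C$. Hence only $j=t-1-i$ could collide with $i$. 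Cancelling the common part would force $\msl i,t-i\msr=\msl t-1-i,i+1\msr$, that is, $j=i$.
\item Outside $T_t$, $v_{t-1}$ is the only vertex with two neighbours in $T_t$; this uses $t\le n/2$ and $n\ge 6$.
\item For $v_k$ with $t+1\le k\le n-1$, set $p=k-t$ and $q=n-k$, so that $d(v_k,v_j)=\min(q+j,p+t-j)$ for $0\le j\le t$. Because $v_0,v_t\in T_t$, the minimum of $m(v_k|T_t)$ is $\min(p,q)$. Since $p+q=n-t$ is fixed, two outside vertices can share a multiset only if the reflection $v_j\mapsto v_{t-j}$ swaps them, i.e.\ they have parameters $(p,q)$ and $(q,p)$ with, say, $p<q$.
\item Such a pair has the same multiset with respect to $A$. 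Now $m(\cdot|T_t)$ is obtained from it by deleting the distance to $v_{t-1}$. This distance is $p+1$ for the first vertex and $\min(p+t-1,q+1)>p+1$ for the second, so the two multisets differ.
\end{itemize}
This last step is the precise version of your ``mirror pairs'' remark. With it, the sufficiency direction, and hence the theorem, follows.
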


\section{Outer multiset dimension}
\label{sec:outer-multiset}

In the first subsection of this section, we summarize the known results regarding the outer multiset dimension. In the subsequent subsection, we prove two new lower bounds on the outer multiset dimension, thus solving three problems from the literature. 

\subsection{Summary of known results}

As a starting example of this section, consider the cycle graphs. It can be easily checked that $\odim(C_3) = 2$, $\odim(C_4) = 3$, and $\odim(C_5) = 4$, while for longer cycles the following holds. 

\begin{proposition} {\rm \cite[Proposition 3.5]{gil-pons-2019}}
\label{prop:odim-cycles}
If $n\ge 6$, then $\odim(C_n) = 3$.
\end{proposition}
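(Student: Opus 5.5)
The plan is to prove the two inequalities $\odim(C_n)\ge 3$ and $\odim(C_n)\le 3$ separately for $n\ge 6$.

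\emph{Lower bound.} Since $\dim(C_n)=2$ and $\dim(G)\le\odim(G)$, we only have to exclude sets of size two. Let $S=\{a,b\}$ be any two vertices. The reflection of $C_n$ that swaps $a$ and $b$ is an automorphism $\sigma$ fixing $S$ setwise. For every vertex $x$ it satisfies $m(x|S)=\msl d(x,a),d(x,b)\msr=\msl d(\sigma x,b),d(\sigma x,a)\msr=m(\sigma x|S)$. It therefore suffices to find some $x\notin S$ with $\sigma(x)\ne x$. The vertices fixed by $\sigma$ are at most two (the "midpoints"), and $n-2\ge 4$ vertices lie outside $S$. So such an $x$ exists, and $\sigma(x)\notin S$ as well. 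Hence $x$ and $\sigma(x)$ are distinct outside vertices with the same multiset, and $S$ is not outer multiset resolving. Sets of size one are excluded in the same way, by the reflection fixing the single vertex.

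\emph{Upper bound.} Label the vertices $v_0,\dots,v_{n-1}$. We need a $3$-set $S$ whose three mutual distances are pairwise distinct, so that no reflection or rotation preserves $S$. The natural choice is $S=\{v_0,v_1,v_3\}$, with mutual distances $1,2,3$ when $n\ge 6$.

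It may be easier to use the known result $\mdim(C_n)=3$ for $n\ge 6$ from \cite[Theorem 9]{saenpholphat-2009}. A multiset resolving set is in particular an outer multiset resolving set, so $\odim(C_n)\le \mdim(C_n)=3$. This gives the upper bound immediately, and I would present it this way, citing that theorem.

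If a self-contained argument is preferred instead, write $x=v_i$ with $3<i\le n-1$ and compute its multiset with respect to $S$. Near the arc the three distances form shifted patterns $\msl t,t+1,t+3\msr$ on one side and $\msl t,t+2,t+3\msr$ on the other. The work is to check that these patterns, together with the folding that occurs around the antipodal point, produce no coincidences. This is a short case analysis on the parity of $n$ and on the position of $i$ relative to $n/2$.

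The main obstacle is this coincidence check near the antipode, where distances saturate at $\lfloor n/2\rfloor$. This is exactly why the citation route is preferable.
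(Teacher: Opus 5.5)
Your proof is correct, and it takes a different route from the paper only in the sense that the survey gives no argument here at all. It simply quotes \cite[Proposition 3.5]{gil-pons-2019}, whereas you derive the result from facts the survey records elsewhere.

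Your lower bound is a clean automorphism argument, and it is rigorous. The reflection $\sigma$ interchanging $a$ and $b$ stabilizes $S=\{a,b\}$, preserves multiset representations, and fixes at most two vertices. Hence among the $n-2$ vertices outside $S$ some $x$ has $\sigma(x)\ne x$, $\sigma(x)\notin S$ and $m(x|S)=m(\sigma(x)|S)$. The argument already works for $n\ge 5$. It is an instance of a general principle worth stating explicitly: a set stabilized by an automorphism that moves some vertex outside the set can never be outer multiset resolving.

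Your upper bound, via $\odim(C_n)\le\mdim(C_n)=3$ from \cite[Theorem 9]{saenpholphat-2009} and the inequality chain in the preliminaries, is valid and immediate. However, it hands all the computational content to a formally stronger statement, since a multiset resolving set must also separate the vertices of $S$. A direct proof would only need to check that an explicit set such as $\{v_0,v_1,v_3\}$ separates the $n-3$ outside vertices.

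Because you present the citation route, the unfinished case analysis near the antipode is not a gap in your proof. Just do not offer that sketch as part of the argument unless you actually carry it out.
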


A useful property of outer multiset resolving sets is the next presented. 

\begin{proposition}  {\rm \cite[Proposition 3.7]{gil-pons-2019}}
\label{prop:outer-twins}
If $S$ is an outer multiset resolving set of a graph $G$ and $u$ and $v$ are twins in $G$, then $u\in S$ or $v\in S$.
\end{proposition}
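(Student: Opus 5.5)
The plan is to argue by contradiction, directly from the definitions. Suppose $S$ is an outer multiset resolving set of $G$ and $u,v$ are twins in $G$ with $u\notin S$ and $v\notin S$. Both $u$ and $v$ then lie in $V(G)\setminus S$, so the definition of an outer multiset resolving set forces $m_G(u|S)\neq m_G(v|S)$. We will show that in fact $m_G(u|S)=m_G(v|S)$. In fact we will show the stronger statement that the ordered vectors agree, $r(u|S)=r(v|S)$, which of course implies equality of the multisets.

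The key step is the standard fact that twins have the same distance to every third vertex: if $u,v$ are twins (true or false), then $d_G(u,w)=d_G(v,w)$ for every $w\in V(G)\setminus\{u,v\}$. To see this, take $w\neq u,v$ and let $P$ be a shortest $u,w$-path. Its second vertex $x$ is a neighbour of $u$.
\begin{itemize}
\item If $x\neq v$, then $x\in N_G(u)\setminus\{v\}\subseteq N_G(v)$. This inclusion holds both when $N_G(u)=N_G(v)$ and when $N_G[u]=N_G[v]$. Replacing $u$ by $v$ in $P$ gives a $v,w$-walk of the same length, so $d_G(v,w)\le d_G(u,w)$.
\item If $x=v$, then $d_G(v,w)=d_G(u,w)-1<d_G(u,w)$.
\end{itemize}
In either case $d_G(v,w)\le d_G(u,w)$, and by symmetry equality holds.

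We apply this to each $w\in S$. This is legitimate because $u,v\notin S$, so every $w\in S$ is distinct from both $u$ and $v$. Hence $d_G(u,w)=d_G(v,w)$ for all $w\in S$, so $m_G(u|S)=m_G(v|S)$, which is the desired contradiction. Therefore $u\in S$ or $v\in S$.

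There is no real obstacle here. The only point requiring care is the case analysis in the twin-distance lemma, which must handle true and false twins uniformly and allow the shortest path to pass through the other twin. The hypothesis $u,v\notin S$ is exactly what keeps every landmark distinct from the twins. Without it, a landmark equal to $u$ would give distance $0$ from $u$ but distance $d_G(u,v)$ from $v$.
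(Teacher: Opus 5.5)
Your proof is correct: the lemma that twins are equidistant from every other vertex gives $r(u|S)=r(v|S)$ whenever $u,v\notin S$, and hence equal multisets. The case $x=v$ in your lemma cannot actually occur on a shortest path, but treating it does no harm. The survey does not prove this proposition itself; it only cites \cite{gil-pons-2019}, and your argument is the standard reasoning behind that result.
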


We now right away consider the computational complexity of finding $\odim$. To this end, the corresponding decision problem is: 

\bigskip
\indent \textsc{Outer Multiset Dimension Problem} \\
\indent \textbf{Instance:} A graph $G$, an integer $k\in [n(G)-1]$. \\
\indent \textbf{Question:} Is there an outer multiset resolving set in $G$ of cardinality at most $k$?
\bigskip

\noindent

\begin{theorem} {\rm \cite[Theorem 4.1]{gil-pons-2019}}
\label{thm:odim-np-complete}
The {\sc Outer Multiset Dimension Problem} is NP-complete.
\end{theorem}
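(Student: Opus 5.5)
Membership in NP is immediate. A certificate is a set $S\subseteq V(G)$ with $|S|\le k$. All pairwise distances can be computed by BFS in polynomial time. From them we form the multisets $m_G(x|S)$ for $x\in V(G)\setminus S$, sort each one, and check that they are pairwise different. This all takes polynomial time. The real content is NP-hardness, and I would reduce from 3-SAT, following the classical Khuller--Raghavachari--Rosenfeld construction for the metric dimension and adapting it to unordered distances.

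\textbf{The gadgets.} Take a 3-SAT instance with variables $x_1,\dots,x_n$ and clauses $C_1,\dots,C_m$.

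For each variable $x_i$ I build a variable gadget. It contains a small rigid subgraph with a pair of twins $a_i,b_i$. By Proposition~\ref{prop:outer-twins}, any outer multiset resolving set must contain one of $a_i,b_i$. Two further vertices $T_i$ and $F_i$ play the roles of the literals $x_i$ and $\overline{x_i}$. The gadget is designed so that the single forced vertex from it can be taken to be $T_i$ or $F_i$ without loss of generality. This encodes the truth value.

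For each clause $C_j$ I build a clause gadget, say a short path with end $c_j$. Its vertices must be separated from one another, and that separation can only come from a selected literal vertex lying at a special distance. So I join $c_j$ to $T_i$ (respectively $F_i$) by a path of a prescribed length when $x_i$ (respectively $\overline{x_i}$) occurs in $C_j$, and use a longer length otherwise.

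Because order is lost, the construction has to make distances from different gadgets distinguishable by their values. I would do this by giving each gadget $i$ its own block of distance values, for example by attaching paths of lengths growing like $ci$. A multiset then still reveals which landmark contributed which distance. Equivalently, I aim for a graph in which any outer multiset resolving set of size $k=n$ (or $n$ plus a fixed number of auxiliary vertices) behaves like a vector resolving set.

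\textbf{Correctness.} The plan has two directions.
\begin{enumerate}
\item From a satisfying assignment, pick $T_i$ or $F_i$ accordingly, and verify that all vertices outside $S$ receive distinct multisets. Within each gadget, the twins and the path vertices are separated by the local landmark. Across gadgets they are separated by the distinct distance blocks. Each clause gadget is separated by a true literal, which sits at the special short distance.
\item Conversely, Proposition~\ref{prop:outer-twins} and a counting argument force exactly one landmark per variable gadget. The clause vertices can be separated only if some adjacent literal is chosen, so the chosen literals give a satisfying assignment.
\end{enumerate}

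\textbf{Main obstacle.} The hardest step is the distance-block calibration. Every pair of non-landmark vertices, including pairs from different gadgets and the long connecting paths, must end up with different multisets. Without ordered coordinates, two vertices can easily have the same multiset with the coordinates permuted. I would handle this by making the gadget distance ranges pairwise disjoint and widely spaced, so that permuted coincidences cannot occur, and then checking the few remaining local collisions case by case.
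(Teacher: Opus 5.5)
Your NP-membership argument is fine, and a 3-SAT reduction modelled on the Khuller--Raghavachari--Rosenfeld one is the kind of argument the survey indicates lies behind the cited result; the survey does not reproduce the proof of \cite[Theorem 4.1]{gil-pons-2019}. The hardness half of your proposal, however, is a plan rather than a proof. No graph, no path lengths and no value of $k$ are fixed, and both directions of correctness are only asserted. The step you flag as the main obstacle, namely excluding coincidences of multisets up to permutation among \emph{all} vertices outside $S$, is precisely the content of the theorem. Moreover, the variable gadget fails as stated. Twins $a_i,b_i$ satisfy $d_G(a_i,z)=d_G(b_i,z)$ for every $z\notin\{a_i,b_i\}$. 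So Proposition~\ref{prop:outer-twins} can force a landmark into the gadget, but that landmark must be $a_i$ or $b_i$, and these two choices are indistinguishable from the rest of the graph. Hence the ``single forced vertex'' cannot be taken to be $T_i$ or $F_i$. Either $\{T_i,F_i\}=\{a_i,b_i\}$, and then $T_i,F_i$ are twins and cannot be attached differently to the clause gadgets. Or each gadget needs a second landmark, which your budget ($n$ plus a constant) does not allow, and you would still need a mechanism forcing it onto $T_i$ or $F_i$.

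The clause test fails as well. If the clause gadget is a path hanging from $c_j$, take a vertex $p$ of it at distance $t$ from $c_j$. For every landmark $s$ off that path, $d_G(s,p)=d_G(s,c_j)+t$, so $m_G(p|S)$ is $m_G(c_j|S)$ shifted by $t$. These multisets are therefore pairwise distinct for \emph{every} such $S$, and the gadget tests nothing. What you need is a pair of vertices that are equidistant from every admissible landmark position except the literal vertices of $C_j$. In the classical reduction this comes from a complementary adjacency pattern between clause and variable gadgets. In the multiset setting the pair must also not be confusable, after permuting entries, with each other or with any other vertex. Finally, the ``distance blocks'' are not justified. They cannot hold with a fixed range per landmark, since every landmark has neighbours at distance $1$. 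Also, joining $c_j$ by short paths to the literal vertices of $C_j$ brings landmarks of different variable gadgets within small distance of one another through $c_j$. This destroys the separation of ranges exactly where you rely on it. The converse direction must also rule out landmarks placed on clause gadgets or connecting paths. Until a concrete graph is specified and these verifications are carried out, NP-hardness is not established.
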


With such a result, we conclude that the {\sc Outer Multiset Dimension Problem} is a tough one. It seems that the problem could still be tough even for trees, as indicated by the investigation of the outer multiset dimension of complete $\delta$-ary trees from~\cite{gil-pons-2019}. There, a formula for $\odim$ of these trees is given, but it leads to an exponential algorithm to determine the actual value. On the positive side, an explicit formula is given for complete 2-ary trees in~\cite[Corollary 5.4]{gil-pons-2019}. 

The discussion above suggests that it makes sense to explore general bounds for the outer multiset dimension as well as its behavior on specific important graph classes. The dimension of any non-trivial connected graph $G$ is bounded as follows (see \cite[Proposition 3.1]{gil-pons-2019}):  
$$1\le \odim(G) \le n(G) -1\,.$$
The lower bound is obvious, and the upper bound holds because $V(G)\setminus \{u\}$ is an outer multiset resolving set for every vertex $u$ of $G$. Graphs that attain the two bounds are characterized as follows. 

\begin{theorem} \label{thm:outer-extremal-dims} 
If $G$ is a connected graph, then the following holds.
\begin{enumerate}
\item[(i)] {\rm \cite[Proposition 3.2]{gil-pons-2019}} $\odim(G) = 1$ if and only if $G$ is a path graph.
\item[(ii)] {\rm \cite[Theorem 2.1]{klavzar-2023}} $\odim(G) = n(G) - 1$ if and only if $G$ is regular and $\diam(G) \le 2$.
\end{enumerate}
\end{theorem}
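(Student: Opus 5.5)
We handle (i) and (ii) separately.

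\emph{Part (i).} If $G=P_n$ with end vertex $u$, the distances from $u$ to the vertices of $P_n$ are pairwise distinct, so $\{u\}$ is an outer multiset resolving set. Conversely, suppose $\{u\}$ is an outer multiset resolving set. Then the vertices of $V(G)\setminus\{u\}$ have pairwise distinct distances to $u$. Every distance in $\{1,\dots,\ecc_G(u)\}$ is realized by some vertex, so each distance layer from $u$ contains exactly one vertex. Because $G$ is connected, each layer vertex is adjacent to the unique vertex of the previous layer. Edges can only join consecutive layers, and each layer is a single vertex, so $G$ is a path. The other implication in (i) is the remark that $\odim(G)\ge 1$ for nontrivial $G$.

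\emph{Part (ii), sufficiency.} Let $G$ be $r$-regular with $\diam(G)\le 2$, and let $S$ be any set with $|S|=n-2$, where $V(G)\setminus S=\{x,y\}$. We show $S$ is not outer multiset resolving. For $x$, the multiset $m(x|S)$ has $\deg(x)-[xy\in E]$ ones, and all remaining entries equal $2$, since the diameter is at most $2$. The same count holds for $y$. Regularity makes these counts equal, so $m(x|S)=m(y|S)$. Hence $\odim(G)\ge n-1$, and the general upper bound gives equality.

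\emph{Part (ii), necessity.} Assume $\odim(G)=n-1$. Then for every pair $x,y$, the set $S=V\setminus\{x,y\}$ fails, so $m(x|S)=m(y|S)$.

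First we prove $\diam(G)\le 2$. Suppose $\diam(G)\ge 3$ and take $x,y$ with $d(x,y)=3$. Removing $x$ from $V\setminus\{y\}$ deletes the entry $0$ from $m(y|V)$. The comparison $m(x|S)=m(y|S)$ then says that $m(x|V)$ with the entries $0$ and $d(x,y)$ removed equals $m(y|V)$ with the same two entries removed. Equivalently, $m(x|V)=m(y|V)$ for every pair $x,y$ of vertices.

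Thus all vertices have the same distance multiset, and in particular the same number of neighbours, so $G$ is regular. It remains to show that a connected regular graph with diameter at least $3$ always admits a pair that is resolved by $V\setminus\{x,y\}$. Identical distance multisets is exactly the condition for this to fail, so the task is to find two vertices whose distance multisets differ.

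This is the main obstacle, because distance-regular graphs of diameter $3$ have all distance multisets equal. The argument must therefore choose the removed pair, or alternatively a different set of size $n-2$, more cleverly.

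One such choice is $S=V\setminus\{x,y\}$ with $x,y$ adjacent. Then $m(x|S)$ is $m(x|V)$ with $0$ and $1$ removed, and $m(y|S)$ is the same; since the multisets agree, $S$ fails. So removing pairs alone never works; one must instead exhibit some outer multiset resolving set of size at most $n-2$ that contains neither of two specific vertices.

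The plan for this step is to take $x,y$ at distance $3$ and a neighbour $z$ of $x$. Let $S=V\setminus\{x,y,z\}$ plus whichever of these vertices is needed, and compare the resulting distance multisets. One then has to argue, via layer counts, that all pairs among the non-chosen vertices are separated. If this case analysis fails, the fallback is to cite \cite{klavzar-2023}.
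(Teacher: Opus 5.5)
The survey gives no proof of this theorem; it only cites \cite{gil-pons-2019} and \cite{klavzar-2023}. Your proposal is therefore assessed on its own. Part (i) is correct. So is the regularity half of the necessity in (ii); the equivalence $m(x|V\setminus\{x,y\})=m(y|V\setminus\{x,y\})\iff m(x|V)=m(y|V)$ is a clean way to get it. There are, however, two genuine gaps in (ii).

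\textbf{Sufficiency.} You show only that no set of size \emph{exactly} $n-2$ is outer multiset resolving, and then conclude $\odim(G)\ge n-1$. That inference needs outer multiset resolving sets to be closed under supersets, and they are not. Your own identity gives a counterexample. In $C_6$ every vertex has the same distance multiset, so no $4$-set is outer multiset resolving. Yet $\odim(C_6)=3$: with vertices $v_0,\dots,v_5$ in cyclic order, the set $\{v_0,v_1,v_3\}$ gives $\msl 1,1,2\msr$, $\msl 1,2,3\msr$, $\msl 1,2,2\msr$ to $v_2,v_4,v_5$. So you must treat every $S$ with $T=V\setminus S$ and $|T|\ge 2$. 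For $x\in T$ one has $m(x|S)=\msl 1^{a_x},2^{|S|-a_x}\msr$ with $a_x=r-\deg_{G[T]}(x)$. Every graph on at least two vertices has two vertices of equal degree, so two vertices of $T$ receive the same multiset. Your computation is just the case $|T|=2$.

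\textbf{Necessity, diameter part.} This part is not proved. You correctly note that $\odim(G)=n-1$ forces all vertices to share one distance multiset $M$, so no $(n-2)$-set can certify $\odim(G)\le n-2$. But your plan ``$V\setminus\{x,y,z\}$ plus whichever of these vertices is needed'' produces an $(n-2)$-set again, which by your own argument fails. The fallback of citing \cite{klavzar-2023} is not an argument.

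The missing step is short. If $\diam(G)\ge 3$, take consecutive vertices $x,z,w,y$ of a shortest path, so that $d(x,y)=3$, $d(x,z)=1$ and $d(z,y)=2$. Let $S=V\setminus\{x,y,z\}$, a set of size $n-3$. Then
\[
m(x|S)=M\setminus\msl 0,1,3\msr,\qquad m(y|S)=M\setminus\msl 0,2,3\msr,\qquad m(z|S)=M\setminus\msl 0,1,2\msr .
\]
These are pairwise distinct, so $\odim(G)\le n-3$, contradicting $\odim(G)=n-1$. In $C_6$ this is exactly the set $\{v_0,v_1,v_3\}$ above.
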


Graphs $G$ with $\odim(G) = 2$ were studied in detail in~\cite{klavzar-2023}. We note in passing that the problem of characterizing the graphs $G$ having $\dim(G) = 2$ is one of the open problems in the area, see~\cite{behtoei-2017} for a partial contribution in this direction. It is not difficult to establish that if $G$ is a graph with $\odim(G) = 2$ and $S=\{u,v\}$ is an outer multiset basis, then $d_G(u,v)\le 2$, see~\cite[Lemma 3.1]{klavzar-2023}. This fact leads to:

\begin{theorem} {\rm \cite[Theorem 3.2]{klavzar-2023}}
\label{thm:outer-dim=2}
Deciding whether a graph $G$ satisfies $\odim(G) = 2$ can be done in $\mathcal{O}(n(G)^3)$ time.
\end{theorem}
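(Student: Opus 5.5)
The plan is to turn the structural fact quoted just before the statement, \cite[Lemma 3.1]{klavzar-2023}, into a brute-force algorithm: every outer multiset basis $\{u,v\}$ of a graph $G$ with $\odim(G)=2$ satisfies $d_G(u,v)\le 2$. The algorithm has four stages.

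\textbf{Preprocessing.} First, compute all-pairs distances in $G$ by running BFS from every vertex. This costs $\mathcal{O}(n(G)\cdot(n(G)+|E(G)|))=\mathcal{O}(n(G)^3)$ time and gives constant-time access to $d_G(x,y)$ for all $x,y$. Second, test whether $G$ is a path; this takes linear time. If $G$ is a path, then $\odim(G)=1$ by Theorem~\ref{thm:outer-extremal-dims}(i), so the answer is negative. Otherwise $\odim(G)\ge 2$, and $\odim(G)=2$ holds exactly when some two-element set is an outer multiset resolving set.

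\textbf{Reduction to local pairs.} By the lemma, it suffices to test pairs $\{u,v\}$ with $d_G(u,v)\in\{1,2\}$. A naive check costs $\mathcal{O}(n)$ per pair with sorting or hashing, since the multisets $\msl d(x,u),d(x,v)\msr$ for $x\notin\{u,v\}$ can be encoded as ordered pairs $(\min,\max)$ with entries below $n$. Bucketing these in an $n\times n$ array, or using radix sort, detects a repeat in $\mathcal{O}(n)$ time. With up to $\mathcal{O}(n^2)$ candidate pairs, however, this gives $\mathcal{O}(n^3)$ only if we are careful, so the counting is the crux.

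\textbf{Counting argument.} Checking one pair takes $\mathcal{O}(n)$ time, provided we reuse a global $n\times n$ marker array with timestamps so that no reinitialization is needed. The total cost is therefore $\mathcal{O}(n)$ times the number of pairs at distance at most $2$, which is at most $\binom{n}{2}$. The overall bound is $\mathcal{O}(n^3)$, and adding the BFS preprocessing keeps it at $\mathcal{O}(n(G)^3)$.

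\textbf{Main obstacle.} The expected difficulty is mostly bookkeeping: achieving $\mathcal{O}(n)$ per pair rather than $\mathcal{O}(n\log n)$. The timestamped bucket array, indexed by the pair $(\min,\max)$ of the two distances, handles this. The heavier ingredient, that one may restrict to pairs at distance at most $2$, is supplied by the cited lemma. If I had to reprove it, I would argue that when $d(u,v)\ge 3$, the neighbours of $u$ and $v$ on a shortest $u,v$-path produce a collision of multisets. This step is where care is needed, but since it is an earlier stated result it can be assumed here.
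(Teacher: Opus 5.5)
Your proposal is correct and follows the same route the paper indicates: the theorem is derived from the quoted lemma that any two-vertex outer multiset basis consists of vertices at distance at most $2$, so one computes all distances and checks each candidate pair in linear time. As your $\binom{n}{2}$ count already shows, the lemma only prunes the candidate pairs and is not needed for the $\mathcal{O}(n(G)^3)$ bound itself; your preliminary path test is needed, because it correctly excludes the case $\odim(G)=1$, in which some two-element sets would also resolve.
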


The structure of graphs $G$ with $\odim(G) = 2$ having an outer multiset basis formed by two adjacent vertices can be explicitly described as follows. Let $\mathcal{F}$ be the family of graphs $G$ constructed in the following way. Let $V(G)=\{x_0,\dots,x_r\}\cup \{y_0,\dots,y_s\}$, $r\ge 0$, $s\ge 1$. The edges of $G$ are given as described below.
\begin{itemize}
  \item $x_0y_0,x_0y_1\in E(G)$.
  \item For every $i\in [r]$ and every $j\in [s]$, $x_{i-1}x_{i}\in E(G)$ and $y_{j-1}y_{j}\in E(G)$.
  \item For every $i\in [\min\{r,s\}]$, the edge $x_iy_i$ might exist or not in $G$.
  \item For every $j\in [\min\{r,s-1\}]$, the edge $x_iy_{i+1}$ might exist or not in $G$.
\end{itemize}
For $r=0$ and $s=1$ we get $K_3\in \mathcal{F}$, and for $r=0$ and $s=2$ we get that the graph obtained from $K_3$ by attaching a pendant edge to one of its vertices also belongs to $\mathcal{F}$. The case $r=s=1$ yields $K_4-e \in \mathcal{F}$. In general, a representative example of a graph from $\mathcal{F}$ can be seen in Fig.~\ref{fig:dim-2-adjacent} with its outer multiset basis formed by the two black vertices. 

\begin{figure}[ht!]
\begin{center}
\begin{tikzpicture}[scale=0.8,style=thick]
\tikzstyle{every node}=[draw=none,fill=none]
\def\vr{4pt} 
\begin{scope}[yshift = 0cm, xshift = 0cm]
\path (0,0) coordinate (v1);
\path (2,0) coordinate (v2);
\path (4,0) coordinate (v3);
\path (6,0) coordinate (v4);
\path (8,0) coordinate (v5);
\path (0,2) coordinate (v6);
\path (2,2) coordinate (v7);
\path (4,2) coordinate (v8);
\path (6,2) coordinate (v9);
\path (8,2) coordinate (v10);
\path (10,2) coordinate (v11);
\draw (v5) -- (v1) -- (v6) -- (v11);
\draw (v1) -- (v7);
\draw (v8)-- (v3) -- (v9) -- (v4);
\draw (v1)  [fill=black] circle (\vr);
\draw (v2)  [fill=white] circle (\vr);
\draw (v3)  [fill=white] circle (\vr);
\draw (v4)  [fill=white] circle (\vr);
\draw (v5)  [fill=white] circle (\vr);
\draw (v6)  [fill=black] circle (\vr);
\draw (v7)  [fill=white] circle (\vr);
\draw (v8)  [fill=white] circle (\vr);
\draw (v9)  [fill=white] circle (\vr);
\draw (v10)  [fill=white] circle (\vr);
\draw (v11)  [fill=white] circle (\vr);
\draw[left] (v1)++(-0.1,0.0) node {$x_0$};
\draw[below] (v2)++(0.0,-0.1) node {$x_1$};
\draw[below] (v3)++(0.0,-0.1) node {$x_2$};
\draw[below] (v4)++(0.0,-0.1) node {$x_3$};
\draw[below] (v5)++(0.0,-0.1) node {$x_r$};
\draw[left] (v6)++(-0.1,0.0) node {$y_0$};
\draw[above] (v7)++(0.0,0.1) node {$y_1$};
\draw[above] (v8)++(0.0,0.1) node {$y_2$};
\draw[above] (v9)++(0.0,0.1) node {$y_3$};
\draw[above] (v10)++(0.0,0.1) node {$y_4$};
\draw[above] (v11)++(0.0,0.1) node {$y_s$};
\end{scope}
\end{tikzpicture}
\end{center}
\vspace*{-0.4cm}
\caption{A graph from the family $\mathcal{F}$, where $r=4$ and $s=5$.}
\label{fig:dim-2-adjacent}
\end{figure}

\begin{theorem} {\rm \cite[Theorem 3.4]{klavzar-2023}}
\label{th:multiset-dim-2-adj}
A graph $G$ has an outer multiset basis formed by two adjacent vertices if and only if $G\in \mathcal{F}$.
\end{theorem}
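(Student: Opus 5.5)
Throughout, write $S=\{u,v\}$ with $uv\in E(G)$. For every $w\notin S$ we have $|d_G(w,u)-d_G(w,v)|\le 1$, so $w$ is of exactly one of three types:
\begin{itemize}
\item type $A_i$: $(d_G(w,u),d_G(w,v))=(i,i+1)$;
\item type $B_i$: $(d_G(w,u),d_G(w,v))=(i+1,i)$;
\item type $C_j$: $(d_G(w,u),d_G(w,v))=(j,j)$,
\end{itemize}
with $i,j\ge 1$. Types $A_i$ and $B_i$ give the same multiset $\msl i,i+1\msr$, and $C_j$ gives $\msl j,j\msr$. Hence $S$ is an outer multiset resolving set if and only if, for every $i$, at most one vertex is of type $A_i$ or $B_i$ combined, and for every $j$ at most one vertex is of type $C_j$. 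The plan is to set $x_0=u$, $y_0=v$, and to prove in the forward direction that the $A$-vertices form the path $x_1,\dots,x_r$ and the $C$-vertices form the path $y_1,\dots,y_s$, with no $B$-vertices (or symmetrically, after swapping $u,v$). The converse will then be a direct distance computation.

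\textbf{Downward closure.} If a vertex $w$ of type $A_i$ exists with $i\ge2$, take a neighbor $w'$ of $w$ on a $u$-geodesic. Then $d(w',u)=i-1$, while $d(w',v)\ge d(w,v)-1=i$ and $d(w',v)\le d(w',u)+1=i$. So $w'$ is of type $A_{i-1}$; symmetrically, type $B$ descends to type $B$. Consequently, if both an $A$-vertex and a $B$-vertex existed, vertices of types $A_1$ and $B_1$ would both exist, which is impossible. Without loss of generality there are no $B$-vertices. The $A$-vertices then occupy distinct levels $1,\dots,r$ closed downward; call them $x_1,\dots,x_r$, so that $x_{i-1}x_i\in E(G)$.

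For a vertex $w$ of type $C_j$, take a neighbor $w'$ on a $v$-geodesic, so $d(w',v)=j-1$. If $j=1$ then $w'=v$; also $w\sim u$, so $w$ is a common neighbor of $u$ and $v$. If $j\ge2$, then $d(w',u)\in\{j-1,j\}$, so $w'$ is of type $C_{j-1}$ or $B_{j-1}$, and the latter is excluded. Thus the $C$-vertices form $y_1,\dots,y_s$ with $y_{j-1}y_j\in E(G)$, and $y_1$ is adjacent to both $x_0$ and $y_0$.

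\textbf{Ruling out $s=0$.} If $s=0$, then $G$ is the path $y_0x_0x_1\cdots x_r$. By Theorem~\ref{thm:outer-extremal-dims}(i) this gives $\odim(G)=1$, so $S$ would not be a basis. Hence $s\ge1$.

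\textbf{Remaining edges.} Adjacent vertices have distance vectors differing by at most $1$ in each coordinate. Apply this to the vectors $(i,i+1)$ of $x_i$ and $(j,j)$ of $y_j$:
\begin{itemize}
\item $x_i\sim x_k$ forces $|i-k|\le1$;
\item $y_j\sim y_l$ forces $|j-l|\le1$;
\item $x_i\sim y_j$ forces $j\in\{i,i+1\}$.
\end{itemize}
This is exactly the list of optional edges in the definition of $\mathcal F$, so $G\in\mathcal F$.

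\textbf{Converse.} Let $G\in\mathcal F$ and take $S=\{x_0,y_0\}$. I claim that $d(x_i,x_0)=i$, $d(x_i,y_0)=i+1$ and $d(y_j,x_0)=d(y_j,y_0)=j$. To prove this, I will show that every edge changes each distance coordinate by at most one in the stated pattern, and that the path edges realise the upper bounds. Given the claim, all the multisets $\msl i,i+1\msr$ and $\msl j,j\msr$ are distinct, so $S$ is an outer multiset resolving set. Since $x_0y_0y_1$ is a triangle, $G$ is not a path, so $\odim(G)\ge 2$ by Theorem~\ref{thm:outer-extremal-dims}(i), and $S$ is an outer multiset basis.

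The main obstacle is this converse distance check in the presence of the optional chords $x_iy_i$ and $x_iy_{i+1}$. The key point is that no chord creates a shortcut. I would verify this by induction on $i+j$: a neighbor of $x_i$ has distance at least $i-1$ to $x_0$, and a neighbor of $y_j$ has distance at least $j-1$ to both $x_0$ and $y_0$, and this propagates the lower bounds.
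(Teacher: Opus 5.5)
Your proof is correct. The survey does not prove this statement; it only quotes it from \cite{klavzar-2023}, so there is no in-paper argument to compare yours with. Your route is a complete, self-contained proof:
\begin{itemize}
\item classifying the outer vertices by the pair $(d(w,u),d(w,v))$;
\item pushing type $A$ down along $u$-geodesics and type $C$ down along $v$-geodesics (choosing the $v$-geodesic is exactly what turns the only alternative into the excluded type $B_{j-1}$);
\item ruling out coexisting $A$- and $B$-vertices via $A_1$ and $B_1$.
\end{itemize}

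Only minor repairs are needed.

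First, $y_0=v$ has distance vector $(1,0)$, not $(0,0)$. Your edge bookkeeping via the vectors $(j,j)$ therefore does not literally cover $y_0$. A one-line separate check shows that its only possible neighbors are $x_0$ and $y_1$, so your conclusion stands.

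Second, the claim ``if $s=0$ then $G$ is the path $y_0x_0x_1\cdots x_r$'' relies on the edge analysis, which you only give afterwards. It is simpler to argue directly: if $s=0$, then $\{v\}$ alone is an outer multiset resolving set, because the distances $d(x_i,v)=i+1$, $i\ge 0$, are pairwise distinct. This contradicts the assumption that $S$ is a basis of size two.

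Third, in the converse, ``induction on $i+j$'' as you phrase it reads as circular. Use instead the potential argument you also allude to. Put $f(x_i)=i$ for $i\ge 0$, $f(y_0)=1$, and $f(y_j)=j$ for $j\ge 1$. Put $g(x_i)=i+1$ for $i\ge 0$ and $g(y_j)=j$ for $j\ge 0$. Every required and optional edge of $\mathcal F$ (namely $x_0y_0$, $x_0y_1$, $x_{i-1}x_i$, $y_{j-1}y_j$, $x_iy_i$, $x_iy_{i+1}$) changes $f$ and $g$ by at most one. Since $f(x_0)=0$ and $g(y_0)=0$, this gives $f\le d(\cdot,x_0)$ and $g\le d(\cdot,y_0)$. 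The paths $x_0x_1\cdots x_i$, $x_0y_1\cdots y_j$, $y_0x_0x_1\cdots x_i$ and $y_0y_1\cdots y_j$ then give equality.
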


As already mentioned, for any non-trivial graph $G$, we have $\odim(G) \ge \dim(G)$. By considering, for a given set of vertices $S$ of $G$, the number of possible different multiset representations for the elements of $V(G)\setminus S$, the following strict inequality can be deduced. 

\begin{theorem} {\rm \cite[Theorem 3.6]{gil-pons-2019}}
\label{thm:outer-strict-inuquality-by-counting} 
If $G$ is a connected graph with $\dim(G) < k$, where $k$ is the smallest positive integer such that $k + \binom{n(G) + \diam(G) - 1}{\diam(G) - 1} \ge n(G)$, then $\odim(G) > \dim(G)$.
\end{theorem}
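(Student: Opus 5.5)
The plan is a counting argument. I will show that if $S$ is an outer multiset resolving set with $|S| = s < k$, then $s \ge k$, which is a contradiction. Once this is established, $\odim(G) \ge k > \dim(G)$ follows from the hypothesis $\dim(G)<k$. The strict inequality comes from the fact that every outer multiset resolving set has at least $k$ elements, while the metric dimension is smaller than $k$.

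First I bound the number of distinct multisets available. For $x \in V(G)\setminus S$, every distance $d_G(x,u)$ with $u\in S$ lies in $\{1,\dots,\diam(G)\}$, since $x\notin S$. Hence $m_G(x|S)$ is a multiset of size $s$ drawn from a $\diam(G)$-element set. The number of such multisets is $\binom{s+\diam(G)-1}{\diam(G)-1}$. Because $S$ is outer multiset resolving, the $n(G)-s$ vertices outside $S$ receive pairwise distinct multisets, so
$$n(G)-s \le \binom{s+\diam(G)-1}{\diam(G)-1}.$$
This gives $s + \binom{s+\diam(G)-1}{\diam(G)-1} \ge n(G)$.

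The main point to check is how this compares with the quantity in the statement, which has $n(G)$ in place of $s$ inside the binomial. The function $g(t)=t+\binom{t+\diam(G)-1}{\diam(G)-1}$ is increasing in $t$. Since $s \le n(G)$, we have $\binom{s+\diam(G)-1}{\diam(G)-1} \le \binom{n(G)+\diam(G)-1}{\diam(G)-1}$. Therefore $s + \binom{n(G)+\diam(G)-1}{\diam(G)-1} \ge n(G)$. By the minimality of $k$ as the smallest positive integer satisfying that inequality, this yields $s \ge k$.

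So every outer multiset resolving set has cardinality at least $k$, hence $\odim(G)\ge k>\dim(G)$. The only delicate point is keeping the direction of the monotonicity straight. The version with $s$ in the binomial is actually the sharper one and implies the stated form, so the stated (weaker) condition suffices. If instead the intended reading had $s$ in the binomial, the same argument applies verbatim using the monotonicity of $g$.
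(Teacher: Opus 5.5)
Your argument is correct and is the same counting argument the paper alludes to. The $n(G)-|S|$ vertices outside an outer multiset resolving set $S$ must receive distinct multisets from only $\binom{|S|+\diam(G)-1}{\diam(G)-1}$ available ones, which forces $|S|\ge k$ and hence $\odim(G)\ge k>\dim(G)$.

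Your handling of the binomial is also right. With $n(G)$ in the binomial as printed, the hypothesis $\dim(G)<k$ never holds for a non-trivial graph: for $\diam(G)\ge 2$ the binomial already exceeds $n(G)$, giving $k=1$, and for $G=K_n$ one gets $k=n-1=\dim(K_n)$. So the meaningful reading is the one with $k$ in the binomial, which your monotonicity remark covers.
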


The outer multiset dimension has also been investigated on graph products. To state the next result for the lexicographic product, recall once more that in a multiset distance irregular graph $G$, for every two vertices $u$ and $v$, the multisets $m_G(u|V(G))$ and $m_G(v|V(G))$ are different.

\begin{theorem} {\rm \cite[Theorem 4.1]{klavzar-2023}}
\label{thm:lex}
If $G$ is a graph with $n(G)\ge 2$ and $H\in \{K_k, \overline{K_k}\}$, $k\ge 2$, then
$$\odim(G\circ H) \ge n(G)(k-1)\,.$$
Moreover, equality holds if and only if $G$ is multiset distance irregular.
\end{theorem}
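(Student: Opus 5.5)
The argument has two parts: a lower bound based on twin classes, and a characterization of equality. Throughout I work in $G\circ H$ with $H\in\{K_k,\overline{K_k}\}$. For every $g\in V(G)$, the $k$ vertices of the fiber $\{g\}\times V(H)$ are pairwise twins: true twins when $H=K_k$, false twins when $H=\overline{K_k}$. This is because in a lexicographic product $(g,h)$ and $(g,h')$ have the same neighbours outside the fiber, and inside the fiber they are either all adjacent or all nonadjacent.

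For the lower bound I apply Proposition~\ref{prop:outer-twins}. An outer multiset resolving set $S$ contains at least one of any two twins, so it misses at most one vertex of each fiber. Hence $|S|\ge n(G)(k-1)$.

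For equality I need the distances in $G\circ H$. Since $n(G)\ge 2$ and $G$ is connected, every $g$ has a neighbour. Distances are then:
\begin{itemize}
\item[] $d((g,h),(g',h'))=d_G(g,g')$ for $g\ne g'$;
\item[] distance $1$ inside a fiber if $H=K_k$, and distance $2$ inside a fiber if $H=\overline{K_k}$ (via a neighbour of $g$).
\end{itemize}
Write $c=1$ in the first case and $c=2$ in the second.

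Let $S$ have size exactly $n(G)(k-1)$. Then $S$ misses exactly one vertex $(g,h_g)$ from each fiber, and the vertices outside $S$ are precisely these $n(G)$ vertices. The multiset of $(g,h_g)$ is
\[
m\big((g,h_g)\,\big|\,S\big)=\msl c^{\,k-1}\msr\cup\bigcup_{g'\ne g}\msl d_G(g,g')^{\,k-1}\msr .
\]
This equals the multiset obtained from $\msl d_G(g,x):x\in V(G)\msr$ by multiplying every multiplicity by $k-1$, after replacing the entry $0$ (for $x=g$) by $c$.

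The main obstacle is showing that this modification is harmless. Since the replacement is the same for every $g$ and the map is injective, $m((g,h_g)|S)=m((g',h_{g'})|S)$ if and only if $m_G(g|V(G))=m_G(g'|V(G))$. To check this, recover the original multiset: the multiplicity of $c$ in the new multiset, divided by $k-1$, minus one, is the number of $x$ with $d_G(g,x)=c$. All other multiplicities are simply divided by $k-1$. So the new multiset determines $m_G(g|V(G))$, and conversely.

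It follows that some (equivalently, every) set of size $n(G)(k-1)$ with one vertex removed per fiber is an outer multiset resolving set exactly when $G$ is multiset distance irregular. This gives the ``if'' direction. For the ``only if'' direction, any outer multiset resolving set of size $n(G)(k-1)$ must have this form by the twin argument, so equality forces $G$ to be multiset distance irregular.
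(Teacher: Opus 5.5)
Your proof is correct and follows the natural route behind \cite[Theorem 4.1]{klavzar-2023}, which the survey quotes without proof. The fibers $\{g\}\times V(H)$ consist of pairwise twins, so Proposition~\ref{prop:outer-twins} gives the lower bound. For a set missing one vertex per fiber, $m((g,h_g)|S)$ is an injective transform of $m_G(g|V(G))$ (the $0$ replaced by $c$ and multiplicities scaled by $k-1$), which yields the equality characterization.

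One wording point to tighten: ``$|S|=n(G)(k-1)$ forces exactly one missed vertex per fiber'' holds only because $S$ is assumed to be outer multiset resolving, and you do invoke this in your last paragraph.
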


For the grid graphs, that is, the Cartesian product of two paths, the following holds. 

\begin{theorem} {\rm \cite[Theorem 5.1]{klavzar-2023}}
\label{thm:grid}
If $s\ge t\ge 2$, then $\odim(P_s \cp P_t) = 3$.
\end{theorem}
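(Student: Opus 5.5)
The statement has two parts: the lower bound $\odim(P_s\cp P_t)\ge 3$ and a construction of an outer multiset resolving set of size three. Throughout, write $G=P_s\cp P_t$ with vertices $(i,j)$, $i\in[s]$, $j\in[t]$, so that $d_G((i,j),(i',j'))=|i-i'|+|j-j'|$.

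\textbf{Lower bound.} Since $G$ is not a path, Theorem~\ref{thm:outer-extremal-dims}(i) gives $\odim(G)\ge 2$, so it remains to exclude $\odim(G)=2$. Suppose $S=\{u,v\}$ is an outer multiset basis. By \cite[Lemma 3.1]{klavzar-2023}, $d_G(u,v)\le 2$.
\begin{itemize}
\item If $u$ and $v$ are adjacent, Theorem~\ref{th:multiset-dim-2-adj} gives $G\in\mathcal{F}$. Every graph in $\mathcal{F}$ contains the triangle $x_0y_0y_1$, whereas $G$ is bipartite. This is a contradiction.
\item If $d_G(u,v)=2$, bipartiteness forces every $w\notin S$ to satisfy $d_G(w,u)\equiv d_G(w,v) \pmod 2$. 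So for a neighbour $w$ of $u$ the multiset $m_G(w|S)$ is either $\msl 1,1\msr$ (if $w$ is a common neighbour) or $\msl 1,3\msr$.
\begin{itemize}
\item If $u$ and $v$ are diagonal in a $4$-cycle, they have two common neighbours, both with representation $\msl 1,1\msr$.
\item If $u$ and $v$ lie on a common row or column, they have exactly one common neighbour. Since $\delta(G)\ge 2$, each of $u$ and $v$ has a further neighbour not adjacent to the other, and these two vertices are distinct and both have representation $\msl 1,3\msr$.
\end{itemize}
Either case gives a contradiction, so $\odim(G)\ge 3$.
\end{itemize}

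\textbf{Upper bound.} I would not place all three vertices near one corner. For example, with $S=\{(1,1),(1,2),(2,1)\}$, every vertex with $i,j\ge 2$ gets the multiset $\msl k,k-1,k-1\msr$ with $k=i+j-2$, which depends only on $i+j$. Any three landmarks clustered at one corner give representations depending only on $i+j$ far from that corner. So I would use two opposite corners of one side plus a symmetry breaker, say
$$S=\{(1,1),(s,1),(1,2)\}.$$
For $w=(i,j)$, set $d_1=i+j-2$ and $d_2=s-i+j-1$. Then $d_1+d_2=s+2j-3$ determines $j$, and $|d_1-d_2|$ determines $i$ up to the reflection $i\leftrightarrow s+1-i$. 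The third distance equals $d_1-1$ when $j\ge 2$ and $d_1+1$ when $j=1$.

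\textbf{Main obstacle.} The difficulty is that the representation is an unordered multiset of three numbers, so we must show that from it one can recover which entries are $d_1$ and $d_2$. I would do this by a case analysis on $j=1$ versus $j\ge 2$. In the case $j\ge 2$, the multiset contains the consecutive pair $d_1,d_1-1$, and one must check that $d_2$ cannot masquerade as part of such a pair in a different way for another vertex. The reflection ambiguity is then broken because $(1,2)$ lies on the left side. If this choice fails in some boundary case, such as small $s$ or $t=2$, I would perturb the third landmark, for instance to $(2,1)$ or $(1,t)$, and treat $t=2$ (the ladders) and small $s$ separately by direct inspection.
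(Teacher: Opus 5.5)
Your lower bound is correct. Lemma 3.1 of \cite{klavzar-2023}, the triangle $x_0y_0y_1$ present in every graph of $\mathcal{F}$, and your parity argument for $d_G(u,v)=2$ together exclude $\odim(G)=2$. The gap is the upper bound, which you never establish. Moreover, the set you propose fails in half of all cases, not just in boundary ones. For $S=\{(1,1),(s,1),(1,2)\}$, any even $s\ge 4$ and any $t\ge 2$, the vertices $(s/2+1,1)$ and $(s/2,2)$ both have representation $\msl s/2-1,\, s/2,\, s/2+1\msr$. For instance, in $P_4\cp P_2$ the vertices $(3,1)$ and $(2,2)$ both get $\msl 1,2,3\msr$. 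This is exactly the masquerading you worried about: in row $1$ the consecutive pair is $d_1,d_1+1$, in row $2$ it is $d_1-1,d_1$, and $d_2$ completes the other triple. Re-orienting the construction along the other side does not help when $s$ and $t$ are both even. In $P_4\cp P_4$ the analogous set $\{(1,1),(1,4),(2,1)\}$ gives $(1,3)$ and $(2,2)$ the same multiset. So "direct inspection" cannot rescue this; without a verified construction the theorem is not proved.

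Your fallback does close the gap, but it needs the verification. Take $S'=\{(1,1),(2,1),(s,1)\}$ when $s\ge 4$.
A vertex $(i,j)\notin S'$ with $i\ge 2$ has representation $\msl a,a-1,b\msr$, where $a=i+j-2$ and $b=s-i+j-1$; the pair $(a,b)$ determines $(i,j)$.
A vertex $(1,j)$ has representation $\msl A,A-1,B\msr$, where $A=j$ and $B=s+j-2$.
The useful observation is that $\msl a,a-1,b\msr=\msl a',a'-1,b'\msr$ with $a<a'$ forces $b=a'=a+1$ and $b'=a-1$.
For two vertices with $i,i'\ge 2$, this gives $s=2i$ and $s+3=2i'$, which is impossible.
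For collisions involving a vertex $(1,j)$, it gives $s=3$ or $s=0$; if the tops coincide, it gives $2i'=3$. All of these are impossible for $s\ge 4$.
The same computation for your original $S$ shows that its only collisions are the ones exhibited above, so $S$ works when $s$ is odd or $s=2$.
Hence $S$ for $s\in\{2,3\}$ and $S'$ for $s\ge 4$ together give $\odim(P_s\cp P_t)\le 3$, which completes the proof.
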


In~\cite[Problem 6.1]{klavzar-2023}, the problem of investigating the outer multiset dimension of non-regular graphs of diameter $2$ was posed. This problem was first addressed in~\cite{pervaiz-2025} by considering the join of two graphs. Such graphs are of diameter (at most) two and are, in many cases, non-regular. The joins considered in this paper are stars $S_n = \overline{K_{n-1}} + K_1$, wheels $W_n = C_{n-1} + K_1$, generalized wheels $W_{m,n} = C_n + \overline{K_m}$, windmills $W(k, n) = nK_k + K_1$, fans $F_n = P_{n-1} + K_1$, and generalized fans $F_{m,n} = P_n + \overline{K_m}$. For each of these graphs, the outer multiset dimension is established. For instance, if $n\ge 3$, then $\odim(S_n) = n - 2$ by~\cite[Theorem 2.1]{pervaiz-2025}. From the other formulas, we extract the following two.

\begin{theorem} {\rm \cite[Theorem 2.4]{pervaiz-2025}}
\label{thm:fan}
If $n\ge 4$, then
$$
\odim(F_n) =\left\{\begin{array}{ll}
       2; 			& n \in \{4,5\}, \\
       3; 			& n = 6, \\
       n-4; 	& n \ge 7.                      
                    \end{array}
\right.
$$
\end{theorem}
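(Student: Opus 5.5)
Throughout, write $F_n=P_{n-1}+K_1$, where the path is $v_1v_2\cdots v_{n-1}$ and $c$ is the universal vertex. The plan is to reduce outer multiset resolvability in $F_n$ to a counting condition on neighbours, derive a general lower bound from that, and then give explicit constructions together with a check of the small cases.

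\emph{Reduction.} Since $\diam(F_n)\le 2$, every vertex $x\notin S$ has only the entries $1$ and $2$ in $m(x|S)$, and $|S|$ entries in total. Hence $m(x|S)$ is determined by $a_S(x)=|N(x)\cap S|$. So $S$ is an outer multiset resolving set if and only if the numbers $a_S(x)$, $x\notin S$, are pairwise distinct.

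\emph{Values of $a_S$.} A path vertex has at most two path neighbours, plus $c$. The centre, if it lies outside $S$, has $a_S(c)=|S|$.
\begin{itemize}
\item If $c\notin S$, the path vertices outside $S$ take distinct values in $\{0,1,2\}$. So there are at most three of them, and with $c$ at most four vertices lie outside $S$.
\item If $c\in S$, the path vertices outside $S$ take distinct values in $\{1,2,3\}$, so at most three vertices lie outside $S$.
\end{itemize}
In both cases $|S|\ge n-4$.

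\emph{Lower bounds for $n=6$ and $n\ge 7$.} For $n=6$, equality $|S|=2$ would force $c\notin S$ and three path vertices realising all of $0,1,2$. But then one of them has the value $2=|S|=a_S(c)$, a collision. Hence $\odim(F_6)\ge 3$. For $n\le 5$, the bound $n-4$ is replaced by $\odim\ge 2$, which holds by Theorem~\ref{thm:outer-extremal-dims}(i) because $F_n$ is not a path for $n\ge 4$.

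\emph{Construction for $n\ge 7$.} Take $c\notin S$ and leave $v_1$, $v_2$ and one interior vertex $v_k$ with $4\le k\le n-2$ outside $S$. All other vertices go into $S$.
\begin{itemize}
\item $a_S(v_1)=0$, since its only path neighbour $v_2$ is outside $S$ and $c\notin S$.
\item $a_S(v_2)=1$, coming from $v_3\in S$.
\item $a_S(v_k)=2$, from $v_{k-1},v_{k+1}\in S$.
\item $a_S(c)=n-4\ge 3$.
\end{itemize}
These four values are distinct, so $\odim(F_n)=n-4$.

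\emph{Small cases.}
\begin{itemize}
\item $n=4$: $F_4=K_4-e\in\mathcal F$, so it has an outer multiset basis of two adjacent vertices by Theorem~\ref{th:multiset-dim-2-adj}. Directly, $S=\{v_2,c\}$ works, since the two remaining vertices $v_1,v_3$ are twins, both with $a_S=2$, and Proposition~\ref{prop:outer-twins} requires one of them in $S$. Instead take $S=\{v_1,v_2\}$: then $a_S(v_3)=1$ and $a_S(c)=2$, which are distinct. Hence $\odim(F_4)=2$.
\item $n=5$: take $S=\{v_1,v_2\}$. Then $a_S(v_3)=1$, $a_S(v_4)=0$ and $a_S(c)=2$, all distinct, so $\odim(F_5)=2$.
\item $n=6$: take $S=\{v_2,v_4,v_5\}$ with $c\notin S$. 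Then $a_S(v_1)=1$, $a_S(v_3)=2$ and $a_S(c)=3$, all distinct, so $\odim(F_6)=3$.
\end{itemize}

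The main step is the reduction to neighbour counts. Everything afterwards is case bookkeeping, where the only delicate point is the collision between $a_S(c)=|S|$ and the path values when $|S|\le 3$.
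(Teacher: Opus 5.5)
Your proof is correct. The paper does not prove this statement; it only quotes it from \cite{pervaiz-2025}. Your lower-bound argument is, however, exactly the paper's own proof of the join bound $\odim(G+\overline{K_m})\ge n(G)-\Delta(G)+m-2$, specialised to $G=P_{n-1}$ and $m=1$: split on whether the universal vertex lies in $S$, then apply the pigeonhole principle to $|N(x)\cap S|$ over the path vertices. This yields $n-4$. Your additional collision argument for $n=6$ (the path value $2$ clashing with $a_S(c)=|S|=2$), the construction for $n\ge 7$, and the explicit witnesses for $n=4,5,6$ all check out.

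There is one wording slip. In the case $n=4$ you write that $S=\{v_2,c\}$ ``works''. Your own justification shows the opposite: $v_1$ and $v_3$ are false twins with $a_S=2$, so that set fails. The set you actually use, $S=\{v_1,v_2\}$, is a valid witness.
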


\begin{theorem} {\rm \cite[Theorem 2.5]{pervaiz-2025}}
\label{thm:generalized-fan}
If $n\ge 2$ and $m\ge n$, then
$$
\odim(F_{m,n} = P_n + \overline{K_m}) =\left\{\begin{array}{ll}
       m; 			& n \in \{2,3\}, \\
       m+1; 			& n = 4, \\
       m+n-4; 	& n \ge 5.                      
                    \end{array}
\right.
$$
\end{theorem}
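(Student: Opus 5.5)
The key observation is that $F_{m,n}=P_n+\overline{K_m}$ has diameter at most two, so for any $S$ and any $x\notin S$ the multiset $m_G(x|S)$ contains only the values $1$ and $2$. Hence $m_G(x|S)$ is determined by the single number $c_S(x)=|N(x)\cap S|$. I would therefore reformulate the statement as follows: $S$ is an outer multiset resolving set iff the values $c_S(x)$, $x\notin S$, are pairwise distinct.

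Write $V(P_n)=\{v_1,\dots,v_n\}$ in path order and $U$ for the $m$ vertices of $\overline{K_m}$. Put $a=|S\cap U|$ and $b=|S\cap V(P_n)|$. Then:
\begin{itemize}
\item The vertices of $U$ are pairwise false twins, so Proposition~\ref{prop:outer-twins} gives $a\ge m-1$.
\item A vertex $u\in U\setminus S$ has $c_S(u)=b$.
\item A path vertex $v_i\notin S$ has $c_S(v_i)=a+t_i$, where $t_i=|N_{P_n}(v_i)\cap S|\in\{0,1,2\}$.
\end{itemize}
Consequently at most three path vertices lie outside $S$, that is, $b\ge n-3$. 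Moreover, the outside path vertices must realize distinct values of $t_i$. If some $u\notin S$, then $b$ must also avoid the values $a+t_i$. Since $m\ge n\ge b$, this last condition is usually automatic.

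\textbf{Case $n\ge 5$.}
\begin{itemize}
\item Lower bound: $|S|=a+b\ge (m-1)+(n-3)=m+n-4$.
\item Construction: take $S=(U\setminus\{u\})\cup (V(P_n)\setminus\{v_1,v_2,v_4\})$. Then $t_1=0$, $t_2=1$ and $t_4=2$, so the outside path vertices get the values $m-1,m,m+1$. The vertex $u$ gets $n-3\le m-3$. All values are distinct.
\end{itemize}

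\textbf{Case $n\in\{2,3,4\}$.} The bound $b\ge n-3$ is too weak here, so I would do a short case analysis on $(a,b)$, ruling out smaller sizes by pigeonhole on the possible values of $t_i$.
\begin{itemize}
\item $n=2$. With $|S|=m-1$ we have $S=U\setminus\{u\}$, and $v_1,v_2$ both get $m-1$, which fails. So $|S|\ge m$. The set $S=(U\setminus\{u\})\cup\{v_1\}$ gives the values $1$ (for $u$) and $m$ (for $v_2$). These are distinct since $m\ge 2$.
\item $n=3$. Size $m-1$ fails as before, since $v_1$ and $v_3$ collide. For size $m$, the choice $a=m$, $b=0$ fails because $t_1=t_3=0$. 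The choice $a=m-1$, $b=1$ with $S\cap V(P_n)=\{v_1\}$ works: the values are $1$ for $u$, $m$ for $v_2$, and $m-1$ for $v_3$.
\item $n=4$. Any $S$ with $b\le 1$ leaves at least three path vertices outside $S$. Each of them has $t_i\le 1$, so only two values are available and two of them collide. Hence $b\ge 2$ whenever $|S|\le m$ (and $b=2$ with $a=m-1$ is still only $m+1$). So every $S$ of size at most $m$ fails, giving $|S|\ge m+1$. The set $S=(U\setminus\{u\})\cup\{v_1,v_2\}$ works: the values are $2$ for $u$, $m$ for $v_3$, and $m-1$ for $v_4$. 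These are distinct because $m\ge 4$.
\end{itemize}

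The main obstacle is the small-$n$ lower bounds, where $b\ge n-3$ is not enough. There one must use that a path vertex has only $t_i\le b$ neighbours in $S$, together with the end-vertex symmetry $t_1=t_n$ when $b=0$, to force a collision. The hypothesis $m\ge n$ is used only to guarantee that the value $b$ assigned to the excluded vertex of $U$ is distinct from the values $a+t_i$ of the outside path vertices.
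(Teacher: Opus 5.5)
Your proof is correct. The reduction to the neighbour counts $c_S(x)$ is sound. So are the twin bound $a\ge m-1$, the pigeonhole bound $b\ge n-3$, and the refinement $t_i\le b$ for small $n$. All four explicit sets check out; for instance, for $n\ge 5$ the outside vertices $v_1,v_2,v_4,u$ receive the pairwise distinct values $m-1,m,m+1,n-3$.

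The paper gives no proof of this statement; it only cites it from the literature. However, your lower-bound argument for $n\ge 5$ is exactly the paper's own proof of $\odim(G+\overline{K_m})\ge n(G)-\Delta(G)+m-2$, specialized to $G=P_n$ with $\Delta=2$. It uses the same ingredients: diameter two, so the multiset is determined by the number of neighbours in $S$; Proposition~\ref{prop:outer-twins} applied to the vertices of $\overline{K_m}$; and only $\Delta+1$ available counts for the outside path vertices.

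What you add beyond the paper's argument is twofold:
\begin{itemize}
\item the matching constructions, which give the upper bound;
\item for $n\in\{2,3,4\}$, the observation that an outside path vertex has at most $b$ path neighbours in $S$. This is what lifts the general join bound, which yields only $m-1$, $m-1$, $m$ for $n=2,3,4$, to the true values $m$, $m$, $m+1$.
\end{itemize}

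One small presentational point concerns the case $n=4$. Your argument there actually shows $b\ge 2$ for every outer multiset resolving set, so $|S|\ge (m-1)+2=m+1$ follows at once. The qualifier ``whenever $|S|\le m$'' only muddies this.
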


To conclude this subsection, we notice that the outer multiset dimension has been studied in a couple of papers on certain graphs arising from rings. In~\cite{riaz-2025}, the outer multiset dimension has been studied on zero-divisor graphs, and in~\cite{ali-2026}, the outer multiset dimension has been considered on the V-graphs over rings.

\subsection{Two new lower bounds}

In this subsection we prove two new lower bounds on the outer multiset dimension. The first one solves~\cite[Problem 6.1]{klavzar-2023} as follows. 

\begin{theorem}
\label{thm:outer-lower-for-diam-2}
If a graph $G$ has diameter two, then 
$$\odim(G) \ge n(G) - \Delta(G)\,.$$
Moreover, for every $k\ge 1$, there exists a diameter two graph of order $2k+1$ for which the bound is sharp. 
\end{theorem}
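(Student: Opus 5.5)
The plan is to exploit the fact that in a diameter two graph a multiset representation of a vertex outside $S$ carries exactly one piece of information: the number of neighbours it has in $S$. Fix an outer multiset resolving set $S$ of $G$ and a vertex $x\in V(G)\setminus S$. Then $d_G(x,s)\in\{1,2\}$ for every $s\in S$. Hence $m_G(x|S)=\msl 1^{a},2^{|S|-a}\msr$ with $a=|N_G(x)\cap S|$, and two outside vertices have different multisets if and only if they have different values of $a$. So the map $x\mapsto |N_G(x)\cap S|$ is injective on $V(G)\setminus S$. Its values lie in $\{0,1,\dots,\min\{|S|,\Delta(G)\}\}$.

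From this I would derive the bound in two cases. If $|S|<\Delta(G)$, then $n(G)-|S|\le |S|+1\le \Delta(G)$, which is the claim. Otherwise the injectivity only gives $n(G)-|S|\le \Delta(G)+1$, so I would rule out equality. Suppose every value $0,1,\dots,\Delta(G)$ is attained, say by $y$ with value $0$ and $x$ with value $\Delta(G)$.
\begin{itemize}
\item Since $x$ has at most $\Delta(G)$ neighbours and $\Delta(G)$ of them are in $S$, all neighbours of $x$ lie in $S$.
\item All neighbours of $y$ lie outside $S$. In particular $x$ and $y$ are not adjacent, because $x\notin S$ while every neighbour of $x$ is in $S$.
\item Hence $d_G(x,y)=2$, so $x$ and $y$ have a common neighbour. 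That neighbour would have to lie both in $S$ and outside $S$, a contradiction.
\end{itemize}
Therefore $n(G)-|S|\le\Delta(G)$, and taking $S$ to be an outer multiset basis gives $\odim(G)\ge n(G)-\Delta(G)$.

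For sharpness, on $2k+1$ vertices I aim for $\Delta(G)=k$ and an outer multiset resolving set $S$ of size $k+1$. By the discussion above, it suffices that the $k$ outside vertices $x_1,\dots,x_k$ have pairwise different numbers of neighbours in $S$. The lower bound then forces equality. The natural attempt is to let $x_i$ have exactly $i-1$ or $i$ neighbours in $S=\{s_0,\dots,s_k\}$, in a nested, threshold-like pattern. I would then add edges inside $T=\{x_1,\dots,x_k\}$ and inside $S$ to make the diameter equal to two, while keeping every degree at most $k$. A useful guide is that the argument above shows the values $0$ and $k$ cannot both occur; I would also check small cases such as $k=1$, where $P_3$ works.

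The main obstacle is this construction: balancing the degree cap $k$ against the requirement that every pair of vertices is at distance at most two. Verifying the diameter for all pairs (inside $S$, inside $T$, and mixed) is the delicate, case-heavy part. The lower bound itself is short once the counting observation is made.
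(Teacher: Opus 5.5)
Your proof of $\odim(G)\ge n(G)-\Delta(G)$ is correct and is essentially the paper's argument. Every outside vertex has representation $\msl 1^a,2^{|S|-a}\msr$ with $0\le a\le\Delta(G)$. Pigeonhole handles everything except the case of exactly $\Delta(G)+1$ outside vertices. In that case, a vertex with no neighbour in $S$ and a vertex with all its neighbours in $S$ cannot have a common neighbour. There is one small slip: $x$ and $y$ are non-adjacent because $y\notin S$ and $N_G(x)\subseteq S$, not because $x\notin S$.

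The genuine gap is the sharpness half of the theorem. You give no graph, only a plan, and the plan targets parameters that cannot work for every $k$. You aim for $\Delta(G)=k$ with an outer multiset resolving set of size $k+1$.
\begin{itemize}
\item For $k=1$ this is impossible, since every connected graph of order $3$ has maximum degree $2$. Your own example $P_3$ has $\Delta=2=k+1$ and basis size $1=k$, so it does not fit your scheme.
\item For $k=2$ it is also impossible. The only connected graph of order $5$ with maximum degree $2$ and diameter two is $C_5$. Since $C_5$ is regular of diameter two, $\odim(C_5)=4>3=n-\Delta$.
\item For larger $k$ you give neither a construction nor a verification.
\end{itemize}
The missing idea is to swap the roles: take $\Delta(G)=k+1$ and $|S|=k$, and let the $k+1$ outside vertices have $0,1,\dots,k$ neighbours in $S$. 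Then the value $\Delta$ is never used, so the obstruction from your lower-bound argument does not arise.

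The paper's graph $G_k$ does exactly this, on the vertex set $\{u_1,\dots,u_k\}\cup\{v_1,\dots,v_k\}\cup\{w\}$:
\begin{itemize}
\item join $v_j$ to $u_i$ whenever $j\le i$, and join $w$ to every $v_j$;
\item add the cross-edges $u_iu_{k+1-i}$ for $1\le i\le\lfloor k/2\rfloor$ and $v_jv_{k+2-j}$ for $2\le j\le\lceil k/2\rceil$.
\end{itemize}
With $S=\{u_1,\dots,u_k\}$, the vertex $v_j$ has $k-j+1$ neighbours in $S$ and $w$ has none, so $S$ is an outer multiset resolving set. All degrees are at most $k+1$, with equality at $v_1$.

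For the diameter, $v_1$ is adjacent to every $u_i$, $w$ is adjacent to every $v_j$, and $v_iu_j$ is an edge for $j\ge i$. So the only pairs needing attention are $v_i,u_j$ with $j<i$. For these, $v_iu_{k+1-j}u_j$ is a path if $i+j\le k+1$, and $v_iv_{k+2-i}u_j$ is a path otherwise. Hence $\diam(G_k)=2$, and $\odim(G_k)=k=n(G_k)-\Delta(G_k)$.
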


\begin{proof}
Set $n = n(G)$ and $\Delta = \Delta(G)$. Suppose, to the contrary, that $S$ is an outer multiset basis of $G$ with $|S| \le n-\Delta-1$. We investigate two cases.

\medskip
\noindent
\textbf{Case 1}: $|S| \le n-\Delta-2$. \\
In this case, at least $\Delta+2$ vertices are outside of $S$. For every $v \in V(G) \setminus S$, we have $m_G(v|S) = \msl 1^a,2^{|S|-a}\msr$, where $0 \le a \le \Delta$. Hence, there are $\Delta+1$ possible multisets for $v$, and thus there must be two vertices with the same multisets, a contradiction.

\medskip
\noindent
\textbf{Case 2}: $|S| = n-\Delta-1$. \\
Now $|V(G) \setminus S| = \Delta+1$. Since there are $\Delta+1$ possible multisets, each vertex in $V(G) \setminus S$ must receive a unique multiset. Thus, there are vertices $u,v \in V(G) \setminus S$ whose entry $1$ in the multiset has multiplicity $0$ and $\Delta$, respectively. This means that $N(u) \cap S = \emptyset$ while $N(v) \subseteq S$. Hence $uv \notin E(G)$ and $N(u) \cap N(v) = \emptyset$. However, since $G$ has diameter $2$, we have $d(u,v) = 2$, and they must have a common neighbor, a contradiction.

From both cases, we conclude that $|S| \ge n - \Delta$.

To demonstrate that the bound is sharp, consider the graph construction described below. For every integer $k \ge 1$, let the graph $G_k$ be defined as follows. The vertex set of $G_k$ is $\{u_1,\dots,u_k\} \cup \{v_1,\dots,v_k\} \cup \{w\}$. As for the edge set of $G_k$, first connect $v_j$ to $u_i$ for every $1 \le j \le i \le k$. Next, add the following edges, let's refer to them as {\em cross-edges}:
$$
\{u_iu_{k-i+1} : 1 \le i \le \lfloor k/2 \rfloor-1\} \cup
\begin{cases}
    \{u_{k/2}u_{k/2+1}\}, &k \ \text{is even}, \\
    \{u_{\lfloor k/2 \rfloor}u_{\lfloor k/2 \rfloor + 2}\}, &k \ \text{is odd},
\end{cases}
$$
and
$$
\{v_jv_{k-j+2} : 2 \le j \le \lceil k/2 \rceil-1\} \cup
\begin{cases}
    \{v_{k/2}v_{k/2+2}\}, &k \ \text{is even}, \\
    \{v_{\lceil k/2 \rceil}v_{\lceil k/2 \rceil + 1}\}, &k \ \text{is odd}.
\end{cases}
$$
Finally, we connect $w$ to $v_j$ for $j \in [k]$. Note that $G_1 \cong P_3$ and that $G_2$ is the house graph. The graph $G_7$ can be seen in Fig.~\ref{fig:G7}. 

\begin{figure}[ht!]
\begin{center}
\begin{tikzpicture}[scale=0.7,style=thick]
\tikzstyle{every node}=[draw=none,fill=none]
\def\vr{4pt} 

\begin{scope}[yshift = 0cm, xshift = 0cm]
\path (0,0) coordinate (v1);
\path (2,0) coordinate (v2);
\path (4,0) coordinate (v3);
\path (6,0) coordinate (v4);
\path (8,0) coordinate (v5);
\path (10,0) coordinate (v6);
\path (12,0) coordinate (v7);
\path (0,4) coordinate (u1);
\path (2,4) coordinate (u2);
\path (4,4) coordinate (u3);
\path (6,4) coordinate (u4);
\path (8,4) coordinate (u5);
\path (10,4) coordinate (u6);
\path (12,4) coordinate (u7);
\path (6,-3) coordinate (w);

\foreach \i in {1,...,7} \draw (v1)--(u\i);
\foreach \i in {2,...,7} \draw (v2)--(u\i);
\foreach \i in {3,...,7} \draw (v3)--(u\i);
\foreach \i in {4,...,7} \draw (v4)--(u\i);
\foreach \i in {5,...,7} \draw (v5)--(u\i);
\foreach \i in {6,...,7} \draw (v6)--(u\i);
\foreach \i in {7,...,7} \draw (v7)--(u\i);
\foreach \i in {1,...,7} \draw (w)--(v\i);
\draw (u1) .. controls (1,6) and (11,6) .. (u7);
\draw (u2) .. controls (3,5.5) and (9,5.5) .. (u6);
\draw (u3) .. controls (5,5) and (7,5) .. (u5);
\draw (v2) .. controls (4,-1) and (10,-1) .. (v7);
\draw (v3) .. controls (5.5,-0.7) and (8.5,-0.7) .. (v6);
\draw (v4) .. controls (6.5,-0.5) and (7.5,-0.5) .. (v5);

\foreach \i in {1,...,7} \draw (v\i)  [fill=white] circle (\vr);
\foreach \i in {1,...,7} \draw (u\i)  [fill=white] circle (\vr);
\draw (w)  [fill=white] circle (\vr);
\draw[left] (v1)++(-0.1,0.1) node {$v_1$};
\draw[left] (v2)++(-0.1,0.1) node {$v_2$};
\draw[left] (v3)++(-0.1,0.1) node {$v_3$};
\draw[left] (v4)++(-0.1,0.1) node {$v_4$};
\draw[left] (v5)++(-0.1,0.1) node {$v_5$};
\draw[left] (v6)++(-0.1,0.1) node {$v_6$};
\draw[left] (v7)++(-0.1,0.1) node {$v_7$};
\draw[left] (u1)++(-0.1,0.1) node {$u_1$};
\draw[left] (u2)++(-0.1,0.1) node {$u_2$};
\draw[left] (u3)++(-0.1,0.1) node {$u_3$};
\draw[left] (u4)++(-0.1,0.1) node {$u_4$};
\draw[left] (u5)++(-0.1,0.1) node {$u_5$};
\draw[left] (u6)++(-0.1,0.1) node {$u_6$};
\draw[left] (u7)++(-0.1,0.1) node {$u_7$};
\draw[below] (w)++(0.0,-0.1) node {$w$};
\end{scope}

\end{tikzpicture}
\end{center}
\caption{Graph $G_7$}
\label{fig:G7}
\end{figure}

Clearly, $n(G_k) = 2k+1$ and $\Delta(G_k) = k+1$. We next claim that $\diam(G_k) = 2$. Note first that $\ecc_{G_k}(w)=2$ since $wv_i, v_1u_i\in E(G_k)$ for $i\in [k]$. Next, since $v_1u_i\in E(G_k)$ for $i\in [k]$, we have $d_{G_k}(u_i,u_j) \le 2$ for $i,j\in [k]$. Similarly, since $u_kv_i\in E(G_k)$ for $i\in [k]$, we have $d_{G_k}(v_i,v_j) \le 2$ for $i,j\in [k]$. Finally, the cross-edges ensure that $d_{G_k}(v_i,u_j) \le 2$ also holds for all $i,j\in [k]$.

Since the set $\{u_1,\dots,u_k\}$ is as an outer multiset resolving set of $G_k$, we can conclude that $G_k$ attains the bound of the theorem.
\end{proof}

In~\cite[Problem 2]{pervaiz-2025}, a problem was posed to determine, for an arbitrary graph $G$, a lower bound for $\odim(G+K_1)$ and a lower bound for $\odim(G + \overline{K_m})$ for every $m\ge 2$. We can answer these two questions with the next result.  

\begin{theorem}
If $G$ is a graph and $m \ge 1$, then
$$\odim(G+\overline{K_m}) \ge n(G) - \Delta(G) + m - 2.$$
Moreover, the bound is sharp.
\end{theorem}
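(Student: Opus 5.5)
The plan is to argue directly from the structure of the join, combining the twin property of Proposition~\ref{prop:outer-twins} with the counting idea from Case~1 of the proof of Theorem~\ref{thm:outer-lower-for-diam-2}. Applying Theorem~\ref{thm:outer-lower-for-diam-2} to $H=G+\overline{K_m}$ itself is too weak. The reason is that $\Delta(H)=\max\{\Delta(G)+m,\,n(G)\}$, so that bound only yields $n(G)+m-\Delta(H)$, and this is usually much smaller than $n(G)-\Delta(G)+m-2$. So we work with the two sides of the join separately.

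Set $n=n(G)$, $\Delta=\Delta(G)$, $H=G+\overline{K_m}$, and let $W=V(\overline{K_m})$. Let $S$ be an outer multiset basis of $H$, and write $S_1=S\cap V(G)$ and $S_2=S\cap W$.

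\emph{Step 1.} Any two vertices of $W$ have the same open neighbourhood $V(G)$, so they are false twins in $H$. By Proposition~\ref{prop:outer-twins}, at most one vertex of $W$ lies outside $S$, hence $|S_2|\ge m-1$.

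\emph{Step 2.} Every two vertices of $V(G)$ are at distance at most $2$ in $H$: they are either adjacent, or joined through any vertex of $W$. Hence for $v\in V(G)\setminus S$ we get
$$m_H(v|S)=\msl 1^{|S_2|+a},\,2^{|S_1|-a}\msr, \qquad a=|N_G(v)\cap S_1|\in\{0,\dots,\Delta\}.$$
This multiset is determined by $a$ alone, so it takes at most $\Delta+1$ values. The vertices of $V(G)\setminus S$ must receive pairwise distinct multisets, so $n-|S_1|\le \Delta+1$, that is, $|S_1|\ge n-\Delta-1$.

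\emph{Step 3.} Adding the two estimates gives $|S|=|S_1|+|S_2|\ge (n-\Delta-1)+(m-1)=n-\Delta+m-2$, which is the claimed bound. Note that we never need to compare with the possible single vertex of $W\setminus S$; discarding those constraints only weakens the count, so this is harmless for a lower bound.

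\emph{Step 4 (sharpness).} Take $G=K_1$, so $n=1$ and $\Delta=0$. Then $G+\overline{K_m}$ is the star $S_{m+1}$, and the bound reads $m-1$. By the result quoted from~\cite[Theorem 2.1]{pervaiz-2025}, $\odim(S_{m+1})=(m+1)-2=m-1$ for $m\ge 2$, so the bound is attained.

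The only step needing care is the sharpness claim. The star family settles it for every $m\ge 2$, while for $m=1$ small cases such as $K_2+K_1=K_3$ do not attain the bound. If sharpness is required for each $m$, including $m=1$, I would first try the graphs $G_k$ from Theorem~\ref{thm:outer-lower-for-diam-2} with $W$ added: take $S=\{u_1,\dots,u_k\}$ together with $m-1$ vertices of $W$, and check whether this resolving set survives the join.
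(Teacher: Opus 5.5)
Your proof is correct and is the paper's argument in direct form. The paper likewise uses Proposition~\ref{prop:outer-twins} to place $m-1$ vertices of $\overline{K_m}$ in $S$, and then notes that vertices of $V(G)\setminus S$ only receive multisets $\msl 1^a,2^{|S|-a}\msr$ with $a$ ranging over $\Delta(G)+1$ consecutive values. It argues by contradiction, with a case split on whether the last vertex $v_m$ of $\overline{K_m}$ lies in $S$ and a separate remark for complete $G$; your formulation needs neither.

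For sharpness, the paper cites the generalized fans $P_s+\overline{K_m}$ of Theorem~\ref{thm:generalized-fan} rather than stars, and your star example is valid for all $m\ge 2$. For $m=1$ you do not need $G_k$. That idea would in any case fail, because with $\Delta(G_k)=k+1$ the bound is $k+m-2$, one less than the size of the set you propose. The fans $F_n=P_{n-1}+K_1$ with $n\ge 7$ from Theorem~\ref{thm:fan} have $\odim(F_n)=n-4$, which is exactly the bound for $m=1$.
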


\begin{proof}
    The bound trivially holds if $G$ is complete, so let us assume for the rest that $G$ is not complete. Then $\diam(G+\overline{K_m}) = 2$. Set $n = n(G)$, $\Delta = \Delta(G)$, and $V(\overline{K_m}) = \{v_1,\dots,v_m\}$. Suppose, to the contrary, that $S$ is an outer multiset resolving basis of $G+\overline{K_m}$ with $|S| \le n-\Delta+m-3$. Then at least $\Delta+3$ vertices in $G+\overline{K_m}$ are excluded from $S$. Since $v_1,\dots,v_m$ are twins, by Proposition \ref{prop:outer-twins}, we may assume, without loss of generality, that $v_1,\dots,v_{m-1}$ are elements of $S$. Note that if $m = 1$, then there are no such vertices. We consider two cases with respect to the vertex $v_m$.
    
    \medskip
    \noindent
    \textbf{Case 1:} $v_m \notin S$. \\
    There are at least $\Delta+3$ vertices in $V(G) \setminus S$. For every $x \in V(G) \setminus S$, we have $m_{G+\overline{K_m}}(x|S) = \msl 1^a, 2^{|S|-a} \msr$. Since the neighbors of $x$ in $S$ consist of $m-1$ vertices in $\overline{K_m}$ and a maximum of $\Delta$ vertices in $G$, we have $m-1 \le a \le m-1+\Delta$. Hence, there are $\Delta+1$ possible multisets for $x$, and thus there must be two vertices in $V(G) \setminus S$ with the same multisets, a contradiction.
    
    \medskip
    \noindent
    \textbf{Case 2:} $v_m \in S$. \\
    In this case, there are at least $\Delta+2$ vertices in $V(G) \setminus S$. For every $x \in V(G) \setminus S$, we again have $m_{G+\overline{K_m}}(x|S) = \msl 1^a, 2^{|S|-a} \msr$. However, since $x$ is adjacent to at least $m$ neighbors in $S$, namely $v_1,\dots,v_m$, its remaining neighbors in $S$ belong to $G$, and so $m \le a \le m+\Delta$. This again yields $\Delta+1$ possible multisets for $x$, and thus at least two vertices in $V(G) \setminus S$ share the same multisets, another contradiction.
    
    From both cases, we conclude that $|S| \ge n-\Delta+m-2$.

    The bound is sharp, as demonstrated by the generalized fan graphs $F_{s,m} = P_{s} + \overline{K_m}$. Clearly, $n(P_s) = s$ and $\Delta(P_s) = 2$. Indeed, Theorem~\ref{thm:generalized-fan} gives $\odim(F_{s,m}) = s+m-4$ for $m \ge 5$.
\end{proof}

\section{Local multiset dimension}
\label{sec:local}

In this section we discuss the local multiset dimension and divide the material into three subsections. In the first subsection we present exact results, bounds, and a related concept of the local outer multiset dimension. In the subsequent section we present known results on the local multiset dimension of different graph products. In the concluding subsection we present new results on graphs with the local multiset dimension equal to two. In particular, we characterize block graphs which have this property. 

\subsection{Exact values, bounds, local outer multiset dimension}

The complete graph has $\ldim(K_n) = \infty$ for every $n \ge 3$. The results for complete multipartite graphs, cycles, and wheels are as follows.

\begin{theorem}{\rm \cite[Theorem 2.6]{alfarisi-2019}}
If $l\in [k]$ and $n_l \ge k-1$, then $\ldim(K_{n_1,\dots,n_k})=\frac{k(k-1)}{2}$.
\end{theorem}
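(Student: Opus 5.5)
The plan is to reduce the problem to a counting condition on how $S$ meets the parts. Let $V_1,\dots,V_k$ be the parts of $G=K_{n_1,\dots,n_k}$ with $|V_l|=n_l$, and read the hypothesis as $n_l\ge k-1$ for every $l\in[k]$. Since $k\ge 2$, $G$ has diameter at most two: two vertices in different parts are at distance $1$, and two distinct vertices in the same part are at distance $2$. For $S\subseteq V(G)$ write $s_i=|S\cap V_i|$. Then the multiset representation of $x\in V_i$ depends only on $i$ and on whether $x\in S$:
$$m_G(x|S)=\msl 1^{|S|-s_i},2^{s_i}\msr \ \text{ if } x\notin S, \qquad m_G(x|S)=\msl 0,1^{|S|-s_i},2^{s_i-1}\msr \ \text{ if } x\in S.$$

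The key step is to show that $S$ is a local multiset resolving set if and only if the numbers $s_1,\dots,s_k$ are pairwise distinct. Every edge $xy$ joins some $x\in V_i$ to some $y\in V_j$ with $i\ne j$, and every such pair is an edge. If exactly one of $x,y$ lies in $S$, their multisets already differ, because only one of them contains $0$. If both lie outside $S$, or both lie in $S$, the two multisets are equal exactly when $s_i=s_j$.

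Sufficiency is then immediate. For necessity, suppose $s_i=s_j=t$ for some $i\ne j$. If $t=0$, choose $x\in V_i$ and $y\in V_j$, both outside $S$; they are adjacent and have the same multiset. If $t>0$, choose $x\in S\cap V_i$ and $y\in S\cap V_j$; again they are adjacent with equal multisets. In both cases $S$ is not a local multiset resolving set.

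With this characterization the value follows. A lower bound comes from the fact that $k$ pairwise distinct nonnegative integers sum to at least $0+1+\dots+(k-1)=\frac{k(k-1)}{2}$, so $\ldim(G)\ge \frac{k(k-1)}{2}$. For the upper bound, take $S$ consisting of $l-1$ vertices of $V_l$ for each $l\in[k]$; this is possible because $n_l\ge k-1\ge l-1$, and it gives $|S|=\frac{k(k-1)}{2}$.

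The only delicate point I expect is the necessity direction. One must make sure that when $s_i=s_j$ a bad adjacent pair really exists, and this is handled by the split into $t=0$ and $t>0$ above. Everything else is routine bookkeeping of distances in a diameter-two graph.
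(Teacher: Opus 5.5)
Your proof is correct and complete. The survey does not prove this statement; it only quotes it from \cite{alfarisi-2019}, so there is no in-paper argument to compare yours against. Your route is self-contained: you reduce the problem to the condition that the numbers $s_i=|S\cap V_i|$ be pairwise distinct. Each step checks out: the two forms of $m_G(x|S)$, the three cases for an edge, the $t=0$ versus $t>0$ split in the necessity direction, and the count $0+1+\dots+(k-1)$.

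Your characterization also gives more than the stated theorem.

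\emph{Reading of the hypothesis.} It confirms that the hypothesis must be read as you read it, namely $n_l\ge k-1$ for every $l\in[k]$. Under the reading ``for some $l$'' the statement is false. For example, $K_{1,1,1,3}$ would need three pairwise distinct values $s_1,s_2,s_3\in\{0,1\}$, so it has no local multiset resolving set at all. Equivalently, its three universal vertices are pairwise true twins.

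\emph{Weaker sufficient condition.} The stated hypothesis is stronger than necessary. Let $n_{(1)}\le\dots\le n_{(k)}$ be the sorted part sizes. Then $\ldim(K_{n_1,\dots,n_k})=\frac{k(k-1)}{2}$ whenever $n_{(i)}\ge i-1$ for all $i$, by taking $s_{(i)}=i-1$. Otherwise the dimension is $\infty$. Indeed, if $n_{(i)}\le i-2$ for some $i$, then the $i$ smallest parts would need pairwise distinct values in $\{0,\dots,i-2\}$, which pigeonhole rules out. This recovers $\ldim(K_n)=\infty$ for $n\ge 3$ and $\ldim(K_2)=1$ as special cases.
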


\begin{theorem}{\rm \cite[Theorem 10]{simanjuntak-2026+}}
\label{thm:local-cycle}
If $n \ge 3$, then 
$$
\ldim(C_n) =
\begin{cases}
    1; &n \ \text{is even}\,, \\
    3; &n \ge 7 \ \text{is odd}\,, \\
    \infty; & n\in \{3,5\}\,.
\end{cases}
$$
\end{theorem}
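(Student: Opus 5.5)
The plan is to treat the three parity regimes separately, using only that a local multiset resolving set must separate the multisets of the two endpoints of each edge. Label the cycle $v_0,\dots,v_{n-1}$ with indices modulo $n$.

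\emph{Even cycles.} For even $n$, $C_n$ is bipartite. Take any single vertex $u$ as $S=\{u\}$. For an edge $xy$, the distances $d(x,u)$ and $d(y,u)$ have different parities, so $m(x|S)\ne m(y|S)$. Since a nonempty set is needed, $\ldim(C_n)=1$.

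\emph{Odd cycles, lower bounds.} Let $n=2k+1$.
\begin{itemize}
\item A single vertex $v_0$ fails because the antipodal edge $v_kv_{k+1}$ has both endpoints at distance $k$ from $v_0$.
\item A pair $S=\{v_0,v_t\}$ fails because the edge $xy$ lying "opposite" the pair is equidistant from it. The plan is to exhibit such an edge explicitly. I expect this to follow from a reflection of the cycle that swaps $v_0$ and $v_t$. Whichever of the two parities $t$ has, the reflection axis in an odd cycle passes through one vertex and the midpoint of the opposite edge. If the axis does not cut an edge, one chooses the edge fixed setwise by the reflection, which exists since $n$ is odd. That edge $xy$ is swapped by the reflection, so $m(x|S)=m(y|S)$.
\end{itemize}

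More generally, the key tool is this: if a reflection $\sigma$ of $C_n$ fixes $S$ setwise and maps an edge $xy$ to itself with $x\leftrightarrow y$, then $S$ is not locally resolving. This gives $\ldim\ge 3$ for odd $n$. For the infinite cases $n\in\{3,5\}$, a finite check of all subsets suffices.
\begin{itemize}
\item For $C_3$ every set fails by symmetry; for instance $S=V$ gives each vertex the multiset $\msl 0,1,1\msr$.
\item For $C_5$, subsets of size $3,4,5$ must be checked up to rotation. Size $5$ fails by vertex-transitivity, and size $4$ fails because its complement is one vertex, whose reflection fixes $S$ and reverses the opposite edge. For size $3$, $S$ is either three consecutive vertices or $\{v_0,v_1,v_3\}$. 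Both are invariant under some reflection, and that reflection fixes an edge pointwise-reversed, so both fail.
\end{itemize}

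\emph{Odd cycles, upper bound for $n\ge7$.} I need a concrete 3-set. The natural candidate is $S=\{v_0,v_1,v_3\}$, an asymmetric set with pairwise distances $1,2,3$, so no reflection fixes it. Adjacent $x,y$ have distance vectors to $S$ that differ by $\pm1$ in each coordinate, so their sums differ by an odd number or by $\pm1$ or $\pm3$. Equality of multisets requires equal sums. So parity must be broken exactly where some coordinate fails to change parity, namely near antipodes of the elements of $S$, where $d(x,s)=d(y,s)=k$.

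The edges to check are therefore only the three antipodal edges $v_{k}v_{k+1}$, $v_{k+1}v_{k+2}$, and $v_{k+3}v_{k+4}$, together with edges whose sum changes by $\pm2$ or $0$, which again involve an antipodal coordinate. For each such edge, the two multisets are computed directly in terms of $k$. The claim to verify is that they differ once $k\ge3$; for $n=5$ this would fail, consistent with the previous part.

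This case check is the main obstacle. If $\{v_0,v_1,v_3\}$ fails for some $k$, I would try $\{v_0,v_1,v_4\}$ or let the third vertex depend on $n \bmod 4$.
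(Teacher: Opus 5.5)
The paper gives no proof of this statement. It is quoted from \cite{simanjuntak-2026+}, and the only related arguments in the paper are the parity observation behind Theorem~\ref{th:local-equal-1} and Remark~\ref{rem:basis-2-even-dist}. So your proposal has to stand on its own, and it essentially does.

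The even case is correct, as is the exclusion of $1$- and $2$-sets for odd $n$ and the exhaustive treatment of $C_3$ and $C_5$. Your reflection argument is cleaner if stated explicitly: $\sigma(v_i)=v_{t-i}$ fixes $\{v_0,v_t\}$ and reverses the unique edge $v_jv_{j+1}$ with $2j\equiv t-1 \pmod n$. Every reflection of an odd cycle fixes one vertex and reverses one edge, so there is no case ``the axis does not cut an edge''. For cycles this strengthens Remark~\ref{rem:basis-2-even-dist}, since it also rules out pairs at even distance.

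Your reduction for the upper bound is also right once phrased precisely. For an edge $xy$ of $C_{2k+1}$ and a vertex $s$, $d(x,s)-d(y,s)=\pm1$ unless $xy=v_{s+k}v_{s+k+1}$, in which case the difference is $0$. Since $s\mapsto v_{s+k}v_{s+k+1}$ is injective, at most one coordinate vanishes. Equal multisets force equal sums, hence exactly one vanishing coordinate. So only the three antipodal edges of $v_0,v_1,v_3$ need checking.

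What is left undone is that check. It succeeds for $S=\{v_0,v_1,v_3\}$ and every $k\ge 3$, so no fallback set is needed:
\begin{itemize}
\item On $v_kv_{k+1}$ the multisets are $\msl k,k-1,k-3\msr$ and $\msl k,k,k-2\msr$, with sums $3k-4\ne 3k-2$.
\item On $v_{k+1}v_{k+2}$ they are $\msl k,k,k-2\msr$ and $\msl k,k-1,k-1\msr$. The sums are equal, but $k$ has multiplicity $2$ versus $1$.
\item On $v_{k+3}v_{k+4}$, with $v_{k+4}=v_0$ when $k=3$, they are $\msl k,k-1,k-2\msr$ and $\msl k,k-2,k-3\msr$, with sums $3k-3\ne 3k-5$.
\end{itemize}
The hypothesis $k\ge 3$ enters exactly through $d(v_k,v_3)=k-3$ and $d(v_{k+4},v_0)=k-3$. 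For $k=2$ the set is reflection-invariant, consistent with your $C_5$ analysis. With this computation inserted, your argument is a complete proof.
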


\begin{theorem}{\rm \cite[Theorem 15]{simanjuntak-2026+}}
If $n \ge 3$, then
$$
\ldim(W_{n+1}) =
\begin{cases}
    3; &n\in\{4,6\}\,,\\
    \lceil \frac{n}{4}\rceil; &n\ge 8 \; \text{even}\,, \\
    \infty; &\text{otherwise}\,.
\end{cases}
$$
\end{theorem}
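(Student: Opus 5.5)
Throughout, write $W_{n+1}=C_n+K_1$ with hub $c$ and rim $v_0,\dots,v_{n-1}$ (indices mod $n$). The plan is to turn local multiset resolvability into a combinatorial condition on how $S$ cuts the rim, and then to handle the hub separately. Since every rim vertex is adjacent to $c$, distances in the wheel lie in $\{0,1,2\}$. For a vertex $x$, the multiset $m(x|S)$ is therefore determined by two things: whether $x\in S$, and the number $a(x)$ of neighbours of $x$ in $S$. For a rim vertex, $a(x)$ equals its number of rim neighbours in $S$, plus $1$ if $c\in S$. The hub's contribution is the same for all rim vertices, so two adjacent rim vertices are distinguished if and only if either exactly one of them lies in $S$, or both lie on the same side and have different numbers of rim neighbours in $S$.

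\textbf{Key step: run analysis.} Assume $S$ contains at least one rim vertex; otherwise all rim vertices get the same multiset, and adjacent rim vertices are not separated. Decompose the rim into maximal runs of consecutive vertices outside $S$ and maximal runs inside $S$. These runs alternate around the cycle, so there is an even number of them. Now compute the rim-neighbour counts inside a run of length $L$.
\begin{itemize}
\item For an out-run: $L=1$ gives count $2$; $L=2$ gives counts $1,1$, which clash; $L=3$ gives $1,0,1$, which is fine; $L\ge 4$ contains two adjacent interior vertices with count $0$, which clash.
\item For an in-run: $L=1$ gives $0$; $L=2$ gives $1,1$, which clash; $L=3$ gives $1,2,1$, which is fine; $L\ge 4$ contains two adjacent $2$'s, which clash.
\end{itemize}
Hence every run has length $1$ or $3$, and in particular odd length. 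An even number of odd lengths sums to an even number, so $n$ must be even. This gives $\ldim(W_{n+1})=\infty$ for every odd $n$.

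\textbf{Even $n$: lower bound and construction.} Each in-run is followed by an out-run of length at most $3$, and each in-run has length at least $1$. So if there are $r$ in-runs, then $n\le 4r\le 4|S|$, giving $|S|\ge\lceil n/4\rceil$. For the upper bound, take $c\notin S$, and build the rim from in-runs of length $1$ and out-runs of length $3$, with a few runs of length $1$ or $3$ inserted to fix the total. Concretely, for $n\equiv 2\pmod 4$ one can use a single in-run of length $3$ among out-runs of length $3$, which still yields $\lceil n/4\rceil$ vertices of $S$.

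\textbf{Main obstacle: separating the hub.} The hub must differ from every rim vertex.
\begin{itemize}
\item If $c\notin S$, then $m(c|S)=\msl 1^{|S|}\msr$. This can coincide with the multiset of a rim vertex outside $S$ only if that vertex has all of $S$ among its at most two neighbours. So the construction is safe once $|S|\ge 3$. When $|S|=2$ (the case $n=8$), the choice $S=\{v_0,v_4\}$ has no out-run of length $1$, so it is also safe.
\item If $c\in S$, the analogous comparison is with rim vertices in $S$ adjacent to all other vertices of $S$.
\end{itemize}
These checks confirm the value $\lceil n/4\rceil$ for even $n\ge 8$. They also show that for $n\in\{4,6\}$ the hub conflict (or the run constraints) rule out every set of size at most $2$. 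So for these $n$ one exhibits an explicit set of size $3$ and verifies by a short case check that no smaller set works. The small cases are exactly where I expect the bookkeeping to be fiddly, so I would treat them by exhaustive inspection.
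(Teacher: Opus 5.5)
\textbf{Assessment.} The survey only quotes this result, so I judged your proposal on its own. Your reduction is sound. Since $\diam(W_{n+1})\le 2$, the multiset $m(x|S)$ is determined by whether $x\in S$ and by $|N(x)\cap S|$. Every maximal rim run inside or outside $S$ must have length $1$ or $3$, and parity then disposes of odd $n$. You should also state the trivial case $V(C_n)\subseteq S$: there are no out-runs, and all rim vertices clash.

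Two repairs are needed in the even case.
\begin{enumerate}
\item The inequality $n\le 4r$ is false when some in-run has length $3$. What your run analysis actually gives is $n\le I+3r\le 4I$, where $I=|S\cap V(C_n)|\ge r$. Hence $|S|\ge I\ge\lceil n/4\rceil$.
\item This one is a genuine gap: your construction for $n\equiv 2\pmod 4$ does not attain the bound. Taking one in-run of length $3$, the other in-runs as singletons, and all out-runs of length $3$ gives $n=4r+2$ with $|S|=r+2=\lceil n/4\rceil+1$. For example, at $n=10$ it uses four vertices, while the claimed value is three.
\end{enumerate}

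\textbf{How to fix the construction.} The equality analysis of $n\le I+3r\le 4I\le 4|S|$ tells you what an optimal set must look like, and it also explains the exceptions.

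For $n=4k+2$ and $|S|=k+1$, you are forced to have $c\notin S$, all $k+1$ in-runs singletons, and exactly one out-run of length $1$. An example is $S=\{v_0,v_4,\dots,v_{4k}\}$. The vertex $x$ of the singleton out-run has $m(x|S)=\msl 1^2,2^{k-1}\msr$. This differs from $m(c|S)=\msl 1^{k+1}\msr$ if and only if $k\ge 2$. So the construction works for every $n\ge 10$ with $n\equiv 2\pmod 4$, and fails exactly at $n=6$.

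Similarly, for $n=4k$ the forced configuration has out-run endpoints with multiset $\msl 1,2^{k-1}\msr$. This clashes with the hub if and only if $k=1$, which is exactly $n=4$.

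\textbf{The cases $n\in\{4,6\}$.} With this, these cases need no exhaustive search.
\begin{itemize}
\item For $n=6$: up to symmetry, the only set of size at most $2$ satisfying the run and counting constraints is $\{v_0,v_2\}$, and there the hub clashes with $v_1$.
\item For $n=4$: the candidates are $\{v_0\}$, $\{c,v_0\}$ and $\{v_0,v_2\}$, and all of them fail.
\item In both cases, $\{v_0,v_1,v_2\}$ is a local multiset resolving set.
\end{itemize}
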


Furthermore, results on paths, stars, complete $k$-ary trees, and caterpillars are also presented in \cite{alfarisi-2019}, all of which have a local multiset dimension $1$. Indeed, it has been proven later in \cite{simanjuntak-2026+} that all bipartite graphs attain this value. This follows from the property that if $u \in V(G)$ constitutes a local multiset basis of $G$, then for any edge $vw \in E(G)$, $d(u,v)$ and $d(u,w)$ have different parities.

\begin{theorem}{\rm \cite[Theorem 8]{simanjuntak-2026+}}
\label{th:local-equal-1}
Let $G$ be a graph. Then $\ldim(G) = 1$ if and only if $G$ is bipartite.
\end{theorem}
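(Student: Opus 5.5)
We prove the two implications separately, using only the definitions and the fact that a single vertex $u$ gives each vertex $x$ the one-element multiset $m_G(x|\{u\}) = \msl d_G(u,x)\msr$. Thus $\{u\}$ is a local multiset resolving set exactly when $d_G(u,x)\ne d_G(u,y)$ for every edge $xy\in E(G)$. Since $\ldim(G)\ge 1$ for any non-trivial graph, the statement is equivalent to: some vertex $u$ has this property if and only if $G$ is bipartite.

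For the ``if'' direction, let $G$ be connected and bipartite with bipartition $(A,B)$, and fix any vertex $u$, say $u\in A$. In a connected bipartite graph, $d_G(u,x)$ is even for $x\in A$ and odd for $x\in B$, because any $u,x$-path alternates sides. Every edge $xy$ has one end in $A$ and one in $B$, so $d_G(u,x)$ and $d_G(u,y)$ have different parities and hence differ. Therefore $\{u\}$ is a local multiset resolving set, and $\ldim(G)=1$.

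For the ``only if'' direction, suppose $\{u\}$ is a local multiset basis. For any edge $xy$, the triangle inequality gives $|d_G(u,x)-d_G(u,y)|\le 1$, and by assumption the two distances differ, so they differ by exactly $1$. Partition $V(G)$ into $A=\{x : d_G(u,x)\text{ even}\}$ and $B=\{x : d_G(u,x)\text{ odd}\}$. Every edge joins vertices whose distances to $u$ differ by one, hence have opposite parity, so every edge goes between $A$ and $B$. Thus $G$ is bipartite.

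There is no real obstacle here. The only point needing care is the step in the converse that combines the triangle inequality with the inequality of distances to get a difference of exactly one. This rules out two adjacent vertices in the same distance layer from $u$, which is precisely the condition that the BFS layering from $u$ is a proper $2$-coloring. One could also argue via odd cycles: an odd cycle would force some edge inside a single BFS layer, but the parity partition above is more direct.
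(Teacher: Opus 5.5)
Your proof is correct and takes the same approach the paper indicates: a single vertex $u$ is a local multiset basis exactly when the two ends of every edge have distances to $u$ of different parity, and this holds precisely when the parity classes of distances from $u$ form a bipartition. The paper only states this parity property, so your use of the triangle inequality to get a difference of exactly one, and your observation that $\ldim(G)\ge 1$ once $G$ has an edge (which uses nontriviality, since $\ldim(K_1)=0$), supply details the survey leaves implicit.
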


This result also partially generalizes previous contributions on unicyclic graphs \cite{adawiyah-2019}, including the pan graph, sunlet graph, and a cycle with two pendant edges. In general, the following formula has been provided.

\begin{theorem}{\rm \cite[Theorem 2.1]{alfarisi-2023b}}
If $G$ is a non-cycle unicyclic graph, then
$$
\ldim(G) =
\begin{cases}
    1; & $G$ \text{ bipartite}\,,\\
    2; & \text{otherwise}\,.\\
\end{cases}
$$
\end{theorem}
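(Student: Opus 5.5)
The plan is to get the bipartite case and the lower bound for free from Theorem~\ref{th:local-equal-1}, and then to construct an explicit local multiset resolving set of size two in the non-bipartite case. If $G$ is bipartite, Theorem~\ref{th:local-equal-1} gives $\ldim(G)=1$ directly. If $G$ is not bipartite, its unique cycle $C$ is odd, say of length $2k+1$, and the same theorem gives $\ldim(G)\ge 2$. So everything reduces to exhibiting a local multiset resolving set $S$ with $|S|=2$ when $C$ is odd and $G\neq C$.

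Since $G$ is not a cycle, some vertex $c\in V(C)$ has a neighbor $u\notin V(C)$. I would take $S=\{u,v\}$, where $v$ is one of the two neighbors of $c$ on $C$. Every vertex $x$ outside the component $T_u$ of $G-c$ containing $u$ satisfies $d(u,x)=d(c,x)+1$. The first step is the basic parity observation for a single root $c$:
\begin{itemize}
\item every edge $xy$ of $G$ other than the edge $ab$ of $C$ antipodal to $c$ satisfies $|d(c,x)-d(c,y)|=1$;
\item the antipodal edge satisfies $d(c,a)=d(c,b)=k$.
\end{itemize}
Both facts hold because the trees hanging off $C$ are attached through unique roots, so distances to $c$ change by exactly one along tree edges.

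Next I would check edges by type.
\begin{itemize}
\item \emph{Edges inside $T_u$.} All distances to $v$ pass through $c$, so $d(v,x)=d(c,x)+1$ and $d(u,x)$, $d(v,x)$ move in the same direction along any tree edge there. Hence the multiset sums differ by $2$ and the multisets differ.
\item \emph{The edges $uc$ and $cv$.} These are checked by hand: $m(u|S)=\msl0,2\msr\neq\msl1,1\msr=m(c|S)$, and $m(c|S)=\msl1,1\msr\neq\msl2,0\msr=m(v|S)$.
\item \emph{Edges $xy$ outside $T_u$ with $d(c,x)=p$ and $d(c,y)=p+1$.} Then $m(x|S)=\msl p+1,d(v,x)\msr$ and $m(y|S)=\msl p+2,d(v,y)\msr$ with $d(v,y)=d(v,x)\pm1$. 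Equality would force $d(v,x)=p+2$ and $d(v,y)=p+1$. But $d(v,x)\le d(c,x)+1=p+1$, so this is impossible.
\item \emph{The antipodal edge $ab$.} The $u$-coordinates are both $k+1$, while $v$ is adjacent to $c$ on the odd cycle, so $d(v,a)\neq d(v,b)$ (one is $k-1$, the other $k$). Hence the multisets differ.
\end{itemize}

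The main obstacle is doing this case analysis cleanly and exhaustively: tree edges hanging at various cycle vertices, cycle edges on either side of $c$, the special edges near $c$, and the antipodal edge. The point is to ensure no edge has its two distance pairs swapped. The key inequality $d(v,x)\le d(c,x)+1$, from the triangle inequality, should dispose of the generic case, leaving only the antipodal edge and the edges at $c$ to check by hand. If some small case fails (for instance the triangle, $k=1$), I would try instead $S=\{u,w\}$ with $w$ the vertex of $C$ farthest from $c$, and verify that case directly.
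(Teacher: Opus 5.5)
Your proof is correct and complete. The survey gives no argument for this statement; it only quotes \cite[Theorem 2.1]{alfarisi-2023b} and notes that the bipartite case is subsumed by Theorem~\ref{th:local-equal-1}. So the only overlap with the paper is your first step: the bipartite case and the lower bound $\ldim(G)\ge 2$, both via Theorem~\ref{th:local-equal-1}. The upper bound through the explicit pair $S=\{u,v\}$ is your own. It is consistent with Remark~\ref{rem:basis-2-even-dist}, since $d_G(u,v)=2$.

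Your case analysis is exhaustive:
\begin{itemize}
\item The edge $uc$ is not on the unique cycle, so it is a bridge and is the only edge leaving $T_u$.
\item Inside $T_u$ one has $d(u,x)=d(c,x)-1$ and $d(v,x)=d(c,x)+1$. The sum of the two entries therefore changes by $\pm 2$ along every edge.
\item Outside $T_u$, the triangle inequality $d(v,x)\le d(c,x)+1$ disposes of every edge whose endpoints are at different distances from $c$.
\item The only edge of $G$ whose endpoints are equidistant from $c$ is the antipodal edge of $C$, and there $d(v,a)=k-1\neq k=d(v,b)$.
\end{itemize}

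Two minor points. First, the claim $d(v,y)=d(v,x)\pm 1$ in the generic case is false for the edge of $C$ antipodal to $v$, where both distances equal $k$. You do not need it, though: $\msl p+1,d(v,x)\msr=\msl p+2,d(v,y)\msr$ already forces $d(v,x)=p+2$, which the triangle inequality rules out.

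Second, your fallback for the triangle is unnecessary. For $k=1$ the antipodal edge is $vb$ (so $a=v$), and $m(v|S)=\msl 0,2\msr\neq \msl 1,2\msr=m(b|S)$. The same choice of $S$ therefore works for every odd cycle length.
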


Continuing in this direction, the case of bicyclic graphs with disjoint cycles was also studied, while the other case remains open for investigation.

\begin{theorem}{\rm \cite[Theorem 2.2]{alfarisi-2023b}}
If $G$ is a bicyclic graph with two disjoint cycles of order $p_1$ and $p_2$, then
$$
\ldim(G) =
\begin{cases}
    1; & \text{$p_1$ and $p_2$ are even}\,,\\
    2; & \text{otherwise}\,.\\
\end{cases}
$$
\end{theorem}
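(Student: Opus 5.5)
We prove the statement in two steps: first a lower bound $\ldim(G)\ge 2$ whenever some cycle has odd length, then an explicit local multiset resolving set of size two. By Theorem~\ref{th:local-equal-1}, $\ldim(G)=1$ if and only if $G$ is bipartite. This already settles the case $p_1,p_2$ both even, since then every cycle of $G$ is even. If at least one of $p_1,p_2$ is odd, $G$ is non-bipartite, so $\ldim(G)\ge 2$. It remains to exhibit two vertices forming a local multiset resolving set.

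Write $C^1$ and $C^2$ for the two disjoint cycles, and let $P$ be the unique shortest path joining them, with end vertices $a\in V(C^1)$ and $b\in V(C^2)$. The rest of $G$ consists of trees hanging off $C^1\cup P\cup C^2$.

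\textbf{Both cycles odd.} Take $S=\{x,y\}$ with $x$ on $C^1$ antipodal-ish to $a$ and $y$ on $C^2$ antipodal-ish to $b$. For a single vertex $x$ on an odd cycle, $d(x,u)=d(x,v)$ for an edge $uv$ happens only for the unique edge of $C^1$ opposite to $x$, which we call the ``bad edge'' of $x$. Every other edge $uv$ has $d(x,u)-d(x,v)=\pm1$. The same holds for $y$ with respect to $C^2$, and every edge not on $C^1$ or $C^2$ is ``good'' for both vertices, because distances to $x$ and $y$ travel through cut structure.

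We then show $\msl d(x,u),d(y,u)\msr\neq\msl d(x,v),d(y,v)\msr$ for every edge $uv$:
\begin{itemize}
\item[(a)] If the edge is good for both vertices, then both differences are $\pm1$. Equality of the multisets would require a swap $d(x,u)=d(y,v)$ and $d(y,u)=d(x,v)$. Since the sum $d(x,u)+d(y,u)$ changes by $-2$, $0$ or $+2$ along the edge, a swap forces the two differences to have opposite signs. We must rule this out using the geometry: moving along an edge away from $x$ means moving towards $y$ only on the $C^1$–$P$–$C^2$ corridor. The concern there is an edge equidistant-in-swap, i.e.\ the midpoint of the $x$–$y$ geodesic. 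To kill it, I would choose $x$ and $y$ so that $d(x,y)$ is even, making the ``middle'' a vertex rather than an edge. The freedom to pick $x$ among the two vertices near the far side of $C^1$, which are at distances $\lfloor p_1/2\rfloor$ and $\lfloor p_1/2\rfloor -1$ from $a$, lets us adjust the parity.
\item[(b)] If the edge is bad for $x$, it lies on $C^1$. Then $d(y,\cdot)=d(y,a)+d(a,\cdot)$, and $d(a,u)\ne d(a,v)$ unless the edge is also opposite $a$. This is why $x$ must not be the vertex whose bad edge coincides with $a$'s bad edge. Given that, $d(y,u)\neq d(y,v)$ while $d(x,u)=d(x,v)$, so the multisets differ.
\item[(c)] An edge bad for $y$ is handled symmetrically to (b).
\end{itemize}

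\textbf{Exactly one cycle odd.} Say $p_1$ is odd. Use $x$ on $C^1$ chosen as above, and take $y$ a neighbour of $x$ on $C^1$, or some vertex of $P$, again chosen to avoid the bad-edge coincidence and the parity swap. Alternatively, reduce to the unicyclic argument of~\cite{alfarisi-2023b}: the even cycle and everything beyond it behaves bipartitely with respect to both $x$ and $y$, so the two vertices that work for the unicyclic graph $G$ minus an edge of $C^2$ should still work. Justifying that claim needs checking that the distances from $x$ and $y$ to vertices on the $C^2$ side all pass through $b$.

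\textbf{Main obstacle.} The main obstacle is the swap case in (a): excluding edges $uv$ with $d(x,u)=d(y,v)$ and $d(y,u)=d(x,v)$ for arbitrary pendant trees. I would handle it by showing such an edge must lie on a shortest $x$–$y$ path, with $u$ and $v$ symmetric about its midpoint, and then fix the parity of $d(x,y)$ to make this impossible.
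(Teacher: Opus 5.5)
\textbf{There is a genuine gap, and it cannot be closed: the statement as quoted is false.}

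Your argument breaks at the interplay of steps (a) and (b). Step (b) forces $x\neq a$, and symmetrically $y\neq b$. Step (a) needs $d(x,y)$ even, and you adjust parity by taking $x$ at distance $\lfloor p_1/2\rfloor$ or $\lfloor p_1/2\rfloor-1$ from $a$. For $p_1=3$ the second choice is $x=a$, which is exactly what (b) forbids, so a triangle gives no parity freedom. If both cycles are triangles whose two non-attachment vertices have degree $2$, then $d(x,y)=d(a,b)+2$ is fixed. When $d(a,b)$ is odd, nothing can rescue this, because both of your constraints are necessary.

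Take triangles $a_1a_2a_3$ and $b_1b_2b_3$ joined by the edge $a_1b_1$. Since $a_2,a_3$ are adjacent true twins, every local multiset resolving set contains one of them, and likewise one of $b_2,b_3$. So a resolving pair would have to be $S=\{a_i,b_j\}$ with $i,j\in\{2,3\}$. But then $m_G(a_1|S)=\msl 1,2\msr=m_G(b_1|S)$, which is the middle edge of an odd geodesic as in Remark~\ref{rem:basis-2-even-dist}. Hence no pair works, and since $\{a_1,a_2,b_2\}$ does work, $\ldim(G)=3$ although both cycles are odd. The same argument rules out every pair whenever two such triangles are joined by a path of odd length, even with trees attached along that path.

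This agrees with the paper's own block-graph theorem. $G$ is a block graph with $\omega(G)=3$, but the only shortest paths meeting both triangles in two vertices are $a_ia_1b_1b_j$, of length $3$, so $G\notin\mathcal{B}$. The paper merely quotes this result without proof, so there is no argument there to compare with. Any correct version needs an extra hypothesis excluding at least this configuration.

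\textbf{Steps that need repair even where the statement holds.}
\begin{itemize}
\item When exactly one cycle is odd, choosing $y$ adjacent to $x$ is impossible by Remark~\ref{rem:basis-2-even-dist}. The reduction to the unicyclic case is only asserted, not proved.
\item Your claim that the only edge bad for $x$ is the edge of $C^1$ opposite $x$ is wrong when $p_2$ is odd. Distances from $x$ into $C^2$ pass through $b$, so the edge of $C^2$ opposite $b$ is also bad for $x$. This is fixable, since $y\neq b$ resolves that edge, but case (b) has to cover it.
\item Excluding swapped edges in (a), where $d(x,u)=d(y,v)$ and $d(y,u)=d(x,v)$, rests on an unproved claim. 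You assume such an edge lies on a shortest $x,y$-path with $u,v$ symmetric about its midpoint. This is the core of the case analysis and needs a real argument. One route: a swap on an edge good for both vertices forces $d(x,u)-d(y,u)=\pm1$, and you can track this difference around $C^1$, along $P$, and around $C^2$.
\end{itemize}
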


Characterizing graphs with a finite or infinite local multiset dimension remains a major open problem in this subject. To this end, a simple characterization involving twins has been stated as follows. 

\begin{theorem}{\rm \cite[Theorem 4]{simanjuntak-2026+}}
If a graph $G$ has $\ldim(G) < \infty$, then every maximal clique of $G$ contains at most two simplicial vertices. Moreover, if a maximal clique of $G$ contains two simplicial vertices, then exactly one must be in every local multiset resolving set of $G$.
\end{theorem}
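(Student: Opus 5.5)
The key observation is that simplicial vertices lying in a common maximal clique are necessarily twins. If $x$ and $y$ are simplicial vertices of a maximal clique $Q$, then $N_G[x]$ is a clique containing $Q$, so by maximality $N_G[x]=Q$; likewise $N_G[y]=Q$. Hence $N_G[x]=N_G[y]=Q$, so $x$ and $y$ are adjacent true twins. In particular, for every vertex $z\notin\{x,y\}$ we have $d_G(z,x)=d_G(z,y)$, because any shortest path leaving $x$ passes through a vertex of $Q\setminus\{x\}$, and that vertex is adjacent to $y$ as well; the argument is symmetric.

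I would prove both assertions of the theorem by analysing how a local multiset resolving set $S$ can intersect a pair $\{x,y\}$ of such twins. Note that $xy\in E(G)$, so $S$ must give $x$ and $y$ different multisets. Write $S'=S\setminus\{x,y\}$. Every $z\in S'$ contributes the same distance to $m_G(x|S)$ and to $m_G(y|S)$.

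\textbf{Case $S\cap\{x,y\}=\emptyset$.} Then $m_G(x|S)=m_G(y|S)$, a contradiction.

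\textbf{Case $\{x,y\}\subseteq S$.} Then $m_G(x|S)$ is the contributions of $S'$ together with $\msl 0,1\msr$, and $m_G(y|S)$ is exactly the same, again a contradiction.

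So exactly one of $x,y$ lies in every local multiset resolving set, which gives the ``moreover'' part.

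For the first part, suppose some maximal clique $Q$ contains three simplicial vertices $x,y,z$. By the observation above they are pairwise adjacent true twins. Applying the dichotomy to each of the three pairs, $S$ must contain exactly one vertex of $\{x,y\}$, exactly one of $\{x,z\}$, and exactly one of $\{y,z\}$. Summing these conditions counts $|S\cap\{x,y,z\}|$ twice, so $2|S\cap\{x,y,z\}|=3$, which is impossible. Hence no local multiset resolving set exists, i.e.\ $\ldim(G)=\infty$. Taking the contrapositive, whenever $\ldim(G)<\infty$, every maximal clique has at most two simplicial vertices.

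The only genuinely delicate step is the distance equality $d(z,x)=d(z,y)$ for all $z\neq x,y$, which relies on the identity $N[x]=N[y]=Q$ established at the start. Everything else is a direct multiset comparison.
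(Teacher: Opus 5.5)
The survey gives no proof of this statement; it is quoted from \cite[Theorem 4]{simanjuntak-2026+}, so there is nothing in the paper to compare your argument against. Your proof is correct and complete.

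Your steps are:
\begin{itemize}
\item show that simplicial vertices of a common maximal clique $Q$ satisfy $N_G[x]=N_G[y]=Q$;
\item deduce that every local multiset resolving set contains exactly one vertex of each such adjacent pair;
\item obtain the parity contradiction for three such vertices.
\end{itemize}
This is the same twin-based reasoning the survey uses elsewhere, for instance in Proposition~\ref{prop:outer-twins} and in the observation from \cite[Theorem 3]{saenpholphat-2009} that three pairwise true twins force $\mdim(G)=\infty$.

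There is one cosmetic slip in the distance step. On a shortest $x,z$-path, the neighbour of $x$ may be $y$ itself, and then it is not ``adjacent to $y$''. In that case $d_G(y,z)<d_G(x,z)$, so the inequality $d_G(y,z)\le d_G(x,z)$ still holds, and equality follows by symmetry.

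Your argument uses only that $x$ and $y$ are adjacent true twins; simpliciality merely guarantees this. So you have in fact proved a slightly more general statement: every local multiset resolving set contains exactly one vertex from each pair of true twins, and any three pairwise true twins force $\ldim(G)=\infty$.
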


Several remarkable bounds for the local multiset dimension of a graph have been found, with some relation to other graph parameters such as the diameter, the clique number, and the chromatic number.

\begin{theorem}{\rm \cite[Theorem 11]{simanjuntak-2026+}}
If $G$ is a non-trivial graph, then $\ldim(G) \ge \left\lceil \log_2 \omega(G) \right\rceil.$
\end{theorem}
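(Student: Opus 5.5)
We must show $\ldim(G)\ge \lceil \log_2 \omega(G)\rceil$. The plan is to fix a local multiset resolving set $S$ of $G$ (if none exists, $\ldim(G)=\infty$ and there is nothing to prove) and a maximum clique $K$ with $|K|=\omega=\omega(G)$. Since every two vertices of $K$ are adjacent, the defining property forces the multisets $m_G(x|S)$, $x\in K$, to be pairwise distinct. The whole proof therefore reduces to bounding how many distinct multisets can occur on a clique, and showing this number is at most $2^{|S|}$.

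The key observation is that for any $s\in S$ and any $x,y\in K$ we have $|d_G(x,s)-d_G(y,s)|\le d_G(x,y)\le 1$. So for fixed $s$, the distances from $s$ to vertices of $K$ take at most two consecutive values, namely $t_s$ and $t_s+1$, where $t_s=\min_{x\in K} d_G(x,s)$. Hence each $x\in K$ determines a vector $\varepsilon(x)\in\{0,1\}^S$ with $d_G(x,s)=t_s+\varepsilon_s(x)$.

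The multiset $m_G(x|S)$ is a function of $\varepsilon(x)$, because the values $t_s$ do not depend on $x$. Consequently the number of distinct multisets realized on $K$ is at most the number of distinct vectors $\varepsilon(x)$, which is at most $2^{|S|}$. Distinctness on $K$ then gives $\omega\le 2^{|S|}$, i.e.\ $|S|\ge \log_2\omega$, and since $|S|$ is an integer, $|S|\ge\lceil\log_2\omega\rceil$. Taking $S$ to be a local multiset basis yields the claim.

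The main point to get right is the step from distinctness of multisets to the bound $2^{|S|}$. Different $\varepsilon$ vectors might give the same multiset, but that only helps: we need an upper bound on the number of distinct multisets, and passing through the injective-on-$K$ map $x\mapsto\varepsilon(x)$ gives exactly that. It is also worth noting that the bound $2^{|S|}$ is crude. One could sharpen it by grouping the elements $s$ according to $t_s$, but that is not needed for the stated inequality. Finally, the trivial graph is excluded only so that the quantities are well defined; for $\omega=1$ the bound reads $\ldim(G)\ge 0$, which is immediate.
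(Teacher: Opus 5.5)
Your proof is correct and complete. On a maximum clique $K$ the map $x\mapsto m_G(x|S)$ is injective, and it factors through $x\mapsto\varepsilon(x)\in\{0,1\}^S$ because each $d_G(x,s)$ lies in $\{t_s,t_s+1\}$. This gives $\omega(G)\le 2^{|S|}$.

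The survey does not prove this statement; it only quotes it from the literature, so there is no paper proof to compare with. Your clique-counting argument is the natural one.

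One observation: your count also bounds the number of distinct distance vectors $r(x|S)$, $x\in K$, by $2^{|S|}$, since $r(x|S)=(t_s+\varepsilon_s(x))_{s\in S}$. So the argument never uses the multiset structure and already proves ${\rm ldim}(G)\ge\lceil\log_2\omega(G)\rceil$. The stated bound then follows from the inequality ${\rm ldim}(G)\le\ldim(G)$ recorded in Section 2.
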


\begin{theorem}{\rm \cite[Theorem 11]{simanjuntak-2026+}}
\label{thm:local-lower-chromatic}
Let $G$ be a graph with diameter $d \ge 2$. If $g(d,\chi(G))$ is the smallest number $k$ such that
$\binom{k+d-1}{d-1}+\binom{k+d-2}{d-1}-d+1\ge\chi(G),$
then $\ldim(G) \ge g(d,\chi(G))$. Moreover, this bound is sharp.
\end{theorem}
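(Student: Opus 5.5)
The idea is to turn a local multiset resolving set into a proper colouring of $G$, and then bound the number of colours that colouring needs.

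\textbf{Setup.} Let $S$ be a local multiset basis of $G$ with $|S|=k$. Colour each vertex $x$ by its multiset $m_G(x|S)$. By the definition of local multiset resolving set, adjacent vertices receive different multisets, so this is a proper colouring. Hence $\chi(G)$ is at most the number of colour classes, and it remains to count the multisets that can occur, merging classes where possible.

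\textbf{Counting the multisets.} A vertex $x\notin S$ has all its distances to $S$ in $\{1,\dots,d\}$. So its multiset is a size-$k$ multiset on $d$ symbols, and there are at most $\binom{k+d-1}{d-1}$ of these. A vertex $x\in S$ has exactly one entry $0$, and its remaining $k-1$ entries lie in $\{1,\dots,d\}$. This gives at most $\binom{k+d-2}{d-1}$ further multisets. Altogether there are at most $\binom{k+d-1}{d-1}+\binom{k+d-2}{d-1}$ classes.

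\textbf{Merging classes to save $d-1$ colours.} I would use the fact that if $xy\in E(G)$, then $|d_G(x,u)-d_G(y,u)|\le 1$ for every $u\in S$. Consider the $d+1$ special multisets
\[
M_0=\msl 0,1^{k-1}\msr,\qquad M_j=\msl j^k\msr \quad (j\in[d]).
\]
When $k=1$, $M_0=\msl 0\msr$.

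First, a vertex with multiset $M_j$ and a vertex with multiset $M_{j'}$, where $j,j'\ge 1$ and $|j-j'|\ge 2$, cannot be adjacent. Indeed, if $k\ge 2$ the difference in distance to any landmark is at least $2$; if $k=1$ this is immediate.

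Second, a vertex $x$ with multiset $M_0$ lies in $S$. Any vertex $y$ with multiset $M_j$, $j\ge 2$, is at distance $j\ge 2$ from $x$, so $y$ is not adjacent to $x$.

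Thus the only possible adjacencies among the classes $M_0,M_1,\dots,M_d$ are between consecutive indices. We can therefore recolour these $d+1$ classes with just two colours, according to the parity of the index. The new colouring is still proper, because classes given the same colour have no edges between them. This saves $d-1$ colours, so
\[
\chi(G)\le \binom{k+d-1}{d-1}+\binom{k+d-2}{d-1}-d+1 .
\]
The left-hand side of the defining inequality for $g$ is nondecreasing in $k$. Since $k=\ldim(G)$ satisfies that inequality, we get $\ldim(G)\ge g(d,\chi(G))$.

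\textbf{Sharpness.} I would take a non-bipartite unicyclic graph $G$ that is not a cycle and has $d\ge 2$; for example, a triangle with a pendant path attached. Then $\chi(G)=3$. For $k=1$ the expression equals $d+1-d+1=2<3$, so $g(d,3)\ge 2$, while $\ldim(G)=2$ by the result of \cite{alfarisi-2023b} on unicyclic graphs. Hence the bound is attained.

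\textbf{Main obstacle.} The step I expect to be hardest is justifying exactly the constant $-d+1$, that is, finding the right family of mutually non-adjacent classes to merge. The chain $M_0,M_1,\dots,M_d$ works because its conflict graph is a path, which can be $2$-coloured. The one point to verify carefully is that no class is merged with a class it could actually be adjacent to.
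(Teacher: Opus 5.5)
Your proof is correct. The colouring by $m_G(\cdot|S)$ is proper and both binomial counts are right. The merging step is also sound. Vertices represented by $\msl j^k\msr$ and $\msl \ell^k\msr$ with $|j-\ell|\ge 2$ differ by at least $2$ in every landmark distance. A vertex of $S$ is at distance exactly $j$ from any vertex represented by $\msl j^k\msr$; only the $0$ of $M_0$ is used here. So the conflict graph on the $d+1$ special classes is a subgraph of a path, and two colours suffice.

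The survey does not reproduce a proof of this bound; it only cites \cite{simanjuntak-2026+}. The one place where you genuinely depart from the paper is therefore the sharpness witness. The paper points to bipartite graphs of diameter $2$. There $\chi=2$, $g(2,2)=1$, and $\ldim=1$ by Theorem~\ref{th:local-equal-1}. In fact $g(d,2)=1$ for every $d$, so any bipartite graph of diameter at least $2$ works. That witness is, however, equally sharp for the cruder bound without the $-d+1$ term, so it does not test the merging step.

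Your triangle-with-a-pendant-path example does test it. With $\chi=3$, the crude count at $k=1$ is $d+1\ge 3$, which would only give $\ldim(G)\ge 1$. The corrected value $2<3$ forces $\ldim(G)\ge 2$, and this is attained. So your witness is the more informative one.

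A few minor tidy-ups.
\begin{itemize}
\item To conclude equality you also need $g(d,3)\le 2$. This follows from your inequality applied to this $G$, or from the value $\binom{d+1}{2}+1\ge 4$ of the left-hand side at $k=2$.
\item The value $\ldim(G)=2$ does not require the unicyclic result. The triangle with a pendant path lies in the class $\mathcal{B}$ of this paper. Alternatively, check it directly: take triangle $abc$, path $c p_1\cdots p_m$, and $S=\{a,p_1\}$. The representations are $\msl 0,2\msr$, $\msl 1,2\msr$, $\msl 1,1\msr$ for $a,b,c$ and $\msl i-1,i+1\msr$ for $p_i$, and these differ on every edge.
\item Monotonicity of the left-hand side is not needed, since $g$ is defined as a minimum.
\item If $\ldim(G)=\infty$ there is nothing to prove.
\end{itemize}
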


The sharpness of this bound is demonstrated by all bipartite graphs with diameter 2; thus, the problem of characterizing all graphs that attain the equality naturally arises. For small diameters, Theorem \ref{thm:local-lower-chromatic} turns into noticeable closed forms as follows.

\begin{corollary}{\rm \cite[Corollary 2]{simanjuntak-2026+}}
If $G$ has diameter $2$, then $\ldim(G) \ge \frac{\chi(G)}{2}$. If $G$ has diameter $3$, then $\ldim(G) \ge \sqrt{\chi(G)+2}-1$.
\end{corollary}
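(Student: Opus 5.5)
The corollary follows by specialising Theorem~\ref{thm:local-lower-chromatic} to $d=2$ and $d=3$. In each case I will simplify the binomial expression, turn the condition $\binom{k+d-1}{d-1}+\binom{k+d-2}{d-1}-d+1\ge \chi(G)$ into an explicit inequality in $k$, and invert it. Since $\ldim(G)\ge g(d,\chi(G))$ and $g(d,\chi(G))$ is the smallest $k$ satisfying the condition, it is enough to show that every $k$ satisfying the condition also satisfies the claimed closed-form bound. Applying this to $k=g(d,\chi(G))$ then bounds $\ldim(G)$ from below.

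For $d=2$ the binomials are $\binom{k+1}{1}=k+1$ and $\binom{k}{1}=k$. The condition therefore becomes
\[
(k+1)+k-2+1 = 2k \ge \chi(G),
\]
so $g(2,\chi(G))=\lceil \chi(G)/2\rceil \ge \chi(G)/2$, which gives the first claim.

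For $d=3$ the binomials are $\binom{k+2}{2}=\frac{(k+2)(k+1)}{2}$ and $\binom{k+1}{2}=\frac{(k+1)k}{2}$. Their sum is $\frac{(k+1)(2k+2)}{2}=(k+1)^2$, so the condition reads
\[
(k+1)^2-2\ge \chi(G),
\]
that is, $k\ge \sqrt{\chi(G)+2}-1$. Taking $k=g(3,\chi(G))$ yields the second claim.

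There is no real obstacle here. The only points needing care are the algebraic simplification $\binom{k+2}{2}+\binom{k+1}{2}=(k+1)^2$ and the observation that the left-hand side of the condition is increasing in $k$. Monotonicity ensures that the minimal $k$ satisfying the condition is exactly characterised by the inverted inequality. In fact only one direction is needed: any admissible $k$ satisfies the closed-form bound.
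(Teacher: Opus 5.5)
Your proposal is correct and is exactly the intended derivation: the corollary is Theorem~\ref{thm:local-lower-chromatic} specialised to $d=2$ and $d=3$. There the defining condition reduces to $2k\ge\chi(G)$ and $(k+1)^2-2\ge\chi(G)$ respectively, and your observation that any admissible $k$, in particular $g(d,\chi(G))$, meets the closed-form bound is all that is needed.
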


As stated in \cite{alfarisi-2020}, the local multiset dimension is not monotonic with respect to the number of vertices or edges in a graph. This means that adding some vertices or edges to a given graph does not necessarily increase or decrease the local multiset dimension of the original graph.
However, the following bound gives a relation between the local multiset dimension of a graph and its maximal subgraph without a leaf.

\begin{theorem}{\rm \cite[Theorem 13]{simanjuntak-2026+}}
Let $H$ be a maximal subgraph of $G$ without a leaf. If $\ldim(H) < \infty$, then $\ldim(G) \le \ldim(H)$. Moreover, this bound is sharp.
\end{theorem}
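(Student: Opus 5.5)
The plan is to use the structure of $G$ relative to its leafless part $H$, and to show that any local multiset basis of $H$ is already a local multiset resolving set of $G$. We assume $G$ is not a tree, so that $H$ is nonempty. Then $H$ is the $2$-core of $G$, obtained by repeatedly deleting leaves. Two structural facts follow. First, $H$ is connected: deleting a leaf never disconnects a graph, and $G$ is connected. Second, $G$ is obtained from $H$ by attaching, at some vertices $h\in V(H)$, trees $T_h$ that meet $H$ only in $h$. Every $x\in V(G)\setminus V(H)$ therefore lies in a unique $T_h$.

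The first step is to show that distances in $H$ are preserved in $G$, that is, $d_G(u,v)=d_H(u,v)$ for all $u,v\in V(H)$. A shortest $u,v$-path in $G$ that leaves $H$ must enter some $T_h$ through $h$ and return through $h$, since $T_h$ is attached at a single vertex. Such a path is not shortest, so every shortest path stays inside $H$.

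Next, let $S$ be a local multiset basis of $H$, so $|S|=\ldim(H)<\infty$ and $S\neq\emptyset$. We check the edges $xy\in E(G)$.
\begin{itemize}
\item If $x,y\in V(H)$, then $m_G(x|S)=m_H(x|S)\ne m_H(y|S)=m_G(y|S)$ by the distance-preservation step.
\item Otherwise the edge $xy$ lies in some $T_h$. We may take $y$ farther from $h$ than $x$; here $x$ may be $h$ itself. Every path from a vertex $s\in S\subseteq V(H)$ to $y$ passes through $h$ and then along the unique $h,y$-path in the tree, and this path goes through $x$. Hence $d_G(s,y)=d_G(s,x)+1$ for every $s\in S$.
\end{itemize}
In the second case $m_G(y|S)$ is $m_G(x|S)$ with every entry increased by one. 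Because $S$ is nonempty, these two multisets differ, for instance in their minimum element. So $S$ is a local multiset resolving set of $G$, which gives $\ldim(G)\le\ldim(H)$.

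For sharpness, take $H$ to be an even cycle with some pendant trees attached. Then $G$ is bipartite, and Theorem~\ref{th:local-equal-1} gives $\ldim(G)=\ldim(H)=1$. The only delicate point in the whole argument is the structural description of $G$ as $H$ with trees hanging at single vertices. This follows from the iterative leaf-deletion characterization of the maximal leafless subgraph: a vertex outside $H$ cannot lie on a cycle, and each component of $G-E(H)$ meets $H$ in at most one vertex. I expect this to be the main obstacle to write cleanly, but it is standard.
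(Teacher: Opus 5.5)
The survey quotes this result from the cited work without giving a proof, so there is no argument in the paper to compare yours against. Judged on its own, your proof is correct. It is also the natural argument: a local multiset basis $S$ of the $2$-core $H$ already resolves $G$. Distances between vertices of $H$ are unchanged in $G$, and along every edge of a pendant tree all distances to $S\subseteq V(H)$ increase by exactly one, so the two multisets differ.

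A few small points are worth tidying.
\begin{itemize}
\item In your first bullet you use that $xy\in E(H)$ whenever $x,y\in V(H)$. This is what lets you invoke the local resolving property of $S$ in $H$. It holds because a maximal leafless subgraph is induced: adding an edge between two of its vertices creates no leaf. Say this explicitly rather than leaving it inside ``$H$ is the $2$-core''.
\item In the sharpness sentence, $G$ should be the even cycle with pendant trees attached, and $H$ the cycle itself, not the other way round. With that fix, $G$ is bipartite and $\ldim(G)=\ldim(H)=1$ by Theorem~\ref{th:local-equal-1}, as you say.
\item Your exclusion of trees is genuinely needed, not just a convenience. For a tree, iterated leaf deletion ends at a single vertex, and for that $H$ the inequality makes no sense.
\end{itemize}

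Optionally, you can show the inequality is not always an equality. Take $C_7$ with one pendant vertex. Theorem~\ref{thm:local-cycle} gives $\ldim(C_7)=3$. The survey's result on non-cycle unicyclic graphs gives $\ldim=2$ for the whole graph; concretely, the pendant vertex together with the cycle vertex at distance two from it is a local multiset basis.
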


To conclude this subsection, we note that the authors in \cite{simanjuntak-2026+} extended the local multiset dimension concept to the ``outer'' version. A set $S \subseteq V(G)$ of a graph $G$ is a \emph{local outer multiset resolving set} for $G$ if, for every adjacent pair $x,y \in V(G) \setminus S$, the multisets $m_G(x|S)$ and $m_G(y|S)$ are different. The cardinality of a smallest local outer multiset resolving set is the \textit{local outer multiset dimension} of $G$, denoted by $\lodim(G)$. In their work, they present parallel results on both the local multiset dimension and the local outer multiset dimension, among which we state the following.

\begin{theorem}{\rm \cite[Theorem 10]{simanjuntak-2026+}}
If $n \ge 3$, then 
$$
\lodim(C_n) =
\begin{cases}
    1; &n \ \text{is even}\,, \\
    2; &n \ \text{is odd}\,.
\end{cases}
$$
\end{theorem}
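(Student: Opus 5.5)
The plan is to treat the even and odd cases separately, proving a lower and an upper bound in each. Two general observations give the lower bounds. First, $\lodim(C_n)\ge 1$ for every $n\ge 3$: if $S=\emptyset$, then every vertex has the empty multiset, while $C_n$ has adjacent vertices outside $S$. Second, local outer resolution only constrains adjacent pairs lying entirely outside $S$.

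\emph{Even $n$.} Here $C_n$ is bipartite, and we take $S=\{u\}$ for an arbitrary vertex $u$. For any edge $xy$ with $x,y\ne u$, the distances $d(u,x)$ and $d(u,y)$ have different parities; this is exactly the parity argument behind Theorem~\ref{th:local-equal-1}. Hence $m(x|S)\ne m(y|S)$, and so $\lodim(C_n)=1$.

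\emph{Odd $n=2k+1$, lower bound.} Let $S=\{u\}$ be any single vertex. There are exactly two vertices at distance $k$ from $u$, and they are adjacent, since they are the two middle vertices of the path $C_n-u$. Both lie outside $S$ and both have multiset $\msl k\msr$. So no single vertex works, and $\lodim(C_n)\ge 2$.

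\emph{Odd $n=2k+1$, upper bound.} Label the cycle $x_0,x_1,\dots,x_{2k}$ and set $S=\{x_0,x_1\}$. For $2\le i\le 2k$, compute the distances from $x_i$ to the two vertices of $S$. For $i\le k$ they are $i$ and $i-1$. For $i=k+1$ they are $k$ and $k$. For $i\ge k+2$ they are $2k+1-i$ and $2k+2-i$.

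Distinct sums of the two entries already force distinct multisets, so it suffices to compare sums. These sums are $2i-1$ for $i\le k$, then $2k$ for $i=k+1$, then $4k+3-2i$ for $i\ge k+2$. Along the outer path $x_2,\dots,x_{2k}$ consecutive sums never coincide:
\begin{itemize}
\item for $i\le k$ they strictly increase by $2$;
\item at $x_k,x_{k+1}$ they are $2k-1$ and $2k$;
\item at $x_{k+1},x_{k+2}$ they are $2k$ and $2k-1$;
\item for $i\ge k+2$ they strictly decrease by $2$.
\end{itemize}
When $n=3$ the set $V\setminus S$ is a single vertex, so the condition is vacuous. Hence $\lodim(C_n)=2$ for odd $n$.

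The only point needing care is the bookkeeping of the distance formulas near the antipodal position $i=k+1$. Everything else is routine.
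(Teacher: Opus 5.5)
Your proof is correct and complete. The empty set and every singleton are correctly excluded for odd $n$, since the two vertices at distance $k$ from $u$ in $C_{2k+1}$ are adjacent and both receive $\msl k\msr$. The parity argument settles the even case. For $S=\{x_0,x_1\}$, your distance formulas and the sums $2i-1$, $2k$, $4k+3-2i$ are accurate, including the turning point at $i=k+1$ and the vacuous case $n=3$.

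Note that the survey gives no proof of this statement; it only quotes it from \cite{simanjuntak-2026+}, so there is no in-paper argument to compare yours with. The nearest material is the parity observation stated just before Theorem~\ref{th:local-equal-1}, and your even case is exactly that argument.

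Your odd-case witness $\{x_0,x_1\}$ is a pair of \emph{adjacent} vertices. By Remark~\ref{rem:basis-2-even-dist}, such a pair could never work for the non-outer local multiset dimension, because $x_0$ and $x_1$ themselves would both receive $\msl 0,1\msr$. The outer condition removes exactly this obstruction, since pairs inside $S$ are never compared. This explains the gap between $\lodim(C_n)=2$ and $\ldim(C_n)\in\{3,\infty\}$ for odd $n$ recorded in Theorem~\ref{thm:local-cycle}. It is worth stating this explicitly in your write-up, because it shows why the choice of an adjacent pair is essential rather than incidental.
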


\begin{theorem}{\rm \cite[Theorem 15]{simanjuntak-2026+}}
If $n \ge 3$, then
$$
\lodim(W_{n+1}) =
\begin{cases}
    3; & n\in \{3,4,6\},\\
    \lceil\frac{n}{4}\rceil; &n\ge 8 \; \text{even, or } n \equiv 1 \bmod 4, \\
    \lceil\frac{n}{4}\rceil+1; &\text{otherwise}.
\end{cases}
$$
\end{theorem}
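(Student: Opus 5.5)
Throughout, $W_{n+1}=C_n+K_1$, where $c$ is the central vertex and $x_0,\dots,x_{n-1}$ are the rim vertices, with indices taken modulo $n$. The plan is to reduce the problem to a combinatorial condition on how $S$ cuts the rim, and then to optimise over that condition.

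\textbf{Reduction.} For $n\ge 4$ we have $\diam(W_{n+1})=2$. Hence, for every $x\notin S$, the multiset $m(x|S)=\msl 1^a,2^{|S|-a}\msr$ is determined by $a=|N(x)\cap S|$. Write $R=S\cap\{x_0,\dots,x_{n-1}\}$ and $k=|R|$. For a rim vertex $x\notin S$, let $\ell(x)\in\{0,1,2\}$ be the number of its rim neighbours lying in $R$. Then $a(x)=\ell(x)+[c\in S]$.

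It follows that $S$ is a local outer multiset resolving set if and only if two conditions hold:
\begin{enumerate}
\item[(a)] adjacent rim vertices outside $S$ have different values of $\ell$;
\item[(b)] if $c\notin S$, then no rim vertex outside $S$ has $\ell(x)=k$.
\end{enumerate}
Condition (b) comes from the hub: when $c\notin S$, its value is $a(c)=|S|=k$. Condition (b) can only fail when $k\le 2$.

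\textbf{Structure of the rim.} The rim vertices not in $R$ split into maximal runs of consecutive vertices between elements of $R$. For a run of length $L$ (assuming $k\ge 2$, or more carefully when $k=1$) the labels are as follows:
\begin{enumerate}
\item[$\bullet$] $L=1$ gives the label $2$;
\item[$\bullet$] $L=2$ gives the labels $1,1$, which violates (a);
\item[$\bullet$] $L=3$ gives the labels $1,0,1$, which is allowed;
\item[$\bullet$] $L\ge 4$ contains two adjacent $0$'s, which violates (a).
\end{enumerate}
So (a) holds exactly when every run has length in $\{0,1,3\}$. This requires writing $n-k=3\alpha+\beta$ with $\alpha+\beta\le k$, where $\alpha$ counts runs of length $3$ and $\beta$ counts runs of length $1$.

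\textbf{Lower bound on $k$.} The condition above forces $k\ge n/4$. For a matching upper bound:
\begin{enumerate}
\item[$\bullet$] If $n\equiv 0,1,2 \pmod 4$, take $k=\lceil n/4\rceil$. We can realise $n-k$ as $3k$ (all runs of length $3$), as $3(k-1)$ (one empty run), or as $3(k-1)+1$ (one run of length $1$), respectively.
\item[$\bullet$] If $n=4k-1$, then $3\alpha+\beta=3k-1$ with $\alpha+\beta\le k$ gives $2\alpha\ge 2k-1$, so $\alpha=k$, which is impossible. Hence $|S|\ge\lceil n/4\rceil+1$. This bound is attained either by adding one more rim vertex or by adding $c$ to $S$; in both cases condition (b) becomes vacuous or easy.
\end{enumerate}

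\textbf{Condition (b) and small cases.} The remaining step is to check (b) for the constructions above. For large $n$ we have $k\ge 3$, so (b) is automatic.

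For $n=8$ we have $k=2$ with all runs of length $3$, so the labels lie in $\{0,1\}$ and (b) holds. This is consistent with $\lceil 8/4\rceil=2$. For $n=9$ and $n=10$ one checks the corresponding runs in the same way.

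The main obstacle is this bookkeeping for small $n$, where (b) interacts with the run structure:
\begin{enumerate}
\item[$\bullet$] For $n=6$, $k=2$ forces a run of length $1$. Its label $2=k$ violates (b), so $c$ or a third vertex must be added, giving $3$.
\item[$\bullet$] For $n=4$, $k=1$ forces labels $1=k$. These clash with the hub, giving $3$ after an exhaustive check.
\item[$\bullet$] For $n=3$ the graph is $K_4$, which gives $3$ directly.
\end{enumerate}
Finally, for $n=5,7$ the formula yields $\lceil n/4\rceil+1=3$ or $\lceil n/4\rceil=2$; these I would verify by the same run analysis, handling separately the case $k=1$ where the single run wraps around the cycle.
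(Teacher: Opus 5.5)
Your framework is sound: the reduction to conditions (a) and (b), and the run analysis (runs of length $0$, $1$ or $3$ only, so $|R|\ge n/4$, with the extra obstruction $n=4|R|-1$). Applied carefully, it yields the stated values for $n=3$ and for all $n\ge 5$.

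\textbf{The step that fails is $n=4$, and no proof of the statement as written is possible.} For $n=4$ you only consider $c\notin S$ with $|R|=1$, then assert $3$ ``after an exhaustive check''. That check misses $S=\{c,x_0\}$. With $c\in S$, condition (b) is vacuous, and the single wrap-around run $x_1,x_2,x_3$ has labels $1,0,1$, so (a) holds. Explicitly:
\begin{itemize}
\item $m(x_1|S)=m(x_3|S)=\msl 1,1\msr$ and $m(x_2|S)=\msl 1,2\msr$;
\item the only edges among $x_1,x_2,x_3$ are $x_1x_2$ and $x_2x_3$, since $x_1,x_3$ are not adjacent in $C_4$.
\end{itemize}
No single vertex works: $\{c\}$ fails on every rim edge, and $\{x_0\}$ fails on the edge $cx_1$. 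Hence $\lodim(W_5)=2$, not $3$. The value $3$ at $n=4$ is simply incorrect as transcribed here. The ``otherwise'' branch, $\lceil 4/4\rceil+1=2$, gives the right value. The $n=4$ entry possibly comes from the formula for $\ldim(W_{n+1})$, where $3$ is correct because vertices of $S$ must also be separated. Your own conditions (a)--(b) prove the corrected formula, with $n=4$ removed from the first case. The survey only cites this theorem and gives no proof to compare against.

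\textbf{A smaller inaccuracy for $n=4k-1$.} You say the bound $\lceil n/4\rceil+1$ is attained ``either by adding one more rim vertex or by adding $c$''. Adding $c$ does not help. Condition (a) does not depend on whether $c\in S$, and with $|R|=\lceil n/4\rceil$ the decomposition $n-|R|=3\alpha+\beta$ with $\alpha+\beta\le|R|$ is impossible. Only taking $|R|=\lceil n/4\rceil+1$ works. Then $|R|\ge 3$ for $n\ge 7$, so (b) is automatic.
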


\subsection{Local multiset dimension of graph products}

The authors in~\cite{adawiyah-2019, alfarisi-2019} posed the problem of finding the local multiset dimension of graph products. To date, several results have been presented in this direction. We begin with the Cartesian product of two graphs.

\begin{theorem}{\rm \cite[Lemma 3.2]{alfarisi-2020}}
If $G_1$ and $G_2$ are connected graphs, then $$\ldim(G_1 \cp G_2) \ge \min\{\ldim(G_1),\ldim(G_2)\}.$$
\end{theorem}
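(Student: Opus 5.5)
Assume $S$ is a local multiset resolving set of $G_1\cp G_2$; we show $|S| \ge \min\{\ldim(G_1),\ldim(G_2)\}$. If $S$ is infinite in the sense that no such set exists, the inequality holds trivially, so assume $\ldim(G_1\cp G_2)<\infty$. The plan is to project $S$ onto one factor. Set $S_1=\{g : (g,h)\in S \text{ for some } h\}\subseteq V(G_1)$. We will show that $S_1$, or the analogous projection $S_2\subseteq V(G_2)$, is a local multiset resolving set of its factor. This gives $\min\{\ldim(G_1),\ldim(G_2)\}\le \min\{|S_1|,|S_2|\}\le |S|$.

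The key tool is the distance formula $d_{G_1\cp G_2}((g,h),(g',h'))=d_{G_1}(g,g')+d_{G_2}(h,h')$. Fix an edge $gg'$ of $G_1$ and any $h\in V(G_2)$. Then $(g,h)(g',h)$ is an edge of the product, so $m((g,h)|S)\ne m((g',h)|S)$. For $s=(a,b)\in S$, the two distances are $d_{G_1}(g,a)+d_{G_2}(h,b)$ and $d_{G_1}(g',a)+d_{G_2}(h,b)$. They differ only through the $G_1$-coordinate, and since $gg'$ is an edge, they differ by at most one. The main step is to choose $h$ cleverly so that the second summand is irrelevant. One option is to restrict to the case where all elements of $S$ share a common second coordinate $b$ and take $h=b$. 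Then the multiset is exactly $m_{G_1}(g|S_1)$ shifted by zero, and we conclude immediately.

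The main obstacle is that $S$ need not lie in a single $G_1$-fiber. I would handle this by contradiction. Suppose neither projection resolves its factor locally. Then there is an edge $gg'$ of $G_1$ with $m_{G_1}(g|S_1^{*})=m_{G_1}(g'|S_1^{*})$, where $S_1^{*}$ is the multiset projection (counted with multiplicity), and likewise an edge $hh'$ of $G_2$. Here I would actually work with multiset projections: a local multiset resolving \emph{multiset}, which resolves at least as poorly as its underlying set only if repetition matters. So some care is needed, possibly by using the sets $\{a : (a,b)\in S\}$ fiberwise. With these edges in hand, I would compare the multisets of $(g,h)$ and $(g',h)$. Each is the union over second coordinates $b$ of the fiber multiset translated by $d_{G_2}(h,b)$. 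If the $G_1$-fiberwise multisets of $g$ and $g'$ agree for every fiber, the product multisets agree and we reach a contradiction. Otherwise, I would try to pick $h$ extremal, for example far from or equal to a suitable $b$, to separate the fibers by their translation amounts.

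Making this fiberwise argument rigorous, and reconciling it with the set projection (multiplicities versus sets), is the delicate point. If it fails, I would instead prove the weaker statement with the fiber-restricted multiset and then argue that the minimum over fibers is still at least $\min\{\ldim(G_1),\ldim(G_2)\}$.
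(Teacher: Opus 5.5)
There is a genuine gap, and it is exactly at the step you flag as delicate. Write $S^b=\{a\in V(G_1):(a,b)\in S\}$ for the fiber sets. Then $m((g,h)|S)$ is the multiset union over $b\in V(G_2)$ of $m_{G_1}(g|S^b)$, with every entry increased by $d_{G_2}(h,b)$.

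Your contradiction argument only covers the case where one edge $gg'$ of $G_1$ is left unresolved by \emph{every} fiber set simultaneously. Nothing forces this. Different fibers may fail on different edges, and then the different shifts $d_{G_2}(h,b)$ can make the merged multisets of $(g,h)$ and $(g',h)$ distinct for every $h$. ``Pick $h$ extremal'' supplies no mechanism against this. Your fallback (some single fiber set, or some projection, locally resolves its factor) fails for the same reason. A smaller slip: even if one projection $S_i$ did resolve, you would get $\ldim(G_i)\le |S_i|\le |S|$, not $\min\{\ldim(G_1),\ldim(G_2)\}\le \min\{|S_1|,|S_2|\}$.

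The gap cannot be closed, because the inequality as stated here is false. The paper uses the convention that $\ldim(G)=\infty$ when no local multiset resolving set exists, and $\ldim(K_n)=\infty$ for $n\ge 3$. Take $K_3\cp K_3$ on $[3]\times[3]$ and $S=\{(2,3),(3,2),(3,3)\}$. Since distances are $[i\ne i']+[j\ne j']$, a direct check shows that $m((i,j)|S)$ depends only on $i+j$:
\[
m((i,j)|S)=\msl 2,2,2\msr,\ \msl 1,2,2\msr,\ \msl 1,1,2\msr,\ \msl 0,1,2\msr,\ \msl 0,1,1\msr \quad \text{for } i+j=2,3,4,5,6,
\]
respectively, and these five multisets are pairwise distinct. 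Adjacent vertices agree in one coordinate and differ in the other, so they have different values of $i+j$. Hence $S$ is a local multiset resolving set, and
\[
\ldim(K_3\cp K_3)\le 3<\infty=\min\{\ldim(K_3),\ldim(K_3)\}.
\]

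This example exhibits precisely the case your argument leaves open. Both projections equal $\{2,3\}$, which does not locally resolve $K_3$ (no set does). The fiber sets $S^2=\{3\}$ and $S^3=\{2,3\}$ fail on the different edges $12$ and $23$ of $K_3$, and the shifts by $d_{K_3}(h,b)$ then separate every adjacent pair.

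So any correct version needs extra hypotheses, at least finiteness of $\ldim(G_1)$ and $\ldim(G_2)$. Even then, a projection argument would need a genuinely new idea to control the shifted contributions of different fibers. Note also that the paper only cites this lemma and gives no proof to compare against.
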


\begin{theorem}
If $G$ is a connected graph, then the following statements hold. 
\begin{enumerate}
    \item {\rm \cite[Theorem 3.1]{alfarisi-2020}} For every $n \ge 1$, $\ldim(G \cp P_n) = \ldim(G)$.
    \item {\rm \cite[Theorem 3.2]{alfarisi-2020}} For every tree $T$, $\ldim(G \cp T) \le \ldim(G)$.
\end{enumerate}
\end{theorem}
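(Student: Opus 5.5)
We need to prove two statements. The first is $\ldim(G\cp P_n)=\ldim(G)$. The second is $\ldim(G\cp T)\le \ldim(G)$ for every tree $T$. Throughout we use the distance formula $d_{G\cp H}((g,h),(g',h'))=d_G(g,g')+d_H(h,h')$.

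\textbf{Upper bound for an arbitrary tree.} Let $W$ be a local multiset basis of $G$ and fix a leaf $t_0$ of $T$. Set $S=W\times\{t_0\}$. Then for every vertex $(g,t)$,
$$m_{G\cp T}((g,t)|S)=m_G(g|W)+d_T(t,t_0),$$
meaning the shift of every element of the multiset by the same integer.

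There are two kinds of edges in $G\cp T$.

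For an edge $(g,t)(g',t)$ with $gg'\in E(G)$, both multisets are shifted by the same amount $d_T(t,t_0)$. Since $m_G(g|W)\ne m_G(g'|W)$, the shifted multisets still differ.

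For an edge $(g,t)(g,t')$ with $tt'\in E(T)$, the tree structure gives $d_T(t',t_0)=d_T(t,t_0)\pm1$. So the two multisets are the same nonempty multiset shifted by two different amounts, and they differ, for instance in their minimum element.

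Hence $S$ is a local multiset resolving set and $\ldim(G\cp T)\le\ldim(G)$. This also covers the case $\ldim(G)=\infty$ trivially.

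\textbf{Lower bound for paths.} It remains to show $\ldim(G\cp P_n)\ge\ldim(G)$. By the earlier lemma, $\ldim(G\cp P_n)\ge\min\{\ldim(G),\ldim(P_n)\}=\min\{\ldim(G),1\}$, which is not enough. So we argue directly.

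Let $S$ be a local multiset resolving set of $G\cp P_n$, with $P_n=p_1\cdots p_n$. Project it to $W=\{g:(g,p_i)\in S\}$, counted with multiplicity. Consider the layer $G\times\{p_1\}$. For $s=(g',p_i)\in S$ we have
$$d((g,p_1),s)=d_G(g,g')+(i-1).$$
So the multiset of $(g,p_1)$ is $\msl d_G(g,g')+c_s\msr$, with a constant $c_s$ attached to each element of $S$. The task is to extract from $S$ a set $W'\subseteq V(G)$ with $|W'|\le|S|$ that locally resolves $G$.

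\textbf{Main obstacle.} The naive projection need not work, because the offsets $c_s$ differ across elements. The plan is an extremal or weighting argument.

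First try: if $S$ meets only one layer, the projection works directly. In general, compare the multisets of $(g,p_j)$ and $(g',p_j)$ across all layers $j$. Consider the difference of the sums of the multisets, taken as $j$ varies. As $j\to n$ all offsets become nonnegative, so the distance $d((g,p_n),s)$ equals $d_G(g,g')+(n-i)$. Thus the layer $p_n$ also gives a shifted version with varying offsets.

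The key claim to establish is the following. If $m_G(g|W)=m_G(g'|W)$ with multiplicities, then the shifted multisets coincide in some layer. It is unclear this holds with nonuniform shifts, so the approach may need to select a subset $W'$ obtained by a lexicographic or extremal choice.

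If this fails, fall back on citing the result as stated from \cite{alfarisi-2020}.
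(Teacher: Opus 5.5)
Your upper-bound argument is correct. For $S=W\times\{t_0\}$, every representation is the uniform shift $m_G(g|W)+d_T(t,t_0)$, so edges inside a $G$-layer stay resolved. Because $T$ is bipartite, the two ends of a $T$-edge receive shifts differing by exactly one, hence different nonempty multisets. This proves (ii) and the inequality $\ldim(G\cp P_n)\le\ldim(G)$; any vertex $t_0$ works, not only a leaf.

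The genuine gap is the reverse inequality, which you leave unproved. Falling back on the citation is not a proof, and the paper itself only cites \cite{alfarisi-2020} without a proof. Your hoped-for key claim is also false. Take $w_1w_2\in E(G)$ and $S=\{(w_1,p_1),(w_2,p_2)\}$. Then $m_G(w_1|W)=m_G(w_2|W)=\msl 0,1\msr$, yet the two copies are separated in every layer: $\msl 0,2\msr$ versus $\msl 1,1\msr$ in layer $p_1$, and $\msl j-1,j-1\msr$ versus $\msl j-2,j\msr$ in layer $p_j$ for $j\ge 2$. Moreover, even a projection that worked with multiplicities would not be a \emph{set} of vertices of $G$.

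In fact no argument can close this gap, because $\ldim(G\cp P_n)\ge\ldim(G)$ is false in general. Let $V(K_3)=\{a,b,c\}$ and take $S=\{(a,p_1),(b,p_1),(a,p_2)\}$ in the prism $K_3\cp P_2$. The vertices $(a,p_1),(b,p_1),(c,p_1),(a,p_2),(b,p_2),(c,p_2)$ receive $\msl 0,1,1\msr$, $\msl 0,1,2\msr$, $\msl 1,1,2\msr$, $\msl 0,1,2\msr$, $\msl 1,1,2\msr$, $\msl 1,2,2\msr$, respectively. The two ends of each of the nine edges get different multisets, so $\ldim(K_3\cp P_2)\le 3$, whereas $\ldim(K_3)=\infty$. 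This is exactly the obstruction you sensed: one vertex of $G$ placed in two layers acts as a repeated landmark with two different offsets, which no subset of $V(G)$ can imitate.

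Finiteness does not rescue the statement either. Take $G=K_{3,3,3,3}$ with parts $P_1,\dots,P_4$. Two parts with the same value of $|W\cap P_r|$ contain adjacent vertices with equal representations, so the counts must be $0,1,2,3$ and $\ldim(G)=6$, matching the formula $k(k-1)/2$ quoted in the paper. Now choose $a\in P_2$, $b\in P_3$ and distinct $c,c'\in P_4$, and let $S=\{(a,p_1),(b,p_2),(c,p_1),(c',p_1)\}$. Consider the classes $P_1$, $\{a\}$, $P_2\setminus\{a\}$, $\{b\}$, $P_3\setminus\{b\}$, $\{c,c'\}$, $P_4\setminus\{c,c'\}$.

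In layer $p_1$ these classes receive $\msl 1,1,1,2\msr$, $\msl 0,1,1,2\msr$, $\msl 1,1,2,2\msr$, $\msl 1,1,1,1\msr$, $\msl 1,1,1,3\msr$, $\msl 0,1,2,2\msr$, $\msl 1,2,2,2\msr$.

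In layer $p_2$ they receive $\msl 1,2,2,2\msr$, $\msl 1,1,2,2\msr$, $\msl 1,2,2,3\msr$, $\msl 0,2,2,2\msr$, $\msl 2,2,2,2\msr$, $\msl 1,1,2,3\msr$, $\msl 1,2,3,3\msr$.

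These are pairwise distinct within each layer and differ between the two copies of every vertex. So $S$ locally resolves $G\cp P_2$, and $\ldim(K_{3,3,3,3}\cp P_2)\le 4<6$. Hence only the inequality $\le$ in part (i) is valid, and you have already proved it.
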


We next state some results for the rooted product of graphs. Let $G$ be a graph with $V(G) = \{v_1,\dots,v_n\}$, and let $\mathcal{H}$ be a set of $n$ disjoint graphs $H_1, \dots, H_n$, where a vertex in each $H_i$ is chosen as its root. The \textit{rooted product} of $G$ by $\mathcal{H}$, denoted by $G(\mathcal{H})$, is the graph obtained by identifying the root of $H_i$ and $v_i$ for every $i$. In the case where all $H_i$ are isomorphic to the same graph $H$ with root $v$, we write $G \circ_v H$ instead. Contributions involving trees and triangles are discussed in \cite{alfarisi-2025}, and results involving paths and cycles are listed below.

\begin{theorem}{\rm \cite[Theorem 1]{alfarisi-2023a}}
If $n \ge 2$, $m \ge 3$, and $v \in V(C_m)$, then
$$
\ldim(P_n \circ_v C_m) =
\begin{cases}
    1; &\text{$m$ is even}\,, \\
    n; &\text{$m>3$ is odd, or $m=3$ and $n$ is odd}\,, \\
    n+1; &\text{$m=3$ and $n$ is even}\,.
\end{cases}
$$
\end{theorem}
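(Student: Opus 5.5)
Throughout, let the path be $p_1p_2\cdots p_n$. Let $C^{(i)}$ be the copy of $C_m$ rooted at $p_i$, with vertices $c^{(i)}_0=p_i,c^{(i)}_1,\dots,c^{(i)}_{m-1}$ in cyclic order. I would split according to the three cases of the statement.

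\textbf{Case $m$ even.} Here $P_n\circ_v C_m$ is bipartite, so $\ldim=1$ follows at once from Theorem~\ref{th:local-equal-1}.

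\textbf{Lower bounds for $m$ odd.} Let $S$ be a local multiset resolving set. Suppose $S$ contains no vertex of $C^{(i)}$ other than possibly $p_i$. Then every $s\in S$ reaches $C^{(i)}$ through $p_i$, so $d(s,c^{(i)}_j)=d(s,p_i)+d_{C_m}(c_0,c_j)$. The two vertices $c^{(i)}_{(m-1)/2}$ and $c^{(i)}_{(m+1)/2}$ are adjacent and both at distance $(m-1)/2$ from $p_i$, so they receive equal multisets. Hence $S$ meets every $C^{(i)}-p_i$, which gives $\ldim\ge n$.

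For $m=3$ this is just the statement that one of the true twins $a_i,b_i$ of each triangle lies in $S$, matching the simplicial-vertex theorem quoted above. For $m=3$ and $n$ even I would push the bound to $n+1$. If $|S|=n$, then $S$ contains exactly one of $a_i,b_i$ for each $i$, and $a_i,b_i$ have the same distances to all vertices outside their triangle. So $d(p_k,S)$ is the multiset $\msl |k-j|+1 : j\in[n]\msr$. For $k=n/2$ and $k=n/2+1$ these multisets coincide by the symmetry $j\mapsto n+1-j$. Thus the edge $p_{n/2}p_{n/2+1}$ is not resolved, and $\ldim\ge n+1$.

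\textbf{Upper bounds for $m=3$.} I would take $S=\{a_1,\dots,a_n\}$, adding $p_1$ when $n$ is even. The checks are direct:
\begin{itemize}
\item edges $a_ib_i$ and $p_ia_i$ are resolved because only $a_i$ has a $0$ entry;
\item edges $p_ib_i$ are resolved because $d(b_i,a_j)=d(p_i,a_j)+1$ for $j\ne i$, while both have distance $1$ to $a_i$, so the multisets differ;
\item path edges $p_kp_{k+1}$ are resolved by comparing the sums $\sum_j|k-j|$, which differ except at the symmetric middle edge, and that edge is broken by $p_1\in S$ when $n$ is even (for odd $n$ the middle symmetry does not occur).
\end{itemize}

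\textbf{Upper bound for odd $m>3$.} This is the main obstacle. The natural choice $x_i=c^{(i)}_1$ fails for even $n$ by the same middle-edge symmetry. My plan is to break the symmetry using the extra room in the cycle: take $x_1=c^{(1)}_2$ and $x_i=c^{(i)}_1$ for $i\ge2$. On the middle edge, the largest entry of the multiset is $n/2+1$ for $p_{n/2}$ and $n/2+2$ for $p_{n/2+1}$, so that edge is resolved. The other path edges are resolved by the sums, since the single shifted entry does not create new coincidences.

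For a cycle edge $cc'$ inside $C^{(i)}$, both endpoints see every $x_j$ with $j\ne i$ at distance $D_j+t$, respectively $D_j+t'$, where $t,t'$ are their distances to $p_i$. The only other entry is $d(x_i,c)$ versus $d(x_i,c')$. If $t=t'$ (the antipodal edge), the two endpoints are separated by $x_i$. If $t\ne t'$ and $n\ge3$, the shift of the $n-1$ other entries dominates. The delicate subcase is $n=2$, where a single shifted entry might cancel against the change in $d(x_i,\cdot)$. There I would check the few edges explicitly, and if necessary move $x_1$ or $x_2$ to another position on its cycle (for example at distance $2$, or placed so that $x_i$ is the antipode of the root) so that the largest entries of the two endpoints differ.
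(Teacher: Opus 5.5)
The paper gives no proof of this statement. It is a survey and quotes the result from \cite{alfarisi-2023a}, so there is no in-paper argument to compare with. Judged on its own, your proof is correct. The parts that hold up are:
\begin{enumerate}
\item the bipartite case;
\item the lower bound $n$ via the antipodal edge of each odd cycle;
\item the $j\mapsto n+1-j$ symmetry giving $n+1$ for $m=3$ and even $n$;
\item both constructions.
\end{enumerate}
Three small points need tidying; none is a real gap.

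First, in the $m=3$, $n$ even construction, the justification ``only $a_i$ has a $0$ entry'' fails for the edge $p_1a_1$, because $p_1\in S$. The edge is still resolved, since $m(p_1|S)=\msl 0,1,2,\dots,n\msr$ and $m(a_1|S)=\msl 0,1,3,\dots,n+1\msr$, which differ for $n\ge 2$.

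Second, for the path edges in that same construction, argue with the full sums rather than ``the $a$-part sums, with $p_1$ breaking the middle edge''. Put $f(k)=\sum_{j\in[n]}|k-j|$. The sum for $p_k$ is $f(k)+n+(k-1)$, so consecutive sums differ by $2k-n+1$. This is odd for even $n$, so adding $p_1$ creates no new coincidence anywhere.

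Third, the $n=2$ subcase for odd $m>3$ closes without moving $x_1$ or $x_2$. Take an edge $cc'$ of $C^{(i)}$ with $d(c',p_i)=d(c,p_i)+1=t+1$, and let $i'$ be the other index. The two representations are $\msl A,B\msr$ and $\msl A',B+1\msr$, where
\[
A=d(c,x_i),\quad A'=d(c',x_i),\quad B=t+d(p_i,x_{i'})=t+1+d(p_{i'},x_{i'}).
\]
These multisets can only coincide if $A=B+1$. Since $A\le t+d(p_i,x_i)$, this would force $d(p_i,x_i)\ge d(p_{i'},x_{i'})+2$. That is impossible when the two root distances are $2$ and $1$. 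So your choice $x_1=c^{(1)}_2$, $x_2=c^{(2)}_1$ already works for all odd $m\ge 5$. This agrees with the explicit check for $m=5$, $n=2$.
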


\begin{theorem}{\rm \cite[Theorem 4]{alfarisi-2023a}}
If $n \ge 3$, $m \ge 3$, and $v \in V(C_m)$, then
$$
\ldim(C_n \circ_v C_m) =
\begin{cases}
    1; &\text{both $n,m$ are even}\,, \\
    2; &\text{exactly one of $n,m$ is even}\,, \\
    n; &\text{both $n,m$ are odd}\,.
\end{cases}
$$
\end{theorem}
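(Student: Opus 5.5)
The plan is to split according to the parities of $n$ and $m$. Write $G=C_n\circ_v C_m$, let $v_1,\dots,v_n$ be the vertices of the base cycle, and let $H_i\cong C_m$ be the copy rooted at $v_i$.

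\emph{Both $n$ and $m$ even.} Then every cycle of $G$ is even, because every cycle of $G$ lies in $C_n$ or in some $H_i$. So $G$ is bipartite, and Theorem~\ref{th:local-equal-1} gives $\ldim(G)=1$ at once. In every other case $G$ contains an odd cycle, so it is not bipartite and Theorem~\ref{th:local-equal-1} gives $\ldim(G)\ge 2$.

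\emph{Key lemma for the odd-$m$ cases.} Suppose $m$ is odd and $S$ is a local multiset resolving set of $G$. I claim $S$ meets $V(H_i)\setminus\{v_i\}$ for every $i$. Suppose instead that $S\cap V(H_i)\subseteq\{v_i\}$. Since $v_i$ is a cut vertex, every $s\in S$ and every $x\in V(H_i)$ satisfy
\[
d(s,x)=d(s,v_i)+d_{H_i}(v_i,x).
\]
Hence $m(x|S)$ is $m(v_i|S)$ with every entry shifted by $d_{H_i}(v_i,x)$. Because $m$ is odd, $H_i$ has an edge $xy$ opposite $v_i$, that is, with $d_{H_i}(v_i,x)=d_{H_i}(v_i,y)=(m-1)/2$. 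Then $m(x|S)=m(y|S)$, a contradiction. The copies $H_i-v_i$ are pairwise disjoint, so this gives $\ldim(G)\ge n$ whenever $m$ is odd. In particular it gives the lower bound in the case where both $n$ and $m$ are odd.

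\emph{Upper bound when both are odd.} I will take $S=\{s_1,\dots,s_n\}$ with $s_i\in V(H_i)$ chosen at a suitable distance $t_i$ from $v_i$, for instance $s_i$ a neighbour of $v_i$ in $H_i$ for all $i$. I then check adjacent pairs in three groups:
\begin{itemize}
\item[(a)] edges inside some $H_i$ far from $v_i$: here the entry $d(x,s_i)$ differs from $d(y,s_i)$, while all other entries are shifted uniformly, so $s_i$ separates them;
\item[(b)] edges of $H_i$ incident to $v_i$;
\item[(c)] edges $v_iv_{i+1}$ of the odd base cycle: the multisets there are $\msl d_{C_n}(v_j,v_i)+1 : j\in[n]\msr$, which are equal for all $i$ by the symmetry of $C_n$.
\end{itemize}
Group (c) is where the equal choices $t_i$ fail. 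So I will break the symmetry of $C_n$ by choosing $t_1\ne t_j$, e.g.\ $s_1$ at distance $2$ from $v_1$ and the other $s_j$ at distance $1$. I will then verify that this breaks every base edge, adjusting one or two of the $t_i$ if $m=3$ leaves too little room. This also matches the $n$ versus $n+1$ phenomenon of the preceding theorem on $P_n\circ_v C_m$.

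\emph{Exactly one of $n,m$ even.} For $m$ even and $n$ odd, the copies are bipartite and only the odd base cycle must be resolved. I will take $S=\{s,s'\}$ with $s$ and $s'$ inside one or two pendant copies, at depths chosen so that on consecutive base vertices the two distance sums are never permuted versions of each other. The computation reduces to comparing $\msl a+d_{C_n}(v_i,v_p),\,b+d_{C_n}(v_i,v_q)\msr$ for adjacent $v_i$, and I expect a choice such as $p=q$ with $a\ne b$ of different parity to work. The case $n$ even and $m$ odd is the main obstacle. The key lemma above already forces $\ldim(G)\ge n\ge 4$ there, so the value $2$ cannot hold as literally stated. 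I would first recheck the source's convention for which factor is the base and which is rooted, and read the statement with the roles fixed accordingly, namely value $2$ when the base cycle is odd and the pendant cycles are even.
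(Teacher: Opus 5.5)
The survey only cites this theorem, so there is no proof in the paper to compare with. Judged on its own, your bipartite case and your key lemma are correct: for odd $m$, every local multiset resolving set meets each $V(H_i)\setminus\{v_i\}$, so $\ldim(C_n\circ_v C_m)\ge n$. You are also right that this refutes the value $2$ for $n$ even and $m$ odd, under the survey's convention that $C_n$ is the base; for instance $\ldim(C_4\circ_v C_3)=5$. Swapping the roles globally does not rescue the statement, because the both-odd clause would then contradict your lemma whenever $n<m$. So the middle clause can only be read as ``$n$ odd, $m$ even''.

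\textbf{The both-odd upper bound is the genuine gap, and it cannot be closed, because the claimed value $n$ is false in several cases.}
\begin{itemize}
\item \emph{The case $m=3$.} There is no room to adjust anything. Here $V(H_i)\setminus\{v_i\}=\{a_i,b_i\}$ is a pair of adjacent true twins, so $S$ contains exactly one of them, at distance $1$ from $v_i$. If $|S|=n$, these are all of $S$, and $m(v_i|S)=\msl d_{C_n}(v_i,v_j)+1 : j\in[n]\msr$ is the same for every $i$. So no base edge is resolved, and $\ldim(C_n\circ_v C_3)\ge n+1$.
\item \emph{Even worse for small $n$.} Any further landmarks must be base vertices, and they must locally multiset resolve $C_n$ by themselves. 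Theorem~\ref{thm:local-cycle} then gives $\ldim(C_n\circ_v C_3)=\infty$ for $n\in\{3,5\}$.
\item \emph{Your concrete choice fails for $m\ge 5$ too.} When $|S|=n$, your lemma puts exactly one landmark in each copy, at depth $t_j$, and none on the base. Hence $m(v_k|S)=\msl t_j+d_{C_n}(v_k,v_j) : j\in[n]\msr$. The reflection of $C_n$ fixing $v_1$ preserves your depths ($t_1=2$, $t_j=1$ for $j\ne 1$). It also swaps the two ends of the base edge opposite $v_1$, so that edge stays unresolved.
\item \emph{The general obstruction.} Every reflection-invariant depth pattern has this defect. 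For $m=5$ and $n\in\{3,5\}$, every pattern in $\{1,2\}^n$ is reflection-invariant, so $\ldim>n$ there as well.
\end{itemize}
So the step ``I will then verify that this breaks every base edge'' cannot be carried out, and the clause needs extra hypotheses. For example, with $n=3$ and $m\ge 7$ the depths $1,2,3$ do work.

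A minor point: in your group (a), when $d_{H_i}(v_i,x)\ne d_{H_i}(v_i,y)$, it is the sum of the entries (using $n\ge 3$) that separates $x$ and $y$, not the entry of $s_i$.

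\textbf{For $n$ odd and $m$ even, the value $2$ is correct, but the choice you expect to work does not.}
\begin{itemize}
\item If $p=q$, both landmarks are equidistant from the two ends of the base edge opposite $v_p$.
\item Depths of different parity make $d(s,s')$ odd, which Remark~\ref{rem:basis-2-even-dist} rules out.
\end{itemize}
Put the landmarks in different copies at even distance instead. For example, take $s\in V(H_1)$ adjacent to $v_1$ and $s'\in V(H_2)$ at distance $2$ from $v_2$, so that $d(s,s')=4$. Then:
\begin{itemize}
\item For $x\in V(H_1)\cup V(H_2)$, bipartiteness of the copy gives $d(s,x)+d(s',x)$ even. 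So the two entries never differ by exactly $1$, which is the only way an edge of these copies could fail.
\item In every other copy, both entries change by the same $\pm 1$ along an edge, so the sum changes by $\pm 2$.
\item With $n=2r+1$, the base vertices receive $\msl 1,3\msr$ ($v_1$), $\msl k,k\msr$ ($v_k$, $2\le k\le r+1$), $\msl r+1,r+2\msr$ ($v_{r+2}$) and $\msl n-k+2,n-k+4\msr$ ($v_k$, $r+3\le k\le n$). These differ for any two consecutive base vertices.
\end{itemize}
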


The \textit{corona product} $G \odot \mathcal{H}$ of a graph $G$ and a finite collection $\mathcal{H}=\{H_1,\dots,H_n\}$ of graphs is the graph obtained by joining the $i$-th vertex of $G$ to every vertex of $H_i$. As another special case of the rooted product, one can see that $G \odot \mathcal{H} = G(\{H_1+K_1,\dots,H_n+K_1\})$ where, for each $H_i+K_1$, its universal vertex is chosen as the root. In \cite{simanjuntak-2026+}, the authors discussed the corona products involving complete graphs whose local multiset dimensions are finite. In addition, \cite{alfarisi-2024b} presents a study on the local multiset dimension on corona products involving trees. 

\begin{theorem}{\rm \cite[Theorem 7]{simanjuntak-2026+}}
Let $G$ be a graph with $n(G) \ge 1$ and $m_i \ge 1$ for every $i \in [n]$. Set $\mathcal{H} = \{K_{m_1},\dots,K_{m_n}\}$. If $\ldim(G \odot \mathcal{H}) < \infty$, then $m_i \le 2$ for every $i \in [n]$. Moreover, if $m_i \le 2$ for every $i \in [n]$, then $\ldim(G \odot \mathcal{H}) \ge m$.
\end{theorem}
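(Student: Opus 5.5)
The plan is to reduce both claims to the earlier result on simplicial vertices in maximal cliques (\cite[Theorem 4]{simanjuntak-2026+}, stated above). Throughout I read $m$ in the statement as the number of indices $i\in[n]$ with $m_i=2$, since this is the quantity the argument naturally bounds.

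First I identify the relevant maximal cliques of $G\odot\mathcal{H}$. Write $v_i$ for the $i$-th vertex of $G$ and $H_i\cong K_{m_i}$ for its attached copy. Every vertex $x\in V(H_i)$ satisfies $N[x]=V(H_i)\cup\{v_i\}$, which induces $K_{m_i+1}$. Hence each such $x$ is simplicial, and $V(H_i)\cup\{v_i\}$ is a maximal clique, because no further vertex is adjacent to all of $V(H_i)$. This maximal clique therefore contains at least $m_i$ simplicial vertices. (When $n(G)=1$, the vertex $v_i$ may also be simplicial, but that only strengthens the count.)

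For the first claim, assume $\ldim(G\odot\mathcal{H})<\infty$. By \cite[Theorem 4]{simanjuntak-2026+}, each maximal clique contains at most two simplicial vertices, so $m_i\le 2$ for every $i$.

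For the second claim, assume $m_i\le 2$ for all $i$, and let $I$ be the set of indices with $m_i=2$. For $i\in I$, the two vertices $x_i,y_i$ of $H_i$ are simplicial vertices in the same maximal clique. By the second part of \cite[Theorem 4]{simanjuntak-2026+}, exactly one of $x_i,y_i$ lies in any local multiset resolving set $S$. This can also be checked directly: $x_i,y_i$ are adjacent true twins, so $d(x_i,z)=d(y_i,z)$ for every $z\notin\{x_i,y_i\}$. If neither is in $S$, their multisets coincide. If both are in $S$, each contributes the pair $\msl 0,1\msr$, and again the multisets coincide. The pairs $\{x_i,y_i\}$, $i\in I$, are pairwise disjoint. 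Thus $|S|\ge |I|=m$, which gives $\ldim(G\odot\mathcal{H})\ge m$.

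The only delicate point is confirming that $V(H_i)\cup\{v_i\}$ is indeed a maximal clique whose simplicial vertices include those of $H_i$, including the degenerate cases $n(G)=1$ and $m_i=1$. Once that is settled, the earlier theorem does all the work.
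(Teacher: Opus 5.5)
Your proof is correct. The survey only quotes this result without proof, so there is nothing to compare line by line. Your reduction is the argument the survey's placement of the result suggests: each $V(H_i)\cup\{v_i\}$ is a maximal clique in which every vertex of $H_i$ is simplicial, so the simplicial-vertex theorem (Theorem 4 of the same source, which you also re-verify via the adjacent-true-twin argument) forces $m_i\le 2$. It likewise forces every local multiset resolving set to contain exactly one vertex from each attached $K_2$.

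Your reading of the undefined $m$ as $|\{i : m_i=2\}|$ is the natural one. For instance, if $G$ is bipartite and all $m_i=1$, the corona is bipartite and has local multiset dimension $1$, so no larger quantity such as $n$ could be intended.
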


The last graph operation to be considered is the following. Let $G$ be a graph with a root $v$. The \textit{amalgamation} $\Amal(G, v, m)$ is the graph obtained by taking $m$ copies of $G$ and identifying all of their roots. Several contributions to this product are found in \cite{alfarisi-2024a}. Among all, the following general bound is stated.

\begin{theorem}{\rm \cite[Lemma 0.2]{alfarisi-2024a}}
\label{thm:local-amalgamation}
If $n \ge 3$, $m \ge 2$, and $v$ is a vertex of a connected graph $G$, then
$$\ldim(\Amal(G,v,m)) \le m \cdot \ldim(G).$$
\end{theorem}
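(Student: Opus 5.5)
The plan is to take a local multiset basis $S$ of $G$ with $|S| = \ldim(G)$ and copy it into every copy of $G$. Denote the copies by $G_1,\dots,G_m$, all sharing the root $v$. Let $S_i$ be the copy of $S$ in $G_i$, and set $S' = \bigcup_{i=1}^m S_i$. Then $|S'| \le m\cdot\ldim(G)$; the inequality may be strict if $v\in S$, since all copies then share $v$. If $\ldim(G)=\infty$ there is nothing to prove, so we may assume $S$ is finite.

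We then show that $S'$ is a local multiset resolving set of $\Amal(G,v,m)$. Every edge $xy$ of the amalgamation lies inside a single copy $G_i$, and $G_i$ is an isometric subgraph because the copies meet only at the cut vertex $v$. For $z\in V(G_i)$ the representation splits as
$$m(z|S') = m_{G_i}(z|S_i) \cup \msl d(z,v)+d(v,s) : s\in S_j,\ j\ne i\msr .$$
Here the first part is the multiset of $z$ with respect to $S$ in the copy $G_i\cong G$, with copies of $v$ counted appropriately. The second part equals $(m-1)$ copies of the multiset $\msl d(z,v)+d_G(v,s): s\in S\setminus\{v\}\msr$, and it depends on $z$ only through $d(z,v)$.

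Now take the edge $xy$ in $G_i$. If $d(x,v)=d(y,v)$, the second parts coincide, and the first parts differ because $S$ resolves the edge $xy$ in $G$. Hence the whole multisets differ, since the multiset union cancels equal summands.

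The main obstacle is the case $d(x,v)\ne d(y,v)$, say $d(y,v)=d(x,v)+1$. There the first parts may differ in a way that is compensated by the shift of the second parts. I would compare sums. The sum of the second part increases by exactly $(m-1)|S\setminus\{v\}|$ when passing from $x$ to $y$, and this is at least $1$ whenever $S\ne\{v\}$, because $m\ge2$. The sum of the first part changes by some integer $\delta$ with $|\delta|\le|S|$.

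For the total sums to agree we would need $\delta = -(m-1)|S\setminus\{v\}|$, and this need not be impossible. So I would refine the argument using the smallest element instead. Let $M=\max_{s\in S}d_G(v,s)$. Then the largest distance appearing in the second part is $d(z,v)+M$, and it appears $m-1\ge1$ times. For vertices $z$ in $G_i$, every element of the first part satisfies $d(z,s)\le d(z,v)+M$. The maximum entry of $m(z|S')$ is therefore exactly $d(z,v)+M$, which differs between $x$ and $y$, so their multisets differ.

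The degenerate case $S=\{v\}$ is handled directly. Then $S'=\{v\}$, and the edge $xy$ is resolved in $G$ by $v$ alone, which means $d(x,v)\ne d(y,v)$; this carries over unchanged. This completes the argument, with the max-entry trick as the key step.
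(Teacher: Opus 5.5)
Your proof is correct and complete. The survey gives no proof of this statement; it only quotes it from \cite{alfarisi-2024a}. So there is no in-paper argument to compare against, and your argument has to stand on its own, which it does.

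The construction $S'=\bigcup_i S_i$ is the natural one, and both cases of your verification are sound.
\begin{itemize}
\item When $d(x,v)=d(y,v)$, the contributions of the copies $G_j$, $j\neq i$, coincide, and multiset sum is cancellative. The resolving property of $S$ in $G\cong G_i$ then finishes the case. Here $G_i$ is isometric in the amalgamation because $v$ is a cut vertex.
\item When $d(x,v)\neq d(y,v)$ and $S\neq\{v\}$, the bound $d(z,s_i)\le d(z,v)+d_G(v,s)\le d(z,v)+M$ holds. Together with $m\ge 2$, which makes the second part nonempty, it shows that the maximum entry of $m(z|S')$ is exactly $d(z,v)+M$. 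This separates $x$ from $y$ without using any property of $S$.
\end{itemize}

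Three cosmetic remarks:
\begin{itemize}
\item The paragraph comparing sums is a dead end and can be deleted.
\item The hypothesis $n\ge 3$ is never used in your argument.
\item Your count actually gives $|S'| = m|S|-(m-1)$ when $v\in S$, so the bound is slightly better in that case.
\end{itemize}
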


The authors in \cite{alfarisi-2024a} then continued to provide some results for the amalgamation of wheels $W_n$ and fans $F_n$ for various root vertices. In particular, they showed that if both graphs take the universal vertex as their root, then the bound in Theorem \ref{thm:local-amalgamation} becomes an equality. In more general settings, the amalgamation of a finite collection of graphs has been defined, either taking a vertex or an edge as their roots. These types of amalgamation have been discussed in \cite{simanjuntak-2026+} with a particular interest in complete graphs. Finally, some other contributions on the local multiset dimension seem to have been presented in \cite{shankar-2025}.



\subsection{Graphs with local multiset dimension two}

By Theorem~\ref{th:local-equal-1}, $\ldim(G)=1$ if and only if $G$ is a bipartite graph. Hence, it is natural to consider characterizing those graphs $G$ for which $\ldim(G)=2$. We present here some partial results in this direction, and to this end, we first show some basic observations. Clearly, a first situation that immediately follows is that if $\ldim(G)=2$, then $G$ is not bipartite.

\begin{remark}
\label{rem:basis-2-even-dist}
If $G$ is a non-bipartite graph with a local multiset basis $S=\{u,v\}$, then $d_G(u,v)$ is even.
\end{remark}

\begin{proof}
If $d_G(u,v)$ is odd, then there are two adjacent vertices $x,y$ in a shortest $u,v$-path (it is possible that $uv\in E(G)$) such that $m_G(x|S)=\msl \lfloor d_G(u,v) / 2\rfloor, \lceil d_G(u,v) / 2\rceil\msr=m_G(y|S)$, which is not possible.
\end{proof}

The remark above leads to the fact that if $S=\{u,v\}$ is a local multiset basis, then $u$ and $v$ cannot de adjacent. Some sporadic examples of graphs with local multiset dimension two are drawn in Figure~\ref{fig:examples}.

\begin{figure}[ht]
    \centering

\begin{tikzpicture}[
    vertex/.style={circle, draw=black, thick, minimum size=6pt, inner sep=0pt},
    filled/.style={vertex, fill=black},
    empty/.style={vertex, fill=white}
]

\begin{scope}[shift={(0,-0.5)}, scale=0.8]
    \node[filled] (a1) at (-0.25, 0) {};
    \node[empty]  (a2) at (1.75, 0) {};
    \node[empty]  (a3) at (0.75, -1.5) {};
    \draw (a1) -- (a2) -- (a3) -- (a1);
    
    \node[empty] (a_mid) at (0.75, -3) {};
    \draw (a3) -- (a_mid);
    
    \node[empty] (a4) at (0.75, -4.5) {};
    \node[empty] (a5) at (-0.25, -6) {};
    \node[filled] (a6) at (1.75, -6) {};
    \draw (a_mid) -- (a4);
    \draw (a4) -- (a5) -- (a6) -- (a4);
\end{scope}

\begin{scope}[shift={(4.5,0)}]
    \node[filled] (T) at (0.75, 0) {};
    \node[filled] (B) at (0.75, -6) {};
    
    \node[empty] (L1) at (-0.5,-1) {}; \node[empty] (L2) at (-1.5,-2) {}; \node[empty] (L3) at (-0.5,-3) {};
    \node[empty] (L4) at (-1.5,-4) {}; \node[empty] (L5) at (-0.5,-5) {};
    \draw (T) -- (L1) -- (L2) -- (L3) -- (L4) -- (L5) -- (B);
    \draw (L1) -- (L3) -- (L5);

    \node[empty] (ML1) at (0.1,-1.2) {}; \node[empty] (ML2) at (0.5,-2.2) {}; \node[empty] (ML3) at (0.1,-3) {};
    \node[empty] (ML4) at (0.5,-3.8) {}; \node[empty] (ML5) at (0.1,-4.8) {};
    \draw (T) -- (ML1) -- (ML2) -- (ML3) -- (ML4) -- (ML5) -- (B);

    \node[empty] (MR1) at (1.4,-1.2) {}; \node[empty] (MR2) at (1.0,-2.2) {}; \node[empty] (MR3) at (1.4,-3) {};
    \node[empty] (MR4) at (1.0,-3.8) {}; \node[empty] (MR5) at (1.4,-4.8) {};
    \draw (T) -- (MR1) -- (MR2) -- (MR3) -- (MR4) -- (MR5) -- (B);

    \node[empty] (R1) at (2.0,-1) {}; \node[empty] (R2) at (3.0,-2) {}; \node[empty] (R3) at (2.0,-3) {};
    \node[empty] (R4) at (3.0,-4) {}; \node[empty] (R5) at (2.0,-5) {};
    \draw (T) -- (R1) -- (R2) -- (R3) -- (R4) -- (R5) -- (B);
    \draw (R1) -- (R3) -- (R5);
\end{scope}

\begin{scope}[shift={(10.5,-0.5)}, scale=0.8]
    \node[empty]  (c1) at (0,0) {};
    \node[empty]  (c2) at (1.2,-1.5) {};
    \node[empty]  (c3) at (1.2,-4.5) {};
    \node[filled] (c4) at (0,-6) {}; 
    \node[empty]  (c5) at (-1.2,-4.5) {};
    \node[filled] (c6) at (-1.2,-1.5) {}; 
    
    \draw (c1) -- (c2) -- (c3) -- (c4) -- (c5) -- (c6) -- (c1);
    \draw (c3) -- (c5);
\end{scope}

\end{tikzpicture}
    \caption{Three graphs $G$ with $\ldim(G)=2$. Local multiset basis are drawn in bold.}
    \label{fig:examples}
\end{figure}

Observing the structure of such examples, one can notice that the quantity and variety of graphs $G$ satisfying $\ldim(G)=2$ is huge, and describing their structure is probably a very challenging task. In this sense, we center our attention here on the class of (not bipartite) block graphs. Recall that block graphs are the graphs in which every maximal $2$-connected subgraph is complete. From now on, we assume $q$ represents the number of maximal $2$-connected subgraphs of a block graph $G$. We may also say that $V(G)=\bigcup_{i\in [q]} V_i$, where each $V_i$ is the set of vertices of a maximal $2$-connected subgraph of $G$. Notice that $|V_i\cap V_j|\le 1$ for any distinct $i,j\in [q]$. We shall also say that the sets $V_i$, $i\in [q]$, are the blocks of $G$.

In a block graph $G$, each vertex is either a cut vertex or a simplicial vertex. A block $V_i$ is called a {\em terminal block} if it has at most one cut vertex. Trees (every block is isomorphic to $K_2)$ and complete graphs (only one block) are the most basic examples of block graphs.

In order to characterize the block graphs $G$ with $\ldim(G)=2$, we define the class of block graphs $\mathcal{B}$. A graph $G$ belongs to $\mathcal{B}$ if $G$ is a block graph with $\omega(G) = 3$, and contains a shortest path of even length which passes through exactly two vertices of each of the triangles of $G$. Examples of graphs in $\mathcal{B}$ are, for instance, all the unicyclic graphs of order at least $4$ in which the unique cycle has order three. Moreover, note that the left-hand side graph of Figure \ref{fig:examples} is a graph from the family $\mathcal{B}$.

\begin{theorem}
Let $G$ be a block graph. Then $\ldim(G)=2$ if and only if $G\in \mathcal{B}$.
\end{theorem}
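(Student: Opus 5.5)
Both directions revolve around a basis $S=\{u,v\}$ and the shortest $u,v$-path $P$. By Remark~\ref{rem:basis-2-even-dist}, $d_G(u,v)$ is even. The sufficiency direction will take as basis the two ends of the path from the definition of $\mathcal{B}$. For necessity, we will show that any local multiset basis $\{u,v\}$ of a non-bipartite block graph has this path as a witness.

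\emph{Sufficiency.} Let $G\in\mathcal{B}$ and let $P$ be the even shortest $u,v$-path through exactly two vertices of every triangle. Set $S=\{u,v\}$. Since $G$ contains a triangle, it is not bipartite, so Theorem~\ref{th:local-equal-1} gives $\ldim(G)\ge 2$. It remains to check that $S$ is local multiset resolving.

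\begin{itemize}
\item \emph{Edges in $K_2$-blocks.} For such an edge $xy$, both $d(x,u)$ and $d(y,u)$ change by exactly one. I will argue that the sum $d(x,u)+d(x,v)$ has a parity that changes across the edge. The reason is that every cycle lies inside a triangle block, so on bridges the parity of $d(\cdot,u)$ is determined by a fixed $2$-colouring-like labelling. The same holds for $d(\cdot,v)$.
\item \emph{Triangles.} For a triangle $\{a,b,c\}$ with $ab\in E(P)$, we have $d(a,u)+d(a,v)=d(b,u)+d(b,v)=d(u,v)$, but $d(a,u)\neq d(b,u)$. Because the total $d(u,v)$ is even, the multisets $\msl d(a,u),d(a,v)\msr$ and $\msl d(b,u),d(b,v)\msr$ coincide only if $d(a,u)=d(b,v)$. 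That would force $a,b$ to be symmetric about the midpoint of $P$, which is impossible for adjacent vertices on an even path. The third vertex $c$ is at distance $+1$ from both $u$ and $v$ compared with one of $a,b$. Its sum of distances therefore has a different parity or magnitude, and it is distinguished from both $a$ and $b$.
\item \emph{Branches off $P$.} Vertices hanging off $P$ through cut vertices have distances of the form $d(w,u)=d(p,u)+t$ and $d(w,v)=d(p,v)+t$, where $p\in V(P)$ is the attachment point. The analysis then reduces to the two previous items.
\end{itemize}

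\emph{Necessity.} Let $\ldim(G)=2$ with basis $\{u,v\}$. The graph $G$ is non-bipartite, so it has a triangle. First, $\omega(G)\le 3$: a $K_4$ block would contain four pairwise adjacent vertices. Their distances to $u$ take at most two values ($t$ and $t+1$, with a unique vertex nearest to $u$ in each block), and the same holds for $v$. This leaves at most three possible multisets among four pairwise adjacent vertices, a contradiction. Let $P$ be the shortest $u,v$-path, which is unique in a block graph and has even length by Remark~\ref{rem:basis-2-even-dist}. We must show that every triangle $T$ meets $P$ in two vertices. If $T$ meets $P$ in at most one vertex, then there is a unique cut vertex $p\in T$ closest to $P$. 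The two other vertices $x,y$ of $T$ then satisfy $d(x,u)=d(y,u)$ and $d(x,v)=d(y,v)$. Hence $m(x|S)=m(y|S)$ for the edge $xy$, a contradiction.

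The delicate point I expect is the parity bookkeeping in the sufficiency direction. Concretely, I must show that on every bridge edge the multiset changes, and not merely that its ordered version changes. I would handle this via the invariant $d(x,u)+d(x,v)\equiv d(u,v)+2t \pmod 2$ along $P$ with branches. This has to be adjusted by $+1$ exactly when crossing the third vertex of a triangle; tracking that adjustment carefully is where most of the work lies.
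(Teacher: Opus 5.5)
Your necessity direction is correct. The gate argument (each block has a unique vertex nearest $u$ and a unique vertex nearest $v$, so a block yields at most three distinct distance pairs) is a slightly cleaner way than the paper's case split to rule out $K_4$ blocks. Your uniform treatment of triangles meeting $P$ in at most one vertex also covers terminal and non-terminal triangles at once.

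The gap is in sufficiency. Your first bullet claims that the parity of $s(x)=d(x,u)+d(x,v)$ changes across every edge of a $K_2$-block, and this is false. Across a bridge, each of $d(\cdot,u)$ and $d(\cdot,v)$ changes by exactly $\pm 1$, as you say yourself. Hence $s$ changes by $-2$, $0$ or $2$, and its parity is \emph{preserved} across every bridge. Your closing invariant $s\equiv d(u,v)+2t \pmod 2$ says exactly this: $s$ is even at both ends of every such bridge. So no parity bookkeeping can separate the endpoints of a bridge, and the plan as stated fails there.

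What actually separates them depends on where the bridge lies. Write $P=p_0p_1\cdots p_{2m}$.
\begin{itemize}
\item For a bridge $p_ip_{i+1}$ on $P$ you have $s(p_i)=s(p_{i+1})=2m$. Here you need the midpoint argument that you only applied to triangle edges: $\msl i,2m-i\msr=\msl i+1,2m-i-1\msr$ would force $2i+1=2m$.
\item Let $H$ be the chain formed by $P$ and its triangles. Every triangle lies in $H$, and each component of $G-V(H)$ attaches to a single vertex $r$ of $H$ and is a tree. So every other bridge lies in such a pendant tree. Since $r$ separates that tree from both $u$ and $v$, you get $s(w)=s(r)+2d(w,r)$. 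The endpoints therefore differ by exactly $2$ in magnitude, not in parity.
\end{itemize}

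Two smaller statements also need correcting.
\begin{itemize}
\item For a triangle $\{p_i,p_{i+1},c\}$, the vertex $c$ is not one step farther from both $u$ and $v$ than a single one of $p_i,p_{i+1}$. In fact $d(c,u)=d(p_i,u)+1=i+1$ and $d(c,v)=d(p_{i+1},v)+1=2m-i$. So $s(c)=2m+1$, and this odd value is what separates $c$ from $p_i$ and $p_{i+1}$.
\item Pendant trees may also hang from such third vertices $c\notin V(P)$. There your formula $d(w,v)=d(p,v)+t$ with $p\in V(P)$ is wrong; take the root in $V(P)\cup\{c\}$ instead, as above.
\end{itemize}
With these repairs the sufficiency direction goes through. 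It is then considerably more explicit than the paper's brief justification via the auxiliary shortest paths $Q$.
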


\begin{proof}
Let $G$ be a block graph. We recall that each pair of vertices in $G$ is connected by a unique shortest path. 

$(\Rightarrow)$ Assume $\ldim(G)=2$ and let $S=\{x,y\}$ be a local multiset basis. Also, let $P$ be the unique shortest $x,y$-path in $G$. Notice that, by Remark \ref{rem:basis-2-even-dist}, the distance between $x$ and $y$ must be even, which means $x,y$ are not adjacent.  If $\omega(G)=2$, then $G$ is a tree, which is a bipartite graph, and then $\ldim(G)=1$, which is not possible. Hence, $\omega(G)\ge 3$. Suppose $\omega(G)>3$ and let $V_j$ be a block with at least $4$ vertices in $G$. Notice that $|V(P)\cap V_j|\le 2$ due to the structure of shortest paths in a block graph. 

Assume first $1\le |V(P)\cap V_j|\le 2$. Since $|V_j|\ge 4$, due to the uniqueness of shortest paths between vertices of a block graph, we deduce the existence of two vertices $u,v\in V_j$ (which are adjacent) such that they have the same distances to $x$ and to $y$, which contradicts the fact that $S$ is a local multiset basis. Hence, $|V(P)\cap V_j|=0$. Now, let $z$ be a vertex of $V_j$ that minimizes the distance between the vertices in $V_j$ and the vertices from the path $P$ (notice that such $z$ is unique due to the structure of block graphs). It can be hence readily observed that any two vertices of $V_j$ (that are adjacent) other than $z$ have the same distances to $x$ and to $y$, which is again not possible. Thus, we conclude that $\omega(G) = 3$ which means that all blocks of $G$ have order either $2$ or $3$.

Notice next that, if $G$ has a terminal block of order three (i.e., two of its vertices are simplicial vertices), then exactly one of its simplicial vertices must be in $S$. Thus, $G$ can have at most two terminal blocks of order three. If (for instance) $x$ belongs to a terminal block $V_i$, then clearly the cut vertex $w$ of $V_i$ belongs to the shortest $x,y$-path $P$. Also, notice that $x,w$ are hence adjacent. Assume now, there is a block (not terminal) of order $3$, say $V_r$. Suppose that $|V(P)\cap V_r|\le 1$. Let $u$ be a vertex of $V_r$ that minimizes the distance between the vertices in $V_r$ and the vertices in $P$ (notice that this vertex can be indeed a vertex of $P$, and that it is unique). Due to the structure with the unique shortest $x,y$-path $P$, the other two vertices $u',u''\in V_r$ are adjacent and have the same distance to both vertices $x$ and $y$, which is a contradiction. Thus, $|V(P)\cap V_r| = 2$ (it cannot be three due to the structure of a block graph).

As a consequence of the arguments above, it holds that $G$ satisfies $\omega(G) = 3$ and contains a shortest path which passes through exactly two vertices of each of the triangles of $G$. Thus $G\in \mathcal{B}$. 

\medskip
$(\Leftarrow)$ Assume now $G\in \mathcal{B}$. Let $P''$ be the shortest path of even length between two vertices $w,z$ in $G$ that passes through exactly two vertices from each of the triangles of $G$. We claim that the set $D=\{w,z\}$ is a local multiset resolving set for $G$. To see this, we only need to observe that any two adjacent vertices $a,b$ of $G$ are either in the shortest path $P''$, or (w.l.o.g.), in another shortest $a,b'$-path $Q$ such that $b'\in V(P'')$ and $b\in V(Q)$. This fact makes that $a$ and $b$ have distinct multiset representations with respect to $D$. Thus, we deduce that $D$ is indeed a local multiset basis, and so $\ldim(G)=2$, since it cannot be $\ldim(G)=1$ by Theorem \ref{th:local-equal-1}. This completes this implication and the whole proof.   
\end{proof} 

\section{Other two variants}

As usual in graph theoretical studies, there are always several variations of a main concept that would appear in the investigation. Until now, we have surveyed the most remarkable cases related to the multiset dimension. In this section, we consider two other variations that have been less considered. For each one of them, we are only aware of exactly one work that studies them. In addition to these two variations, the reader can also find information about an independent version of the multiset dimension. That is, in~\cite{Kumar-2023} a study has been presented on the multiset resolving sets of graphs that are also independent sets (sets of vertices that induce edgeless graphs). 

\subsection{Edge multiset dimension}
\label{sec:edge-multi}

The concept of edge metric dimension of graphs was introduced in \cite{Kelenc-2018} as a variation focused into uniquely identifying the edges of a graph, by means of a distance vector to a set of vertices. After such seminal work, this variant has become one of the most popular metric dimension parameters, which is probably based on its non-comparability with the classical metric dimension parameter. To see more about this, we simply suggest the survey \cite[Section 8]{kuziak-2021}. In this sense, it is not surprising that an edge version of the multiset dimension has also already been addressed.

If $S =\{u_1,\dots,u_k\}$ is an ordered set of vertices of a graph $G$, then the \emph{edge metric representation} of an edge $e=uv\in E(G)$ with respect to $S$ is the vector 
$$r(e|S) = (d_G(e,u_1),\dots,d_G(e,u_k))\,,$$
where the distance between a vertex $x$ and an edge $e=uv$ is $d_G(x,e)=\min\{d_G(x,u),d_G(x,v)\}$. The set $S$ is an {\em edge resolving set} for $G$ if all the vectors $r(e|S)$, $e\in E(G)$, are pairwise different. The \emph{edge metric dimension} $\edim(G)$ of $G$ is the cardinality of a smallest edge resolving set for $G$. These concepts were first presented in~\cite{Kelenc-2018}.

Now, in \cite{Ikhlaq-2023}, the authors considered the following concepts. The \emph{edge multiset representation} of an edge $e=uv\in E(G)$ with respect to $S$ is the multiset  
$$m_G(e|S) = \msl d_G(e,u_1),\dots,d_G(e,u_k)\msr\,.$$
The set $S$ is an {\em edge multiset resolving set} for $G$ if all the multisets $m_G(e|S)$, $e\in E(G)$, are pairwise different. The \emph{edge multiset dimension} $\medim(G)$ of $G$ is the cardinality of a smallest edge multiset resolving set for $G$.

The article \cite{Ikhlaq-2023} presents some preliminary results, special conditions, and bounds on the edge-multiset dimension of certain graphs. Among them, we remark the following contributions, some of which are rather similar and their proofs being equivalent to the standard multiset dimension parameter.

\begin{theorem}{\em \cite[Theorem 9]{Ikhlaq-2023}}
Let $G$ be a graph. Then $\medim(G) = 1$  if and only if $G\cong P_n$.
\end{theorem}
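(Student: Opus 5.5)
For the direction ``$G\cong P_n$ implies $\medim(G)=1$'', I take $P_n=x_1x_2\cdots x_n$ with edges $e_i=x_ix_{i+1}$, $i\in[n-1]$, and $S=\{x_1\}$. Then $d_G(e_i,x_1)=\min\{i-1,i\}=i-1$. These values are pairwise distinct for distinct $i$, so the one-element multisets $m_G(e_i|S)=\msl i-1\msr$ separate all edges. Hence $\medim(P_n)\le 1$. Since any edge multiset resolving set of a graph with at least one edge must be nonempty, equality holds. For $n=1$ there are no edges; I would either follow the paper's convention that $G$ is non-trivial or treat this case separately.

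For the converse, suppose $S=\{u\}$ is an edge multiset resolving set. A one-element multiset carries the same information as a vector, so the condition says that the values $d_G(e,u)$, $e\in E(G)$, are pairwise distinct. I will show that this forces $G$ to be a path with $u$ as an end vertex.

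First, every edge incident with $u$ has distance $0$, so $\deg(u)=1$. Next, for $k\ge 1$, every edge $e=ab$ with $d_G(e,u)=k$ has an endpoint at distance exactly $k$ from $u$. Since $G$ is connected, every vertex $v$ with $d_G(v,u)=k\ge 1$ is incident to an edge to a vertex at distance $k-1$ (along a shortest path), and that edge has distance $k-1$ from $u$.

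Let $L_k$ be the set of vertices at distance $k$ from $u$. Each vertex of $L_{k}$, $k\ge1$, contributes an edge of distance $k-1$ toward $L_{k-1}$. Distinctness of these distances forces $|L_k|\le 1$ for all $k$, and forces each vertex of $L_k$ to have exactly one neighbour in $L_{k-1}$. There are no edges inside any level either: such an edge would have distance $k$, as would the edge from $L_k$ to $L_{k+1}$ when $L_{k+1}\ne\emptyset$. If instead $L_k$ is the last level, then $|L_k|=1$ already rules out an internal edge. So the levels are singletons joined consecutively, and $G$ is a path.

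The only delicate step is this level-counting argument, specifically making sure that edges inside a level are excluded in the last level too. Since every level is a singleton, this is immediate. Everything else is routine.
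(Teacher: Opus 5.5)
Your proof is correct. The survey gives no proof of its own; it only quotes the result from the literature, remarking that such proofs parallel the vertex version. Your distance-level argument is the standard one behind $\mdim(G)=1$ if and only if $G$ is a path, transferred to edges.

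There are two small points to tidy up. First, nonemptiness of an edge multiset resolving set needs at least two edges: for $P_2$ the empty set resolves the single edge vacuously, which is a convention issue like your $n=1$ case. Second, you use implicitly that no edge joins non-consecutive levels; this follows from $|d_G(a,u)-d_G(b,u)|\le 1$ for every $ab\in E(G)$.
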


\begin{lemma}{\em \cite[Lemma 2]{Ikhlaq-2023}}
If $G$ is a graph, then $\medim(G) \ne 2$.
\end{lemma}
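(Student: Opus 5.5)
The statement is that no graph $G$ has $\medim(G)=2$. I would argue by contradiction, mirroring the argument for $\mdim$: take an edge multiset resolving set $S=\{u,v\}$ and find two distinct edges with the same multiset $m_G(\cdot|S)$. The vertex version uses the trick $m(x|\{x,y\})=m(y|\{x,y\})$, which has no direct edge analogue. So the plan is to work on a shortest $u,v$-path instead.

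Let $d=d_G(u,v)\ge 1$ and let $P=u=p_0p_1\cdots p_d=v$ be a shortest path. For an edge $e_i=p_ip_{i+1}$ on $P$ we have $d_G(e_i,u)=i$ and $d_G(e_i,v)=d-i-1$. Hence $m(e_i|S)=\msl i,d-1-i\msr$. This is symmetric under $i\mapsto d-1-i$, so $e_i$ and $e_{d-1-i}$ collide whenever $i\ne d-1-i$. Such a pair exists as soon as $d\ge 3$ (take $i=0$: $e_0\ne e_{d-1}$, both with multiset $\msl 0,d-1\msr$). This settles the case $d\ge 3$.

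The small cases $d\in\{1,2\}$ are where the real work lies, and I expect them to be the main obstacle.

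If $d=1$, the edge $uv$ has multiset $\msl 0,0\msr$. Since $\medim(G)\ne 1$ forces $G\not\cong P_n$ (using the preceding theorem, otherwise the minimum would be $1$), $G$ is not a path. Every edge incident to $u$ or $v$ other than $uv$ has multiset $\msl 0,1\msr$ unless it goes to a common neighbour. Concretely, edges $ux$ with $x\ne v$ satisfy $d(ux,u)=0$ and $d(ux,v)\le 1$, so they receive $\msl 0,0\msr$ or $\msl 0,1\msr$. Thus at most one other edge at $u$ and $v$ together can have $\msl 0,1\msr$, and one with $\msl 0,0\msr$ would already collide with $uv$. This forces $\deg u+\deg v\le 3$. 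Continuing outward, an edge at distance $k$ gets $\msl k,k\msr$ or $\msl k,k+1\msr$, so each distance level contains at most two edges, and the edges are nearly linearly arranged. I would then show such a graph is a path (contradicting $G\not\cong P_n$), or exhibit a triangle-type collision.

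If $d=2$, with middle vertex $w$, both $uw$ and $wv$ get $\msl 0,1\msr$, an immediate collision.

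So the only delicate case is $d=1$. There I would push the level-counting argument, possibly also using that a cycle through $uv$ creates two edges at equal distance to both endpoints, to conclude that $G$ must be a path.
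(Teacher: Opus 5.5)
\paragraph{Verdict on the proposal.} There is a genuine gap: the case $d_G(u,v)=1$ is left as a plan and not proved. The case $d_G(u,v)\ge 2$ is correct. Along a shortest $u,v$-path, the edges $e_0$ and $e_{d-1}$ are distinct and both receive $\msl 0,d-1\msr$. This already covers $d=2$, so that case needs no separate treatment. The paper gives no proof of this lemma (it only cites \cite{Ikhlaq-2023}), so the proposal is judged on its own.

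\paragraph{Why the level count does not close $d_G(u,v)=1$.} Your bound ``at most two edges per distance level'' does not by itself force $G$ to be a path. At each level $k$ it still allows one edge of type $\msl k,k\msr$ and one of type $\msl k,k+1\msr$, and you give no argument excluding the branching this could produce. The phrases ``nearly linearly arranged'' and ``show such a graph is a path, or exhibit a triangle-type collision'' are not an argument. There is also a small inaccuracy. An edge $ux$ with $x\neq v$ always gets exactly $\msl 0,1\msr$, because $d_G(ux,v)=\min\{1,d_G(x,v)\}=1$; the value $\msl 0,0\msr$ is impossible, common neighbour or not. Your conclusion $\deg u+\deg v\le 3$ is nevertheless correct.

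\paragraph{The missing step.} Use the inequality $\deg u+\deg v\le 3$ that you already derived. Since $u$ and $v$ are adjacent, both degrees are at least $1$, so one of them, say $v$, is a leaf.
\begin{itemize}
\item Every path from a vertex $x\neq v$ to $v$ passes through $u$, so $d_G(x,v)=d_G(x,u)+1$.
\item No edge other than $uv$ contains $v$. Hence $d_G(e,v)=d_G(e,u)+1$ for every edge $e\neq uv$, and $m_G(e|S)=\msl d_G(e,u),d_G(e,u)+1\msr$.
\item So for $e\neq uv$ the multiset is determined by the single number $d_G(e,v)\ge 1$. Moreover, $uv$ is the only edge with $d_G(e,v)=0$.
\end{itemize}
Therefore, if $S=\{u,v\}$ separates all edges, then $\{v\}$ alone already does, giving $\medim(G)\le 1$ and contradicting $\medim(G)=2$.

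Equivalently, no $\msl k,k\msr$-type edge occurs except $uv$, so each level has at most one edge and $G$ is forced to be a path, which is what you intended. The direct conclusion via $\{v\}$ does not even need the characterization of graphs with $\medim(G)=1$.
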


As in the case of the multiset dimension, there are graphs having no edge multiset resolving sets, and in such cases, the value of their edge multiset dimension is established as infinite. Parallel results concerning this were also given in \cite{Ikhlaq-2023}.

\begin{lemma}{\em \cite[Lemma 3]{Ikhlaq-2023}}
\label{lem:diam-two-edges}
Let $G$ be a non-trivial graph. If the distance between any vertex and any edge of $G$ is at most $2$, then $G$ does not contain an edge multiset resolving set.
\end{lemma}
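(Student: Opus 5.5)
The plan is a pigeonhole argument on the edge multiset representations, mirroring the argument behind Theorem~\ref{th:diam-2}. Suppose, to the contrary, that $S=\{u_1,\dots,u_k\}$ is an edge multiset resolving set of $G$. Since every vertex--edge distance is at most $2$, each $m_G(e|S)$ is a multiset on $\{0,1,2\}$. It is therefore determined by the pair $(a_e,b_e)$, where $a_e$ is the number of vertices of $S$ incident with $e$ (so $a_e\le 2$) and $b_e$ is the number of vertices of $S$ at distance exactly $1$ from $e$. The goal is to exhibit two distinct edges with the same pair.

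\textbf{Step 1: small $S$.} If $|S|=1$, then $G$ has at most three edges, since the possible values are only $\msl 0\msr,\msl 1\msr,\msl 2\msr$, and at most two edges can be at distance $0$ from $u_1$ with distinct values. So only a handful of tiny graphs need checking: paths $P_2,P_3$, the triangle, and $P_4$ (but $P_4$ has a vertex--edge pair at distance $2$). I would then have to decide whether the hypothesis really excludes these small paths. This is the main obstacle: $P_2$ and $P_3$ seem to satisfy the hypothesis, and yet $\medim(P_n)=1$ by the preceding theorem. I would therefore read ``non-trivial'' as implicitly excluding such degenerate paths, or restrict to graphs that are not paths. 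If the statement is intended for general $G$, I would note that $P_2$ and $P_3$ are counterexamples and prove the lemma for $G\not\cong P_n$. For $|S|=2$, the preceding lemma already rules it out.

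\textbf{Step 2: the main case, $|S|\ge 3$ and $G$ not a path.} Edges with $a_e=2$ have both ends in $S$. Edges with $a_e=1$ have exactly one end in $S$. Edges with $a_e=0$ lie entirely outside $S$. Within each class, $b_e$ must take distinct values. I would mimic the proof of Theorem~\ref{th:diam-2} and look at edges incident to a fixed vertex $u\in S$ whose other ends lie in $S$. Two such edges $uu_i$ and $uu_j$ both have $a=2$. Their $b$-values count the vertices of $S\setminus\{u,u_i,u_j\}$ adjacent to the respective edges, plus the opposite endpoint (distance $1$). I would then compare them with the symmetric counts obtained by swapping roles, in the spirit of the argument in Remark~\ref{rem:basis-2-even-dist} and of the $\mdim=3$ observation that pairwise distances in a basis must differ.

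\textbf{Alternative route.} A cleaner option is to reduce to Theorem~\ref{th:diam-2}. The hypothesis forces $\diam(G)\le 3$, and the condition is similar to the line graph having small eccentricities, so one could try to show that an edge multiset resolving set of $G$ yields a multiset resolving set of a diameter-two auxiliary graph. I would try this first, falling back on direct counting: there are at most $\binom{k+2}{2}$ possible multisets while $|E(G)|$ is large relative to $k$. This counting step is where I expect the real difficulty, and possibly the need to correct the hypotheses.
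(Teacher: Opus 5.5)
\textbf{The statement is false, and so is the repaired version you aim to prove.} You are right that $P_2$ and $P_3$ refute the statement as written, but your diagnosis stops too early. First, $P_4$ also satisfies the hypothesis. ``At most $2$'' allows the pairs (end vertex, far edge), which are at distance exactly $2$. A single end vertex gives the three edges the multisets $\msl 0\msr$, $\msl 1\msr$, $\msl 2\msr$, so $\medim(P_4)=1$. In fact, done carefully, your $|S|=1$ analysis shows that exactly $P_2$, $P_3$, $P_4$ arise there.

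More seriously, excluding paths does not rescue the lemma. Let $F$ be the fork: the path $a_1c_1c_2b$ with an extra leaf $a_2$ attached to $c_1$. It is a tree of diameter $3$ in which every vertex--edge distance is at most $2$; the extreme cases are $d(b,a_ic_1)=\min\{3,2\}=2$ and $d(a_i,c_2b)=\min\{2,3\}=2$. Take $S=\{a_1,c_2,b\}$. The edges $a_1c_1$, $a_2c_1$, $c_1c_2$, $c_2b$ receive $\msl 0,1,2\msr$, $\msl 1,1,2\msr$, $\msl 0,1,1\msr$, $\msl 0,0,2\msr$, respectively, and these are pairwise distinct. So $\medim(F)=3<\infty$, and the claim ``for $G\not\cong P_n$'' that you set out to prove is false. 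No completion of your Step~2 can succeed.

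\textbf{Neither route contains an argument.} The pigeonhole argument behind Theorem~\ref{th:diam-2} works because the $k$ vertices of $S$ must exhaust the only $k$ available multisets $\msl 0,1^a,2^{k-1-a}\msr$. This forces one vertex of $S$ adjacent to all the others and one adjacent to none. For edges nothing similar happens: the numbers of edges inside $S$, between $S$ and $V(G)\setminus S$, and outside $S$ are not tied to $k$.

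The same problem defeats your counting route. For $|S|=k\ge 2$ there are $3k$ admissible multisets (at most two zeros), and nothing forces $|E(G)|>3k$; the fork has $4$ edges against $9$ multisets. The reduction to Theorem~\ref{th:diam-2} is also blocked: as you note, the hypothesis only yields $\diam(G)\le 3$, and both $P_4$ and $F$ have diameter $3$.

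\textbf{What a correct statement needs.} The hypothesis must be genuinely stronger. For example, suppose every vertex--edge distance is at most $1$. Then $m_G(xy|S)$ is determined by $|S\cap\{x,y\}|\in\{0,1,2\}$, so $|E(G)|\le 3$. Among the resulting graphs, only $P_2$ and $P_3$ admit an edge multiset resolving set. A diameter-two version, which is the true analogue of Theorem~\ref{th:diam-2}, would need its own proof.
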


Notice that this result is parallel to the case of graphs of diameter two for the multiset dimension. If a graph $G$ has diameter two, then it satisfies the condition of Lemma \ref{lem:diam-two-edges}. However, the contrary is not true. For instance, $C_6$ satisfies the conditions of this lemma, but has diameter three. Examples of basic graphs including complete graphs, wheels, fans, complete bipartite graphs were mentioned in \cite{Ikhlaq-2023} to have infinite edge multiset dimension. 

Other contributions in this topic were focused into computing the value of $\medim(G)$ for some specific graphs $G$. Some of these results are next mentioned.

\begin{theorem}{\em \cite[Theorem 12]{Ikhlaq-2023}}
If $n\ge 7$, then $\medim(C_n) = 3$.
\end{theorem}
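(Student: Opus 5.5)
To show $\medim(C_n)=3$ for $n\ge 7$, write $V(C_n)=\{v_0,\dots,v_{n-1}\}$ (indices mod $n$) and $e_i=v_iv_{i+1}$. The lower bound is easy. By \cite[Theorem 9]{Ikhlaq-2023}, $\medim(C_n)=1$ only for paths, so $\medim(C_n)\ge 2$. By \cite[Lemma 2]{Ikhlaq-2023}, the value $2$ never occurs. Hence $\medim(C_n)\ge 3$, and the work is in exhibiting an edge multiset resolving set of size three.

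For the upper bound I would use a set $S=\{v_0,v_1,v_k\}$ with a small, asymmetric gap, in the spirit of the known vertex result $\mdim(C_n)=3$ for $n\ge 6$. Good first candidates are $k=3$, or $k$ chosen so that the three pairwise distances $1$, $k-1$, and $\min(k,n-k)$ are all distinct. Distinct pairwise distances are necessary, since otherwise an edge incident to one basis vertex can collide with an edge incident to another.

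First record the distance from a vertex to an edge on a cycle: $d(v_j,e_i)=\min\{d(v_j,v_i),d(v_j,v_{i+1})\}$. As $i$ goes around the cycle, this is $0$ on the two edges at $v_j$, then increases by one per step on each side, up to about $\lfloor (n-1)/2\rfloor$. The plan is to tabulate $m(e_i|S)$ as a piecewise-linear function of $i$, splitting the cycle into arcs according to which side of each basis vertex $e_i$ lies.

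The main step is checking injectivity. Start with the invariant given by the number of zeros in the multiset: exactly the edges incident to $S$ have a zero. Among these, $e_0$ contains two basis vertices and so has two zeros, while $e_{n-1}, e_1, e_{k-1}, e_k$ each have one zero. These four are told apart by their remaining two entries, using the distinct pairwise distances. For edges with no zero, use the sum of the entries together with the minimum entry, which locates the edge to within a reflection. Then show that reflection-symmetric pairs are separated by the asymmetry that the adjacent pair $v_0,v_1$ contributes (it gives distances differing by $0$ versus $1$ on the two sides). This case analysis is routine but fiddly, and it splits by the parity of $n$ because of antipodal behaviour on even cycles.

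The obstacle is precisely the edges near the antipodes of the basis vertices, where distances flatten out and collisions are most likely. This is also why the statement needs $n\ge 7$: for smaller $n$ every vertex–edge distance is at most $2$, so \cite[Lemma 3]{Ikhlaq-2023} gives infinite dimension. If the choice $k=3$ produces a collision there for some parity, I would instead take $k=\lfloor n/2\rfloor-1$ or a similar value, and verify the small cases $n=7,8$ directly.
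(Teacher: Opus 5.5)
Your lower bound is fine: Theorem 9 and Lemma 2 of \cite{Ikhlaq-2023} give $\medim(C_n)\ge 3$. Your explanation of why $n\ge 7$ is needed is also correct, since for $n\le 6$ every vertex--edge distance in $C_n$ is at most $\lfloor (n-1)/2\rfloor\le 2$. (The survey only cites this result, so there is no in-paper proof to compare with.)

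The gap is the upper bound, which is the entire content of the theorem. You never show that any particular $3$-set is an edge multiset resolving set. You call the injectivity check routine, but you are not sure your candidate passes it, and your fallback is unsound. For $n=7$, the choice $k=\lfloor n/2\rfloor-1=2$ gives $S=\{v_0,v_1,v_2\}$. This set is invariant under the reflection $v_j\mapsto v_{2-j}$, so it fails: $e_0$ and $e_1$ both get $\msl 0,0,1\msr$. That failure is exactly your own necessary condition, since $1=k-1$. As written, the proposal is a plan, not a proof.

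The missing verification is short with the right bookkeeping, and your first candidate $S=\{v_0,v_1,v_3\}$ works for every $n\ge 7$, with no parity-dependent fallback. For $0\le i\le n-1$ one has $d(v_0,e_i)=f(i):=\min\{i,n-1-i\}$, a tent function with maximum $\lfloor (n-1)/2\rfloor$. By rotation, $d(v_s,e_i)=f(i-s)$ (indices mod $n$), so
\[
m(e_i|S)=\msl f(i),\,f(i-1),\,f(i-3)\msr .
\]
This yields the following multisets:
\begin{itemize}
\item[(a)] $\msl 0,0,2\msr$, $\msl 0,1,1\msr$, $\msl 0,1,2\msr$ for $e_0,e_1,e_2$;
\item[(b)] $\msl t,t+2,t+3\msr$ with $t=i-3$, for $3\le i\le\lfloor (n-1)/2\rfloor$;
\item[(c)] $\msl j,j+1,j+3\msr$ with $j=n-1-i$, for $i-3\ge\lceil (n-1)/2\rceil$;
\item[(d)] if $n=2m+1$, the peak edges $e_{m+1},e_{m+2}$ give $\msl m-2,m-1,m\msr$ and $\msl m-2,m-1,m-1\msr$;
\item[(e)] if $n=2p$, the peak edges $e_p,e_{p+1},e_{p+2}$ give $\msl p-3,p-1,p-1\msr$, $\msl p-2,p-2,p-1\msr$, $\msl p-3,p-2,p-1\msr$.
\end{itemize}

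Sort each triple and record its two consecutive differences. This pattern separates all classes except in three places:
\begin{itemize}
\item[(i)] within the two linear families (b) and (c), where the minimum entry separates;
\item[(ii)] pattern $(1,1)$: $e_2$ against the peak triple $\msl m-2,m-1,m\msr$ or $\msl p-3,p-2,p-1\msr$;
\item[(iii)] pattern $(1,0)$: $e_1$ against $\msl m-2,m-1,m-1\msr$.
\end{itemize}
In (ii) and (iii) the triples differ as soon as $m\ge 3$, resp.\ $p\ge 4$, i.e.\ $n\ge 7$. This also shows exactly where the hypothesis enters: for $n=6$, $e_2$ and $e_5$ both get $\msl 0,1,2\msr$.
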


\begin{theorem}{\em \cite[Theorem 21]{Ikhlaq-2023}}
The edge multiset dimension of a complete $r$-ary tree is finite if and only if $r = 1$ or $r=2$. Moreover, if $T$ is a complete binary tree of height $h$, then $\medim(T) = 2^h - 1$.
\end{theorem}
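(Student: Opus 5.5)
The statement has three parts: infiniteness for $r\ge 3$, the trivial case $r=1$, and the exact value $2^h-1$ for binary trees. I will handle them in that order, using one local observation about pendant edges.

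\emph{Pendant edges at a common vertex.} Let $p$ be a vertex whose children are leaves $\ell_1,\dots,\ell_t$, with pendant edges $e_i=p\ell_i$. For any vertex $s\ne \ell_i$, the shortest path from $s$ to $e_i$ runs through $p$, so $d(s,e_i)=d(s,p)$. For $s=\ell_i$ we get $d(s,e_i)=0$, while $d(\ell_i,e_j)=1=d(\ell_i,p)$ for $j\ne i$. Hence, for any set $S$, write $M_0$ for the multiset of values $d(s,p)$, $s\in S$. Then $m(e_i|S)$ equals $M_0$ if $\ell_i\notin S$, and equals $M_0$ with one entry $1$ replaced by $0$ if $\ell_i\in S$. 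So $m(e_i|S)$ depends only on whether $\ell_i\in S$, and the edges $e_i$ receive at most two distinct multisets.

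\emph{The cases $r\ge 3$ and $r=1$.} If $r\ge 3$ and $h\ge 1$, every parent of leaves carries at least three pendant edges. By pigeonhole, two of them get the same multiset for every $S$, so $\medim=\infty$. If $r=1$, the tree is a path and Theorem 9 of \cite{Ikhlaq-2023} gives $\medim=1$, which is finite.

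\emph{Binary trees: lower bound.} For $r=2$, the observation shows that each sibling leaf pair must contain exactly one vertex of $S$. This already gives $|S|\ge 2^{h-1}$.

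To reach $2^h-1$, I will run a parity/symmetry argument at every internal vertex $v$ with children $c_1,c_2$, whose subtrees are $T_1,T_2$. For $s\notin T_1\cup T_2$ we have $d(s,vc_1)=d(s,vc_2)$. So $vc_1$ and $vc_2$ are separated only through the within-subtree contributions. Together with the swap automorphism of the two subtrees, this should force that $S$ cannot meet $T_1$ and $T_2$ in mirror-equivalent patterns.

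I then aim to prove $f(h)\ge 2f(h-1)+1$ with $f(1)=1$, which gives $f(h)=2^h-1$. The recursion needs two things. First, a minimal set restricted to each subtree must essentially resolve that subtree's edges. Second, one extra vertex is needed to break the symmetry between the two branches and to separate $vc_i$ from the edges deeper inside.

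\emph{Binary trees: upper bound.} I will construct $S$ recursively, mirroring the multiset basis of complete binary trees in \cite{simanjuntak-2017}, where $\mdim=2^h-1$. At each sibling leaf pair I take the left leaf. I then add one asymmetrically placed vertex per internal level-pattern, giving $2^{h-1}+(2^{h-1}-1)$ vertices in total. Resolution will be verified by comparing edges level by level: the number of $0$'s and $1$'s in $m(e|S)$ identifies the depth of $e$ and the branch it lies in.

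\emph{Main obstacle.} I expect the lower bound $2^h-1$, specifically the inductive step producing the extra $+1$, to be the hardest part. The first thing I will try is to show that if $S$ meets $T_1$ and $T_2$ in sets of equal "distance profile" from $v$, then the edges $vc_1$ and $vc_2$ collide. I will then count how many vertices are needed to avoid this simultaneously at all levels.
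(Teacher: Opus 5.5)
Your pendant-edge observation and the cases $r\ge 3$ and $r=1$ are correct. For $r=2$, however, you only establish $|S|\ge 2^{h-1}$. The rest of the lower bound and the whole upper bound (which is also what gives finiteness for $r=2$) remain plans, and both plans rest on steps that fail as stated.

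\textbf{Lower bound.} The recursion $f(h)\ge 2f(h-1)+1$ needs $S\cap T_{c_i}$ to resolve the edges of $T_{c_i}$. That is not a valid principle for multisets. A vertex $s\notin T_{c_1}$ contributes $d(s,c_1)$ plus the depth of the edge inside $T_{c_1}$, so outside vertices can separate edges of $T_{c_1}$ at different depths; only same-depth collisions pass to the restriction. Even granting $|S\cap T_{c_i}|\ge f(h-1)$, counting alone cannot produce the extra $+1$. For that you would need to know that minimum-size restrictions must have equal distance profiles.

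The missing idea is an exact criterion combined with a structural, not cardinality, induction. Put $F_e(x)=\sum_{s\in S}x^{d(s,e)}$, $G_v(x)=\sum_{s\in S}x^{d(s,v)}$ and $K_c(x)=\sum_{s\in S\cap T_c}x^{d(s,c)}$. Then $F_{vc}=G_v+(1-x)K_c$, so $F_{vc_1}-F_{vc_2}=(1-x)(K_{c_1}-K_{c_2})$. Hence the two child edges at $v$ collide if and only if $K_{c_1}=K_{c_2}$; equal depth profiles is the right notion, not mirror-equivalence.

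Now prove bottom-up that $S$ contains exactly one vertex of every sibling pair. Suppose this holds inside $T_{c_1}-c_1$ and $T_{c_2}-c_2$. Then $K_{c_j}=\epsilon_j+\sum_{d=1}^{m}2^{d-1}x^d$, where $\epsilon_j\in\{0,1\}$ records whether $c_j\in S$ and $m$ is the common height. So exactly one of $c_1,c_2$ lies in $S$. Your leaf observation is the base case. This gives $|S|\ge\sum_{i=1}^{h}2^{i-1}=2^h-1$ at once.

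\textbf{Upper bound.} The set is never specified; ``one asymmetrically placed vertex per internal level-pattern'' is not a definition. The invariant you intend to use also cannot work. For $e=uw$, the number of $0$'s in $m(e|S)$ is $|S\cap\{u,w\}|\le 2$ and the number of $1$'s is at most $4$. These cannot encode one of $h$ levels, let alone one of $2^k$ edges on a level.

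What works is any set $S$ with exactly one vertex from each sibling pair and not containing the root (for instance, all left children). Such a set has $2^h-1$ vertices.
\begin{itemize}
\item \emph{Different levels.} For $h\ge 2$ and an edge $uw$ with $u$ at depth $k$, the largest entry of $m(uw|S)$ is $h+k$. It is attained by a leaf of $S$ in the root-subtree not containing $u$, so the maximum entry identifies the level.
\item \emph{Same level.} Let $u_1,\dots,u_{k+1}=w$ be the root path to $w$, and $u'_j$ the analogous path to $w'$. Let $\Delta_j=[u_j\in S]-[u'_j\in S]$, where $[P]$ is $1$ if $P$ holds and $0$ otherwise. Since exactly one of $u_j$ and its sibling lies in $S$, expanding along the root path gives $G_u=A_k+(1-x^2)\sum_{j=1}^{k}[u_j\in S]\,x^{k-j}$, with $A_k$ depending only on $k$.
\end{itemize}
Consequently
\[
F_{uw}-F_{u'w'}=(1-x)\bigl[(1+x)\textstyle\sum_{j=1}^{k}\Delta_j x^{k-j}+\Delta_{k+1}\bigr].
\]
Evaluating the bracket at $x=-1$ forces $\Delta_{k+1}=0$, and then every $\Delta_j=0$. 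When $S$ meets each sibling pair exactly once, the membership pattern determines the root path, so $w=w'$.
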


Other families of graphs considered in \cite{Ikhlaq-2023} include the dragon graphs, the comb product of graphs, some caterpillar graphs, kayak paddle graphs, lobster graphs, and some families of trees.

\medskip
Finally, the work \cite{Ikhlaq-2023} focuses on making a comparison between $\medim(G)$ and $\mdim(G)$. Such a comparison is based on the relationship that exists between the classical metric dimension and the edge metric dimension. That is, it is already known that any of the possibilities $\dim(G)<\edim(G)$, $\dim(G)=\edim(G)$, or $\dim(G)>\edim(G)$ is possible for infinite classes of graphs (see \cite{Knor-2021,Knor-2022}). In \cite{Ikhlaq-2023}, the authors claim that an analogous situation occurs for the multiset dimension and the edge multiset dimension, and present examples of graphs $G$ satisfying each of the three situations $\mdim(G)<\medim(G)$, $\mdim(G)=\medim(G)$, or $\mdim(G)>\medim(G)$. However, such examples that appear in \cite{Ikhlaq-2023} are not correct. To see this, we have checked by using a brute force computer search that both graphs $G$ appearing in \cite[Figures 17 and 18]{Ikhlaq-2023} satisfy that $\mdim(G)=\medim(G)$, which contradicts what they have claimed. Specifically, in the graph from \cite[Figure 17]{Ikhlaq-2023}, the vertices $u_1,u_2,u_6$ form a multiset basis, while the vertices $u_1,u_3,u_6$ form an edge multiset basis. Also, in \cite[Figure 18]{Ikhlaq-2023}, the vertices $v_1,v_3,v_{13}$ form a multiset basis, while the vertices $v_1,v_2,v_{13}$ form an edge multiset basis.

Based on this fact, one might think that all three situations $\mdim(G)<\medim(G)$, $\mdim(G)=\medim(G)$, or $\mdim(G)>\medim(G)$ could not be possible, but indeed they are. The equality $\mdim(G) = \medim(G)$ is satisfied by the cycles with at least 7 vertices, in which both parameters admit the value $3$ \cite{saenpholphat-2009, Ikhlaq-2023}. The case $\mdim(G) < \medim(G)$ is fulfilled by the graph $G$ depicted in Figure \ref{fig:example mdim < medim}. Here, the multiset basis of $G$ is formed by solid black vertices (there are 3 of them), while the edge multiset basis is formed by the circled vertices (there are 4). Some other examples regarding this situation appear in the class of graphs obtained from the subdivision of complete graphs. To see this, given the integers $n\ge 3$ and $k\ge 1$, the \textit{subdivision graph} $K_n^k$ is obtained from a complete graph $K_n$ by subdividing each edge $k$ times. By making some computer search, we have obtained the values appearing in Tables \ref{tab:subdiv} and \ref{tab:subdiv-edge}, for the multiset and edge multiset dimensions of these graphs. There we can see that for instance, if $G$ is $K_5^2$ or $K_5^6$ (among other ones), then $\mdim(G) < \medim(G)$. 

\begin{figure}[h]
\centering
\begin{tikzpicture}[
    scale=1,
    vertex/.style={circle, draw=black, thick, minimum size=6pt, inner sep=0pt},
    doublevertex/.style={circle, draw=black, thick, minimum size=8pt, inner sep=0pt, outer sep=-1pt},
    empty/.style={vertex, fill=white},
    setA/.style={vertex, fill=black},            
    setB/.style={doublevertex, fill=white, double, double distance=2pt}, 
    both/.style={doublevertex, fill=black, double, double distance=2pt}
]

    \node[both] (1) at (0,0) {};          
    \node[setB] (2) at (1.5,0) {};          
    \node[setA] (3) at (3,0) {};          
    \node[empty] (4) at (4.5,0) {};         
    \node[setB] (5) at (6,0) {};          
    \node[both] (6) at (2,-1.7) {};       
    \node[empty] (7) at (4,-1.7) {};      

    \draw (1)--(2)--(3)--(4)--(5);
    \draw (3)--(6)--(7)--(3);
\end{tikzpicture}
\caption{A graph $G$ with $\mdim(G) < \medim(G) < \infty$.}
\label{fig:example mdim < medim}
\end{figure}

\begin{table}[ht]
    \centering
    \begin{tabular}{|c|c|c|c|c|c|c|c|c|c|c|c|c|c|c|c|}
    \hline
        $n/k$ & $1$ & $2$ & $3$ & $4$ & $5$ & $6$ & $7$ & $8$ & $9$ & $10$ & $11$ & $12$ & $13$ & $14$ & $15$ \\ \hline
        $3$ & $3$ & $3$ & $3$ & $3$ & $3$ & $3$ & $3$ & $3$ & $3$ & $3$ & $3$ & $3$ & $3$ & $3$ & $3$\\ \hline
        $4$ & $3$ & $4$ & $3$ & $3$ & $3$ & $3$ & $3$ & $3$ & $3$ & $3$ & $3$ & $3$ & $3$ & $3$ & $3$  \\ \hline
        $5$ & $\infty$ & $4$ & $4$ & $4$ & $4$ & $3$ & $4$ & $3$ & $4$ & $3$ & $4$ & $3$ & $3$ & $3$ & $3$  \\
    \hline
    \end{tabular}
    \caption{Multiset dimension of some instances of $K_n^k$.}
    \label{tab:subdiv}
\end{table}

\begin{table}[ht]
    \centering
    \begin{tabular}{|c|c|c|c|c|c|c|c|c|c|c|c|c|c|c|c|}
    \hline
        $n/k$ & $1$ & $2$ & $3$ & $4$ & $5$ & $6$ & $7$ & $8$ & $9$ & $10$ & $11$ & $12$ & $13$ & $14$ & $15$ \\ \hline
        $3$ & $\infty$ & $3$ & $3$ & $3$ & $3$ & $3$ & $3$ & $3$ & $3$ & $3$ & $3$ & $3$ & $3$ & $3$ & $3$\\ \hline
        $4$ & $\infty$ & $4$ & $3$ & $3$ & $3$ & $3$ & $3$ & $3$ & $3$ & $3$ & $3$ & $3$ & $3$ & $3$ & $3$  \\ \hline
        $5$ & $\infty$ & $6$ & $4$ & $4$ & $4$ & $4$ & $4$ & $4$ & $4$ & $4$ & $4$ & $4$ & $4$ & $4$ & $3$  \\
    \hline
    \end{tabular}
    \caption{Edge multiset dimension of some instances of $K_n^k$.}
    \label{tab:subdiv-edge}
\end{table}

For the last case $\mdim(G) > \medim(G)$, we developed exhaustive search computations and found all connected graphs $G$ on 8 or 9 vertices satisfying this property, given that both parameters are finite. These graphs are listed in Figure \ref{fig:examples mdim > medim}, and they are indeed the smallest possible graphs satisfying the inequality $\mdim(G) > \medim(G)$. For each of these graphs, the multiset basis is formed by solid black vertices, while the edge multiset basis is formed by the circled vertices. To achieve this, we utilized the comprehensive combinatorial graph catalogs maintained by Brendan D. McKay. Specifically, we used the dataset of all non-isomorphic connected graphs with up to 9 vertices. These graph datasets are generated using the nauty and Traces software packages \cite{mckay-2014}, which are standard tools for canonical labeling and generating exhaustive lists of non-isomorphic graphs. The complete dataset is also available in \url{https://users.cecs.anu.edu.au/~bdm/data/graphs.html}.

\begin{figure}[h]
\centering
\begin{tikzpicture}[
    scale=0.9,
    rotate=-90,
    xscale=-1,
    vertex/.style={circle, draw=black, thick, minimum size=6pt, inner sep=0pt},
    doublevertex/.style={circle, draw=black, thick, minimum size=8pt, inner sep=0pt, outer sep=-1pt},
    empty/.style={vertex, fill=white},
    setA/.style={vertex, fill=black},            
    setB/.style={doublevertex, fill=white, double, double distance=2pt}, 
    both/.style={doublevertex, fill=black, double, double distance=2pt}
]

    \node[both] (1) at (0,0) {};
    \node[empty] (2) at (2,0) {};
    \node[setA] (3) at (0,1.5) {};
    \node[setA] (4) at (2,1.5) {};
    \node[both] (5) at (0,3) {};
    \node[setB] (6) at (1,3) {};
    \node[empty] (7) at (2,3) {};
    \node[both] (8) at (3.2,3) {};

    \draw (1)--(3)--(5)--(6)--(7)--(8);
    \draw (1)--(4)--(7);
    \draw (3)--(2)--(4)--(3);
\end{tikzpicture}
\hspace{1.4cm}
\begin{tikzpicture}[
    scale=0.9,
    rotate=-90,
    xscale=-1,
    vertex/.style={circle, draw=black, thick, minimum size=6pt, inner sep=0pt},
    doublevertex/.style={circle, draw=black, thick, minimum size=8pt, inner sep=0pt, outer sep=-1pt},
    empty/.style={vertex, fill=white},
    setA/.style={vertex, fill=black},            
    setB/.style={doublevertex, fill=white, double, double distance=2pt}, 
    both/.style={doublevertex, fill=black, double, double distance=2pt}
]

    \node[both] (1) at (0,0) {};
    \node[empty] (2) at (2,0) {};
    \node[setA] (3) at (0,1.5) {};
    \node[empty] (4) at (2,1.5) {};
    \node[both] (5) at (0,3) {};
    \node[both] (6) at (1,3) {};
    \node[empty] (7) at (2,3) {};
    \node[both] (8) at (3.2,2.3) {};
    \node[empty] (9) at (3.2,3.7) {};

    \draw (1)--(3)--(5)--(6)--(7)--(8);
    \draw (1)--(4)--(7)--(9);
    \draw (3)--(2)--(4);
\end{tikzpicture}
\hspace{0.8cm}
\begin{tikzpicture}[
    scale=0.9,
    rotate=-90,
    xscale=-1,
    vertex/.style={circle, draw=black, thick, minimum size=6pt, inner sep=0pt},
    doublevertex/.style={circle, draw=black, thick, minimum size=8pt, inner sep=0pt, outer sep=-1pt},
    empty/.style={vertex, fill=white},
    setA/.style={vertex, fill=black},            
    setB/.style={doublevertex, fill=white, double, double distance=2pt}, 
    both/.style={doublevertex, fill=black, double, double distance=2pt}
]

    \node[both] (1) at (0,0) {};
    \node[empty] (2) at (2,0) {};
    \node[setA] (3) at (0,1.5) {};
    \node[empty] (4) at (2,1.5) {};
    \node[both] (5) at (0,3) {};
    \node[both] (6) at (1,3) {};
    \node[empty] (7) at (2,3) {};
    \node[both] (8) at (3.2,2.3) {};
    \node[empty] (9) at (3.2,3.7) {};

    \draw (1)--(3)--(5)--(6)--(7)--(8)--(9);
    \draw (1)--(4)--(7)--(9);
    \draw (3)--(2)--(4);
\end{tikzpicture}

\vspace{0.6cm}

\begin{tikzpicture}[
    scale=0.9,
    rotate=-90,
    xscale=-1,
    vertex/.style={circle, draw=black, thick, minimum size=6pt, inner sep=0pt},
    doublevertex/.style={circle, draw=black, thick, minimum size=8pt, inner sep=0pt, outer sep=-1pt},
    empty/.style={vertex, fill=white},
    setA/.style={vertex, fill=black},            
    setB/.style={doublevertex, fill=white, double, double distance=2pt}, 
    both/.style={doublevertex, fill=black, double, double distance=2pt}
]

    \node[both] (1) at (0,0) {};
    \node[empty] (2) at (2,0) {};
    \node[both] (3) at (0,1.5) {};
    \node[setA] (4) at (2,1.5) {};
    \node[empty] (5) at (0,3) {};
    \node[empty] (6) at (2,3) {};
    \node[empty] (7) at (3.2,2.25) {};
    \node[setA] (8) at (0,4.5) {};
    \node[setB] (9) at (2,4.5) {};

    \draw (1)--(3)--(5)--(8)--(9);
    \draw (3)--(2)--(4)--(6)--(5);
    \draw (1)--(4)--(7)--(6);
\end{tikzpicture}
\hspace{1.4cm}
\begin{tikzpicture}[
    scale=0.9,
    rotate=-90,
    vertex/.style={circle, draw=black, thick, minimum size=6pt, inner sep=0pt},
    doublevertex/.style={circle, draw=black, thick, minimum size=8pt, inner sep=0pt, outer sep=-1pt},
    empty/.style={vertex, fill=white},
    setA/.style={vertex, fill=black},            
    setB/.style={doublevertex, fill=white, double, double distance=2pt}, 
    both/.style={doublevertex, fill=black, double, double distance=2pt}
]

    \node[setA] (1) at (0,0) {};
    \node[both] (2) at (-1.2,1.5) {};
    \node[both] (3) at (0,1.5) {};
    \node[empty] (4) at (1.2,1.5) {};
    \node[both] (5) at (-1.2,3) {};
    \node[empty] (6) at (0,3) {};
    \node[empty] (7) at (1.2,3) {};
    \node[empty] (8) at (0,4.5) {};
    \node[both] (9) at (0,6) {};

    \draw (1)--(2)--(5)--(8)--(9);
    \draw (1)--(3)--(6)--(8);
    \draw (1)--(4)--(7)--(8);
\end{tikzpicture}
\caption{All connected graphs $G$ on 8 or 9 vertices with $\medim(G) < \mdim(G) < \infty$.}
\label{fig:examples mdim > medim}
\end{figure}

Notice that, among the examples shown in Figure \ref{fig:examples mdim > medim}, there are graphs which are bipartite, as well as, some which are not bipartite. Hence, in contrast to the case of the relationship between the classical metric dimension and the edge metric dimension of bipartite graphs,  the bipartition properties of a graph $G$ seem to be not related to the fact that $\medim(G) < \mdim(G)$. Recall that, as proved in \cite[Lemma 2.1]{Kelenc-2023}, if $G$ is a connected bipartite graph, then any resolving set of $G$ is also an edge resolving set for $G$, and so, $\edim(G) < \dim(G)$.

\subsection{$k$-multiset antidimension}
\label{sec:antidim}

In this subsection, we discuss a multiset related structure called $k$-multiset antiresolving set. It can be seen as a dual concept regarding the multiset resolving sets. That is, while in the multiset resolving sets we are looking to uniquely identify the vertices of the graph through a multiset of distances, in the $k$-multiset antiresolving sets we are interested in precisely not identifying the vertices of the graph. The formal related concepts are as follows. 

Consider a vertex $x$ and a set of vertices $S$ of a connected graph $G$. The \emph{anonymity set} of a user $x$ with respect to $S$ is given by $S_{[x]} = \{v \in V(G) \,:\, m_G(x|S) = m_G(v|S)\}$. The set $S$ is called a \emph{$k$-multiset antiresolving set} ($k$-MARS for short) if $k$ is the largest integer such that for every vertex $x \in V(G) \setminus S$ it holds that $|S_{[x]}| \geq k$. The $k$-\emph{multiset antidimension}, denoted $\adimms_k(G)$, is the cardinality of a smallest $k$-MARS of $G$. In Figure \ref{fig:Q_3} we show a graph in order to exemplify the concepts above. Consider the $3$-cube $Q_3$ and observe that any two vertices at distance at most $2$ in $Q_3$ form a $2$-MARS, while two vertices at distance $3$ in $Q_3$ form a $6$-MARS. Moreover, note that any vertex of $Q_3$ is a $1$-MARS.

\begin{figure}[ht]
\centering
\begin{tikzpicture}[scale=0.7, transform shape]
\node [draw, shape=circle] (000) at  (0,0) {};
\node [draw, shape=circle] (001) at  (0,3) {};
\node [draw, shape=circle] (010) at  (3,0) {};
\node [draw, shape=circle] (011) at  (3,3) {};

\node [draw, shape=circle] (100) at  (1,1) {};
\node [draw, shape=circle,fill=black] (101) at  (1,4) {};
\node [draw, shape=circle,fill=black] (110) at  (4,1) {};
\node [draw, shape=circle] (111) at  (4,4) {};

\draw[thick](000)--(001)--(011)--(010)--(000);
\draw[thick](100)--(101)--(111)--(110)--(100);
\draw[thick](000)--(100);
\draw[thick](001)--(101);
\draw[thick](011)--(111);
\draw[thick](010)--(110);
\end{tikzpicture}
\hspace{3cm}
\begin{tikzpicture}[scale=0.7, transform shape]
\node [draw, shape=circle] (000) at  (0,0) {};
\node [draw, shape=circle,fill=black] (001) at  (0,3) {};
\node [draw, shape=circle] (010) at  (3,0) {};
\node [draw, shape=circle] (011) at  (3,3) {};

\node [draw, shape=circle] (100) at  (1,1) {};
\node [draw, shape=circle] (101) at  (1,4) {};
\node [draw, shape=circle,fill=black] (110) at  (4,1) {};
\node [draw, shape=circle] (111) at  (4,4) {};

\draw[thick](000)--(001)--(011)--(010)--(000);
\draw[thick](100)--(101)--(111)--(110)--(100);
\draw[thick](000)--(100);
\draw[thick](001)--(101);
\draw[thick](011)--(111);
\draw[thick](010)--(110);
\end{tikzpicture}
\caption{Bold vertices form a $2$-MARS and a $6$-MARS, respectively.}\label{fig:Q_3}
\end{figure}

It stands to reason that $k$-MARS may not exist for any integer $k$ in a graph $G$. Consequently, from this point forward, let $\kappa(G)$ denote the largest integer $k$ such that $G$ contains a $k$-MARS. The basic bounds for $\kappa(G)$ for a graph of order $n$ are the following ones.

\begin{proposition}{\rm \cite[Proposition 1]{estrada-moreno-2026+}}
\label{prop:trivial-bounds}
If $G$ is a graph, then $1\le \kappa(G)\le n(G)-1$. Moreover,
$\kappa(G)= n(G)-1$ if and only if $G$ has a vertex of degree $n(G)-1$.
\end{proposition}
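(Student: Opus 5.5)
The plan is to argue directly from the definition of a $k$-MARS. The key structural fact is that, for any nonempty $S\subseteq V(G)$ and any $x\in V(G)\setminus S$, the anonymity set $S_{[x]}$ is contained in $V(G)\setminus S$. Indeed, every $v\in S$ has $0\in m_G(v|S)$, while $0\notin m_G(x|S)$ because $x\notin S$. We assume throughout, as is implicit in the definition, that $S$ is nonempty, so that there are vertices outside $S$ whenever $S\ne V(G)$. I will state this containment first and use it for the upper bound and for the characterization.

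\emph{Lower bound.} Take any vertex $u$ and put $S=\{u\}$. For every $x\in V(G)\setminus S$ we have $x\in S_{[x]}$, so $|S_{[x]}|\ge 1$. Hence the largest $k$ with $|S_{[x]}|\ge k$ for all $x\notin S$ is some integer $k\ge 1$, and $S$ is a $k$-MARS. Therefore $\kappa(G)\ge 1$.

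\emph{Upper bound.} By the containment, $|S_{[x]}|\le n(G)-|S|\le n(G)-1$ for every $S$ and every $x\notin S$. So no $k$-MARS with $k\ge n(G)$ exists, and $\kappa(G)\le n(G)-1$. Moreover, if $|S|\ge 2$ then $|S_{[x]}|\le n(G)-2$. Consequently, any $(n(G)-1)$-MARS must be a singleton $S=\{u\}$ with $S_{[x]}=V(G)\setminus\{u\}$ for all $x\ne u$. This means every vertex other than $u$ has the same multiset $\msl d_G(x,u)\msr$, i.e.\ all vertices $x\ne u$ lie at one common distance $d$ from $u$.

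\emph{Characterization.} Suppose $\kappa(G)=n(G)-1$, with $S=\{u\}$ as above. Since $G$ is connected and non-trivial, $u$ has a neighbor, so $d=1$. Hence every other vertex is adjacent to $u$, and $\deg(u)=n(G)-1$. Conversely, if $\deg(u)=n(G)-1$, then $S=\{u\}$ gives $m_G(x|S)=\msl 1\msr$ for all $x\ne u$. Thus $|S_{[x]}|=n(G)-1$ for every $x\notin S$, so $S$ is an $(n(G)-1)$-MARS, and together with the upper bound $\kappa(G)=n(G)-1$.

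The only delicate step is the containment $S_{[x]}\subseteq V(G)\setminus S$, which relies on the $0$ entry. Everything else is direct.
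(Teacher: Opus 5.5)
Your proof is correct and complete. The survey gives no proof of this proposition and only cites \cite{estrada-moreno-2026+}, so there is no different route to compare against. Your direct argument has the natural ingredients: the containment $S_{[x]}\subseteq V(G)\setminus S$ via the $0$ entry, the bound $|S_{[x]}|\le n(G)-|S|$, and the reduction of the extremal case to a singleton $\{u\}$ with all other vertices equidistant from $u$.

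One wording point. Excluding $S=V(G)$ does not follow from $S\ne\emptyset$, as your ``so that'' suggests. It follows from the definition itself, because when $V(G)\setminus S=\emptyset$ no largest $k$ exists. By contrast, $S\ne\emptyset$ is a genuine convention that you need: otherwise $\emptyset$ would be an $n(G)$-MARS and the upper bound would fail.
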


\begin{theorem}{\rm \cite[Theorem 1]{estrada-moreno-2026+}}
For any tree $T$ with at least three vertices, $\kappa(T)\ge 2$.
\end{theorem}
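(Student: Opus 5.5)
We need to prove that every tree $T$ with at least three vertices satisfies $\kappa(T)\ge 2$. Since $\kappa(T)$ is the largest $k$ for which a $k$-MARS exists, it suffices to exhibit a set $S\subseteq V(T)$ such that every vertex $x\in V(T)\setminus S$ shares its multiset $m_T(x|S)$ with at least one other vertex of $T$. (Such an $S$ is then a $k$-MARS for some $k\ge 2$, so $\kappa(T)\ge 2$.) The plan is to choose $S$ in a small, local configuration where a symmetry argument pairs up the vertices outside $S$.

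The main case is when $T$ has two leaves $\ell_1,\ell_2$ adjacent to a common support vertex $w$. Take $S=V(T)\setminus\{\ell_1,\ell_2\}$. Then $\ell_1$ and $\ell_2$ are false twins, so $d_T(\ell_1,s)=d_T(\ell_2,s)$ for every $s\in S$. Hence $m_T(\ell_1|S)=m_T(\ell_2|S)$, and each outside vertex has an anonymity set of size at least $2$. This case covers, for instance, all stars.

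Otherwise, no vertex is adjacent to two leaves. Since $n(T)\ge 3$, take a longest path $v_0v_1\cdots v_d$ with $d\ge 2$. Then $v_0$ is a leaf and $v_1$ has degree exactly $2$, because any other neighbour of $v_1$ would be a leaf and $v_1$ cannot support two leaves. I would try $S=V(T)\setminus\{v_0,v_1\}$. For every $s\in S$, the path from $v_0$ to $s$ passes through $v_1$ and then $v_2$, so $d(v_0,s)=d(v_1,s)+1$. This makes the two multisets shifted copies of each other, which are generally different, so this choice fails.

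The better choice is $S=\{v_2,v_0\}$ or a single vertex: with $S=\{v\}$, every $x\ne v$ satisfies $m(x|S)=\msl d(x,v)\msr$, so it suffices that every distance class around $v$ other than $\{v\}$ has size at least $2$. Concretely, I would pick a vertex $v$ such that each level $\{x: d(x,v)=i\}$ with $1\le i\le \ecc(v)$ contains at least two vertices. A central vertex is the natural candidate when the tree is bicentral-balanced. When this fails, I would instead take $S=\{u\}$ with $u$ a leaf's neighbour, or combine the singleton idea with a pair $S=\{v_0,v_2\}$, for which $v_1$ has multiset $\msl 1,1\msr$; one would then need another vertex at distance $1$ from both, which a tree does not provide.

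The main obstacle is this second case, namely guaranteeing a uniformly valid choice of $S$ for trees with no twin leaves, such as paths. For $P_n$, taking $S$ to be a middle vertex, or the middle edge's endpoint set when $n$ is even, pairs symmetric vertices and works. For general trees I would take $S$ to be the center (one vertex or the two central vertices). By maximality of the longest path, every level up to the radius is hit on both sides of the center, so each outside vertex's multiset is shared with a vertex at the same distance on the other branch. The remaining work is to verify carefully that the two branches realize the same distance profiles, using the longest path.
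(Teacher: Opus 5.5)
The survey only cites this theorem from \cite{estrada-moreno-2026+} and gives no proof, so there is no in-paper argument to compare with. On its own merits, the construction you settle on in your last paragraph (take $S$ to be the center) is correct. In fact it handles \emph{every} tree with $n(T)\ge 3$, stars included. Your twin-leaf case (correct, but unnecessary) and the abandoned attempts ($V(T)\setminus\{v_0,v_1\}$, $\{v_0,v_2\}$, a support vertex) can therefore be deleted. Your intermediate worry that a single central vertex might ``fail'' is unfounded: in a unicentral tree it never does.

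What needs fixing is the justification. It is cleanest to take $S$ to be the middle of a longest path $P=v_0v_1\cdots v_d$ with $d\ge 2$, so that maximality is literally the reason. If $v_j$ is the vertex of $P$ nearest to $x$ and $h=d_T(x,v_j)$, then $h+j\le d$ and $h+d-j\le d$, i.e.\ $h\le\min\{j,d-j\}$.
\begin{itemize}
\item For $d=2r$ and $S=\{v_r\}$, this gives $d_T(x,v_r)=h+|j-r|\le r$. So every $x\ne v_r$ lies in a level $D_i(v_r)$ with $i\in[r]$, and that level contains both $v_{r-i}$ and $v_{r+i}$.
\item For $d=2r+1$ and $S=\{v_r,v_{r+1}\}$, take $x\notin S$ with $j\le r$. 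Then $m_T(x|S)=\msl i,i+1\msr$ with $i=d_T(x,v_r)\le r$, and symmetrically if $j\ge r+1$. Both $v_{r-i}$ and $v_{r+1+i}$ carry this multiset.
\end{itemize}
Vertices of $S$ have a $0$ in their multiset, so they never enter $S_{[x]}$. Hence $S$ is a $k$-MARS with $k=\min_{x\notin S}|S_{[x]}|\ge 2$; the minimum is over a nonempty set since $n(T)\ge 3$.

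Your closing phrase ``the two branches realize the same distance profiles'' is not what is needed, and it is false in general: one side may have many more vertices at a given level than the other. All that matters is that each level $1,\dots,r$ is hit at least twice, which $P$ guarantees.

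Be aware also of what this route does not give. The center yields a $k$-MARS with $k$ equal to the smallest level size, which can exceed $2$. For a spider with three legs of equal length $r\ge 2$, $\{c\}$ is a $3$-MARS. So your argument proves $\kappa(T)\ge 2$, but it does not by itself produce a $2$-MARS of size at most two, as in the companion statement $\adimms_2(T)\in\{1,2\}$ quoted in the survey.
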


\begin{proposition}{\rm \cite[Proposition 2]{estrada-moreno-2026+}}
The following statements holds for any integers $r$ and $t$ with $r\ge t$ and $n\ge 2$.
\begin{itemize}
  \item[{\rm (i)}] If $t=1$, then $\kappa(K_{r,t})=r+t-1$, and otherwise $\kappa(K_{r,t})=r+t-2$.
  \item[{\rm (ii)}] $\kappa(P_n)=2$.
\end{itemize}
\end{proposition}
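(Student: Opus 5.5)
For (i), the case $t=1$ is the star $K_{r,1}$, which has a vertex of degree $n(G)-1=r$. Proposition~\ref{prop:trivial-bounds} then gives $\kappa(K_{r,1})=r=r+t-1$ immediately.

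For $r\ge t\ge 2$, no vertex of $K_{r,t}$ is universal. Proposition~\ref{prop:trivial-bounds} therefore gives $\kappa(K_{r,t})\le r+t-2$, and it remains to exhibit an $(r+t-2)$-MARS. I would take $S=\{a,b\}$ with $a$ in the part $A$ of size $r$ and $b$ in the part $B$ of size $t$.
\begin{itemize}
\item Every $x\in A\setminus\{a\}$ has $d(x,a)=2$ and $d(x,b)=1$, so $m(x|S)=\msl 1,2\msr$.
\item Every $x\in B\setminus\{b\}$ likewise has $m(x|S)=\msl 1,2\msr$.
\item The vertices of $S$ both have $m(a|S)=m(b|S)=\msl 0,1\msr$.
\end{itemize}
Hence for each $x\notin S$ the anonymity set $S_{[x]}$ is exactly $V(G)\setminus S$. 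This set has $r+t-2\ge 2$ elements and is nonempty, so $S$ is an $(r+t-2)$-MARS and $\kappa(K_{r,t})=r+t-2$.

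For (ii), label the path $v_1,\dots,v_n$ and identify each vertex with its position. For $n=2$ the path has a universal vertex, so the trivial bound gives $\kappa(P_2)=1$; the claimed value $2$ must therefore be intended for $n\ge 3$, and I would treat that range.

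\emph{Lower bound.} I would use the reflection $i\mapsto n+1-i$.
\begin{itemize}
\item If $n$ is odd, take $S=\{v_{(n+1)/2}\}$. Every other vertex $v_i$ shares its multiset $\msl |i-\tfrac{n+1}{2}|\msr$ exactly with its mirror image $v_{n+1-i}$, so all anonymity sets outside $S$ have size $2$.
\item If $n$ is even, take $S=\{v_1,v_n\}$. Then $m(v_i|S)=\msl i-1,n-i\msr$, which coincides exactly for $v_i$ and $v_{n+1-i}$, and no vertex is its own mirror image. Again every anonymity set outside $S$ has size exactly $2$.
\end{itemize}
In both cases $S$ is a $2$-MARS, so $\kappa(P_n)\ge 2$.

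\emph{Upper bound.} This is the main step, and I must show that every nonempty $S$ has some $x\notin S$ with $|S_{[x]}|\le 2$. I would prove the stronger statement that every anonymity class has at most two elements. Let $p$ and $q$ be the smallest and largest positions in $S$. The largest entry of $m(y|S)$ is $g(y)=\max\{|y-p|,|y-q|\}$, so two vertices with equal multisets have equal values of $g$. The function $g$ strictly decreases until the midpoint of $[p,q]$ and strictly increases after it. It has at most a two-point plateau at the minimum, which happens when $q-p$ is odd. Consequently each level set of $g$ contains at most two vertices, so every anonymity class has size at most $2$. Since $S\ne V(G)$ (for $n\ge3$ the set $S=V(G)$ is excluded, as there is no $x\notin S$), the minimum over $x\notin S$ is at most $2$. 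This gives $\kappa(P_n)\le 2$.

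The only delicate point in the upper bound is the strict monotonicity of $g$ and the size of its plateau; the remaining steps are direct checks.
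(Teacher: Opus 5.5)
Your proof is correct and complete. The survey gives no proof of this proposition; it only quotes it from \cite{estrada-moreno-2026+}. So there is no in-paper argument to compare against, and your proof has to be judged on its own.

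In (i), Proposition~\ref{prop:trivial-bounds} settles the star. When $t\ge 2$ it also gives the upper bound $r+t-2$. Your set $\{a,b\}$, with one vertex from each part, puts all $r+t-2$ outside vertices into the single class $\msl 1,2\msr$.

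In (ii), you are right that the statement as quoted fails for $n=2$. $P_2=K_{1,1}$ has a universal vertex, so $\kappa(P_2)=1$, which is also what part (i) gives for $r=t=1$. The claim needs $n\ge 3$; this is a defect of the quoted statement, not of your proof.

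For the lower bound $\kappa(P_n)\ge 2$ alone, you could also invoke the quoted theorem that $\kappa(T)\ge 2$ for every tree with at least three vertices. Your explicit sets give more: they have sizes $1$ ($n$ odd) and $2$ ($n$ even), matching the values of $\adimms_2(P_n)$ in the Remark that follows.

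Your upper bound, via the largest entry $g(y)=\max\{|y-p|,|y-q|\}$ of $m(y|S)$, is clean and proves more than needed. It shows that for every nonempty $S$, every anonymity class in $P_n$ has at most two elements, so no set is a $k$-MARS with $k\ge 3$.

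Two small remarks:
\begin{itemize}
\item Restricting to nonempty $S$ is the convention implicit in Proposition~\ref{prop:trivial-bounds}; otherwise $\emptyset$ would be an $n$-MARS.
\item $S=V(G)$ is excluded for every $n$, not only for $n\ge 3$.
\end{itemize}
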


The concepts above were introduced recently in~\cite{estrada-moreno-2026+} and is the only work on this topic to date. There, the $k$-multiset antidimension of various graphs families, such as bipartite graphs, trees, and wheels, was studied. We start with the cases of a path and a complete bipartite graph from the mentioned work, and proceed further the rest of combinatorial results concerning the $k$-multiset antidimension of graphs that are known so far.

\begin{remark}{\rm \cite[Remark 2]{estrada-moreno-2026+}}
If $n\ge 2$, then $\adimms_1(P_n)=1$. Moreover, 
$\adimms_2(P_n)=\left\{\begin{array}{ll}
                             1; & n \mbox{ odd}\,, \\
                             2; & n \mbox{ even}\,.
                             \end{array}
  \right.$
\end{remark}

\begin{proposition}{\rm \cite[Proposition 3]{estrada-moreno-2026+}}
Let $r,t$ be two positive integers with $r\ge t$.
\begin{itemize}
\item[{\rm (i)}] If $1< k< t$, then $\adimms_k(K_{r,t})=t-k$.
\item[{\rm (ii)}] If $k=t$, then 
$\adimms_k(K_{r,t})=\left\{\begin{array}{ll}
                                         1; & r>t\,, \\
                                         r; & r=t\,.
                                       \end{array}\right.
$
\item[{\rm (iii)}] If $t< k\le r$, then $\adimms_k(K_{r,t})=r+t-k$.
\item[{\rm (iv)}] If $($$r< k\le r+t-2$ is even and $r+t$ is even$)$ or $($$r< k\le r+t-2$ is odd and $r+t$ is odd$)$, then $\adimms_k(K_{r,t})=r+t-k$.
\item[{\rm (v)}] If $($$r< k\le r+t-2$ is even and $r+t$ is odd$)$ or $($$r< k\le r+t-2$ is odd and $r+t$ is even$)$, then $K_{r,t}$ does not contain any $k$-MARS.
\end{itemize}
\end{proposition}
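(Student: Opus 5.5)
The plan is to compute every multiset representation in $K_{r,t}$ explicitly, reduce the statement to a small integer optimization problem, and then run through the ranges of $k$ in (i)--(v). Let $A$ and $B$ be the parts, with $|A|=r\ge t=|B|$. Let $S$ be a nonempty set with $a=|S\cap A|$ and $b=|S\cap B|$. Then:
\begin{itemize}
\item a vertex $x\in A\setminus S$ has $m(x|S)=\msl 1^b,2^a\msr$;
\item a vertex $y\in B\setminus S$ has $m(y|S)=\msl 1^a,2^b\msr$;
\item every vertex of $S$ has a $0$ in its multiset, so it never lies in the anonymity set of a vertex outside $S$.
\end{itemize}
Hence for $x\in A\setminus S$ we get $|S_{[x]}|=r-a$ when $a\ne b$, and $|S_{[x]}|=r+t-2a$ when $a=b$; symmetrically for $B$. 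So the value $k$ for which $S$ is a $k$-MARS is determined by $(a,b)$:
\begin{itemize}
\item if $a=b$, then $k=r+t-2a$;
\item if $a\ne b$, then $k=\min\{r-a,\,t-b\}$, where a term is omitted when its part lies entirely in $S$ (that is, $a=r$ or $b=t$).
\end{itemize}
With this, $\adimms_k(K_{r,t})$ is the minimum of $a+b$ over pairs $(a,b)\neq(0,0)$ with $0\le a\le r$ and $0\le b\le t$ that realize the value $k$.

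For each range of $k$, I would first exhibit an optimal pair and then show that every other realizing pair is at least as large, by checking three types: $a=b$; $a\ne b$ with both parts not fully in $S$; and one part fully in $S$.

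In (i), with $1<k<t$, take $(a,b)=(0,t-k)$. Since $b\ge1$ we have $a\ne b$, and $\min\{r,k\}=k$, so this gives size $t-k$. For the lower bound:
\begin{itemize}
\item if the minimum equals $t-b$, then $b=t-k$;
\item if it equals $r-a$, then $a=r-k\ge t-k$;
\item if $a=b$, the size is $r+t-k\ge t-k$;
\item if $A\subseteq S$, the size is at least $r$.
\end{itemize}

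In (ii), for $r>t$ the pair $(1,0)$ gives $\min\{r-1,t\}=t$, so the value is $1$. For $r=t$ the minimum cannot equal $r$ unless $a=b=0$. The only options left are then $a=b=r/2$ and $B\subseteq S$ with $a=0$, and both give size $r$.

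In (iii), with $t<k\le r$, the $B$-term $t-b<k$ can never be the minimum. So either $B\subseteq S$ and $r-a=k$, giving size $t+(r-k)=r+t-k$, or $a=b$ and $2a=r+t-k$. Both give size $r+t-k$.

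For (iv) and (v), where $r<k\le r+t-2$, any pair with $a\ne b$ and both parts not fully in $S$ yields $k\le t\le r$, so it is excluded. If one part lies fully in $S$, then $k\le r$ again. The only remaining possibility is $a=b$ with $r+t-2a=k$, which requires $k\equiv r+t\pmod 2$.
\begin{itemize}
\item When the parities agree, this gives size $r+t-k$, with $a=(r+t-k)/2\le t$ because $k\ge r$. This is (iv).
\item When the parities disagree, no $k$-MARS exists. This is (v).
\end{itemize}

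The main obstacle is the bookkeeping at the boundary cases rather than any single deep step. These are the cases where a whole part lies in $S$ (so that part imposes no constraint), and the degenerate equalities $k=t$ and $r=t$. I would handle them by fixing the three-type case split above once and reusing it verbatim in each item. I would also double-check the convention that $S\ne\emptyset$, since $S=\emptyset$ would trivially give $k=n(G)$.
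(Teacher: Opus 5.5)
Your proposal is correct. The representations $\msl 1^b,2^a\msr$ for vertices of $A\setminus S$ and $\msl 1^a,2^b\msr$ for vertices of $B\setminus S$ are right. So is the resulting formula for the anonymity level in terms of $(a,b)$, and your case split gives the stated values in every range (i)--(v). The survey only quotes this proposition from \cite{estrada-moreno-2026+} and gives no proof, so there is nothing in the text to compare your route against. Your direct reduction to a minimization of $a+b$ over realizing pairs is the natural argument.

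A few bookkeeping points you should tidy up:
\begin{itemize}
\item In (i) you treat $A\subseteq S$ but not $B\subseteq S$. There the pair is $(r-k,t)$, of size $r+t-k\ge t-k$, so nothing changes.
\item In (ii) with $r=t$, the symmetric option $A\subseteq S$, $b=0$ also occurs and again has size $r$.
\item In (iii) you show every realizing pair has size $r+t-k$, but you should also say explicitly that one exists. The pair $(r-k,t)$ works, including the borderline $r-k=t$: it then falls under $a=b$ with value $r+t-2t=r-t=k$.
\item The convention $S\neq\emptyset$ does not affect this proposition. $S=\emptyset$ only yields $k=r+t$, which lies outside all the ranges considered.
\end{itemize}
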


Notice that any tree $T$ satisfies $2\le \kappa(T)\le |V(T)|-1$. Moreover, both bounds are achieved, paths $P_n$ satisfy $\kappa(T)=2$ and stars $K_{1,t}$ satisfy $\kappa(K_{1,t})=t$. Some partial results are known for trees and are recalled in the following. 

\begin{proposition}{\rm \cite[Corollary 2]{estrada-moreno-2026+}}
For any tree $T$ of diameter at least three, $\adimms_2(T)\in \{1, 2\}$.
\end{proposition}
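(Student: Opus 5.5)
We must show that every tree $T$ of diameter at least three satisfies $\adimms_2(T)\in\{1,2\}$: it has a $2$-MARS of size one or two, and it has no $1$-MARS of size one. That last point is impossible, since a singleton is always a $1$-MARS when some vertex has a unique distance to it; so I read $\adimms_2(T)$ as the least size of a set $S$ for which every $x\notin S$ has $|S_{[x]}|\ge 2$, with $2$ the largest such $k$.

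The plan is to exhibit such a set of size at most two directly. Any nonempty $S$ gives $\adimms_2(T)\ge 1$ trivially, so only the upper bound and the exactness of $k=2$ need work.

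\textbf{Step 1: singleton sets.} For $S=\{s\}$, the anonymity class of $x$ is the distance layer $\{v: d_T(v,s)=d_T(x,s)\}$. So $\{s\}$ is a $2$-MARS precisely when every layer at distance $i\ge 1$ from $s$ has at least two vertices, and some layer has exactly two. Choosing $s$ as a center often works, but a long path hanging off a center gives singleton layers. Hence singletons alone will not always suffice, and we pass to pairs.

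\textbf{Step 2: a pair for the general case.} Let $P=a_0a_1\cdots a_d$ be a diametral path, $d\ge 3$, and take $S=\{a_0,a_d\}$, or a nearby pair such as two leaves attached to adjacent vertices. For $x\notin S$, let $p\in V(P)$ be the vertex closest to $x$ and $h=d_T(x,p)$. Then
$$m_T(x|S)=\msl d_T(a_0,p)+h,\; d_T(a_d,p)+h\msr .$$
Writing $j$ for the index of $p$, this multiset is $\msl j+h,\,d-j+h\msr$, which depends only on the pair $\{j,\,d-j\}$ and on $h$. The reflection $j\mapsto d-j$ therefore pairs up classes. For $P$ itself this gives class size two except at the middle vertex when $d$ is even. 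The vertices off $P$ enlarge classes, and they merge further through the coincidences $j+h=j'+h'$.

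\textbf{Step 3: the parity obstacle.} The main obstacle is the middle vertex $a_{d/2}$ when $d$ is even; its class must contain at least two vertices. Three cases arise.
\begin{itemize}
\item If some vertex $x'$ hangs off $a_{d/2\pm1}$ at height $1$, its multiset is $\msl d/2,\,d/2\msr$, the same as for $a_{d/2}$, so the class is already large enough.
\item Otherwise, take the pair $\{a_0,a_{d-1}\}$ instead; this makes the relevant length odd and removes the fixed point of the reflection.
\item If neither works, try a singleton centered at a branch vertex; at that point we are deciding between the singleton option of Step 1 and the pair of Step 2.
\end{itemize}
Finally, to certify that $k$ equals exactly $2$ and not more, we check that the class of $a_1$, namely $\{a_1,a_{d-1}\}$ together with any vertices at equal distances, has size exactly two for a suitable choice of the pair. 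The diameter assumption $d\ge3$ guarantees that $a_1\ne a_{d-1}$ and that these two vertices have no other partners, because leaves adjacent to $a_1$ would extend the diameter only in controlled ways. I expect this case analysis around the center to be the hardest part, and I would rely on the earlier Theorem giving $\kappa(T)\ge 2$ as a guide.
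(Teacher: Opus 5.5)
The survey only quotes this result, so there is no in-paper proof to compare against. Judged on its own, your proposal has a genuine gap: it never exhibits a set of size at most two that is provably a $2$-MARS, and the claims meant to supply one are false.

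In Step~2 the multiset $\msl j+h,\,d-j+h\msr$ has sum $d+2h$. So two vertices outside $S$ share a class exactly when they have the same height $h$ and their projections onto $P$ lie in $\{a_j,a_{d-j}\}$. Consequently:
\begin{itemize}
\item Off-path vertices never enlarge the classes of path vertices, and nothing merges through $j+h=j'+h'$.
\item A single vertex at height $h$ above $a_j$, with nothing at height $h$ above $a_{d-j}$, is a singleton class. For the path $a_0a_1\cdots a_5$ with one pendant vertex $x$ at $a_2$, the multiset $m_T(x|\{a_0,a_5\})=\msl 3,4\msr$ is unique. So the endpoints fail even for odd $d$.
\item For even $d$ the endpoints always fail. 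A vertex $y$ with multiset $\msl d/2,d/2\msr$ satisfies $d_T(y,a_0)+d_T(y,a_d)=d$, so it lies on the unique $a_0,a_d$-geodesic and equals $a_{d/2}$. The same argument shows that no pair at even distance is ever a $2$-MARS.
\end{itemize}
Your three bullets in Step~3 do not repair this. The first is a miscalculation: a vertex at height $1$ above $a_{d/2\pm1}$ has multiset $\msl d/2,\,d/2+2\msr$. The second fails whenever $\deg(a_{d-1})=2$, because then $a_d$ is the only vertex with multiset $\msl 1,d\msr$; already $P_5$ is an example. The third states no criterion and proves nothing.

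Your final paragraph targets the easy half of the problem. For any pair at odd distance $\ell\ge 3$, the two middle vertices of the connecting geodesic are the only vertices with multiset $\msl (\ell-1)/2,(\ell+1)/2\msr$. So a class of size exactly $2$ comes for free. The entire content is ruling out singleton classes, and that is never done. The result $\kappa(T)\ge 2$ does not help either, since it gives no control on $|S|$.

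A proof needs a selection rule that is verified for every tree. One possible start uses the distance layers around the center of $T$: the central vertex as a singleton if $d$ is even, or the central edge as an adjacent pair if $d$ is odd. Each nonempty layer contains two vertices of $P$, which settles the case where some layer has exactly two vertices. The case where every layer has at least three vertices then needs a genuinely different choice.

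That case really occurs and cannot be handled by singletons or adjacent pairs. If every internal vertex of $T$ has degree at least $4$, each singleton and each adjacent pair produces either a class of size $1$ or only classes of size at least $3$. In that same tree, every pair at odd distance $\ge 3$ is a $2$-MARS. Nothing in your case distinction addresses this.
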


\begin{proposition}{\rm \cite[Proposition 4]{estrada-moreno-2026+}}
Let $T$ be a tree. Then $\adimms_2(T)=1$ if and only if there is a vertex $x\in V(T)$ such that $|D_j(x)|=2$ for some $j\in [\epsilon(x)]$ and also $|D_i(x)|\ge 2$ for every $i\in [\epsilon(x)]\setminus\{j\}$, where $D_i(x)$ represents the set of vertices at distance $i$ from $x$. 
\end{proposition}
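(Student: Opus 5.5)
The plan is to argue directly from the definitions, since the statement reduces to a description of the anonymity sets of a one-vertex set. It follows from the definition of $\adimms_2$ that $\adimms_2(T)=1$ holds exactly when some singleton $S=\{x\}$ is a $2$-MARS of $T$. Sets of size zero are excluded by convention: the empty set gives every vertex the empty multiset, so it is not a meaningful antiresolving set. The task is therefore to determine, for a fixed vertex $x$, the largest $k$ for which $\{x\}$ is a $k$-MARS.

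Fix $x\in V(T)$ and put $S=\{x\}$. For every vertex $v$ we have $m_T(v|S)=\msl d_T(v,x)\msr$. Hence two vertices have the same multiset representation if and only if they are at the same distance from $x$. For $v\in V(T)\setminus S$ with $d_T(v,x)=i$, where $i\in[\epsilon(x)]$, the anonymity set is exactly $S_{[v]}=D_i(x)$. The vertex $x$ itself is the only vertex with representation $\msl 0\msr$, so it never enters an anonymity set of a vertex outside $S$. Since every layer $D_i(x)$ with $1\le i\le \epsilon(x)$ is nonempty, the largest $k$ with $|S_{[v]}|\ge k$ for all $v\notin S$ is
$$k=\min\{|D_i(x)| : i\in[\epsilon(x)]\}\,.$$

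It follows that $\{x\}$ is a $2$-MARS precisely when this minimum equals $2$. That is the case exactly when $|D_i(x)|\ge 2$ for all $i\in[\epsilon(x)]$ and equality $|D_j(x)|=2$ holds for some $j$, which is the condition in the statement. Both implications are now immediate. If such an $x$ exists, then $\{x\}$ is a $2$-MARS, so $\adimms_2(T)\le 1$ and therefore $\adimms_2(T)=1$. Conversely, if $\adimms_2(T)=1$, then a $2$-MARS $\{x\}$ exists and the minimum above equals $2$ for that $x$.

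I do not expect a genuine obstacle here. The only points that need care are the conventions: $x$ is excluded from the anonymity sets considered, because the condition only ranges over $V(T)\setminus S$, and ``$k$-MARS'' requires $k$ to be the \emph{largest} such integer, which is what forces the equality $|D_j(x)|=2$ and not merely $|D_i(x)|\ge 2$. The argument never uses acyclicity, so it in fact holds for any connected graph; the tree hypothesis only matters in connecting this statement to \cite[Corollary 2]{estrada-moreno-2026+}.
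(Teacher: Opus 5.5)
Your proof is correct and complete. For $S=\{x\}$, the anonymity set of each $v\neq x$ is exactly the layer $D_{d_T(v,x)}(x)$, so $\{x\}$ is a $k$-MARS with $k=\min_{i\in[\epsilon(x)]}|D_i(x)|$, and the statement is this identity specialised to $k=2$. The survey quotes the result from \cite{estrada-moreno-2026+} without a proof, so there is no argument in the paper to compare against; your direct unpacking of the definitions is the natural proof.

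Your side remarks are also right. The tree hypothesis is never used. The empty-set convention is not even needed: $\emptyset$ is an $n(T)$-MARS, so it could only matter for $T=P_2$, and there neither side of the equivalence holds.
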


Determining the value of $\adimms_k(T)$ for any tree $T$ when $k\ge 3$ is a challenging problem. The results are known for the case of complete binary trees $T_d$ of height $d$. 
Since $T_1\cong P_3$, we recall the results in cases where $d\ge 2$.

\begin{proposition}{\rm \cite[Proposition 5]{estrada-moreno-2026+}}
If $d\ge 2$, then for any $k\in \{2,2^2,\dots 2^d\}$, 
$$\adimms_k(T_{d})\le \sum_{i=0}^{\log_2 k - 1} 2^{i}\,.$$
\end{proposition}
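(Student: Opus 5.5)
Writing $k=2^t$ with $1\le t\le d$, we have $\sum_{i=0}^{t-1}2^i=2^t-1=k-1$. So it suffices to exhibit a $k$-MARS of $T_d$ with exactly $k-1$ vertices. The natural candidate is the set $S_t$ of all vertices of $T_d$ at depth at most $t-1$, that is, the root together with the first $t-1$ levels. It has exactly $\sum_{i=0}^{t-1}2^i$ vertices. The vertices outside $S_t$ are precisely those at depth $j$ with $t\le j\le d$, and the level at depth $j$ has $2^j$ vertices.

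\textbf{Step 1 (vertices on a level are anonymous).} Every automorphism of $T_d$ fixes the root and preserves depth, so it maps $S_t$ onto itself. The automorphism group of the complete binary tree acts transitively on each level, since we may swap the two subtrees hanging from any vertex. Hence if $x$ and $x'$ lie on the same level $j$, there is an automorphism $\varphi$ with $\varphi(x)=x'$ and $\varphi(S_t)=S_t$. It follows that $m(x|S_t)=m(x'|S_t)$, so every level of vertices outside $S_t$ is contained in a single anonymity class.

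\textbf{Step 2 (different levels are distinguished).} Let $x$ be at depth $j\ge t$. The largest distance from $x$ to a vertex of $S_t$ is attained at a vertex of depth $t-1$ in the half of the tree not containing $x$ (the other subtree of the root). This distance is $j+(t-1)$, because $t\ge 1$ and the path goes through the root. Since this maximum determines $j$, vertices on different levels have different multisets. Consequently, the anonymity classes of vertices outside $S_t$ are exactly the levels $t,\dots,d$.

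\textbf{Step 3 (conclusion).} The class sizes are $2^j$ for $t\le j\le d$, and the smallest is $2^t=k$, attained at level $t$, which exists because $t\le d$. Thus every $x\notin S_t$ has $|S_{[x]}|\ge k$, and $k$ is the largest integer with this property. So $S_t$ is a $k$-MARS, which gives $\adimms_k(T_d)\le |S_t|=\sum_{i=0}^{\log_2 k-1}2^i$.

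The main point needing care is Step 2. One must check that the maximal distance really is $j+t-1$ in all cases, including $t=1$, where $S_1$ is just the root and the distance to it is $j$ itself. One must also check that no coincidence of multisets across levels can enlarge a class beyond a single level; the maximum-distance argument is what rules this out.
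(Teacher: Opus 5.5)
Your proof is correct and complete. The top $\log_2 k$ levels form a set of exactly $\sum_{i=0}^{\log_2 k-1}2^i$ vertices, and its anonymity classes are precisely the remaining levels: multisets agree within a level by the subtree-swapping automorphisms, differ across levels via the maximum distance $j+t-1$, and no vertex of $S_t$ can join a class because its multiset contains $0$. Hence the smallest class is level $\log_2 k$, of size $k$. The survey gives no proof and only cites \cite{estrada-moreno-2026+}, but the form of the bound (a count of the vertices on levels $0,\dots,\log_2 k-1$) points to exactly this construction, so yours is the natural route and very likely the original one.
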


\begin{proposition}{\rm \cite[Proposition 6]{estrada-moreno-2026+}}
If $d\ge 2$, then 
$$\adimms_3(T_{d})=\left\{\begin{array}{ll}
    \infty; & d=2\,, \\
    2; & d=3\,, \\
    1; & d>3. 
\end{array}\right.$$
\end{proposition}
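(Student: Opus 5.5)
The plan is to exploit the fact that for a single vertex $x$ the anonymity classes with respect to $\{x\}$ are exactly the distance layers $D_i(x)$, $i\in[\ecc(x)]$. Hence $\{x\}$ is a $k$-MARS with $k=\min_i |D_i(x)|$. Since $\adimms_3$ is always at least $1$, the value $1$ for $d>3$ will follow once we find a vertex whose smallest layer has size exactly $3$. For $d\in\{2,3\}$ we will instead show that no vertex has this property and then handle larger sets. Throughout, let $r$ be the root of $T_d$ and let $c,c'$ be its two children.

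\textbf{Case $d>3$.} Take $x=c$. Then $D_1(c)=\{r\}\cup\{\text{children of } c\}$, so $|D_1(c)|=3$. For $2\le i\le d-1$, $D_i(c)$ consists of the $2^{i-1}$ descendants of $c$ at depth $i$ below $c$, together with the $2^{i-2}$ vertices of the subtree of $c'$ at depth $i-2$ below $c'$. Thus $|D_i(c)|=2^{i-1}+2^{i-2}\ge 3$. The last two layers lie entirely in the subtree of $c'$, so $|D_d(c)|=2^{d-2}$ and $|D_{d+1}(c)|=2^{d-1}$. Both are at least $4$ exactly when $d\ge 4$. Hence the minimum layer size is $3$, so $\{c\}$ is a $3$-MARS and $\adimms_3(T_d)=1$. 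This bookkeeping of layers is routine but must be done carefully at the two boundary levels $d$ and $d+1$.

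\textbf{Case $d=3$.} First rule out singletons by checking every vertex up to symmetry. The root has $|D_1(r)|=2$. The vertex $c$ has $|D_3(c)|=2^{d-2}=2$. A vertex $a$ at level $2$ has $D_2(a)=\{r,\text{sibling of } a\}$, of size $2$. A leaf has a unique neighbour. So no singleton is a $3$-MARS, and $\adimms_3(T_3)\ge 2$. Next, exhibit a concrete pair $S$ for which every anonymity class of the vertices outside $S$ has size at least $3$, with minimum exactly $3$. I would search among pairs that are asymmetric with respect to the root, for example one level-$2$ vertex together with one leaf in the opposite subtree, or two leaves at distance $4$. For each candidate I would tabulate the $13$ multisets $m(v|S)$. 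Finding a pair that works is the main obstacle: several natural candidates produce singleton classes (the root, or $c$ and $c'$), so this step is genuinely a finite search guided by symmetry.

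\textbf{Case $d=2$.} Here $T_2$ has $7$ vertices, and I must show that no set $S$ is a $3$-MARS. Singletons fail by the same computation as above: the root has $|D_1(r)|=2$; $c$ has layers of sizes $3,2,2$; a leaf has $|D_1|=1$. For $|S|\ge 2$, the largest value $k$ realized by $S$ must equal $3$ exactly. This forces the at most $5$ vertices outside $S$ to split into classes of size at least $3$, so either they form a single class of size $3$, $4$ or $5$, or $|S|=2$ and the outside splits as $3+2$. The latter is not allowed, since it gives $k=2$. So the real question is whether the outside vertices split into classes whose minimum size is exactly $3$. I would enumerate subsets $S$ up to the automorphisms of $T_2$: the pairs, the triples whose complement has $4$ vertices forming one class, and the sets with exactly $3$ outside vertices forming one class. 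In each case I would exhibit two outside vertices whose multisets differ (typically a vertex adjacent to an element of $S$ versus one that is not), which rules out $k=3$. This gives $\adimms_3(T_2)=\infty$.
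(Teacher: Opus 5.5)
Your singleton-layer strategy is sound. The case $d>3$ is correct: the descendants of $c$ at depth $i$ below $c$ number $2^i$, not $2^{i-1}$, but that only enlarges the layers. Your exclusion of singletons for $d=3$ is also correct. The genuine gap is the upper bound $\adimms_3(T_3)\le 2$. You never produce a pair, so nothing in the proposal rules out $\adimms_3(T_3)\ge 3$, or even $\adimms_3(T_3)=\infty$. Moreover, the candidates you intend to try first all fail. For two leaves at distance $4$ (below the two children of $c$), the root is alone in its class $\msl 3,3\msr$. For a child $a$ of $c$ together with a leaf $\ell$ below $c'$, the child of $c'$ that is not the parent of $\ell$ is alone in its class $\msl 3,4\msr$. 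The missing witness contains the root. Take $S=\{r,c\}$: the vertex $c'$ and the two children of $c$ have multiset $\msl 1,2\msr$; the two children of $c'$ and the four leaves below $c$ have $\msl 2,3\msr$; the four leaves below $c'$ have $\msl 3,4\msr$. The classes have sizes $3$, $6$, $4$, so $\{r,c\}$ is a $3$-MARS. Together with your singleton analysis this gives $\adimms_3(T_3)=2$.

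For $d=2$ your plan is sound but not carried out, and the enumeration you describe is larger than needed. A $3$-MARS must have minimum class size exactly $3$, so counting alone disposes of $|S|\in\{2,3,5,6\}$. A complement of $5$ or $4$ vertices either forms a single class (giving a $5$- or $4$-MARS, so nothing needs refuting) or contains a class of size at most $2$. What remains are two kinds of sets. The first is singletons, which would need a $3+3$ split; you handled these (the layers of $c$ have sizes $3,1,2$, not $3,2,2$, harmlessly). The second is $4$-sets $S$ whose three outside vertices share one multiset. Label the leaves $\ell_1,\ell_2$ below $c$ and $\ell_3,\ell_4$ below $c'$. Then $r\in S$: otherwise $m(r|S)$ has all entries in $\{1,2\}$, so each other outside vertex must have all its vertices at distance at least $3$ outside $S$ as well. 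Those far vertices are $\ell_3,\ell_4$ for $c$, and $c',\ell_3,\ell_4$ for $\ell_1$ (symmetrically for the others), which pushes the complement beyond three vertices. Up to symmetry the remaining complements are $\{c,c',\ell_1\}$, $\{c,\ell_1,\ell_2\}$, $\{c,\ell_1,\ell_3\}$, $\{c,\ell_3,\ell_4\}$ and $\{\ell_1,\ell_2,\ell_3\}$. In each, two outside vertices have different multisets; for instance, in the first case $m(c|S)=\msl 1,1,3,3\msr\neq\msl 1,1,1,3\msr=m(c'|S)$. This short check is what actually establishes $\adimms_3(T_2)=\infty$.
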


Observe that a wheel $W_n = C_{n-1} + K_1$ has a maximum degree equal to $n-1$, therefore, from Proposition \ref{prop:trivial-bounds} it follows that $\kappa(W_n)=n-1$.

\begin{theorem}{\rm \cite[Theorem 2]{estrada-moreno-2026+}}
Let $W_n$ be a wheel graph of order $n\ge 7$. If $k\equiv 0\pmod 2$, then for any $4\le k< n-1$,
$$\adimms_k(W_n)=\left\{\begin{array}{ll}
   \dfrac{k+2}{2}; & n\ge\dfrac{5k+2}{2}\,, \\
    n-k; & \dfrac{3k+2}{2}\le n<\dfrac{5k+2}{2}\,, \\
    \infty; & \text{otherwise}\,. 
\end{array}\right.$$
If $k\equiv 1 \pmod 2$, then for any $5\le k< n-1$,
$$\adimms_k(W_n)=\left\{\begin{array}{ll}
   \min\left\{\left\lfloor\dfrac{n-1-k}{3}\right\rfloor+t+1,\dfrac{3k+3}{2}\right\}; & n\ge\dfrac{5k+5}{2}\,, \\
    n-k; & 2k< n<\dfrac{5k+5}{2}\,, \\
    \infty; & \text{otherwise}\,,
\end{array}\right.$$
where $t$ is the remainder of dividing $(n-1-k)$ by $3$. Moreover, $\adimms_{n-1}(W_n) = 1$.
\end{theorem}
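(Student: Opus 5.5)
Throughout, write $W_n=C_{n-1}+K_1$ with hub $c$ and rim cycle $C$ on $m=n-1\ge 6$ vertices. Then $\diam(W_n)=2$. The argument rests on three simple facts.

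\begin{itemize}
\item For $x\notin S$ we have $m_{W_n}(x|S)=\msl 1^{a},2^{|S|-a}\msr$ with $a=|N(x)\cap S|$.
\item Every vertex of $S$ has a $0$ in its multiset, so it never lies in $S_{[x]}$ for $x\notin S$.
\item Hence the anonymity classes of the vertices outside $S$ are exactly the sets $\{x\notin S : |N(x)\cap S|=a\}$. So $S$ is a $k$-MARS if and only if every nonempty such class has size at least $k$ and some class has size exactly $k$.
\end{itemize}

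The last statement is immediate from this: $S=\{c\}$ puts all rim vertices in one class of size $n-1$, so $\adimms_{n-1}(W_n)=1$.

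\textbf{Step 1: the hub must be in $S$.} We show that for $4\le k<n-1$ any $k$-MARS contains $c$. Suppose $c\notin S$.
\begin{itemize}
\item If $|S|\ge 3$, then $c$ is the only outside vertex adjacent to all of $S$, because a rim vertex has at most two rim neighbours. So $c$ forms a class of size $1$.
\item If $|S|\in\{1,2\}$, the class of $c$ consists of $c$ and the rim vertices adjacent to all of $S$, which has size at most $3<k$.
\end{itemize}
Thus $c\in S$. Writing $S'=S\cap V(C)$, the rim vertices outside $S$ are split into classes $A_t$, $t\in\{0,1,2\}$, according to the number $t$ of their neighbours in $S'$.

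\textbf{Step 2: count the classes via gaps.} Decompose $C$ into maximal runs of $S'$ and the gaps between them. With $s=|S'|$, the number of gaps is $r\le s$. A gap of length $1$ contributes one vertex to $A_2$. A gap of length $g\ge 2$ contributes two vertices to $A_1$ and $g-2$ vertices to $A_0$. Hence
\begin{itemize}
\item $|A_2|$ equals the number of gaps of length $1$,
\item $|A_1|$ is even, equal to twice the number of long gaps,
\item $|A_0|=\sum(g-2)$ over the long gaps,
\item $|A_0|+|A_1|+|A_2|=n-1-s$.
\end{itemize}
The problem becomes an integer optimisation: minimise $s+1$ subject to every nonempty $A_t$ having size at least $k$ and the minimum being exactly $k$. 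The parity of $|A_1|$ explains the split by the parity of $k$.

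\textbf{Step 3a: even $k$.} The cheapest option is $|A_1|=k$. This uses $k/2$ long gaps, hence $s\ge k/2$ runs, and we take single-vertex runs. The rest lies in $A_0$ (with $A_2=\emptyset$), of size $n-1-k/2-k$, which must be $0$ or at least $k$. This gives $(k+2)/2$ exactly when $n\ge(5k+2)/2$.

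For smaller $n$, we show that no configuration with all classes of size at least $k$ can have two nonempty classes. So exactly one class exists, of size $k$, forcing $|S|=n-k$. We then check which single-class configurations are realisable: $A_0$ alone is impossible, $A_1$ alone, or $A_2$ alone, each has its own constraint. This should produce the threshold $(3k+2)/2$ and $\infty$ below it.

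For the lower bound in the first range, any solution needs some class of size exactly $k$. If that class is $A_1$, then $s\ge k/2$. If it is $A_2$ or $A_0$, compare by counting, since $A_2$ needs $s\ge k$ and $A_0$ forces $A_1\ne\emptyset$.

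\textbf{Step 3b: odd $k$.} Here $|A_1|\neq k$, so the class of size $k$ is $A_0$ or $A_2$, and $|A_1|\in\{0\}\cup[k+1,\infty)$ is even. I expect this to be the main obstacle. One must balance the cost of creating $A_2$-vertices (a gap of length $1$ costs about one vertex of $S'$) against long gaps, where each gap costs at least one run. The remaining $n-1-k$ vertices must be absorbed so that no class is undersized. The term $\lfloor (n-1-k)/3\rfloor+t+1$ suggests a packing of the leftover vertices into patterns of period $3$ (e.g.\ $S',x,x$ blocks), with the remainder $t$ costing extra. The competing option $(3k+3)/2$ comes from $|A_1|=k+1$ together with $|A_2|=k$ or $|A_0|=k$.

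I would first list all feasible class-size vectors $(|A_0|,|A_1|,|A_2|)$ together with the minimum $s$ realising each. Then I would minimise case by case, checking the boundary ranges $2k<n<(5k+5)/2$, where only the single-class solution $|S|=n-k$ survives, and $n\le 2k$, where no configuration works and the value is $\infty$.
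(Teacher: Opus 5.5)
\paragraph{Verdict.} The survey only quotes this theorem from \cite{estrada-moreno-2026+} and gives no proof of it, so your proposal has to stand on its own. Your framework is right, but the odd case, which is half the statement, is only planned and not proved.

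\paragraph{What works.} Steps 1 and 2 are correct and give the right reduction. Let $p$ be the number of gaps of length $1$, $q$ the number of gaps of length at least $2$, and $N_0=|A_0|$. The realisable configurations are exactly those with $s+p+2q+N_0=n-1$, $1\le p+q\le s$, and $N_0=0$ whenever $q=0$.

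The even case then goes through as you indicate. Two nonempty classes always include $A_1$ together with $A_0$ or $A_2$. Hence $N_0+3q+2p\ge \frac{5k}{2}$, which forces $n\ge\frac{5k+2}{2}$. A lone class of size $k$ must be either $A_1$, which needs $n-1-k\ge\frac{k}{2}$, or $A_2$, which needs $n-1-k\ge k$. For the lower bound, every $k$-MARS has $s\ge\frac{k}{2}$: either $A_2$ is the only class, or $A_1\neq\emptyset$. The claim $\adimms_{n-1}(W_n)=1$ is also fine.

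\paragraph{The gap: odd $k$.} Step 3b consists of a plan plus guesses, and one guess is wrong. Put $M=n-1-k=3Q+t$. Since $|A_1|=2q\neq k$, the class of size exactly $k$ is $A_0$ or $A_2$. The missing case analysis is as follows.
\begin{itemize}
\item $|A_0|=k$. Then $q\ge 1$, so $2q\ge k+1$.
\begin{itemize}
\item If $p=0$, then $s=M-2q\ge q$. This gives $q\le Q$ and $s\ge M-2Q=Q+t$. The value $s=Q+t$ is attained, with $q=Q$, exactly when $Q\ge\frac{k+1}{2}$, i.e. $n\ge\frac{5k+5}{2}$. This is where the remainder $t$ is forced into $S$, matching your period-$3$ intuition: one $S$-vertex plus two $A_1$-vertices per long gap.
\item If $p\ge k$, then $s\ge p+q\ge\frac{3k+1}{2}$.
\end{itemize}
\item $|A_2|=k$. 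Either $q=0$ and $|S|=n-k$ (this needs $n\ge 2k+1$), or $q\ge\frac{k+1}{2}$ and $s\ge\frac{3k+1}{2}$.
\end{itemize}

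\paragraph{Where $\frac{3k+3}{2}$ comes from, and why the $\min$ needs checking.} The value $\frac{3k+3}{2}$ comes from $|A_2|=k$, $|A_1|=k+1$ with all runs singletons. It does not come from $|A_0|=k$ as you suggest: $|A_0|=k$, $|A_1|=k+1$, $|A_2|=0$ gives $|S|=n-2k-1$.

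The singleton configuration leaves $|A_0|=n-\frac{7k+5}{2}$, which must be $0$ or at least $k$. So $\frac{3k+3}{2}$ is \emph{not} attainable for every $n\ge\frac{5k+5}{2}$. To justify the $\min$ in the formula you must check the following. The inequality $\frac{3k+3}{2}<Q+t+1$ means $M>\frac{9k+3}{2}-2t$. This gives $n\ge\frac{11k-1}{2}\ge\frac{9k+5}{2}$, so the singleton configuration is available whenever it is needed.

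\paragraph{The remaining ranges.} For $n<\frac{5k+5}{2}$, two nonempty classes would need $n-1\ge k+\frac{3(k+1)}{2}$, so they are impossible. A single class of odd size must then be $A_2$ with $q=0$. This gives $n-k$ when $n\ge 2k+1$ and $\infty$ otherwise. In the top range, $n-k>\frac{3k+3}{2}$, so that option never wins.

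None of this is worked out in your proposal. Once it is written down, your reduction does yield a complete proof.
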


In addition to the combinatorial results above, two different linear programming formulations of the $k$-multiset antiresolving set problem were presented in~\cite{estrada-moreno-2026+}. This allowed the authors to calculate the value of the $k$-multiset antidimension of some social graph classes, including some random graphs. These formulations were implemented in a computer assisted system and some experimentation developed. The computations also allowed to evaluate the resistance of such social graphs against active attacks. The interested reader may check~\cite{estrada-moreno-2026+} for details. 

To close this section, we want to recall that the parameter $k$-multiset antidimension was  originated due to its relationship with the area of privacy in social networks, specifically with the concept of $k$-anonymity within a network. Moreover, it is also seen as a multiset version of the $k$-metric antidimension (see \cite{trujillo-2016}), which is also related to privacy in social networks. This multiset variant was introduced in order to better capture the notion of anonymity in a network, based on the fact that, indeed, in a network there is not a real notion of a direction and position. Thus, a multiset makes more real the setting regarding the privacy of a user in a network with respect to active attacks.

In order to better catch an idea on this parameter, we must mention first the classical notion of $k$-anonymity in graphs. That is, a social graph achieves $k$-anonymity with respect to active attacks, if it has probability at least $1/k$ to be identified in the network (see \cite{Samarati-1998}). 
In this sense, in \cite{trujillo-2016} was introduced the concept of $(k, \ell)$-anonymity, which divides the set of users into anonymity sets and claims that users within the same anonymity set are indistinguishable. It is defined that two users are in the same anonymity set if their vectors of distances from the set of attacker nodes to them are equal. Hence, instead of considering a vector of distances, the authors in \cite{estrada-moreno-2026+} proposed considering a multiset of distances, which leads to a new measure called $(k, \ell)$-multiset anonymity. Since it is not the goal of this survey to continue digging into this privacy application, for extra information of this anonymity measure and related issues, we again suggest the article \cite{estrada-moreno-2026+}.

\section{Open problems}
\label{sec:conclude}

In this section we collect a list of open problems. All those problems posed earlier in the literature are accordingly labeled. 

\subsection{Problems on multiset dimension}

\begin{problem}{\rm \cite{hakanen-2024}}
Determine $\mdim(P_n\stp P_m)$, $n,m\ge 3$.
\end{problem}

\begin{problem}{\rm \cite[Conjecture 1]{hfidh-2019}}
Is it true that for any graph $G$ with $\mdim(G)<\infty$ it holds  $\mdim(G)\le n(G)-1$?
\end{problem}

\begin{problem}{\rm \cite[Conjecture 2]{hfidh-2019}}
Let $T$ be a tree of diameter $d$. Is it true that if $\mdim(T)<\infty$, then $\mdim(T)\le n(T)-d+1$?
\end{problem}

\begin{problem} {\rm \cite[Conclusion]{eide-2026+}}
Determine whether $\mdim({\cal G}(n, p))$ is finite w.h.p.\ for $(n-1)p = n^x$, $\frac{1}{8}  < x \le \frac{1}{2}$. 
\end{problem}

\begin{problem} {\rm \cite[Conclusion]{eide-2026+}}
Tighten the gap between the lower and upper bounds in Theorem~\ref{thm:pralat} for $\mdim({\cal G}(n, p))$ for $x \in (0, \frac{1}{2})$.
\end{problem}

\begin{problem}
The idea of finding the set of integers $r$ for which a given graph has a multiset resolving set seems to be natural based on the fact that such structures are not monotonic with respect to the inclusion property. That is, studying the identification spectrum as defined in \cite{Chartrand-2021}. There are only a few results in this direction, and it might be a promising research line.
\end{problem}

As already mentioned, since $Q_n$ is a bipartite graph, $\ldim(Q_n)=1$ for any integer $n\ge 2$. With respect to the multiset and outer multiset dimensions, by computer search, we have obtained the results appearing in Table \ref{tab:Q_n}. These computations indicate that finding the value of such parameters for any hypercube is of clear interest. With respect to the edge multiset dimension of hypercubes, again by computer search, we obtained that $\medim(Q_3)=\medim(Q_4)=\infty$. Thus, a question concerning this is pointed out.

\begin{table}[ht]
    \centering
    \begin{tabular}{|c|c|c||c|c|c|}
    \hline
       $n$  & $\mdim(Q_n)$ & $\odim(Q_n)$ & $n$  & $\mdim(Q_n)$ & $\odim(Q_n)$\\ \hline
       $3$ & $\infty$ & $5$ & $5$ & $7$ & $7$\\ \hline
       $4$ & $8$ & $7$ & $6$ & $8$ & $8$\\ 
    \hline
    \end{tabular}
    \caption{Multiset and outer multiset dimension of small hypercubes.}
    \label{tab:Q_n}
\end{table}

\begin{problem}
Determine the multiset, outer multiset, and edge multiset dimensions of hypercubes. In particular, is it true that $\medim(Q_d)=\infty$ for any $d\ge 3$?
\end{problem}

\subsection{Problems on outer multiset dimension}

\begin{problem}
Determine the computational complexity of {\sc Outer Multiset Dimension Problem} restricted to trees.
\end{problem}

\begin{problem} {\rm \cite[Problem 6.2]{klavzar-2023}}
Investigate the outer multiset dimension of general lexicographic products.
\end{problem}

\begin{problem} {\rm \cite[Problem 6.3]{klavzar-2023}}
Extend Theorem~\ref{thm:grid} to Cartesian products of more that two paths. 
\end{problem}

\begin{problem} {\rm \cite[Problem 6.4]{klavzar-2023}}
\label{prob:odim-torus}
Determine $\odim(C_s\cp C_t)$ for $s,t\ge 3$.
\end{problem}

Assuming Problem~\ref{prob:odim-torus} is solved, we can further ask to extend it to Cartesian products of more that two cycles. 

\begin{problem} {\rm \cite[Problem 3]{pervaiz-2025}}
For an arbitrary graph $G$, determine $\odim(G + K_1)$.
\end{problem}

\begin{problem} {\rm \cite[Problem 4]{pervaiz-2025}}
For an arbitrary graph $G$ and every $m\ge 2$, determine $\odim(G + \overline{K_m})$.
\end{problem}

Since any regular graph $G$ of diameter two satisfies $\odim(G)=n(G)-1$, the following question is worth of considering.

\begin{problem}
Find the outer multiset dimension of every non-regular graph of diameter two.
\end{problem}

\subsection{Problems on local multiset dimension}

\begin{problem}
Characterize the graphs $G$ with $\ldim(G) = \infty$.
\end{problem}

\begin{problem}
    Study the graphs $G$ with $\ldim(G) = 2$.
\end{problem}

\begin{problem}{\rm \cite[Problem 3.1]{alfarisi-2023b}}
    Determine the local multiset dimension of bicyclic graphs in which the two cycles have at least one vertex in common.
\end{problem}

\begin{problem}{\rm \cite[Problem 2]{alfarisi-2023a}}
    Characterize the graphs $G$ with $\ldim(G) = n-1$ or $n-2$.
\end{problem}

\begin{problem}{\rm \cite[Problem 3]{alfarisi-2023a}}
    Characterize the graphs $G$ with $\ldim(G) = {\rm ldim}(G)$.
\end{problem}

\subsection{Problems on edge multiset dimension}

\begin{problem}
Characterize the graphs $G$ with $\medim(G) = \infty$.
\end{problem}

\begin{problem}{\em \cite[Problem 2]{Ikhlaq-2023}}
Characterize the graphs $G$ for which either $\mdim(G)<\medim(G)$, or $\mdim(G)=\medim(G)$, or $\mdim(G)>\medim(G)$. 
\end{problem}

A related version of the problem above can be also as follows. 

\begin{problem}
Show that for any integer $k$, there is graph $G$ for which $\mdim(G)-\medim(G)=k$.
\end{problem}

\begin{problem}{\em \cite[Problem 4]{Ikhlaq-2023}}
If $\medim(G)$ is finite, then can any upper and/or lower bounds with respect to $n(G)$ be found for $\medim(G)$?
\end{problem}

Observing the values of the edge multiset dimension of the subdivision graph $K_n^1$ appearing in the first column from Table \ref{tab:subdiv-edge} (we have computed that also $\medim(K_6^1)=\medim(K_7^1)=\infty$), the following open question seems to be natural to consider. 

\begin{problem}
Is it true that $\medim(K_n^1)=\infty$ for every integer $n\ge 3$?
\end{problem}

Moreover, the values computed for the multiset and edge multiset dimension of the subdivision graph $K_n^k$ suggest to study these two parameters for this family of graphs.


\section*{Acknowledgement}

M.\ Farhan, D.\ Kuziak and I.\ G.\ Yero have been partially supported by ``Ministerio de Ciencia e Innovaci\'on'' through the grant PID2023-146643NB-I00.
Sandi Klav\v zar was supported by the Slovenian Research and Innovation Agency (ARIS) under the grants P1-0297, N1-0285, N1-0355, N1-0431, J1-70045.

\end{document}